\numberwithin{equation}{section}
\newtheorem{thm}{Theorem}[section]
\newtheorem{lem}[thm]{Lemma}
\newtheorem{prop}[thm]{Proposition}
\newtheorem{cor}[thm]{Corollary}
\newtheorem{question}[thm]{Question}
\theoremstyle{definition}
\newtheorem{defn}[thm]{Definition}
\newtheorem{Example}[thm]{Example}
\newtheorem{rem}[thm]{Remark}
\newcommand{\N}{\mathbb{N}}
\newcommand{\Z}{\mathbb{Z}}
\newcommand{\R}{\mathbb{R}}
\newcommand{\C}{\mathbb{C}}
\newcommand{\cZ}{\mathcal{Z}}
\newcommand{\cO}{\mathcal{O}}
\newcommand{\cP}{\mathcal{P}}
\newcommand{\cX}{\mathcal{X}}
\newcommand{\cY}{\mathcal{Y}}
\newcommand{\cA}{\mathcal{A}}
\newcommand{\cB}{\mathcal{B}}
\newcommand{\cU}{\mathcal{U}}
\newcommand{\cV}{\mathcal{V}}
\newcommand{\cK}{\mathcal{K}}
\newcommand{\cC}{\mathcal{C}}
\newcommand{\Ct}{\mathrm{C}}
\newcommand{\Cz}{\Ct_0}
\newcommand{\Cb}{\Ct_b}
\newcommand{\Ch}{\Ct_h}
\newcommand{\CzRed}{\Ct_0^{\mathrm{red}}}
\newcommand{\CRed}{\Ct^{\mathrm{red}}}
\newcommand{\fP}{\mathfrak{P}}
\newcommand{\categoryfont}[1]{\mathsf{#1}}
\newcommand{\sLCH}[1][\Gamma]{\boldsymbol\sigma\categoryfont{LCH}_{#1}}
\newcommand{\sLCHz}[1][\Gamma]{\boldsymbol\sigma\categoryfont{LCH}_{#1}^2}
\newcommand{\sLCHp}[1][\Gamma]{\boldsymbol\sigma\categoryfont{LCH}_{#1}^+}
\newcommand{\sLCHzp}[1][\Gamma]{\boldsymbol\sigma\categoryfont{LCH}_{#1}^{+,2}}
\newcommand{\sCH}[1][\Gamma]{\boldsymbol\sigma\categoryfont{CH}_{#1}}
\newcommand{\sCHz}[1][\Gamma]{\boldsymbol\sigma\categoryfont{CH}_{#1}^2}
\newcommand{\LCHz}[1][\Gamma]{\categoryfont{LCH}_{#1}^2}
\newcommand{\LCHp}[1][\Gamma]{\categoryfont{LCH}_{#1}^+}
\newcommand{\LCHzp}[1][\Gamma]{\categoryfont{LCH}_{#1}^{+,2}}
\newcommand{\CHz}[1][\Gamma]{\categoryfont{CH}_{#1}^2}
\newcommand{\sCs}[1][\Gamma]{\boldsymbol\sigma\categoryfont{C}^*_{#1}}
\newcommand{\csCs}[1][\Gamma]{\categoryfont{c}\boldsymbol\sigma\categoryfont{C}^*_{#1}}
\newcommand{\CGCBG}[1][\Gamma]{\categoryfont{CGBG}_{#1}}
\newcommand{\CGCBGz}[1][\Gamma]{\categoryfont{CGBG}^2_{#1}}
\newcommand{\coarseStr}{\mathcal{E}}
\newcommand{\topology}{\mathcal{T}}
\newcommand{\Diag}[1][]{{\Delta_{#1}}}
\newcommand{\swapcross}{\mathbin{\bar\times}}
\newcommand{\HilbertMod}{\mathfrak{H}}
\DeclareMathOperator{\im}{im}
\DeclareMathOperator{\id}{id}
\DeclareMathOperator{\dist}{dist}
\DeclareMathOperator{\Var}{Var}
\DeclareMathOperator{\supp}{supp}
\DeclareMathOperator{\Pen}{Pen}
\DeclareMathOperator{\Cone}{Cone}
\DeclareMathOperator{\Prop}{prop}
\newcommand{\ev}[1]{\mathop{\textrm{ev}_{#1}}}
\newcommand{\maxtensor}{\mathbin{{\otimes}_{\mathrm{max}}}}
\newcommand{\maxcrossed}{\mathbin{{\rtimes}_{\mathrm{max}}}}
\newcommand{\redcrossed}{\mathbin{{\rtimes}_{\mathrm{red}}}}
\newcommand{\Homol}{\mathrm{E}}
\newcommand{\Cohom}{\mathrm{E}}
\newcommand{\HomolX}{\mathrm{EX}}
\newcommand{\CohomX}{\mathrm{EX}}
\newcommand{\HX}{\mathrm{HX}}
\newcommand{\CX}{\mathrm{CX}}
\newcommand{\KX}{\mathrm{KX}}
\newcommand{\K}{\mathrm{K}}
\newcommand{\Ktop}{\mathrm{K}^{\mathrm{top}}}
\newcommand{\KK}{\mathrm{KK}}
\newcommand{\EE}{\mathrm{E}}
\newcommand{\HS}{\mathrm{HS}}
\newcommand{\CS}{\mathrm{CS}}
\newcommand{\lf}{\mathrm{lf}}
\newcommand{\cs}{\mathrm{c}}
\newcommand{\textCstar}{\ensuremath{\mathrm{C}^*\!}}
\newcommand{\Roe}{\mathrm{C}^*}
\newcommand{\PseudoLoc}{\mathrm{D}^*}
\newcommand{\Loc}{\mathrm{C}^*_{\mathrm{L},\Gamma}}
\newcommand{\PseudoLocLoc}{\mathrm{D}^*_{\mathrm{L},\Gamma}}
\newcommand{\sHigCom}{\overline{\mathfrak{c}}}
\newcommand{\sHigCor}{\mathfrak{c}}
\newcommand{\sHigComRed}{\overline{\mathfrak{c}}^{\mathrm{red}}}
\newcommand{\sHigCorRed}{\mathfrak{c}^{\mathrm{red}}}
\newcommand{\sHigFuAlg}{\overline{\mathfrak{b}}}
\newcommand{\sHigFuAlgRed}{\overline{\mathfrak{b}}^{\mathrm{red}}}
\newcommand{\VanInfFuAlg}{\mathrm{B}_0}
\newcommand{\HigFuAlg}{\mathrm{B}_h}
\newcommand{\Kom}{{\mathfrak{K}}}
\newcommand{\Lin}{{\mathfrak{B}}}
\newcommand{\multiplier}{{\mathcal{M}}}
\newcommand{\doubleplier}{{\mathcal{D}}}
\newcommand{\Asymp}{\mathfrak{A}}
\newcommand{\admissible}{{\mathfrak{C}}}
\newcommand{\sfP}{\mathsf{P}}
\newcommand{\sfD}{\mathsf{D}}
\newcommand{\blank}{-}
\newcommand{\coarsecomp}{\Pi_0}
\newcommand{\myvdots}{{\vphantom{(}\raisebox{-0.3ex}[0pt][0pt]{\vdots}}}
\begin{document}
\allowdisplaybreaks

\newcommand{\arXivNumber}{2006.02053}

\renewcommand{\PaperNumber}{057}

\FirstPageHeading

\ShortArticleName{Equivariant Coarse (Co-)Homology Theories}

\ArticleName{Equivariant Coarse (Co-)Homology Theories}

\Author{Christopher WULFF}

\AuthorNameForHeading{C.~Wulff}

\Address{Mathematisches Institut, Georg--August--Universit\"at G\"ottingen,\\ Bunsenstr. 3-5, D-37073 G\"ottingen, Germany}
\Email{\href{mailto:christopher.wulff@mathematik.uni-goettingen.de}{christopher.wulff@mathematik.uni-goettingen.de}}
\URLaddress{\url{https://www.uni-math.gwdg.de/cwulff/eng.html}}

\ArticleDates{Received October 03, 2021, in final form July 15, 2022; Published online July 26, 2022}

\Abstract{We present an Eilenberg--Steenrod-like axiomatic framework for equivariant coarse homology and cohomology theories. We also discuss a general construction of such coarse theories from topological ones and the associated transgression maps. A large part of this paper is devoted to showing how some well-established coarse \mbox{(co-)}homology theories, whose equivariant versions are either already known or will be introduced in this paper, fit into this setup. Furthermore, a new and more flexible notion of coarse homotopy is given which is more in the spirit of topological homotopies. Some, but not all, coarse \mbox{(co-)}homology theories are even invariant under these new homotopies. They also led us to a meaningful concept of topological actions of locally compact groups on coarse spaces.}

\Keywords{equivariant coarse homology; equivariant coarse cohomology; equivariant coarse assembly; equivariant coarse coassembly; generalized coarse homotopies}

\Classification{51F30; 55N35; 46L85}

\vspace{-3mm}

{\small \tableofcontents}

\section{Introduction}

Coarse \mbox{(co-)}homology theories have already been discussed before in various forms, see for example the axiomatic approach by Mitchener \cite{MitchenerCoarse} (only formulated for homology) or the spectral approach by Bunke and Engel \cite{BunkeEngel_CoarseCohomologyTheories, BunkeEngel_homotopy}, see also \cite{BunkeEngelKasprowskiWinges} for the equivariant version.

The work by Bunke and Engel et al.\ is undoubtedly the most ``modern'' manifestation of this topic and also the mightiest from a conceptual point of view.
However, it appears to be unnecessarily inconvenient for most practitioners of coarse geometry and coarse index theory \`a~la~John Roe, because it utilizes the language of infinity categories, which does not belong to the standard repertoire of that community and is usually not required in their research.\footnote{Note that the review of \cite{BunkeEngel_homotopy} in zbMATH independently expresses this opinion.}
Instead, coarse index invariants are often constructed directly in very specific models for certain coarse (co-)homology groups and reformulating them in the world of infinity categories would be rather futile.

Another basic idea of the Bunke--Engel-approach is that coarse (co-)homology theories should always be defined on the category of all bornological coarse spaces. While this is theoretically desirable, the plain unadulterated definitions of many examples work a priori only on certain subcategories, e.g., the one of all proper metric spaces. Additional work has to be invested to reformulate these definitions in order to squeeze the theories into this rather rigid set-up.

Therefore, it is imperative to not completely dismiss the complementary, ``service-oriented'' approach: Instead of defining coarse (co-)homology in a way which forces the users to adapt their examples to the theory, one can also formulate the theory in a flexible way which adapts itself to the concrete examples.
An Eilenberg--Steenrod like axiomatic set-up involving a suitable notion of admissible categories of coarse spaces seems to be perfect in this regard, and the purpose of the present exposition is to provide suchlike.

The present paper should somewhat be understood as an advancement of parts of Mitchener's aforementioned work \cite{MitchenerCoarse}, but we also set the focus a bit differently and take some newer developments into account. First of all, we generalize the axioms to the equivariant case and also consider cohomology theories. Second, we elaborate on how equivariant coarse \mbox{(co-)}homology theories can be constructed from topological \mbox{(co-)}homology theories for so-called $\sigma$-locally compact spaces by a coarsification process involving Rips complexes, which is an idea that goes back to the definition of coarse $\K$-theory in \cite{EmeMeyDualizing} and was discussed more generally in the non-equivariant case in \cite[Section~4]{EngelWulff}.
The advantage of coarse theories obtained as coarsifications is that they are related to the topological \mbox{(co-)}homology groups of Higson dominated coronas via so-called transgression maps, whose equivariant and relative versions are new in this paper.
Their importance stems from their close connection to \mbox{(co-)}assembly maps.
And third, a strong focus is laid on two example sections which illustrate how already established coarse \mbox{(co-)}homology theories, in particular the $\K$-theories of stable Higson coronas and Roe algebras, fit into this setup.

Our primary motivation for expanding further on the theory~-- and also for our specific choice of contents~-- was to lay the foundations for the author's subsequent publication \cite{wulff2020secondary} about secondary cup and cap products on coarse \mbox{(co-)}homology theories, which was written completely in the framework presented here, albeit without equivariance.
For this reason, we focus mainly on the material relevant therein instead of attempting to give a complete encyclopedic account on all the aspects of coarse (co-)homology which have been discussed in the literature so far. Nevertheless, we believe that our way of treating this topic can be highly suitable for other researchers in the area as well, so that we decided to write this separate discourse about it.

In this regard it should be mentioned that~-- although a pretty large part of this paper is devoted to giving examples~-- the equivariant versions of some of them are not being elaborated in a very sophisticated manner. In particular, the equivariance is implemented into ordinary coarse cohomology and the singular cohomology only very naively, which is sufficient for the purpose of providing examples, but probably not optimal for applications. Alexander--Spanier \mbox{(co-)}homology will be touched upon only briefly without any group actions at all.
Therefore, there is probably still room for improvement.

Two interesting by-products of our research should also be mentioned.
One aspect of coarse theory which always used to be a bit awkward to work with is the notion of coarse homotopy, because its appearance is somewhat different from that of a topological homotopy: A coarse homotopy between two coarse maps $X\to Y$ is a map $H\colon X\times \Z\to Y$ with certain properties (cf.~Definition~\ref{def:originalcoarsehomotopy}) such that the two maps can be identified with two restrictions of $H$ to certain subspaces of the form $X_\rho^\rho\coloneqq\{(x,\rho(x))\mid x\in X\}$,\footnote{The reason for this strange notation will become apparent in Sections~\ref{sec:coarsehomotopies} and~\ref{sec:flasquenessimplieshomotopyinvariance}.} where $\rho$ is a bornological map.
It is not only inconvenient that these boundaries $X_\rho^\rho$ of the homotopy are somewhat hidden inside of the definition, but also that the canonical coarse maps $X_\rho^\rho\to X$ do not even have to be coarse equivalences for general bornological $\rho$, so the inverse $X\to X_\rho^\rho\subset X\times \Z$ may be non-coarse. Furthermore, it is in general also not possible to simply concatenate two homotopies and hence transitivity of the homotopy relation is a priori not given.
These problems can be solved, and many authors do so, by assuming that the maps $\rho$ are not only bornological but even controlled. However, for most applications it is not really necessary to restrict the notion of coarse homotopies in this way.

Our attempt to alter the definition of coarse homotopies in order to resolve these inconveniences and transform its appearance to something resembling topological homotopies more closely led to the discovery of the broader and more flexible notion of generalized coarse homotopies (cf.~Definition~\ref{def:coarsehomotopy}). These are defined not on $X\times\Z$ but on $X\times I$, $I$ an closed interval, equipped with a coarse structure in which all slices $X\times\{t\}$ are coarsely embedded copies of $X$ and the intervals $\{x\}\times I$ can widen arbitrarily outside of bounded subsets.
Many coarse \mbox{(co-)}homology theories, in particular those arising from topological \mbox{(co-)}homology theories by a coarsification procedure but also the $\K$-theory of the Roe algebra, are invariant under generalized coarse homotopies.

A nice side-effect of our generalization of coarse homotopies is that it also leads us directly to a meaningful notion of topological actions of locally groups on coarse spaces. We give its definition, because we believe that such topological actions might be useful in future research, but immediately afterwards we restrict our attention to discrete groups for the development of our equivariant \mbox{(co-)}homology theories.

This paper is organized as follows. We recall the relevant definitions from coarse geometry and introduce our new notions of generalized coarse homotopies and topological group actions on coarse spaces in Section~\ref{sec:CoarseGeometry}. Afterwards we state our axioms of coarse \mbox{(co-)}homology theories and summarize elementary properties in Section~\ref{sec:CoarseCoHomologyTheories}. Section~\ref{sec:Examples} gives basic examples, which are defined by direct construction, before we exhibit a general procedure to obtain coarse \mbox{(co-)}homology theories from topologial ones via Rips complexes and introduce the so-called transgression maps in Section~\ref{sec:Sigmastuff}. The final Section~\ref{sec:FurtherExamples} continues the discussion of examples and explains how transgression maps are related to \mbox{(co-)}assembly maps.

\section{Coarse spaces}
\label{sec:CoarseGeometry}

In this section we recall the basics of coarse geometry, most of which can be found in \cite{RoeCoarseGeometry}.
\begin{defn}\label{def:coarsestructure}
A \emph{coarse structure} on a set $X$ is a collection $\coarseStr$ of subsets of $X\times X$, called the \emph{controlled sets} or \emph{entourages}, which contains the diagonal and is closed under the formation of subsets, inverses, products and finite unions. That is:
\begin{enumerate}\itemsep=0pt
\item[$1)$] $\Diag[X]\coloneqq\{(x,x)\mid x\in X\}\in\coarseStr$,
\item[$2)$] if $E\in\coarseStr$ and $E'\subset E$, then $E'\in\coarseStr$,
\item[$3)$] if $E\in\coarseStr$, then $E^{-1}\coloneqq\{(y,x)\mid(x,y)\in E\}\in\coarseStr$ (transposition),
\item[$4)$] if $E_1,E_2\in\coarseStr$, then
\[
E_1\circ E_2\coloneqq\{(x,z)\mid\exists y\in X\colon (x,y)\in E_1\wedge (y,z)\in E_2\}\in \coarseStr
\]
(composition),
\item[$5)$] if $E_1,E_2\in\coarseStr$, then $E_1\cup E_2\in\coarseStr$.
\end{enumerate}
The coarse structure $\coarseStr$ or the coarse space $(X,\coarseStr)$
\begin{enumerate}\itemsep=0pt
\item[$\bullet$] is called \emph{connected} if each element of $X\times X$ is contained in some entourage,
\item[$\bullet$] is said to be \emph{generated by a subset $S$} of the power set $\fP(X\times X)$ of $X\times X$ if it is the smallest coarse structure containing $S$.

Any subset $S$ of $\fP(X\times X)$ generates a unique coarse structure as follows: It is contained in at least one coarse structure, namely $\fP(X\times X)$, and the intersection of coarse structures is again a coarse structure. Thus, the unique coarse structure generated by $S$ is the intersection of all coarse structures in which it is contained.
Equivalently, it consists of all subsets of sets which are produced by taking finitely many unions, compositions and transpositions of sets in $S\cup\{\Delta\}$,

\item[$\bullet$] is called \emph{countably generated} if it is generated by a countable subset $S\subset\fP(X\times X)$.

This is equivalent to the existence of an increasing sequence $E_1\subset E_2\subset E_3\subset\cdots$ of entourages such that each entourage of $X$ is contained in one of them. To see this, let $S=\{F_1,F_2,F_3,\dots\}$ and then define $E_n$ as the union of the finitely many entourages $G_1\circ \dots\circ G_m$ with $m\leq n$ and $G_1,\dots,G_m\in\big\{\Delta_X,F_1,\dots, F_n,F_1^{-1},\dots,F_n^{-1}\big\}$.
\end{enumerate}
The pair $(X,\coarseStr)$ is then called a (countably generated/connected) \emph{coarse space}.
If there is no ambiguity, we will usually call $X$ a coarse space, the coarse structure being understood implicitly.

If $\coarseStr$ is a coarse structure on $X$ and $A\subset X$ is a subset, then
\(\coarseStr_A\coloneqq\{E \cap (A\times A) \mid E\in \coarseStr\}\)
is a coarse structure on $A$, called the \emph{restricted coarse structure}. A pair $(X,A)$ is called a \emph{pair of coarse spaces} if $X$ is a coarse space and $A\subset X$, where we consider $A$ as equipped with the restricted coarse structure.
\end{defn}

Important examples are of course metric spaces $(X,d)$: an entourage in the metric coarse structure on $X$ is a subset of $X\times X$ that is contained in $E_R\coloneqq\{(x,y)\in X\times X\mid d(x,y)\leq R\}$ for some $R\geq 0$.
It is countably generated by the entourages $E_n$ with $n\in\N$.
This construction works even if we allow the metric to take on the value $\infty$.
Then the metric coarse structure is connected if and only if the metric takes on only finite values.
If a coarse structure comes from a metric in this way, we call it \emph{metrizable}.
Roe proved the following metrizability lemma under the additional assumption of coarse connectedness, but its proof works equally well for all coarse spaces.
\begin{lem}[{\cite[Theorem 2.55]{RoeCoarseGeometry}}]\label{lem:metrizability}
A coarse space is metrizable if and only if it is countably generated.
\end{lem}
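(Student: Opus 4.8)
The plan is to prove the two implications separately, the forward one being routine and the converse requiring an explicit metrization. For the easy direction, suppose the coarse structure $\coarseStr$ on $X$ is induced by a metric $d$. I claim the countable family $\{E_n\}_{n\in\N}$, where $E_n=\{(x,y)\mid d(x,y)\le n\}$, generates $\coarseStr$: by definition every entourage is contained in some $E_R$, hence in $E_{\lceil R\rceil}$, so the smallest coarse structure containing all the $E_n$ already contains every entourage, while conversely each $E_n$ is itself an entourage. Thus $\coarseStr$ is countably generated.

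For the converse, suppose $\coarseStr$ is generated by a countable set $S=\{S_k\mid k\in\N\}$. The first step is to replace $S$ by a more convenient cofinal sequence. Setting $T_n=\Diag[X]\cup\bigcup_{k\le n}(S_k\cup S_k^{-1})$ gives an increasing sequence of symmetric entourages containing the diagonal, and I then close up under composition by putting $F_0=T_0$ and $F_{n+1}=(F_n\circ F_n)\cup T_{n+1}$. Each $F_n$ lies in $\coarseStr$ by the closure axioms, the sequence is increasing and symmetric with $\Diag[X]\subseteq F_n$, and by construction $F_n\circ F_n\subseteq F_{n+1}$. A short verification against exactly the five axioms shows that $\{E\subseteq X\times X\mid E\subseteq F_n \text{ for some } n\}$ is itself a coarse structure containing $S$; since $\coarseStr$ is the smallest such, the two coincide. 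In other words the $F_n$ are cofinal: $E\in\coarseStr$ if and only if $E\subseteq F_n$ for some $n$.

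Assuming $\coarseStr$ is connected — which is forced anyway, since a finite-valued metric always induces a connected coarse structure — one has $\bigcup_n F_n=X\times X$. I would then define a symmetric gauge $\rho(x,y)=\inf\{2^n\mid (x,y)\in F_n\}$ with $\rho(x,x)=0$, which vanishes exactly on the diagonal and, thanks to $F_n\circ F_n\subseteq F_{n+1}$, satisfies the weak triangle inequality $\rho(x,z)\le 2\max\{\rho(x,y),\rho(y,z)\}$. This $\rho$ is not yet a metric, and the key step is to pass to a genuine one by the standard chaining construction of Frink: set $d(x,y)=\inf\sum_i\rho(x_{i-1},x_i)$, the infimum over all finite chains $x=x_0,\dots,x_m=y$. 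Frink's lemma then yields $\tfrac12\rho\le d\le\rho$, so $d$ is a metric (positive off the diagonal). Finally the comparability $\tfrac12\rho\le d\le\rho$ shows that a set has bounded $d$-diameter if and only if it is contained in some $F_n$, i.e.\ the metric coarse structure of $d$ coincides with $\coarseStr$.

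The main obstacle is the passage from the almost-ultrametric gauge $\rho$ to an honest metric $d$: verifying Frink's chaining estimate $\tfrac12\rho\le d\le\rho$ is the one genuinely non-formal ingredient, everything else being a direct check against the coarse-structure axioms (and in fact only the comparability of $d$ with $\rho$ up to fixed positive constants is needed for the conclusion). A secondary point to flag is connectedness: since a finite-valued metric always gives a connected coarse structure, the converse strictly requires either the standing assumption that $\coarseStr$ be connected or the use of extended $[0,\infty]$-valued metrics, in which case one simply allows $\rho=\infty$ off $\bigcup_n F_n$ and the same argument goes through verbatim.
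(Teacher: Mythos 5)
The paper does not actually prove this lemma --- it is quoted from Roe's book with a \qed --- so your proposal has to stand on its own. Your forward direction, your construction of the increasing symmetric cofinal sequence $(F_n)$, the verification that $\{E\subseteq X\times X\mid E\subseteq F_n\text{ for some }n\}$ is a coarse structure, and your handling of the connectedness caveat via $[0,\infty]$-valued metrics (which matches the remark the paper makes immediately after the lemma) are all correct.

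There is, however, a genuine gap at the metrization step. The chaining lemma you invoke --- Frink's lemma in its correct form, as proved for instance in Kelley's metrization lemma --- requires the \emph{triple} composition condition $F_n\circ F_n\circ F_n\subseteq F_{n+1}$; your recursion $F_{n+1}=(F_n\circ F_n)\cup T_{n+1}$ only delivers the pairwise condition, and under the pairwise condition the conclusion $\tfrac12\rho\le d$ is false. Concretely, on a four-point set put $\rho(x_0,x_1)=\rho(x_2,x_3)=2^{-10}$, $\rho(x_1,x_2)=1$, $\rho(x_0,x_2)=\rho(x_1,x_3)=2$, $\rho(x_0,x_3)=4$. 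One checks all triples satisfy $\rho(x,z)\le 2\max\{\rho(x,y),\rho(y,z)\}$, and all values are powers of $2$, so this $\rho$ is exactly the gauge of a nested symmetric sequence with $F_n\circ F_n\subseteq F_{n+1}$; yet the chain $x_0,x_1,x_2,x_3$ gives $d(x_0,x_3)\le 1+2^{-9}<2=\tfrac12\rho(x_0,x_3)$. The reason the standard induction breaks is that after splitting a chain at the half-weight point one must recombine \emph{three} pieces (prefix, middle step, suffix); the triple condition does this with one application of the inequality, whereas the pairwise condition needs two, and the resulting factor $4$ prevents the induction from closing. (Two-sided comparability can in fact be rescued under the pairwise hypothesis --- one can show $\tfrac14\rho\le d\le\rho$ --- but that is a genuinely harder result and not the lemma you quote.) The repair is immediate: define $F_{n+1}=(F_n\circ F_n\circ F_n)\cup T_{n+1}$ instead; your cofinality verification goes through verbatim and the standard lemma then applies as stated.

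It is also worth pointing out that for coarse structures one can avoid chaining entirely, because --- unlike for uniformities --- composition does not leave the structure. From the $T_n$ define $F_n$ as the union of all compositions $T_{i_1}\circ\dots\circ T_{i_k}$ with $i_1+\dots+i_k\le n$ (a finite union of entourages, hence an entourage, and still cofinal). This sequence is sub-additive in its index, $F_n\circ F_m\subseteq F_{n+m}$, so $d(x,y)\coloneqq\inf\{n\mid(x,y)\in F_n\}$ already satisfies the triangle inequality, and $\{d\le R\}\subseteq F_{\lfloor R\rfloor}$ holds by construction; no Frink-type comparison is needed at any point. This is the cleanest route to the lemma and is essentially how the cited result is proved.
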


If $A\subset X$, then the restricted coarse structure on $A$ is the same as the metric coarse structure of the restricted metric.

\begin{defn}For $A\subset X$ and $E\in\coarseStr$ we call the set
\[
\Pen_E(A)\coloneqq E\circ A\coloneqq\{x\in X\mid \exists y\in A\colon (x,y)\in E\}
\]
the \emph{$E$-penumbra} of $A$. We will call a set $B\subset X$ a \emph{penumbra} of $A$, if it is contained in $\Pen_E(A)$ for some entourage $E$, or equivalently, if it is equal to $\Pen_E(A)$ for some entourage~$E$. If~$\Pen_E(A)=X$, then we say that $A$ is \emph{$E$-dense} in $X$, and $A$ is \emph{coarsely dense} in $X$ if it is $E$-dense for some entourage $E$.

For $A=\{y\}$ we call $\Pen_E(\{y\})$ the \emph{$E$-ball} around $y\in X$.
A subset $K\subset X$ is called \emph{bounded} if it is contained in some $E$-ball, or equivalently, if it is itself some $E$-ball.
A family of subsets $\{K_i\}_{i\in I}$, $K_i\subset X$, is called \emph{uniformly bounded} if the union $\bigcup_{i\in I}K_i\times K_i\subset X\times X$ is an entourage; in particular, each $K_i$ is bounded in this case.
\end{defn}

In the example of metric spaces, the $E_R^\circ\coloneqq\{(x,y)\in X\times X\mid d(x,y)<R\}$-penumbras of arbitrary subsets are exactly their open $R$-neigh\-bor\-hoods for each $R>0$.
A subset $K\subset X$ is bounded in the coarse geometric sense iff $K\subset \Pen_{E_R^\circ}(\{y\})=B_R(y)$ for some $R>0$ and some $y\in X$, i.e., if it is bounded in the metric sense.
A family of subsets of $X$ is uniformly bounded iff their diameters are uniformly bounded.

\begin{defn}\label{def:discreteboundedgeometry}
A coarse space $X$ is called \emph{locally finite} or \emph{discrete} if each bounded set is finite and it is called \emph{uniformly locally finite} or \emph{uniformly discrete} if for each uniformly bounded family $\{K_i\}_{i\in I}$ of subsets of $X$ the number of points in $K_i$ is uniformly bounded over all $i\in I$.

The space is said to have \emph{bornologically bounded geometry} if it posseses a coarsely dense and locally finite subspace, called a \emph{discretization}, and it is said to have \emph{coarsely bounded geometry} if it posseses a coarsely dense and uniformly locally finite subspace, called a \emph{uniform discretization}.
\end{defn}

The word ``discrete'' usually refers to a topological property, and indeed it is motivated by the relation between coarse structure and topology which is usually required if both of them are present at the same time.
\begin{defn}\label{defn:topcoarsespace}
A \emph{topological coarse space} is a~set $X$ equipped with both a coarse structure~$\coarseStr$ and a locally compact Hausdorff topology $\topology$ such that there is an entourage $E_0$ which is a~neighborhood of the diagonal in $X\times X$ and every bounded subset with respect to $\coarseStr$ is relatively compact with respect to $\topology$.
\end{defn}

The two properties in the definition ensure that in this case our coarse geometric notion of discreteness agrees with the topological notion of discreteness.
Furthermore, using the first property and Zorn's lemma one sees that each topological coarse space $X$ posseses a maximal subset $X'\subset X$ with the property that $E_0\cap (X'\times X')\subset \Diag[X]$, and this maximal subset is $\big(E_0\cup E_0^{-1}\big)$-dense and discrete in the topological and hence also the coarse geometric sense. Thus:
\begin{lem}\label{lem:topbbg}All topological coarse spaces have bornologically bounded geometry.
\end{lem}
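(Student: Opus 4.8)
The plan is to construct an explicit discretization by the Zorn's lemma argument already indicated in the paragraph preceding the statement. First I would reduce to the case where the distinguished entourage $E_0$ is symmetric: since $E_0^{-1}$ is again an entourage and, being the image of $E_0$ under the homeomorphism $(x,y)\mapsto(y,x)$ of $X\times X$, is likewise a neighborhood of the diagonal, the intersection $E_0\cap E_0^{-1}$ is a symmetric entourage which is still a neighborhood of $\Diag[X]$. Replacing $E_0$ by this intersection, I may assume $E_0=E_0^{-1}$. I would then consider the poset of all ``$E_0$-separated'' subsets $S\subset X$, i.e.\ those satisfying $E_0\cap(S\times S)\subset\Diag[X]$, ordered by inclusion. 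The union of a chain of such sets is again $E_0$-separated, because any two distinct points of the union already lie together in a single member of the chain; hence Zorn's lemma yields a maximal $E_0$-separated subset $X'$, which I claim is the desired discretization.

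Coarse density of $X'$ follows from maximality. For any $x\in X$, if $x$ were not $E_0$-related to any point of $X'$, then by symmetry of $E_0$ the set $X'\cup\{x\}$ would still be $E_0$-separated, contradicting maximality. Therefore every $x\in X$ lies in $\Pen_{E_0}(X')$, so $X'$ is $E_0$-dense and in particular coarsely dense.

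For local finiteness I would pass through the topology. Because $E_0$ is a neighborhood of the diagonal, every point $x\in X$ admits an open neighborhood $U_x$ with $U_x\times U_x\subset E_0$; since distinct points of $X'$ are never $E_0$-related, each $U_x$ meets $X'$ in at most one point. Now let $K\subset X'$ be bounded in the restricted coarse structure. Then $K$ is bounded in $X$, hence relatively compact by the defining property of a topological coarse space, so $\overline{K}$ is compact and is covered by finitely many of the sets $U_x$. As each $U_x$ contains at most one point of $X'$, and hence at most one point of $K$, the set $K$ must be finite. Thus $X'$ is locally finite, and together with coarse density this establishes bornologically bounded geometry.

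The Zorn and density steps are routine; the step demanding the most care is the local finiteness argument, where the three structures have to be interlocked. Here coarse boundedness is converted into topological relative compactness via the defining axiom of a topological coarse space, the $E_0$-separation of $X'$ is converted into topological discreteness via the neighborhood-of-diagonal property, and only then does compactness force finiteness. Keeping these conversions honest — in particular checking that boundedness in the \emph{restricted} coarse structure on $X'$ still implies boundedness, and hence relative compactness, in $X$ — is the one point where I would proceed slowly.
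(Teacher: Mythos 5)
Your proof is correct and follows exactly the route the paper sketches in the paragraph before the lemma: a maximal $E_0$-separated subset produced by Zorn's lemma, with maximality giving $E_0$-density and the neighborhood-of-the-diagonal property combined with relative compactness of bounded sets giving local finiteness. Your symmetrization of $E_0$ and the careful check that boundedness in the restricted coarse structure implies boundedness in $X$ are just the details the paper leaves to the reader, honestly filled in.
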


An example of a coarse space without bornologically bounded geometry is the space of all bounded functions $\N\to\N$ equipped with the supremum metric.
It is, of course, not a topological coarse space, because sufficiently large balls in it are not relatively compact.
Although it is a topologically discrete metric space,
it does not contain any coarsely dense coarsely discrete subset. More precisely, it is easy to see that if $X'$ is an $R$-dense subset of this space, then any $3R$-ball must contain infinitely many points of~$X'$.

Let us use this opportunity to clarify the relation between metrics and coarse structures a bit more. The following lemma is in fact obvious from the definitions and Lemmas~\ref{lem:metrizability} and~\ref{lem:topbbg}.

\begin{lem}
A coarse structure and a topology on a set coming from a metric constitute a~topological coarse structure if and only if the metric is proper. Hence, all proper metric spaces have bornologically bounded geometry.

As a partial converse, all countably generated locally finite coarse structures are induced by proper metrics.
\end{lem}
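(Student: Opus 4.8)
The plan is to prove the three assertions separately, using throughout the observation recorded just above the lemma that for a metric coarse structure the coarse bounded sets are exactly the metric bounded sets, together with the standard definition of a proper metric as one for which every closed ball $\overline{B}_R(x)=\{y\mid d(x,y)\le R\}$ is compact. For the equivalence, I would first assume that the metric coarse structure $\coarseStr$ and metric topology $\topology$ form a topological coarse space and deduce properness: by definition every coarse bounded subset is then relatively compact, and applying this to a closed ball $\overline{B}_R(x)$ (which is metric bounded, hence coarse bounded) shows that its closure is compact; since a closed ball is already closed, it is itself compact, so $d$ is proper. For the converse I would assume $d$ proper and check the two clauses of the definition of a topological coarse space. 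Local compactness of $\topology$ is immediate, each closed ball being a compact neighbourhood of its centre. The required entourage can be taken to be $E_1=\{(x,y)\mid d(x,y)\le 1\}$, which is controlled and contains the open set $\{(x,y)\mid d(x,y)<1\}$, an open neighbourhood of the diagonal; hence $E_1$ is a neighbourhood of the diagonal. Finally, any coarse bounded set is metric bounded, so it lies in some closed ball, whose compactness forces the set to be relatively compact.

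The ``hence'' clause then costs nothing: a proper metric space is, by the equivalence just established, a topological coarse space, so \Cref{lem:topbbg} immediately yields that it has bornologically bounded geometry.

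For the partial converse I would begin with a countably generated locally finite coarse structure and invoke the metrizability result \Cref{lem:metrizability} to obtain a metric $d$ whose metric coarse structure is the given one. It then remains only to notice that this $d$ is automatically proper: each closed ball $\overline{B}_R(x)$ is metric bounded, hence coarse bounded, and local finiteness forces it to be finite; a finite subset of a metric space is compact, so every closed ball is compact and $d$ is proper.

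I do not anticipate a genuine obstacle, as the whole argument is a matter of matching the definition of a topological coarse space against the metric notions of boundedness and properness. The only point requiring a little care is the bookkeeping for possibly non-connected coarse structures, where the metric furnished by \Cref{lem:metrizability} may take the value $\infty$; this is harmless, however, since balls of finite radius remain coarse bounded and the compactness argument is unaffected.
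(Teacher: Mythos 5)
Your proposal is correct and follows exactly the route the paper intends: the paper leaves this lemma as ``obvious from the definitions and \Cref{lem:metrizability,lem:topbbg}'', and your argument is precisely that unpacking --- matching the two clauses of the definition of a topological coarse space against properness of the metric, invoking \Cref{lem:topbbg} for the ``hence'' clause, and combining \Cref{lem:metrizability} with local finiteness (finite closed balls are compact) for the partial converse. Your remark on extended metrics taking the value $\infty$ for non-connected coarse structures is the right bookkeeping point and matches the paper's own convention.
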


Better still, the question of simultaneous metrizability of topologies and coarse structures has been completely answered by Wright~\cite{Wright_Simul}. His main theorem is as follows.

\begin{thm}[{\cite[Theorem 2.5 and its proof]{Wright_Simul}}]\label{thm:Wrightmetrizability}
Let $d_t$, $d_c$ be two metrics on a set $X$ such that the coarse structure induced by $d_c$ contains an entourage $E_0$ which is open with respect to the topology induced by $d_t$. Then there exists a metric $d$ on $X$ producing the same topology as $d_t$ and quasi-isometric to $d_c$. More precisely,
\[
\forall x,y\in X\colon\ d(x,y)-1\leq d_c(x,y)<(6R+1)(d(x,y)+1)
\]
with $R>\sup\{\lceil d_c(x,y)\rceil\mid(x,y)\in E_0\}$.
\end{thm}

Strictly speaking, Wright did not allow metrics which take the value infinity, but the more general case follows readily: We may always assume that $d_t$ takes only finite values and then apply his results to each coarse component separately. The fact that the same constant $R$ can be chosen for all coarse components implies that the individual metrics can be combined to a new metric $d$ on $X$ which is quasi-isometric to $d_c$.

As a direct consequence, \cite[Corollary 2.6]{Wright_Simul} then gives a necessary and sufficient condition for the simultaneous metrizability of a coarse structure and a topology on a set $X$.
Note that this result also covers cases in which the coarse structure and topology only satisfy one of the compatibility condition, that there is an open entourage, but not the other one, i.e., bounded subsets might not be relatively compact. This corresponds to the metric not being proper.
Instead of recalling Wright's corollary here, we prove the following similar statement about the metrizability of our topological coarse spaces defined in Definition~\ref{defn:topcoarsespace}.

\begin{cor}
A topological coarse space $X$ is metrizable, meaning that there is a necessarily proper metric inducing both the coarse structure and topology, if and only if the coarse structure is countably generated and the topology on each coarse component is second-countable.
\end{cor}

\begin{proof}
If both topology and coarse structure come from a proper metric, then the coarse structure is countably generated.
Furthermore, for each point $x\in X$ and each $n\in\N\setminus\{0\}$ let~$\cU_{x,n}$ be a finite set of $\frac1n$-balls covering the compact subset $\overline{B_n(x)}$. Then $\bigcup_{n=1}^\infty\cU_{x,n}$ is a~countable base for the topology on the coarse component which $x$ is contained in.

For the other direction, we have to find metrics $d_t$, $d_c$ as in Theorem~\ref{thm:Wrightmetrizability}. For $d_c$ we simply take the metric given by Lemma~\ref{lem:metrizability}.

It suffices to construct $d_t$ on each coarse component $C$ separately and then one can choose any common extension to the whole space. Let $C^+=C\cup \{\infty\}$ denote the one-point compactification of the locally compact Hausdorff space $C$ and choose any $x\in C$.
Furthermore, denote by $E_1\subset E_2\subset E_3\subset\cdots$ an increasing sequence of entourages such that each entourage of $X$ is contained in one of them. Therefore each compact $K\subset C$ is contained in one of the relatively compact subsets $E_n\circ \{x\}$ and we conclude that the sets $C^+\setminus \overline{E_n\circ \{x\}}$ are a neighborhood base for $\infty$. Thus, the compact Hausdorff space $C^+$ is second-countable, too, and hence metrizable by Urysohn's metrization theorem. Restricting the so obtained metric to $C$ is what we choose as~$d_t|_C$.
\end{proof}

In a topological coarse space, the relatively compact subsets are exactly the finite unions of bounded ones.
Note that these might not be bounded themselves if the space is not coarsely connected.
We generalize this notion to all coarse spaces.
\begin{defn}\label{def:relativelycompact}
A subset of a coarse space is called \emph{relatively compact} if it is a finite union of bounded subsets.
\end{defn}

Before we can recall and introduce some more properties and constructions of coarse spaces, we will first have to discuss various properties of maps between coarse spaces.

\begin{defn}\label{def:propertiesmapsbetweencoarsespaces}
Two maps $f,g\colon S\to Y$ from an arbitrary set $S$ into a coarse space $Y$ are called \emph{close} if the set $\{(f(x),g(x))\mid x\in S\}$ is an entourage of $Y$.
A map $f\colon X\to Y$ between two coarse spaces $X$, $Y$ is called
\begin{enumerate}\itemsep=0pt
\item[$1)$] \emph{controlled} if $f\times f$ maps entourages of $X$ to entourages of $Y$,
\item[$2)$] \emph{proper} if the preimages of bounded subsets of $Y$ under $f$ are relatively compact in $X$ or, equivalently, if the preimages of relatively compact subsets of $Y$ under $f$ are relatively compact in $X$,
\item[$3)$] \emph{coarse} if it is both controlled and proper,
\item[$4)$] \emph{a coarse equivalence} if it is a coarse map and if there exists another coarse map $g\colon Y\to X$ such that both $g\circ f$ and $f\circ g$ are close to the respective identities,
\item[$5)$] \emph{bornological} if it maps bounded subsets to bounded subsets.
\end{enumerate}
\end{defn}

It is immediate from the definitions that controlled maps and in particular coarse maps are bornological.

If the coarse structure on $X$ is generated by a subset $S\subset \fP(X\times X)$, then in order for the map $f$ to be controlled it is sufficent to check that $f\times f$ maps all entourages in the generating set $S$ to entourages of $Y$.

The composition of two maps between coarse spaces satisfying one of the five properties in the definition again have this property. Furthermore, closeness is an equivalence relation and if $f,g\colon S\to Y$ are close, $h\colon Y\to Z$ is a controlled map and $e\colon T\to S$ is any map, then the maps $h\circ f\circ e$ and $h\circ g\circ e$ are close.
In particular the following categories are well defined.

\begin{defn}\label{def:coarsecategories}
The (\emph{closeness\footnote{The strange expression ``closeness category'' is inspired by the similarity to the well-established notion of ``homotopy category''.}$)$ category of coarse spaces} is the category whose objects are coarse spaces and whose morphisms are (closeness classes of) coarse maps.

The (\emph{closeness$)$ category of pairs of coarse spaces} is the category whose objects are pairs of coarse spaces and whose morphisms are (closeness classes of) coarse maps between the bigger spaces which preserve the subspaces.
\end{defn}

The first of these will be considered a subcategory of the second via the faithful functor $X\mapsto(X,\varnothing)$, and sometimes we also make implicit use of the full functors $(X,A)\to X$ and $(X,A)\to A$ when talking about naturality under coarse maps between pairs of coarse spaces.

Coarse equivalences are exactly those coarse maps whose closeness classes are isomorphisms in the closeness category of coarse spaces. Hence we define a coarse equivalence between pairs of coarse spaces to be a map which represents an isomorphism in the closeness category of pairs of~coarse spaces. These are exactly the coarse equivalences between the big spaces which restrict to~coarse equivalences between the subspaces.

Finally, there are two more definitions that we have to recall.

\begin{defn}
{\samepage The \emph{product} of two coarse spaces $(X,\coarseStr_X)$, $(Y,\coarseStr_Y)$ is the cartesian product of sets $X\times Y$ equipped with the \emph{product coarse structure} $\coarseStr_{X\times Y}$ which is generated by the entourages
\begin{gather*}
E\swapcross F\coloneqq\{((x,y),(x',y'))\in (X\times Y)\times(X\times Y)\mid (x,x')\in E\wedge (y,y')\in F\}
\end{gather*}
for all $E\in \coarseStr_X$ and $F\in\coarseStr_Y$.}

The product of two pairs of coarse spaces $(X,A)$ and $(X,B)$ is $(X,A)\times (Y,B)\coloneqq (X\times Y,A\times Y\cup X\times B)$ equipped with the product coarse structure on $X\times Y$.
\end{defn}

To make it clear, this is not the cartesian product of coarse spaces in the category theoretic sense,
because the projection maps $X\times Y\to X$ and $X\times Y\to Y$ are not coarse maps: they are controlled, but not proper.
It is simply the coarse structure which is usually considered on the set $X\times Y$.

If $(X,d_X)$ and $(Y,d_Y)$ are metric spaces, then the product coarse structure is the one induced by any metric in the quasi-isometry class of $d_X+d_Y$ or, equivalently, $\max\{d_X,d_Y\}$.
Also, if $\coarseStr_X$ and $\coarseStr_Y$ are connected, countably generated, (uniformly) locally finite or of bornologically/coarsely bounded geometry, then their products have the respective property, too. And if $X$ and $Y$ are topological coarse spaces, then so is $X\times Y$ equipped with the product coarse structure and the product topology.

The bounded subsets of the product coarse structure are exactly those which are contained in the cross product of a bounded subset of $X$ and a bounded subset of $Y$.
If $f,f'\colon S\to X$ are close and $g,g'\colon T\to Y$ are close, then the maps $f\times g, f'\times g'\colon S\times T\to X\times Y$ are also close with respect to the product coarse structure.
If $f\colon X\to X'$ and $g\colon Y\to Y'$ are two maps between coarse spaces which satisfy one of the five properties of Definition~\ref{def:propertiesmapsbetweencoarsespaces}, then $f\times g\colon X\times Y\to X'\times Y'$ has the same property, too.
In particular, we can take the cross product of morphisms in all of the four coarse categories of Definition~\ref{def:coarsecategories}.

\begin{defn}\label{def:flasqueness}
A coarse space $X$ is called \emph{flasque} if there exists a coarse map $\phi\colon X\to X$ such that:
\begin{enumerate}\itemsep=0pt
\item[$\bullet$] $\phi$ is close to the identity map,
\item[$\bullet$] for any bounded (and hence also relatively compact) subset $K\subset X$ there exists $N_K\in\N$ such that $\im(\phi^n)\subset X\setminus K$ for all $n\geq N_K$,
\item[$\bullet$] the family $\{\phi^n\}_{n\in\N}$ is equicontrolled, that is, for each entourage $E$ there is an entourage $F$ such that $(\phi^n\times\phi^n)(E)\subset F$ for all $n\in\N$.
\end{enumerate}
A pair of spaces $(X,A)$ is called \emph{flasque} if $X$ is flasque with the map $\phi$ preserving the subspace~$A$.
\end{defn}

The prototypical example of a flasque space are the product coarse spaces $X\times\N$ with $\phi(x,k)=(x,k+1)$, where $\N$ carries the canonical metric coarse structure.

\subsection{Coarse homotopies}
\label{sec:coarsehomotopies}

The best-known notion of coarse homotopy has been introduced in \cite[Definition 11.1]{HigsonPedersenRoe} under the name ``Lipschitz homotopy''. However, a serious deficiency of their original definition has been pointed out in \cite[Remark 3.18]{BartelsSqueezing}, leading essentially to the following corrected definition of coarse homotopy.

\begin{defn}\label{def:originalcoarsehomotopy}
A \emph{coarse homotopy} between two coarse maps between coarse pairs $f,g$: $(X,A)\to (Y,B)$ is a map $H\colon X\times\Z\to Y$ which maps $A\times \Z$ to $B$ such that
\begin{enumerate}\itemsep=0pt
\item[$\bullet$] there are two bornological functions $\rho^\pm\colon X\to\Z$ such that for all $x\in X$, $n\in\Z$ we have
\begin{gather*}
n\leq\rho^-(x)\implies H(x,n)=f(x),
\\n\geq\rho^+(x)\implies H(x,n)=g(x),
\end{gather*}
\item[$\bullet$] and the restriction of $H$ to
\[
X_{\rho^-}^{\rho^+}\coloneqq\{(x,n)\in X\times\Z\mid\rho^-(x)\leq n\leq\rho^+(x)\}
\]
is a coarse map.
\end{enumerate}
\end{defn}
There seems to be some disagreement in the literature whether the functions $\rho^\pm$ are demanded to be only bornological maps or even controlled or coarse maps.
The author is not aware of two maps which are coarsely homotopic in the first version but not in the second or third, although such maps might very well exist.

Working with controlled maps is, of course, more convenient. In particular, in this case it is also possible to reparametrize the coarse homotopy such that $\rho^+\equiv 0$ or $\rho^-\equiv 0$ and hence transitivity of the coarse homotopy relation follows immediately by gluing together two reparametrized coarse homotopies (cf.\ \cite[Theorem 2.4]{MitchenerNorouzizadehSchick}).
However, the author is not aware of any homotopy invariance result which only works in this version but not in the one with only bornological $\rho^\pm$.

This classical notion of coarse homotopy appears to be rather unsatisfactory for two related reasons: First, there are no clear boundaries of the homotopy at which it can be evaluated, but rather strange open ends instead. The boundaries
\[
X_{\rho^\pm}^{\rho^\pm}\coloneqq\big\{(x,n)\in X\times\Z\mid n=\rho^\pm(x)\big\}
\]
are only hidden in the definition in the existence of $\rho^\pm$ and they are not even coarsely equivalent to $X$, if $\rho^\pm$ are not controlled. And second, the relation given by coarse homotopy is not a priori transitive, so one has only an equivalence relation generated by coarse homotopy and not given exactly by it.

These drawbacks motivated the author to the following widening of the notion of coarse homotopy, which brings back the true feeling of homotopy known from topology. As it happens, it closely resembles and also generalizes an even older notion of coarse homotopy between coarse maps between proper metric spaces that was introduced in \cite[Definitions 1.2 and~1.3]{HigsonRoeHomotopy}.

\begin{defn}\label{def:coarsehomotopy}
Let $(X,\coarseStr_X)$ be a coarse space, $I=[a,b]$ an interval and $\cU= \{U_x\}_{x\in X}$ a~collection of neigborhoods of the diagonal $\Diag[I]$ in $I\times I$.
We define the coarse structure $\coarseStr_{\cU}$ on $X\times I$ to be the one generated by the entourages
\begin{gather*}
E\swapcross\Diag[I]=\{((x,s),(y,s))\mid (x,y)\in E\wedge s\in I\}\qquad\text{for}\quad
E\in\coarseStr_X
\end{gather*}
and
\begin{gather*}
E_{\cU}\coloneqq\{((x,s),(x,t))\mid x\in X\wedge (s,t)\in U_x\}.
\end{gather*}

A \emph{generalized coarse homotopy} between two coarse maps $f,g\colon (X,\coarseStr_X)\to (Y,\coarseStr_Y)$ is a coarse map $(X\times I,\coarseStr_\cU)\to (Y,\coarseStr_Y)$ for some $\cU$ as above which restricts to $f$ on $X\times\{a\}$ and to $g$ on $X\times\{b\}$.

If $f,g\colon (X,A)\to (Y,B)$ are coarse maps between pairs of coarse spaces, then a \emph{generalized coarse homotopy} between them is a generalized coarse homotopy between the absolute coarse maps $f,g\colon X\to Y$ which takes $A\times I$ to $B$.

In both cases we call $f$ \emph{generalized coarsely homotopic} to $g$.
\end{defn}

One big advantage of generalized coarse homotopy is that it already defines an equivalence relation. In contrast to the usual coarse homotopy, transitivity is simply obtained by gluing together two of these homotopies.

Note that the coarse structure $\coarseStr_\cU$ is countably generated if and only if the coarse structure~$\coarseStr_X$ is countably generated.

Furthermore, the bounded subsets of $(X\times I,\coarseStr_\cU)$ are exactly those which are contained in $K\times I$ for some bounded $K\subset X$.
To see this, assume that $K\subset X$ is bounded and choose $x\in X$. Then $K\times\{x\}$ is an entourage of $X$ and compactness of $I$ implies that there is $n\in\N$ such that the $n$-fold composition $U_x^n=U_x\circ \dots\circ U_x$ is equal to all of $I\times I$. Thus, $K\times I= ((K\times \{x\})\swapcross \Diag[I])\circ E_\cU^n\circ\{(x,a)\}$ is bounded.
Conversely, if $K\times I$ is bounded, then it is contained in
\[
(E_1\swapcross\Diag[I])\circ E_\cU\circ (E_2\swapcross\Diag[I])\circ E_\cU\circ\dots\circ E_\cU\circ (E_k\swapcross\Diag[I])\circ \{(x,s)\}
\]
for some entourages $E_1,\dots,E_k$ and some $(x,s)\in X\times I$. But then $K\subset E_1\circ\dots\circ E_k\circ\{x\}$ is bounded.

As a direct consequence we note that if $(X,\coarseStr_X,\topology_X)$ is a topological coarse space and $\topology_{X\times I}$ denotes the product topology on $X\times I$, then $(X\times I,\coarseStr_\cU,\topology_{X\times I})$ is again a topological coarse space. Indeed, in this case the above characterization of bounded subsets of $(X\times I,\coarseStr_\cU)$ implies that they are relatively compact with respect to $\topology_{X\times I}$. Furthermore, if $E_0$ denotes an entourage of $X$ which is a neighborhood of the diagonal in $X\times X$, then
\[
(E_0\swapcross\Diag[I])\circ E_\cU\circ E_\cU^{-1}\circ (E_0\swapcross\Diag[I])^{-1}=\bigcup_{(x,t)\in X\times I}(\underbrace{E_0\circ\{x\}\times U_x\circ\{t\}}_{\textnormal{neighborhood of }(x,t)})^2
\]
is an entourage in $\coarseStr_\cU$ which is a neighborhood of the diagonal in $(X\times I)^2$.

Of course, the interval can be reparametrized arbitrarily, but it is convenient to have some flexibility in the notation. For example, it helps us to see how coarse homotopies $H\colon X\times\Z\allowbreak\to X$ give rise to generalized coarse homotopies:
We define the family $\cU= \{U_x\}_{x\in X}$ of open neighborhoods of the diagonal in $[-\infty,\infty]^2$ by
\[
U_x\coloneqq [-\infty,\rho^-(x))^2\cup\big\{(x,y)\in \R^2\mid d(x,y)<1\big\} \cup (\rho^+(x),\infty]^2
 \]
and then a coarse homotopy $\tilde H\colon (X\times [-\infty,\infty],\coarseStr_\cU)\to (Y,\coarseStr_Y)$ between $f$ and $g$ in the sense of Definition~\ref{def:coarsehomotopy} is given by the formula
\[
\tilde H(x,s)\coloneqq\begin{cases}
f(x),&s=-\infty,\\H(x,\lfloor s\rfloor),&s\in\R,\\g(x),&s=+\infty.
\end{cases}
\]
If $H$ maps $A\times \Z$ to $B$, then $\tilde H$ maps $A\times I$ to $B$. Hence this construction also works in the relative case. We have just shown:

\begin{lem}
If two coarse maps between coarse spaces or pairs of coarse spaces are coarsely homotopic, then they are also generalized coarsely homotopic.
\end{lem}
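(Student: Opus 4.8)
The plan is to verify that the pair $(\tilde H,\cU)$ constructed in the paragraph above really does satisfy the requirements of \Cref{def:coarsehomotopy}; the formula and the family $\cU$ are already given, so what is left is a check. It is convenient to rewrite the defining formula as $\tilde H(x,s)=H(x,m_x(s))$, where $m_x(s)$ is the integer $\lfloor s\rfloor$ clamped into the window $[\rho^-(x),\rho^+(x)]$ (with the convention $m_x(-\infty)=\rho^-(x)$ and $m_x(+\infty)=\rho^+(x)$); this is legitimate because $H(x,\lfloor s\rfloor)=f(x)=H(x,\rho^-(x))$ once $\lfloor s\rfloor\le\rho^-(x)$ and $H(x,\lfloor s\rfloor)=g(x)=H(x,\rho^+(x))$ once $\lfloor s\rfloor\ge\rho^+(x)$. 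First I would dispose of the cheap points: each $U_x$ contains the band $\{(s,t)\mid|s-t|<1\}$ together with the two tail squares $[-\infty,\rho^-(x))^2$ and $(\rho^+(x),+\infty]^2$, so it is an open neighbourhood of $\Diag[I]$ in $I\times I$; by construction $\tilde H$ restricts to $f$ on $X\times\{-\infty\}$ and to $g$ on $X\times\{+\infty\}$; and in the relative case $\tilde H(A\times I)\subset B$ is immediate from $H(A\times\Z)\subset B$ together with $f(A),g(A)\subset B$. It then remains to prove that $\tilde H\colon(X\times I,\coarseStr_\cU)\to(Y,\coarseStr_Y)$ is controlled and proper.

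For controlledness it suffices to test the two families of generators of $\coarseStr_\cU$. On the vertical generator $E_\cU$ I would split according to the three pieces of $U_x$: on the two tail squares $\tilde H$ is constant ($f(x)$, resp.\ $g(x)$), so the image lands in $\Diag[Y]$, while on the band $|s-t|<1$ the clamped heights satisfy $|m_x(s)-m_x(t)|\le1$, so $\bigl(H(x,m_x(s)),H(x,m_x(t))\bigr)$ is the image under $H$ of a point of the unit-step entourage $(\Diag[X]\swapcross\{|p-q|\le1\})\cap(X_{\rho^-}^{\rho^+}\times X_{\rho^-}^{\rho^+})$. Since $H$ is controlled on $X_{\rho^-}^{\rho^+}$, all these pairs lie in one fixed entourage of $Y$, and hence $(\tilde H\times\tilde H)(E_\cU)$ is an entourage.

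The hard part is the horizontal generator $E\swapcross\Diag[I]$, because this is exactly where the requirement that every slice $X\times\{s\}$ be a coarsely embedded copy of $X$ forces all the intermediate slice maps $x\mapsto\tilde H(x,s)$ to be uniformly controlled. For $(x,y)\in E$ the image pair is $\bigl(H(x,m_x(s)),H(y,m_y(s))\bigr)$, two points of $X_{\rho^-}^{\rho^+}$ lying over the \emph{same} parameter $s$ but over possibly different integer heights. Since clamping to a window changes the output by at most the change of its endpoints, one has
\[
|m_x(s)-m_y(s)|\le|\rho^-(x)-\rho^-(y)|+|\rho^+(x)-\rho^+(y)|
\]
uniformly in $s$. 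The whole argument now turns on this estimate being bounded over $E$, say by $R_E$: if it is, then $\bigl((x,m_x(s)),(y,m_y(s))\bigr)$ lies in the entourage $(E\swapcross\{|p-q|\le R_E\})\cap(X_{\rho^-}^{\rho^+}\times X_{\rho^-}^{\rho^+})$ of $X_{\rho^-}^{\rho^+}$, so $H$ controlled sends it into one fixed entourage of $Y$, giving controlledness of $\tilde H$. Properness of $\tilde H$ then follows routinely by combining properness of $H|_{X_{\rho^-}^{\rho^+}}$, $f$ and $g$ with the characterisation of the bounded subsets of $(X\times I,\coarseStr_\cU)$ established above.

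The boundedness of the right-hand side of the displayed inequality over $E$ is guaranteed precisely when $\rho^-$ and $\rho^+$ are controlled, and I expect this to be the one spot where the argument uses more than bornology of the $\rho^\pm$, consistent with the discussion preceding the definition of coarse homotopy. For merely bornological $\rho^\pm$ the slice maps need not be uniformly controlled, so one cannot in general straighten the skew boundaries $X_{\rho^\pm}^{\rho^\pm}$ into honest slices of $(X\times I,\coarseStr_\cU)$ via this formula; I would therefore read the lemma as using the controlled version of the classical homotopy (or enlarging $\cU$ further), which is exactly the delicate point to flag in the write-up.
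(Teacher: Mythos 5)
Your check is exactly the paper's own proof: the lemma's justification consists of nothing more than the construction of $\cU$ and $\tilde H$ in the preceding paragraph (whose two typos, namely that the band should read $\{(s,t)\mid |s-t|<1\}$ and the upper tail square should be $(\rho^+(x),\infty]^2$, you silently corrected), so your write-up supplies the verification the paper omits. Your handling of the neighbourhood property, the endpoints, the relative case, the generator $E_\cU$ and properness is correct, and, most importantly, the obstruction you isolate at the generators $E\swapcross\Diag[I]$ is a genuine gap in the paper's argument, not a weakness of your particular estimate. The constructed $\tilde H$ can really fail to be controlled when $\rho^\pm$ are only bornological: take $X=\N$, $Y=\N\times\Z$, $f=g\colon x\mapsto (x,0)$, $\rho^-\equiv 0$, $\rho^+(x)=2$ for even $x$ and $\rho^+(x)=2x$ for odd $x$, and define $H(x,n)=(x,0)$ for even $x$, while for odd $x$ set $H(x,n)=(x,\min(m_x(n),2x-m_x(n)))$ with $m_x(n)$ the clamping of $n$ into $[0,2x]$. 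On the region $X_{\rho^-}^{\rho^+}$ this $H$ is coarse: two points over even columns have height at most $2$, which forces any comparison point over a nearby odd column to have small height, where the tent function is still small; and two points over odd columns have windows of comparable length, so the tent values differ by a bounded amount. Hence $H$ is a legitimate coarse homotopy in the paper's sense with bornological, non-controlled $\rho^\pm$. But $\tilde H$ sends the pairs $((2k+1,2k+1),(2k,2k+1))\in E_1\swapcross\Diag[I]$, where $E_1=\{(x,y)\mid |x-y|\le 1\}$, to the pairs $((2k+1,2k+1),(2k,0))$, which lie in no entourage of $Y$, so $\tilde H$ is not a coarse map.

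Two corrections to your final paragraph. First, your parenthetical fix of ``enlarging $\cU$ further'' cannot work: the generators $E\swapcross\Diag[I]$ belong to $\coarseStr_\cU$ for \emph{every} choice of $\cU$, so the slice maps of any generalized coarse homotopy are forced to be controlled uniformly in the parameter; enlarging $\cU$ only adds entourages to the domain and makes controlledness harder. The only repairs are the one you propose (assume $\rho^\pm$ controlled, or reduce to that case) or replacing $\tilde H$ by a genuinely different, reparametrized map. Second, note that the counterexample above has $f=g$, so it refutes only the proof, not the lemma: whether coarsely homotopic maps with merely bornological $\rho^\pm$ are always generalized coarsely homotopic remains unclear, which is consistent with the paper's own admission, just after its definition of coarse homotopy, that it does not know whether the bornological and controlled versions produce the same homotopy relation.
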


Although generalized coarse homotopies appear to be a lot more flexible than normal coarse homotopies, the author is unaware of an example which disproves the converse of this lemma.

Just like the classical notion of coarse homotopy, generalized coarse homotopy is also compatible with composition of coarse maps. One of the compositions is trivial: If $H\colon(X\times I,\coarseStr_\cU)\to (Y,\coarseStr_Y)$ is a coarse homotopy between two coarse maps $f,g\colon X\to Y$ and if $h\colon Y\to Z$ is another coarse map, then $h\circ H$ clearly is a coarse homotopy between $h\circ f$ and $h\circ g$. For the other composition let $e\colon W\to X$ be a coarse map. Then
\[
e\times\id_I\colon\ (W\times I,\coarseStr_{e^*\cU})\to (X\times I,\coarseStr_\cU),
\]
where $e^*\cU\coloneqq\{U_{e(w)}\}_{w\in W}$, is a coarse map and consequently $H\circ (e\times\id_I)$ is a coarse homotopy between $f\circ e$ and $g\circ e$.

Again, these constructions work equally well for coarse maps between pairs of coarse spaces. We thus obtain categories:

\begin{defn}
The (\emph{generalized$)$ homotopy category of coarse spaces} is the category whose objects are coarse spaces and whose morphisms are homotopy classes of coarse maps.
Analogously, we obtain the (\emph{generalized$)$ homotopy category of pairs of coarse spaces}.
\end{defn}

Note that the generalized homotopy categories are quotients of the homotopy categories and these in turn are quotients of the closeness coarse categories defined in Definition~\ref{def:coarsecategories}, because closeness clearly implies coarse homotopy.

\subsection{Group actions on coarse spaces}
In this subsection we discuss actions of groups $\Gamma$ on coarse spaces $X$. On spaces, we always let the group act on the right.

At the end we will almost exclusively be interested in proper isocoarse (also known as isometric) actions of discrete groups, mostly because our explicit construction of $\Gamma$-equivariant coarse \mbox{(co-)}homology theories via a Rips complex construction in Section~\ref{sec:Ripscomplex} only works easily in this case.
Otherwise the coarse spaces do not have $\Gamma$-invariant discretizations and we would have to work with more complicated versions of the Rips complex instead, compare Emerson--Meyer \cite[Section~2.4]{EmeMeyDescent} for a definition in the case of topological coarse spaces.
Also, the theory of Roe algebras and localization algebras, which we will review in Sections~\ref{sec:RoeAlg} and~\ref{sec:Assembly}, requires proper isometric group actions by discrete groups on proper metric spaces.

However, the idea of our generalized coarse homotopies directly prompts us to propose more generally the following definition of ``topological'' actions of locally compact groups on coarse spaces.
It is totally unclear whether this notion might turn out to be useful in future research or not and we also do not claim that this definition is final.
Possibly, it has to be modified for applications and the reader shall feel free to do so.
Our definition should just serve as a proof of concept that meaningful notions of actions of locally compact topological groups on coarse spaces not carrying a topology exist.

\begin{defn}\label{defn:coarsegroupaction}
A \emph{topological action} of a locally compact group $\Gamma$ on a coarse space $(X,\coarseStr_X)$ is a group action $X\times\Gamma\to X$, $(x,\gamma)\mapsto x\gamma$ which is also a controlled map with respect to some coarse structure on $X\times\Gamma$ defined in the following way:
Given a collection $\cU= \{U_x\}_{x\in X}$ of neigborhoods of the diagonal $\Diag[\Gamma]$ in $\Gamma\times \Gamma$, we define the coarse structure $\coarseStr_{\cU}$ on $X\times\Gamma$ to be the one generated by the entourages
\begin{gather*}
E\swapcross \Diag[L]=\{((x,\gamma),(y,\gamma))\mid \gamma\in L \wedge (x,y)\in E\}\qquad
\text{for}\quad L\subset \Gamma\text{ compact and } E\in\coarseStr_X
\end{gather*}
and
\begin{gather*}
E_{\cU}\coloneqq\{((x,\gamma),(x,\gamma'))\mid x\in X\wedge (\gamma,\gamma')\in U_x\}.
\end{gather*}
\end{defn}

In \cite[Section 2.1]{EmeMeyDescent} and \cite[Section 2.2]{EmeMeyEquivariantCoassembly} Emerson and Meyer also mention continuous and coarse actions of locally compact groups on coarse spaces, but one should keep in mind that their coarse spaces are always topological coarse spaces and continuity is meant in the usual topological sense. Coarseness of their group actions is exactly the statement that the entourages $E\swapcross \Diag[L]$ are mapped to entourages, and continuity of their group action implies that we can pick an entourage $E_0$ which is simultaneously an open neighborhood of the diagonal and then choose a family $\cU$ as in our definition such that $E_{\cU}$ is mapped into $E_0$.
Hence their continuous and coarse group actions are examples of our topological group actions.

Of course, the converse is not true: Our topological group actions on topological coarse spaces are in general not continuous in the topological sense, because the definition makes no use of~the topology on the space.
The word ``topological'' just refers to the fact that the topology of the group has been taken into account. Also, we refrain from calling our group actions ``coarse'', because this property will always be assumed implicitly.

As a more concrete example, if a locally compact group $\Gamma$ acts continuously by quasi-isometries on a proper metric space $(X,d)$ such that the quasi-isometry constants are bounded on each compact subset $L\subset \Gamma$, then we can simply take $U_x\coloneqq \{(\gamma,\gamma')\in \Gamma\times \Gamma\mid d(x\gamma,x\gamma')<1\}$. An~even more particular example is the action of $\R^n\rtimes\mathrm{GL}(\R,n)$ on $\R^n$.

Note that in the definition we have only demanded that the group action is a controlled map and not a coarse map in general. Using the obvious fact that multiplication with the inverse $X\times \Gamma\to X$, $(x,\gamma)\mapsto x\gamma^{-1}$ is also a controlled map, it is straightforward to show that the restrictions of the multiplication maps to the subsets $X\times L$, $L\subset \Gamma$ compact, are coarse maps.

\begin{lem}
The topological actions of a \emph{discrete} group on a coarse space are exactly the group actions by coarse equivalences.
\end{lem}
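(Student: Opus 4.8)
The plan is to reduce the whole statement to the behaviour of the individual right-multiplication maps $R_\gamma\colon X\to X$, $x\mapsto x\gamma$, exploiting two features of a discrete group: compact subsets of $\Gamma$ are precisely the finite ones, and the diagonal $\Diag[\Gamma]$ is open in $\Gamma\times\Gamma$. The openness means that $U_x=\Diag[\Gamma]$ for every $x$ is an admissible (and in fact minimal) choice in \Cref{defn:coarsegroupaction}, and for this choice the entourage $E_{\cU}$ degenerates to the diagonal $\Diag[X\times\Gamma]$. Writing $m\colon X\times\Gamma\to X$ for the action, the key computation I would record first is that for every finite $L\subset\Gamma$ and every $E\in\coarseStr_X$,
\[
(m\times m)(E\swapcross\Diag[L])=\bigcup_{\gamma\in L}(R_\gamma\times R_\gamma)(E)\,,
\]
since $m$ sends $((x,\gamma),(y,\gamma))$ to $(x\gamma,y\gamma)$. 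The crucial point is that these generators $E\swapcross\Diag[L]$ do not depend on the family $\cU$, so they lie in $\coarseStr_{\cU}$ for \emph{every} $\cU$.

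For the forward implication, suppose the action is a topological action, i.e.\ $m$ is controlled for some $\coarseStr_{\cU}$. Applying controlledness to the generators above with $L=\{\gamma\}$ shows that $(R_\gamma\times R_\gamma)(E)$ is an entourage for every $\gamma$ and every $E$, that is, each $R_\gamma$ is controlled. Since $R_\gamma$ is a bijection with inverse $R_{\gamma^{-1}}$, and controlled maps are bornological, the preimage under $R_\gamma$ of a bounded set $K$ equals $R_{\gamma^{-1}}(K)$, which is again bounded and hence precompact; therefore $R_\gamma$ is also proper, thus coarse, and consequently a coarse equivalence with coarse inverse $R_{\gamma^{-1}}$. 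So the action is by coarse equivalences.

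For the converse, assume that each $R_\gamma$ is a coarse equivalence, in particular controlled. Here I would simply take the minimal family $U_x=\Diag[\Gamma]$: then $E_{\cU}=\Diag[X\times\Gamma]$ is sent into $\Diag[X]$, while by the displayed formula each $E\swapcross\Diag[L]$ is sent to a finite union of entourages, hence to an entourage. Since controlledness may be checked on a generating set, $m$ is controlled for this $\coarseStr_{\cU}$, so the action is a topological action.

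The only step requiring a little care is the automatic properness of $R_\gamma$: it follows from the fact that $R_\gamma$ is a bijection whose set-theoretic inverse is again a right multiplication $R_{\gamma^{-1}}$, so that properness of $R_\gamma$ is equivalent to $R_{\gamma^{-1}}$ being bornological, which is automatic once $R_{\gamma^{-1}}$ is controlled. Everything else is a direct unwinding of the definitions, and I expect no genuine obstacle; the real content is that over a discrete group the freedom in choosing $\cU$ is irrelevant and the defining condition collapses to controlledness of each $R_\gamma$.
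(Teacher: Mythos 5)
Your proof is correct and follows essentially the same route as the paper's: the forward direction amounts to the paper's preceding observation (that the restrictions of the action to $X\times L$, $L$ compact, are coarse maps) applied to singletons $L=\{\gamma\}$, which you simply rederive in a self-contained way via the generators $E\swapcross\Diag[\{\gamma\}]$ and the inverse-multiplication trick; the converse uses exactly the paper's choice $U_x=\Diag[\Gamma]$, where the paper phrases the conclusion via the coarse disjoint union decomposition of $X\times\Gamma$ and you instead check controlledness directly on the generating entourages. The only difference is presentational, with your version spelling out details the paper leaves as ``follows readily.''
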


\begin{proof}
One implication of the lemma is exactly the preceding statement applied to singletons $L=\{\gamma\}\subset \Gamma$.

The other direction follows with the coarse structure on $X\times \Gamma$ induced by the collection of sets $U_x\coloneqq\Delta_\Gamma$, which are neighborhoods of the diagonal due to the discreteness of $\Gamma$. With this choice, the coarse space $X\times \Gamma$ decomposes as a \emph{coarse disjoint union} of the spaces $X\times \{\gamma\}$, that is, these subsets carry the subspace coarse structure and they are closed under taking $E$-penumbras for arbitrary entourages $E$. The claim follows readily.
\end{proof}

The relation of topological group actions with the notion of generalized homotopy is the following:
Having seen that multiplication with each group element is always a coarse equivalence, it is immediate that for any two group elements in the same path component of $\Gamma$ their associated multiplication maps are generalized coarsely homotopic.

After this excursion to locally compact topological groups we specialize to the type of group actions which will appear in our \mbox{(co-)}homology theories: the proper and isocoarse actions of discrete groups by coarse equivalences.

\begin{defn}\label{def:discretegroupactingoncoarsespace}
A discrete group $\Gamma$ is said to \emph{act} on a coarse space $(X,\coarseStr)$ if its acts on the underlying set $X$ such that the action of each $\gamma\in\Gamma$ is a coarse equivalence with respect to the coarse structure $\coarseStr$. In this case we call $X$ a \emph{coarse $\Gamma$-space}, and if in addition $A\subset X$ is $\Gamma$-invariant, then we call $(X,A)$ a \emph{pair of coarse $\Gamma$-spaces}.
Furthermore, the $\Gamma$-action, the coarse $\Gamma$-space or the pair of coarse $\Gamma$-spaces is called
\begin{enumerate}\itemsep=0pt
\item[$\bullet$] \emph{proper} if the set $\{\gamma\in \Gamma\mid K\gamma \cap K\not=\varnothing\}$ is finite for all bounded $K\subset X$,
\item[$\bullet$] \emph{isocoarse} if every entourage is contained in a $\Gamma$-invariant entourage. In this case we will call $X$ concisely an \emph{isocoarse $\Gamma$-space} or $(X,A)$ a \emph{pair of isocoarse $\Gamma$-spaces}.
\end{enumerate}
\end{defn}

If $\Gamma$ itself carries a coarse structure in which bounded sets are finite, e.g., if $\Gamma$ is finitely generated and we equip it with the coarse structure coming from a word metric, then properness of the $\Gamma$-action is equivalent to properness of the map $X\times \Gamma\to X\times X$, $(x,\gamma)\mapsto (x\gamma,x)$.

Isocoarse actions are the coarse geometric analog of isometric actions and are sometimes even called isometric (cf.\ \cite[Definition 1]{EmeMeyDescent}). If $\Gamma$ acts isometrically on a metric space $X$ then every entourage $E$ of $X$ is contained in the $\Gamma$-invariant entourage $E_R$ for some sufficiently large $R\geq 0$
and hence the action is isocoarse. Conversely, if $\Gamma$ acts isocoarsely on a metric space $(X,d)$, then
\[
d'(x,y)\coloneqq\sup_{\gamma\in\Gamma}d(x\gamma,y\gamma)
\]
defines a $\Gamma$-invariant metric on $X$ which is coarsely equivalent to $d$.

The equivariance is also easily implemented into our two notions of coarse homotopy as follows and one readily checks that one immediately obtains corresponding (\emph{generalized$)$ homotopy categories of $($pairs of$)$ $($iso-$)$coarse $\Gamma$-spaces}.

\begin{defn}\label{def:equivcoarsehomotopy}
A \emph{$\Gamma$-equivariant coarse homotopy} between two $\Gamma$-equi\-variant coarse maps $(X,A)\to (Y,B)$ is simply a coarse homotopy between them which is equivariant as a map $X\times\Z\to Y$.
Similarily, a \emph{generalized $\Gamma$-equivariant coarse homotopy} between two $\Gamma$-equi\-variant coarse maps $(X,A)\to (Y,B)$ is a generalized coarse homotopy which is equivariant as a map $X\times I\to Y$ and where the coarse structure $\coarseStr_\cU$ on $X\times I$ is constructed from a $\Gamma$-equivariant family $\cU=\{U_x\}_{x\in X}$, that is, $U_{x\gamma}=U_x$ for all $x\in X$, $\gamma\in \Gamma$.
\end{defn}

Note that this definition is already adapted to isocoarse actions: if $X$ is an isocoarse $\Gamma$-space, then the last condition, $U_{x\gamma}=U_x$, implies that $(X\times I,\coarseStr_\cU)$ is an isocoarse $\Gamma$-space, too. If one is really interested in general non-isocoarse $\Gamma$-actions, then it might seem appropriate to weaken the condition to: for all $\gamma\in \Gamma$ there is $n\in\N$ such that for all $x\in X$ we have $U_{x\gamma}\subset\underbrace{U_x\circ\dots\circ U_x}_{n\text{ times}}$. In this case, $(X\times I,\coarseStr_\cU)$ is a coarse $\Gamma$-space for all coarse $\Gamma$-spaces $X$.
However, even in the few cases, where we do consider general non-isocoarse coarse $\Gamma$-spaces we will need the stronger condition (Lemmas~\ref{lem:ordinarycoarsehomologystronghomotopy} and~\ref{lem:ordinarycoarsecohomologystronghomotopy}), so we decided to use it in all cases.

For the classical notion of coarse homotopy, there is no such distinction to be made. The coarse structure on $X\times\Z$ is independent of choices and isocoarse if and only if $X$ is isocoarse.
The same applies to the following equivariant notion of flasqueness.
\begin{defn}\label{def:equiflasque}
A coarse $\Gamma$-space or a pair of coarse $\Gamma$-spaces is called flasque if it is flasque as a coarse space with $\phi$ being $\Gamma$-equivariant, cf.\ Definition~\ref{def:flasqueness}.
\end{defn}

Finally we also have to discuss the compatibility of group actions with discretizations. This will be important for the construction of $\Gamma$-equivariant coarse \mbox{(co-)}homology theories in Section~\ref{sec:Sigmastuff}.
\begin{lem}
Every proper isocoarse $\Gamma$-space $X$ of bornologically bounded geometry\footnote{Recall that this means that $X$ posseses a discretization, see Definition~\ref{def:discreteboundedgeometry}.} has a~$\Gamma$-invariant discretization.
\end{lem}

\begin{proof}
Let $X'\subset X$ be a locally finite $E$-dense subset for some symmetric entourage $E$ containing the diagonal, which exists because of the coarsely bounded geometry. Due to the isocoarseness we may also assume that $E$ is $\Gamma$-invariant. Now, Zorn's lemma implies that there is a maximal $\Gamma$-invariant subset $X''\subset X$ with the property that for all $(x_1,x_2)\in E^2\cap(X''\times X'')$ we have $x_1\Gamma=x_2\Gamma$. This subset is $E^2$-dense.

We claim that it is also locally finite. Let $K\subset X''$ be bounded. Then there is a map $f\colon K\to \Pen_E(K)\cap X'$ which maps each $x\in K$ to a point $f(x)\in X'$ with $(x,f(x))\in E$. If~$f(x_1)=f(x_2)$, then $(x_1,x_2)=(x_1,f(x_1))\circ(f(x_2),x_2)\in E^2\cap (X''\times X'')\implies x_1\Gamma=x_2\Gamma\implies \exists \gamma\in \Gamma\colon x_2=x_1\gamma$. Because of the properness of the $\Gamma$-action and discreteness of $\Gamma$, there are for each $x_1\in K$ only finitely many $\gamma\in \Gamma$ with $x_1\gamma\in K$. Thus, preimages of points under $f$ are all finite. The target of $f$ is a bounded subset of $X'$, so it is finite. Hence, the domain $K$ is also finite.
\end{proof}

\begin{Example}
Consider the proper but non-isocoarse action of $\Z$ on $\R^2$ given by $n.(x,y)=\linebreak(x+n, 2^ny)$. Although $\R^2$ has coarsely bounded geometry, we cannot find a $\Z$-invariant discretization: Assume that for some $R>0$ there is a $\Z$-invariant $R$-dense subset $X'$. Then for every $n\in\N$ the intersection of $X'$ with $[n,2R+n]\times\big[0,2^{n+1}R\big]$ must contain at least $2^n$ elements (at least one in each square $[n,2R+n]\times[2Rk,2R(k+1)]$ for $k=0,\dots,2^n-1$). Therefore, $[0,2R]^2=(-n).\big([n,2R+n]\times\big[0,2^{n+1}R\big]\big)$ intersects $X'$ also in at least $2^n$ points. As $n\in\N$ was arbitrary, the intersection of $X'$ with $[0,2R]^2$ cannot be finite.

This shows that we cannot dispense with the isocoarseness assumption.
\end{Example}

\begin{lem}\label{lem:goodequivariantdiscretization}
Every proper isocoarse $\Gamma$-space $X$ of bornologically bounded geometry has a~$\Gamma$-invariant discretization $X'$ which allows a $\Gamma$-equivariant coarse equivalence $\pi\colon X\to X'$ which is the identity on $X'$ and hence is a coarse inverse up to closeness to the inclusion $\iota\colon X'\subset X$.
\end{lem}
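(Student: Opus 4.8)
The plan is to reduce the statement to the construction of a single $\Gamma$-equivariant, idempotent, ``close-to-the-identity'' map and then take $X'$ to be its image. By isocoarseness I may fix a symmetric $\Gamma$-invariant entourage $E\supseteq\Diag[X]$ for which the preceding lemma supplies a $\Gamma$-invariant, locally finite, $E$-dense subset; I will build an $X'$ of this kind together with a $\Gamma$-equivariant map $\pi\colon X\to X'$ satisfying $(x,\pi(x))\in E$ for all $x$ and $\pi|_{X'}=\id_{X'}$. Everything else is then formal. Closeness to the identity gives $(\pi\times\pi)(F)\subseteq E^{-1}\circ F\circ E$ for each entourage $F$, so $\pi$ is controlled, and $\pi^{-1}(K)\subseteq\Pen_E(K)$ shows that preimages of bounded sets are bounded, so $\pi$ is proper and hence a coarse map. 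Since $\iota\circ\pi$ is close to $\id_X$ (its graph lies in $E$) while $\pi\circ\iota=\id_{X'}$, the map $\pi$ is a coarse equivalence with coarse inverse the inclusion $\iota\colon X'\hookrightarrow X$, exactly as asserted.

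To define $\pi$ I would choose representatives of the $\Gamma$-orbits of $X$, always taking the representative inside $X'$ when the orbit meets $X'$ and setting $\pi=\id$ on such orbits; this is consistent and equivariant because $X'$ is $\Gamma$-invariant. For an orbit disjoint from $X'$, with representative $x_0$ and stabilizer $H_0=\mathrm{Stab}(x_0)$ (finite, by properness applied to the bounded set $\{x_0\}$), I would pick a target $y_0\in X'$ with $(x_0,y_0)\in E$ and declare $\pi(x_0\gamma)\coloneqq y_0\gamma$. The prescription is well defined precisely when $y_0$ is fixed by $H_0$: if $x_0\gamma=x_0\gamma'$, then $\gamma'\gamma^{-1}\in H_0$ and one needs $y_0(\gamma'\gamma^{-1})=y_0$. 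The $\Gamma$-invariance of $E$ then propagates $(x_0,y_0)\in E$ to $(x_0\gamma,y_0\gamma)\in E$, yielding the required uniform closeness on the whole orbit.

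The main obstacle is exactly this well-definedness condition. The finite group $H_0$ acts on the finite nonempty set $\Pen_E(\{x_0\})\cap X'$ of admissible targets (finite by local finiteness, nonempty by $E$-density, and $H_0$-stable since $H_0$ fixes $x_0$ and $E$, $X'$ are $\Gamma$-invariant), but a finite group action on a finite set need not have a fixed point, so an $H_0$-fixed target near $x_0$ need not exist in the discretization coming from the preceding lemma. The trivial escape $y_0=x_0$, which is always $H_0$-fixed, destroys local finiteness, since the fixed-point sets $\mathrm{Fix}(\gamma)=\{x\mid x\gamma=x\}$ of individual group elements can be coarsely large. I therefore expect the real work to lie in arranging $X'$ to contain an adequate supply of stabilizer-fixed points, which I would do by recursion over the isotropy stratification: first discretize each $\mathrm{Fix}(\gamma)$ equivariantly for the induced action of the subgroup of $\Gamma$ preserving it, always sending a point of a given isotropy type to a nearby point of the same type, and only afterwards treat the free part, where any nearby target is admissible. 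Properness ensures that each bounded set meets only finitely many of the sets $\mathrm{Fix}(\gamma)$, so the recursion is locally finite; this keeps the resulting $X'$ locally finite and $E$-dense while leaving $\pi$ equivariant and $E$-close to the identity.
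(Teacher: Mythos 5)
Your formal reduction and your diagnosis of the crux are both correct and match the paper's own: the assignment $\pi(x_0\gamma)\coloneqq y_0\gamma$ is well defined only if $y_0$ is fixed by the whole stabilizer $H_0=\Gamma_{x_0}$, the invariant discretization of the preceding lemma need not contain such a point near $x_0$, and taking $y_0=x_0$ wholesale destroys local finiteness. The gap is in your repair. You discretize the fixed sets $\mathrm{Fix}(\gamma)$ of \emph{single group elements}, but what your own analysis requires are points of $\mathrm{Fix}(H_0)=\bigcap_{\gamma\in H_0}\mathrm{Fix}(\gamma)$ near $x_0$, and for non-cyclic stabilizers the individual $\mathrm{Fix}(\gamma)$ do not supply these. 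Concretely, let $\Gamma=\Z/2\times\Z/2$ act on $X=\R^3$ by the three half-turns about the coordinate axes; then $\mathrm{Fix}(\gamma)$ for the three nontrivial elements are the three axes, whereas $\mathrm{Fix}(\Gamma)=\{0\}$, and no single element has $\{0\}$ as its fixed set. Discretizing each axis by its half-integer points (this is $\Gamma$-invariant, since the other two nontrivial elements act on each axis by $t\mapsto-t$) and the free part by invariant orbits, you obtain a $\Gamma$-invariant, locally finite, coarsely dense $X'$ exactly of the kind your recursion builds, yet $X'$ contains no $\Gamma$-fixed point, so \emph{no} equivariant map $X\to X'$ exists at all: $\pi(0)$ would have to be fixed by all of $\Gamma$. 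For the same reason, ``send a point to a nearby point of the same isotropy type'' is the wrong criterion: well-definedness needs the literal containment $\Gamma_x\subseteq\Gamma_{\pi(x)}$, i.e.\ $\pi(x)\in\mathrm{Fix}(\Gamma_x)$, and nearby points of the same stratum generally have conjugate but \emph{distinct} stabilizers, for which the assignment is never equivariant. A third, smaller gap: when infinitely many fixed sets are discretized separately, you must still exhibit one single entourage witnessing that $\pi$ is close to the identity and that $X'$ is coarsely dense; nothing in your sketch provides this uniformity.

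The strategy can be salvaged, but only by re-indexing the recursion over finite \emph{subgroups} $H\leq\Gamma$: take discretizations $D_H\subseteq\mathrm{Fix}(H)$ invariant under the normalizer $N(H)$ (the preceding lemma applies, since properness and isocoarseness restrict to $\mathrm{Fix}(H)$, and bornologically bounded geometry passes to subspaces with a uniform loss), arrange a uniform density entourage by extracting all the $D_H$ from one fixed discretization of $X$, and send $x$ to a nearby point of $D_{\Gamma_x}$; properness gives local finiteness just as you argue. This is, however, a longer argument than the paper's, which avoids stratification entirely: it keeps the invariant discretization $X''$ of the preceding lemma and enlarges it, observing that by properness each orbit representative $x_O$ of $X''$ sees only finitely many stabilizer subgroups $\Sigma$ among the points of $\Pen_E(\{x_O\})$, and that each such $\Sigma$ is by definition realized as $\Sigma=\Gamma_{y_\Sigma}$ for an \emph{actual point} $y_\Sigma\in\Pen_E(\{x_O\})$ of $X$. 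Adding the finitely many orbits $y_\Sigma\Gamma$ per orbit of $X''$ preserves local finiteness, and afterwards every $x\in X$ has a point of $X'$ within $E^2$ with exactly the same stabilizer, so the equivariant retraction exists; your $\Z/2\times\Z/2$ difficulty and the uniformity issue then never arise.
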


\begin{proof}
Let $X''\subset X$ be a $\Gamma$-invariant discretization, which exists by the previous lemma.
The problem is that points of $X$ might have stabilizers which don't appear in $X''$ and hence we cannot construct a $\Gamma$-equivariant coarse map $X\to X''$. Instead, $X''$ has to be enlarged first.

Assume that $E$ is a symmetric $\Gamma$-invariant entourage containing the diagonal such that~$X''$ is $E$-dense in~$X$. Due to the properness of the group action, for each $x\in X$ the set $\{\gamma\in\Gamma\mid \Pen_E(x)\gamma\cap\Pen_E(x)\not=\varnothing\}$ is finite. All stabilizers $\Gamma_{y}$ of points $y\in\Pen_E(x)$ are contained in this set and therefore the set $S(x)\coloneqq\{\Gamma_y\mid y\in\Pen_E(x)\}$ of all such stabilizers is finite.

For each $\Gamma$-orbit of $O\subset X''$ we choose one fixed representative $x_O$, i.e., $O=x_O\Gamma$, and afterwards we choose for each stabilizer $\Sigma\in S(x_O)$ a representative $y_\Sigma$, i.e., $\Sigma=\Gamma_{y_\Sigma}$. Let $X'$ be the union of $X''$ with all the orbits $y_\Sigma\Gamma$ for all orbits $O$ in $X''$ and all $\Sigma\in S(x_O)$. The choices have been made in such a way that $X'$ is again locally finite and therefore it is also a $\Gamma$-invariant discretization.

Note that for each $x\in X$ there is now a point $x'\in X'\cap \Pen_{E^2}(x)$ with the same stabilizer. This allows us to choose a $\Gamma$-equivariant coarse map $X\to X'$ which is the identity on $X'$ and such that the composition $X\to X'\subset X$ is $E^2$-close to the identity.
\end{proof}

\section{Coarse (co-)homology theories}
\label{sec:CoarseCoHomologyTheories}

Not all $\Gamma$-equivariant coarse \mbox{(co-)}homology theories will be defined on the whole category of pairs of coarse $\Gamma$-spaces, but only on certain sub-categories.
\begin{defn}\label{def:admissibleCAT}
An \emph{admissible $\Gamma$-coarse category} is a full sub-category $\admissible$ of the category of pairs of coarse $\Gamma$-spaces which contains $\varnothing=(\varnothing,\varnothing)$ and satisfies the following condition. If $(X,A)$ is an object in $\admissible$, then so are
\[
(X,X),\qquad (A,A),\qquad X=(X,\varnothing),\qquad A=(A,\varnothing).
\]
\end{defn}
Examples of admissible $\Gamma$-coarse categories are those consisting of all pairs of coarse $\Gamma$-spaces satisfying any combination of the properties isocoarseness (Definition~\ref{def:discretegroupactingoncoarsespace}), coarse connectedness, countably generatedness (Definition~\ref{def:coarsestructure}), (uniform) discreteness, bornologically/coarsely bounded geometry (Definition~\ref{def:discreteboundedgeometry}). Also, the categories of topological coarse spaces and proper metric spaces map forgetfully onto admissible categories when forgetting the topology or the metric, respectively.

This definition is inspired by Eilenberg--Steenrod's notion of admissibility in \cite[p.~5]{EilenbergSteenrod}.
The biggest difference is that we only consider full sub-categories and therefore do not have to put additional conditions on the morphisms.
Note that this also effects the notion of homotopy: Eilenberg and Steenrod only consider homotopies that are morphisms in the admissible category. In particular the domains must be objects of the admissible category. In contrast to their intrinsic notion of homotopy we do not postulate that the domains $X\times \Z$, $X_{\rho^-}^{\rho^+}$ or $(X\times I,\coarseStr_\cU)$ of our homotopies are objects in the admissible category (although they usually are) and hence we get an extrinsic notion of homotopy.

There are some examples of sub-categories with smaller morphism sets on which \mbox{(co-)}ho\-mo\-logy theories can also be defined meaningfully. For example, one could consider the category of topological coarse spaces together with all continuous coarse maps or the category of all coarse spaces together with all rough maps, that is, coarse maps $f\colon X\to Y$ for which preimages of entourages of $Y$ under $f\times f$ are entourages of $X$.
However, it is the author's opinion that the corresponding \mbox{(co-)}homology theories should then be called ``topological coarse \mbox{(co-)}homology'' or ``rough \mbox{(co-)}homology'', respectively, but not ``coarse \mbox{(co-)}homology''.

Our admissible coarse categories together with coarse homotopy or generalized coarse homotopy and the following notion of excisions fit into the framework of Eilenberg--Steenrod's $h$-categories \cite[Definition~9.1 in Section~IV.9]{EilenbergSteenrod} except that we did not single out certain spaces as ``points''.
\begin{defn}\label{defn:excisionmorphisms}
We call an inclusion map between objects of $\admissible$ of the form $(X\setminus C,A\setminus C)\to (X,A)$ with $C\subset A\subset X$ an \emph{excision}, if for every entourage $E$ of $X$ there is an entourage $F$ of $X$ such that
$\Pen_E(A)\setminus C\subset\Pen_F(A\setminus C)$.
\end{defn}
This is our coarse geometric analogue of the topological excision condition that the closure of $C$ lies within the interior of $A$. It says that $C$ sits sufficiently far inside of $A$ such that the difference $A\setminus C$ does not change from a coarse geometric perspective if we enlarge $A$ to an arbitrarily large penumbra.

If we write $B\coloneqq X\setminus C$, then the condition is equivalent to the coarse excisiveness condition for the covering $X=A\cup B$ known from \cite{HigsonRoeYuMayerVietoris}:
\begin{equation}\label{eq:excisiveness}
\forall E\in\coarseStr_X\exists F\in\coarseStr_X\colon\ \Pen_E(A)\cap \Pen_E(B)\subset \Pen_F(A\cap B).
\end{equation}
One direction is trivial. For the other we observe that
\begin{align*}
\Pen_E(A)\cap\Pen_E(B)&\subset\Pen_E(\Pen_{E^{-1}\circ E}(A)\cap B)
=\Pen_E(\Pen_{E^{-1}\circ E}(A)\setminus C)
\\
&\subset\Pen_E(\Pen_F(A\setminus C))=\Pen_{E\circ F}(A\cap B)
\end{align*}
for a sufficiently large entourage $F$.

If $A,B\subset X$ are arbitrary subspaces which satisfy \eqref{eq:excisiveness}, then we call $(X;A,B)$ an excisive triad in $\admissible$, even if $A$ and $B$ do not cover $X$.

We will now define coarse \mbox{(co-)}homology theories as \mbox{(co-)}homologies on these $h$-categories, but without the dimension and additivity axiom, because they would only be unnecessary constraints.

\begin{defn}\label{def:CoHomologytheories}
Let $\admissible$ be an admissible category of pairs of coarse $\Gamma$-spaces.
\begin{enumerate}\itemsep=0pt
\item[$\bullet$] A \emph{$\Gamma$-equivariant coarse homology theory} on $\admissible$ is a collection of covariant functors $\big\{\HomolX^\Gamma_p\big\}_{p\in\Z}$ from $\admissible$ to the category of Abelian groups together with a collection $\big\{\partial^\Gamma_p\big\}_{p\in\Z}$ of natural transformations, the \emph{connecting homomorphisms} $\partial^\Gamma_p\colon \HomolX^\Gamma_p(X)\to \HomolX^\Gamma_{p-1}(A)$ for all objects $(X,A)$ in $\admissible$,
 satisfying the below-mentioned homotopy, exactness and excision axioms.

\item[$\bullet$] A \emph{$\Gamma$-equivariant coarse cohomology theory} on $\admissible$ is a collection of contravariant functors $\big\{\CohomX_\Gamma^p\big\}_{p\in\Z}$ from $\admissible$ to the category of Abelian groups together with a collection $\big\{\delta_\Gamma^p\big\}_{p\in\Z}$ of natural transformations, the \emph{connecting homomorphisms} $\delta_\Gamma^p\colon \CohomX_\Gamma^p(A)\to \CohomX_\Gamma^{p+1}(X)$ for all objects $(X,A)$ in $\admissible$, satisfying the below-mentioned homotopy, exactness and excision axioms.
\end{enumerate}

\noindent
{\it Homotopy.} If two pairs of $\Gamma$-equivariant coarse maps are $\Gamma$-equivariantly coarsely homotopic, then they induce the same maps on \mbox{(co-)}homology. That is, \mbox{(co-)}homology factors through the homotopy category of $\admissible$.

\medskip\noindent
{\it Exactness.} Each pair of coarse $\Gamma$-spaces $(X,A)$ induces a long exact sequence
\begin{gather*}
\dots\to \HomolX^\Gamma_p(A)\xrightarrow{i_*}\HomolX^\Gamma_p(X)\xrightarrow{j_*}\HomolX^\Gamma_p(X,A) \xrightarrow{\partial_p}\HomolX^\Gamma_{p-1}(A)\to \cdots
\end{gather*}
or
\begin{gather*}
\dots\to \CohomX_\Gamma^{p-1}(A)\xrightarrow{\delta^{p-1}}\CohomX_\Gamma^p(X,A)\xrightarrow{j^*}\CohomX_\Gamma^p(X) \xrightarrow{i^*}\CohomX_\Gamma^{p}(A)\to \cdots,
\end{gather*}
respectively,
where $(A,\varnothing)\xrightarrow{i}(X,\varnothing)\xrightarrow{j}(X,A)$ are the inclusion maps.

\medskip\noindent
{\it Excision.} Excisions $(X\setminus C,A\setminus C)\subset (X,A)$ induce isomorphisms
\(\HomolX^\Gamma_*(X\setminus C,A\setminus C)\cong\HomolX^\Gamma_*(X,A)\) or \(\CohomX_\Gamma^*(X,A)\cong\CohomX_\Gamma^*(X\setminus C,A\setminus C)\), respectively.

If $\Gamma=1$ is the trivial group, then we call $\HomolX_*\coloneqq\HomolX^\Gamma_*$ simply a \emph{coarse homology theory} or $\CohomX^*\coloneqq\CohomX_\Gamma^*$ a \emph{coarse cohomology theory}, respectively.
\end{defn}

Standard constructions from algebraic topology (cf.\ \cite[Chapter~4, Section~8, Theorem 5]{Spanier66} and \cite[Section 2.3]{Hatcher_AT})
also show that there are a long exact sequence for triples and a coarse Mayer--Vietoris sequence.

\begin{prop}
Let $\HomolX^\Gamma_*$ be a $\Gamma$-equivariant coarse homology theory or $\CohomX_\Gamma^*$ a $\Gamma$-equivariant coarse co-homology theory.
\begin{enumerate}\itemsep=0pt
\item[$\bullet$] If $X\supset A\supset B$ are coarse $\Gamma$-space such that $(X,A)$, $(X,B)$ and $(A,B)$ are objects in $\admissible$, then there are long exact sequences
\begin{gather*}
\dots\to \HomolX^\Gamma_p(A,B)\xrightarrow{i_*}\HomolX^\Gamma_p(X,B)\xrightarrow{j_*}\HomolX^\Gamma_p(X,A)\to
\HomolX^\Gamma_{p-1}(A,B)\to \cdots
\end{gather*}
or
\begin{gather*}
\dots\to \CohomX_\Gamma^{p-1}(A,B)\to
\CohomX_\Gamma^p(X,A)\xrightarrow{j^*}\CohomX_\Gamma^p(X,B)\xrightarrow{i^*}\CohomX_\Gamma^{p}(A,B)\to \cdots,
\end{gather*}
respectively,
where $(A,B)\xrightarrow{i}(X,B)\xrightarrow{j}(X,A)$ are the inclusion maps and the connecting homomorphisms are the compositions of the connecting homomorphisms of the long exact sequence associated to the pair $(X,A)$ with the homomorphisms induced by the inclusion $(A,\varnothing)\to (A,B)$.
\item[$\bullet$] Let $X$ be a coarse $\Gamma$-space with $X=A\cup B$ satisfying \eqref{eq:excisiveness} and such that $(X,A)$ and $(B,A\cap B)$ are objects in $\admissible$ or $(X,B)$ and $(A,A\cap B)$ are objects in $\admissible$. Then there are the coarse Mayer--Vietoris sequences
\begin{align*}
\cdots&\to \HomolX^\Gamma_p(A\cap B)\xrightarrow{(i_*,j_*)}\HomolX^\Gamma_p(A)\oplus\HomolX^\Gamma_p(B)\xrightarrow{k_*-l_*} \HomolX^\Gamma_p(X)
\\
&\xrightarrow{\partial^{\mathrm{MV}}
}\HomolX^\Gamma_{p-1}(A\cap B)\to \cdots
\end{align*}
or
\begin{align*}
\cdots&\to \CohomX_\Gamma^{p-1}(A\cap B)\xrightarrow{\delta_{\mathrm{MV}}
}\CohomX_\Gamma^p(X)\xrightarrow{(k^*,l^*)}\CohomX_\Gamma^p(A)\oplus \CohomX_\Gamma^p(B)
\\
&\xrightarrow{i^*-j^*}\CohomX_\Gamma^{p}(A\cap B)\to \cdots,
\end{align*}
respectively,
where $i\colon A\cap B\to A$, $j\colon A\cap B\to B$, $k\colon A\to X$ and $l\colon B\to X$ denote the inclusion maps. The connecting homomorphisms $\partial^{\mathrm{MV}}$, $\delta_{\mathrm{MV}}$ are defined by composing the connecting homomorphisms for the pair $(B,A\cap B)$ with inverse of the isomorphism induced by the excision $(B,A\cap B)\subset (X,A)$ and the homomorphism induced by inclusion $X\mapsto (X,A)$, or analogously with the roles of $A$ and $B$ interchanged, depending on which of the two possible hypotheses is true.
\end{enumerate}
\end{prop}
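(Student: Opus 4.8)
The plan is to deduce both sequences formally from the Eilenberg--Steenrod-type axioms of \Cref{def:CoHomologytheories} exactly as in classical algebraic topology, since the admissible category $\admissible$ together with (generalized) coarse homotopy and the excisions of \Cref{defn:excisionmorphisms} forms an $h$-category in the sense of Eilenberg--Steenrod. I would treat the homology case in detail; the cohomology statements then follow by reversing all arrows and reading the connecting homomorphisms $\delta$ in place of $\partial$, with no extra work. Throughout, the only genuinely coarse-geometric input is that all objects occurring in the various long exact sequences actually lie in $\admissible$, which is where the closure properties of \Cref{def:admissibleCAT} and the hypotheses of the proposition are used.

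For the long exact sequence of a triple, I would first note that the hypotheses guarantee that the pairs $(X,A)$, $(X,B)$, $(A,B)$, together with $A$, $B$ and $X$ themselves, are all objects of $\admissible$ (applying \Cref{def:admissibleCAT} to $(A,B)$ to obtain $A$ and $B$, and to $(X,A)$ to obtain $X$). I would then define the connecting homomorphism as the composite $\HomolX^\Gamma_p(X,A)\xrightarrow{\partial_p}\HomolX^\Gamma_{p-1}(A)\to \HomolX^\Gamma_{p-1}(A,B)$, where the second map is induced by the inclusion $(A,\emptyset)\to (A,B)$, exactly as specified. Exactness at the three types of spot is then a diagram chase in the commutative braid assembled from the long exact sequences of the pairs $(X,A)$, $(X,B)$ and $(A,B)$; the commutativity of this braid is precisely the naturality of the $\partial_p$ as natural transformations, applied to the inclusions of pairs. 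Naturality of the resulting triple sequence under morphisms of triples follows from naturality of the three pair sequences.

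For the coarse Mayer--Vietoris sequence, I would first observe, following the computation after \Cref{defn:excisionmorphisms}, that under the covering hypothesis $X=A\cup B$ the inclusion $(B,A\cap B)\subset (X,A)$ is an excision: setting $C\coloneqq X\setminus B$ one has $C\subset A$, $B=X\setminus C$ and $A\cap B = A\setminus C$, so that the excisiveness condition \eqref{eq:excisiveness} is exactly the excision condition of \Cref{defn:excisionmorphisms}. The Excision axiom then yields an isomorphism $\HomolX^\Gamma_*(B,A\cap B)\xrightarrow{\cong}\HomolX^\Gamma_*(X,A)$. I would define $\partial^{\mathrm{MV}}$ as the composite of the map $\HomolX^\Gamma_p(X)\to\HomolX^\Gamma_p(X,A)$ induced by inclusion, the inverse of this excision isomorphism, and the connecting homomorphism $\HomolX^\Gamma_p(B,A\cap B)\to\HomolX^\Gamma_{p-1}(A\cap B)$ of the pair $(B,A\cap B)$, as stated. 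Splicing the long exact sequence of $(X,A)$ with that of $(B,A\cap B)$ along the excision isomorphism and chasing the resulting diagram gives exactness of the Mayer--Vietoris sequence; the symmetric hypothesis with the roles of $A$ and $B$ exchanged is handled identically.

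I expect the main obstacle to be purely bookkeeping rather than conceptual: one must verify at every step that the spaces and pairs entering the diagram chases are objects of $\admissible$, so that the functors and the axioms may legitimately be applied. This is exactly why the proposition spells out which pairs are assumed admissible and why it offers the two symmetric sets of hypotheses for Mayer--Vietoris, each of which suffices to keep either $(B,A\cap B)$ and $(X,A)$ or else $(A,A\cap B)$ and $(X,B)$ inside $\admissible$. Once admissibility is checked, the commutativity of all squares reduces to naturality of the connecting homomorphisms and to functoriality, and the exactness assertions become standard diagram chases (or a single application of the braid lemma), requiring no further coarse-geometric arguments.
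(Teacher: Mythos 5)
Your proposal is correct and coincides with what the paper intends: the paper gives no written proof for \Cref{prop:LES_TriplesMV}, dismissing it as ``standard constructions from algebraic topology,'' and your argument (the braid/diagram chase for the triple sequence, and splicing the long exact sequences of $(X,A)$ and $(B,A\cap B)$ along the excision isomorphism for Mayer--Vietoris, with $C\coloneqq X\setminus B$) is exactly that standard construction. Your attention to the admissibility bookkeeping, including the equivalence of \eqref{eq:excisiveness} with the excision condition of \Cref{defn:excisionmorphisms}, matches the role these hypotheses play in the statement.
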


The axioms of Definition~\ref{def:CoHomologytheories} are the absolute minimum needed for a meaningful notion of \mbox{(co-)}homology, but there are also some additional axioms which one might want to pose ocasionally.

\medskip\noindent
{\it Strong homotopy.} If two pairs of $\Gamma$-equivariant coarse maps are generalized $\Gamma$-equivariantly coarsely homotopic, then they induce the same maps on \mbox{(co-)}ho\-mo\-lo\-gy. That is, the \mbox{(co-)}ho\-mo\-logy theory factors through the generalized homotopy category of $\admissible$.

\medskip\noindent
{\it Flasqueness.} The $\Gamma$-equivariant coarse \mbox{(co-)}homology theory vanishes on all flasque spaces in $\admissible$.

\medskip\noindent
{\it Coronality.}\footnote{This axiom is called the large scale axiom in \cite[Definition 3.1]{MitchenerCoarse}.} The $\Gamma$-equivariant coarse \mbox{(co-)}homology theory vanishes on all relatively compact spaces in $\admissible$.

Mitchener has shown, that his coarse homology theories are completely determined on \emph{coarse CW-complexes} by the values on the ray $\N$ and the one-point space, and the same is true of course for our non-equivariant homology theories, as long as the admissible category $\admissible$ is large enough to accomodate all the constructions needed. Hence, it is not a good idea to pose both the flasqueness and the coronality axiom at the same time.

However, all meaningful examples satisfy exactly one of them. Which one we take depends on how the particular \mbox{(co-)}homology theory should be interpreted:
Some coarse \mbox{(co-)}homology theories arise as coarsifications of locally finite homology or compactly supported cohomology theories (cf.\ Section~\ref{sec:Ripscomplex}). Those topological \mbox{(co-)}homology theories often vanish on spaces of the form $X\times[0,\infty)$ and as a result the flasqueness axiom will be satisfied (see Section~\ref{sec:SteenrodTransgression}, in particular Lemma~\ref{lem:SteenrodFlasqueness}).

Other coarse \mbox{(co-)}homology theories are better interpreted as \mbox{(co-)}ho\-mo\-lo\-gy of some (hypothetical) corona. If these hypothetical \mbox{(co-)}homology groups are unreduced ones, then it is the coronality axiom which is satisfied, because bounded spaces have empty coronas. And if these hypothetical \mbox{(co-)}homology groups are reduced ones, then it is the flasqueness axiom again.

\subsection{Flasqueness implies homotopy invariance}
\label{sec:flasquenessimplieshomotopyinvariance}

It is well-known that vanishing on flasque spaces can be used to show homotopy invariance of a functor (see \cite[Theorem~9.8]{RoeITCGTM}, \cite[Theorem~11.2]{HigsonPedersenRoe}, \cite[Proposition~3.10]{WillettHomological} and \cite[Proposition~4.16]{BunkeEngel_homotopy}; similar is \cite[Proposition~12.4.12]{HigRoe}) and the upcoming Lemma~\ref{lem:flasquenessimplieshomotopyinvariance} is an instance of this principle.
This fact is important, because in some major examples (cf.\ Sections~\ref{sec:sHigCor} and~\ref{sec:RoeAlg}) vanishing on flasque spaces can be shown rather directly by performing certain Eilenberg swindles.
Note that the three notions of coarse homotopy, flasqueness and Eilenberg swindle are closely related in the sense they all feature the natural numbers (or integers) in an essential way. Therefore, the same arguments cannot show strong homotopy invariance, where the homotopy parameter runs through a closed interval.

But first we state an auxiliary lemma. Given any coarse $\Gamma$-space $X$ and a $\Gamma$-equivariant bornological function $\rho\colon X\to\N$ we define the coarse $\Gamma$-spaces
\begin{gather*}
X_{\rho}\coloneqq\{(x,n)\in X\times \Z\mid n\geq\rho(x)\}, \qquad X_{0}\coloneqq X\times \N,
\\
X^{\rho}\coloneqq\{(x,n)\in X\times \Z\mid n\leq\rho(x)\}, \qquad X^{0}\coloneqq X\times -\N,
\\
X_\rho^\rho\coloneqq\{(x,n)\in X\times \Z\mid n=\rho(x)\}, \qquad X_0^{0}\coloneqq X\times \{0\},
\\
X_0^{\rho}\coloneqq\{(x,n)\in X\times \Z\mid 0\leq n\leq\rho(x)\}
\end{gather*}
equipped with the subspace coarse structure of $X\times \Z$.

\begin{lem}\label{lem:homotopydomainisomorphismlemma}
Assume that the admissible category $\admissible$ has the following property: Whenever $(X,A)$ is an object of $\admissible$ and $\rho\colon X\to\N\setminus\{0\}$ is a $\Gamma$-equivariant bornological function, then $\admissible$ also contains objects $\big(\overline X_0^\rho,\overline A_0^\rho\big)$ and $\big(\overline X_\rho^\rho,\overline A_\rho^\rho\big)$
satisfying the following properties:
\begin{enumerate}\itemsep=0pt
\item[$\bullet$] $\overline A_0^\rho\subset \overline X_0^\rho\subset X\times\N$ and $\overline A_\rho^\rho\subset \overline X_\rho^\rho\subset X\times(\N\setminus\{0\})$ carrying the subspace coarse structure of the product coarse structure,
\item[$\bullet$] $\big(\overline X_\rho^\rho,\overline A_\rho^\rho\big)\subset\big(\overline X_0^\rho,\overline A_0^\rho\big)$,
\item[$\bullet$] $\big(\overline X_0^\rho,\overline A_0^\rho\big)$ and $\big(\overline X_\rho^\rho,\overline A_\rho^\rho\big)$ contain $\big(X_0^\rho,A_0^\rho\big)$ and $\big(X_\rho^\rho,A_\rho^\rho\big)$, respectively, as coarsely dense subspaces.
\end{enumerate}
Then any co- or contravariant functor from $\admissible$ to the category of abelian groups that turns the inclusions $i_0\colon (X,A)\cong \big(X_0^0,A_0^0\big)\to \big(\overline X_0^\rho,\overline A_0^\rho\big)$ and $i_\rho\colon \big(\overline X_\rho^\rho,\overline A_\rho^\rho\big)\to \big(\overline X_0^\rho,\overline A_0^\rho\big)$ as well as the projections $p\colon \big(\overline X_0^\rho,\overline A_0^\rho\big)\to (X,A)$ into isomorphisms is homotopy invariant.
\end{lem}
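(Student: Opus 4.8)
The plan is to reduce homotopy invariance to the three isomorphisms in the hypothesis, exactly as in the classical flasqueness-implies-homotopy-invariance argument, and then to isolate the reduction of a general homotopy to a ``standard'' one as the delicate point. So let $f,g\colon(X,A)\to(Y,B)$ be $\Gamma$-equivariantly coarsely homotopic and let $F$ be a (say) covariant functor inverting all the $i_0$, $i_\rho$, $p$; the contravariant case is formally dual. I would first treat the case of a homotopy in \emph{standard form}, i.e.\ one with $\rho^-\equiv 0$ and $\rho^+=\rho\colon X\to\N\setminus\{0\}$, so that $H$ restricts to a coarse map on $X_0^\rho$ with $H(x,0)=f(x)$ and $H(x,\rho(x))=g(x)$, taking $A_0^\rho$ into $B$. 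Since $X_0^\rho$ is coarsely dense in $\overline X_0^\rho$, a coarse map on a coarsely dense subspace extends up to closeness to the whole space, so $H|_{X_0^\rho}$ extends to a coarse map of pairs $\bar H\colon(\overline X_0^\rho,\overline A_0^\rho)\to(Y,B)$. The two relevant identities are then $\bar H\circ i_0=f$ (under the isomorphism $(X,A)\cong(X_0^0,A_0^0)$, because $\bar H$ equals $f$ on the bottom slice), and $\bar H\circ i_\rho$ is close to $g\circ p\circ i_\rho$ (both send the coarsely dense $(x,\rho(x))\in X_\rho^\rho$ to $g(x)$, hence they agree there and are therefore close). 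Finally $p\circ i_0=\id_{(X,A)}$.

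The core computation is now purely formal. From $\bar H\circ i_0=f$ we get $F(f)=F(\bar H)\circ F(i_0)$. From $\bar H\circ i_\rho$ being close to $g\circ p\circ i_\rho$ we get $F(\bar H)\circ F(i_\rho)=F(g)\circ F(p)\circ F(i_\rho)$, and cancelling the isomorphism $F(i_\rho)$ yields $F(\bar H)=F(g)\circ F(p)$. Since $F(p)\circ F(i_0)=F(\id)=\id$, combining these gives $F(f)=F(\bar H)\circ F(i_0)=F(g)\circ F(p)\circ F(i_0)=F(g)$, as desired. This step uses nothing beyond the homotopy axiom's input data and the three inverted maps, and it is where the hypothesis on $\admissible$ is consumed.

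It remains to reduce an arbitrary coarse homotopy to standard form, and this is the main obstacle. If the bounding functions $\rho^\pm$ are controlled (or merely bounded on one side), the reparametrization $H'(x,m)\coloneqq H(x,\rho^-(x)+m)$ onto $X_0^{\rho^+-\rho^-}$ is itself coarse and the reduction is immediate. For genuinely bornological $\rho^\pm$ this reparametrization fails to be controlled, and one must instead recognize the region $R\coloneqq X_{\rho^-}^{\rho^+}$ directly as a standard cylinder over the base graph $X_{\rho^-}^{\rho^-}$: setting $\tilde\rho\coloneqq\rho^+-\rho^-\ge 1$, the fibrewise ``tilting'' map $\bigl((x,\rho^-(x)),m\bigr)\mapsto\bigl(x,\rho^-(x)+m\bigr)$ is a coarse equivalence of $(X_{\rho^-}^{\rho^-})_0^{\tilde\rho}$ onto $R$ precisely because the vertical displacement $(\rho^-(x)-\rho^-(x'))+(m-m')$ it introduces stays controlled on entourages of the domain (the graph $X_{\rho^-}^{\rho^-}$ carrying the subspace structure of $X\times\Z$). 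This identifies the two boundaries with $X_{\rho^-}^{\rho^-}$ and $X_{\rho^+}^{\rho^+}$ and, via the hypothesis applied over the base $X_{\rho^-}^{\rho^-}$, reduces everything to inverting the graph projections $\pi^\pm\colon X_{\rho^\pm}^{\rho^\pm}\to X$.

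The genuinely hard part is this last invertibility. One sees that $F$ inverts the projection of the graph of any bornological function that is bounded on one side: a constant shift (a coarse equivalence commuting with the projection to $X$) turns a function bounded below into one valued in $\N\setminus\{0\}$, where $p\circ i_\rho$ is inverted by hypothesis, and the reflection $(x,n)\mapsto(x,-n)$ reduces the bounded-above case to the bounded-below one. Since $\rho^-\le\rho^+$, the only remaining case forces both functions to be unbounded in \emph{both} directions, i.e.\ honestly non-controlled; then there is no coarse section of $\pi^\pm$ and invertibility is not visible from the three hypotheses alone, so this is exactly where the distinction between merely bornological and controlled homotopy parameters flagged in \Cref{sec:coarsehomotopies} becomes operative. (A secondary, more routine technical point is that applying the hypothesis over the base $X_{\rho^-}^{\rho^-}$ presupposes that these graph subspaces again lie in $\admissible$, which holds for the admissible categories cut out by closure-stable properties but should be noted.) My expectation is that the cleanest honest statement proves the lemma for one-sidedly bounded — in particular controlled — $\rho^\pm$, with the bi-infinite bornological case being the genuine sticking point.
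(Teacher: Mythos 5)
Your treatment of the standard-form case ($\rho^-\equiv 0$, $\rho^+=\rho\colon X\to\N\setminus\{0\}$) is essentially the paper's argument, but it contains one real flaw: you cancel $F(i_\rho)$ after asserting that $F$ identifies the \emph{close} maps $\bar H\circ i_\rho$ and $g\circ p\circ i_\rho$. Closeness-invariance of $F$ is not among the hypotheses --- $F$ is an arbitrary functor on $\admissible$, whose morphisms are coarse maps rather than closeness classes, and which is only assumed to invert the three named families --- and deriving closeness-invariance from those hypotheses is precisely the kind of statement being proved, so as written this step is circular. The paper sidesteps the issue by building the extension more carefully: $\overline H$ is chosen to agree with $H$ on $X_0^\rho\setminus\overline X_\rho^\rho$ (hence with $f$ on the zero slice) and to agree with $g\circ p$ on all of $\overline X_\rho^\rho$. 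Then $\overline H\circ i_0=f\circ p\circ i_0$ and $\overline H\circ i_\rho=g\circ p\circ i_\rho$ hold on the nose, and pure functoriality plus the three isomorphisms yield $F(f)=F(g)$. This repair is small but necessary.

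Where you and the paper genuinely part ways is the reduction of an arbitrary homotopy to standard form. The paper does not tilt over graphs at all: it replaces $\rho^-$ by $\min\{\rho^-,-1\}$ and $\rho^+$ by $\max\{\rho^+,1\}$ and cuts $H$ along $X\times\{0\}$ into two standard-form homotopies, from $f$ to $h\coloneqq H(\cdot,0)$ and from $h$ to $g$, then applies the standard-form case twice. Relative to this, your proposal is incomplete: it never produces the cut, and your tilting route --- note that the tilt is only a coarse \emph{map}, not the coarse equivalence you claim, since its inverse fails to be controlled for non-controlled $\rho^-$; fortunately coarseness of the tilt is all your argument uses --- only recovers the case of one-sidedly bounded $\rho^\pm$, modulo the admissibility caveat you mention.

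That said, the sticking point you flagged is real, and the paper's one-sentence reduction glosses over it: the cut presupposes that $H$ is still coarse on the enlarged region $X_{\min\{\rho^-,-1\}}^{\max\{\rho^+,1\}}$, equivalently that $h=H(\cdot,0)$ and both halves are coarse, and for merely bornological $\rho^\pm$ of mixed sign this can fail. Take $X=\N\times\{0,1\}$ with $d((a,i),(b,j))=|a-b|+|i-j|$, $Y=\N$, $f=g\colon(a,i)\mapsto a$, and set $\rho^-(a,0)=\rho^+(a,0)=\rho^+(a,1)=a$, $\rho^-(a,1)=-a$, $H((a,0),n)=a$, $H((a,1),t)=\max\{a,2a-|t|\}$. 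One checks that $H$ is controlled and proper on $X_{\rho^-}^{\rho^+}$ (properness holds exactly because the excursion goes upward), so this is a legitimate coarse homotopy in the sense of the paper; yet $h(a,0)=a$, $h(a,1)=2a$ is not controlled, so the paper's cut produces maps that are not even coarse. Since $f=g$ here, this breaks the paper's \emph{reduction}, not the lemma itself, but it shows that the bi-infinite bornological case is not settled by the paper's printed argument either --- your diagnosis of where the genuine difficulty sits is accurate, even though your proposal, like the paper's proof, does not close it.
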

All of the admissible categories mentioned earlier have this property, where in most cases we can take $\big(\overline X_\rho^\rho,\overline A_\rho^\rho\big)=\big(X_\rho^\rho,A_\rho^\rho\big)$ and $\big(\overline X_0^\rho,\overline A_0^\rho\big)=\big(X_0^\rho,A_0^\rho\big)$, except for the categories obtained from proper metric spaces or topological coarse spaces, in which case we have to take the closures of~$X_\rho^\rho$, $A_\rho^\rho$, $X_0^\rho$ and $A_0^\rho$ in $X\times\N$. This explains the notation.
\begin{proof}
Let $(X,A)$ and $(Y,B)$ be two objects in $\admissible$ and let $H\colon (X,A)\times \Z\to(Y,B)$ be a $\Gamma$-equivariant coarse homotopy between $f$ and $g$ with associated $\Gamma$-equivariant bornological maps $\rho^\pm\colon X\to\Z$. We may assume $\rho^-\equiv 0$ and $\rho\coloneqq\rho^+\colon X\to\N$ without loss of generality. Otherwise we simply replace $\rho^-$ by $\min\{\rho^-,-1\}$ and $\rho^+$ by $\max\{\rho^+,1\}$ and cut the homotopy along $X\times\{0\}$ into two homotopies of the above type.

Using the assumptions on $\big(\overline X_0^\rho,\overline A_0^\rho\big)$ and $\big(\overline X_\rho^\rho,\overline A_\rho^\rho\big)$, we can find a coarse map $\overline{H}\colon \big(\overline X_0^\rho,\overline A_0^\rho\big)\to (Y,B)$ which agrees with $H$ on $\big(X_0^\rho\setminus \overline X_\rho^\rho,A_0^\rho\setminus \overline A_\rho^\rho\big)$, in particular on $\big(X_0^0,A_0^0\big)$, and with $g\circ p$ on $\big(\overline X_\rho^\rho,\overline A_\rho^\rho\big)$.
The claim now follows by applying the functor to $f\circ p\circ i_0=\overline H\circ i_0$ and $g\circ p\circ i_\rho=\overline H\circ i_\rho$ and exploiting the isomorphism hypotheses.
\end{proof}

\begin{lem} \label{lem:flasquenessimplieshomotopyinvariance}
Assume that the admissible category $\admissible$ has the following property: Associated to each $X=(X,\varnothing)$ in $\admissible$ and each $\Gamma$-equivariant bornological function $\rho\colon X\to\N\setminus\{0\}$ there are coarse subspaces $\overline X_\rho$, $\overline X^\rho$ of $X\times \Z$ with the following properties:
\begin{enumerate}\itemsep=0pt
\item[$\bullet$] $\overline X_\rho$ and $\overline X^\rho$ contain $X_\rho$ and $X^\rho$, respectively, as coarsely dense subspaces,
\item[$\bullet$] the sets $\overline X_\rho^\rho\coloneqq \overline X_\rho\cap \overline X^\rho$ and $\overline X_0^\rho\coloneqq X_0\cap \overline X^\rho$ together with the corresponding sets for $A\subset X$ satisfy the assumptions of Lemma~$\ref{lem:homotopydomainisomorphismlemma}$,
\item[$\bullet$] all pairs of coarse $\Gamma$-spaces that can be formed from $X\times\Z$, $X_0$, $X^0$, $X_0^0$, $\overline X_\rho$, $\overline X^\rho$, $\overline X_0^\rho$ and~$\overline X_\rho^\rho$ are objects in $\admissible$.
\end{enumerate}
If $\HomolX^\Gamma_*$ or $\CohomX_\Gamma^*$ satisfies the exactness, excision and flasqueness axioms, then it also satisfies the homotopy axiom.
\end{lem}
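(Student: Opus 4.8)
The plan is to invoke \Cref{lem:homotopydomainisomorphismlemma}: under its hypotheses it suffices to check that a theory satisfying exactness, excision and flasqueness turns the three maps $i_0$, $i_\rho$ and $p$ into isomorphisms. Since $p\circ i_0=\id_{(X,A)}$ and the connecting maps are natural, once $(i_0)_*$ is an isomorphism so is $p_*=(i_0)_*^{-1}$ (dually $p^*=(i_0^*)^{-1}$ in the contravariant case), so only $i_0$ and $i_\rho$ require genuine work. The engine of the whole argument is that each of the four auxiliary spaces $X_0=X\times\N$, $X^0=X\times(-\N)$, $\overline X_\rho$ and $\overline X^\rho$, together with their $A$-versions, is flasque: the $\Gamma$-equivariant shift $(x,n)\mapsto(x,n+1)$ realises flasqueness on the spaces unbounded above ($X_0$, $\overline X_\rho$) and $(x,n)\mapsto(x,n-1)$ on those unbounded below ($X^0$, $\overline X^\rho$); alternatively each barred space is equivariantly coarsely equivalent to the flasque model $X_\rho$ resp.\ $X^\rho$ it densely contains. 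These shifts preserve the relevant subspaces, so the flasqueness axiom makes $\HomolX^\Gamma_*$ (resp.\ $\CohomX_\Gamma^*$) vanish on all of them.

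For $i_0$ I would use the decomposition $\overline X^\rho=X^0\cup\overline X_0^\rho$ with $X^0\cap\overline X_0^\rho=X_0^0$, which I claim is an excisive triad in the sense of \eqref{eq:excisiveness}. Excision then identifies $\HomolX^\Gamma_*(\overline X_0^\rho,X_0^0)\cong\HomolX^\Gamma_*(\overline X^\rho,X^0)$, and since both $\overline X^\rho$ and $X^0$ are flasque the long exact sequence of the pair $(\overline X^\rho,X^0)$ forces this relative group to vanish. Feeding $\HomolX^\Gamma_*(\overline X_0^\rho,X_0^0)=0$ into the long exact sequence of $(\overline X_0^\rho,X_0^0)$ shows that the inclusion induces an isomorphism $(i_0)_*\colon\HomolX^\Gamma_*(X_0^0)\xrightarrow{\cong}\HomolX^\Gamma_*(\overline X_0^\rho)$ in the absolute case. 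Running the identical computation with $A$ in place of $X$ gives the corresponding isomorphism $\HomolX^\Gamma_*(A_0^0)\xrightarrow{\cong}\HomolX^\Gamma_*(\overline A_0^\rho)$, and comparing the long exact sequences of the pairs $(X_0^0,A_0^0)$ and $(\overline X_0^\rho,\overline A_0^\rho)$ via the five lemma upgrades these two absolute statements to the desired relative isomorphism $(i_0)_*$.

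The map $i_\rho$ is handled by the mirror-image decomposition $X_0=\overline X_\rho\cup\overline X_0^\rho$ with $\overline X_\rho\cap\overline X_0^\rho=\overline X_\rho^\rho$: excision gives $\HomolX^\Gamma_*(\overline X_0^\rho,\overline X_\rho^\rho)\cong\HomolX^\Gamma_*(X_0,\overline X_\rho)$, which vanishes because $X_0$ and $\overline X_\rho$ are flasque, and the long exact sequence of $(\overline X_0^\rho,\overline X_\rho^\rho)$ together with the five lemma for the $A$-version then yields that $(i_\rho)_*$ is an isomorphism. The contravariant case is formally dual: every arrow reverses, the pair sequences become the cohomology pair sequences, and the same flasqueness and excision inputs apply verbatim, with $p^*=(i_0^*)^{-1}$. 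That every pair occurring above is an object of $\admissible$ is guaranteed by the third hypothesis, and the $A$-versions live in $\admissible$ by admissibility applied to $A=(A,\emptyset)$.

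I expect the main obstacle to be the two excisiveness verifications, i.e.\ checking \eqref{eq:excisiveness} for the covers $\overline X^\rho=X^0\cup\overline X_0^\rho$ and $X_0=\overline X_\rho\cup\overline X_0^\rho$. The point is that any entourage of a subspace of $X\times\Z$ spreads the $\Z$-coordinate only by a bounded amount, so that $\Pen_E(X^0)\cap\Pen_E(\overline X_0^\rho)$ stays within a uniformly bounded band around the common boundary $X_0^0$, and similarly on the $\overline X_\rho$ side; making this precise for the barred enlargements is the one genuinely technical step. A secondary point to nail down is that the shift maps are simultaneously $\Gamma$-equivariant and compatible with all subspaces, both the $X$- and the $A$-versions, so that the flasqueness axiom really applies to each space used above and the five-lemma ladders are honest maps of long exact sequences.
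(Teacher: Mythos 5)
Your proposal shares its skeleton with the paper's proof: invoke \Cref{lem:homotopydomainisomorphismlemma}, reduce the relative case to the absolute one via naturality of the pair sequences and the five lemma, use vanishing on the four flasque spaces $X_0,X^0,\overline X_\rho,\overline X^\rho$, deduce that $i_0$ and $i_\rho$ induce isomorphisms, and handle $p$ as a one-sided inverse of $i_0$. Where you differ is the packaging: the paper applies the coarse Mayer--Vietoris sequences of \Cref{prop:LES_TriplesMV} to the three decompositions $X\times\Z=X_0\cup X^0$, $X\times\Z=X_0\cup\overline X^\rho$ and $X\times\Z=\overline X_\rho\cup\overline X^\rho$, so that flasqueness collapses them to isomorphisms $\HomolX^\Gamma_p(\text{intersection})\cong\HomolX^\Gamma_{p+1}(X\times\Z)$, natural in the triad, from which the statements about $i_0$ and $i_\rho$ follow; you instead run excision and the long exact sequence directly on the pairs $(\overline X^\rho,X^0)$ and $(X_0,\overline X_\rho)$. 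Since the Mayer--Vietoris sequences are themselves manufactured from excision and the pair sequences, this is essentially the same argument unpacked, and slightly more economical.

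There are, however, two concrete weak points. First, your fallback justification of flasqueness is circular: you may not conclude that the theory vanishes on $\overline X^\rho$ because it is coarsely equivalent to the flasque $X^\rho$ --- invariance under coarse equivalences is not available before homotopy (or at least closeness) invariance has been established, which is exactly what the lemma is proving. Only a direct flasqueness map $\overline X^\rho\to\overline X^\rho$ is admissible here, and one must check (or arrange) that a shift-type map actually preserves the barred space; for the closures arising from proper metric or topological coarse spaces it does. Second, your claim that the excisiveness check on the $\overline X_\rho$ side works ``similarly'' to the one around $X_0^0$ is not correct. The cover $\overline X^\rho=X^0\cup\overline X_0^\rho$ is indeed excisive, because its common boundary $X_0^0$ is a horizontal slice and entourages of $X\times\Z$ move the $\Z$-coordinate only boundedly. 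But the common boundary of your second cover is $\overline X_\rho^\rho$, essentially the graph of $\rho$, and for merely bornological $\rho$ condition \eqref{eq:excisiveness} can fail: take $\Gamma=1$, $X=\{a_k,b_k\mid k\geq1\}$ with $d(a_k,b_k)=1$ and all other distances at least $1000$, $\rho(a_k)=2k$, $\rho(b_k)=1$, and $\overline X_\rho=X_\rho$; then $(a_k,k)\in X_0^\rho$ lies within distance $1$ of $X_\rho$ but at distance roughly $k$ from $X_\rho^\rho$, so no uniform $F$ exists. (Your second cover also presupposes $\overline X_\rho\subset X_0$, which the stated hypotheses do not guarantee.) In fairness, the paper's own proof silently requires the same excisiveness for its third Mayer--Vietoris decomposition and the same inclusion for its naturality argument, so this is a difficulty you have inherited rather than created; but it is a genuine one, not the routine verification your proposal suggests, and it is precisely the point at which bornological versus controlled $\rho$ makes a difference.
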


Again, the hypothesis is satisfied by all admissible the categories mentioned earlier.
The proof is essentially a $\Gamma$-equivariant version of the proofs of the statements cited at the beginning of this subsection.

\begin{proof}
We claim that the functor $\HomolX^\Gamma_*$ or $\CohomX_\Gamma^*$ satisfies the hypotheses of the preceding lemma.
Using the naturality of the long exact sequences under the maps $i_0$, $i_\rho$ and $p$ and the five-lemma, we can furthermore restrict ourselves to the absolute case $A=\varnothing$.

Now, as $\HomolX^\Gamma_*$ or $\CohomX_\Gamma^*$ satisfies the exactness and excision axioms, it also has Mayer--Vietoris sequences which can be applied to the decompositions $X\times\Z=X_0\cup X^0$, $X\times\Z=X_0\cup \overline X^\rho$ and $X\times\Z=\overline X_\rho\cup \overline X^\rho$. It furthermore vanishes on the flasque spaces $X_0$, $X^0$, $\overline X_\rho$, $\overline X^\rho$ and therefore the Mayer--Vietoris sequences simplify to isomorphisms
\begin{gather*}
\HomolX^\Gamma_p\big(X_0^0\big)=\HomolX^\Gamma_p\big(X_0\cap X^0\big)\cong \HomolX^\Gamma_{p+1}(X\times\Z),
\\
\HomolX^\Gamma_p\big(\overline X_0^\rho\big)=\HomolX^\Gamma_p\big(X_0\cap \overline X^\rho\big)\cong \HomolX^\Gamma_{p+1}(X\times\Z),
\\
\HomolX^\Gamma_p\big(\overline X_\rho^\rho\big)=\HomolX^\Gamma_p\big(\overline X_\rho\cap \overline X^\rho\big)\cong \HomolX^\Gamma_{p+1}(X\times\Z)
\end{gather*}
and analogously for $\CohomX_\Gamma^*$.
These isomorphisms are natural and therefore the inclusion maps~$i_0$: $X_0^0\to \overline X_0^\rho$ and $i_\rho\colon \overline X_\rho^\rho\to \overline X_0^\rho$ induce isomorphisms. Up to the canonical isomorphism $X\to X_0^0$ in the category of coarse spaces, the map $p$ is a one sided inverse to $i_0$. Thus, it induces a one sided inverse to the isomorphism induced by $i_0$ and must itself be an isomorphism.
\end{proof}

\section{Examples}\label{sec:Examples}
In Section~\ref{sec:Sigmastuff} we will describe a procedure to construct a wide class of coarse \mbox{(co-)}homology theories out of generalized \mbox{(co-)}homology theories for $\sigma$-locally compact spaces.
Therefore we shall limit ourselves in this section to a few examples which can be defined by different means.

The first two, ordinary coarse homology (Section~\ref{sec:ordinarycoarsehomology}) and cohomology (Section~\ref{sec:ordinarycoarsecohomology}), are defined by constructing rather elementary \mbox{(co-)}chain complexes. Note that we have implemented the $\Gamma$-action in a very naive way by considering $\Gamma$-invariant chains or dividing out the $\Gamma$-action on the cochain complexes, respectively. This is sufficient for the purpose of giving examples, but in real life it would be better to develop a more sophisticated $\Gamma$-equivariant generalization in the cohomological case.

For example, one well-known way of defining equivariant cohomology of a topological space~$X$ with action of a topological group $\Gamma$ is to take a non-equivariant cohomology theory $\Cohom^*$ and evaluate it on the homotopy quotient:
\[
\Cohom_\Gamma^*(X)\coloneqq\Cohom^*(X\times_\Gamma\mathrm{E}\Gamma).
\]

\begin{question}
Does an analogous construction also work for coarse geometry? And if yes, what is the coarse geometric analogue of the classifying space $\mathrm{E}\Gamma$?
\end{question}
An answer to this question would give rise to a wide variety of equivariant cohomology theories.

In Sections~\ref{sec:sHigCor} and~\ref{sec:RoeAlg} we show how the $\K$-theories of stable Higson coronas and Roe algebras fit into our set-up. Here, there are well-established ways of implementing the $\Gamma$-actions.

\subsection{Ordinary coarse homology}
\label{sec:ordinarycoarsehomology}

Among all coarse homology and cohomology theories, the ordinary coarse homology is the most elementary non-trivial one that one could possibly think of. In spirit, it is the obvious dual to Roe's coarse cohomology which we will review in the following subsection. In a special case, its definition has already been given in \cite[p.~453]{YuCyclicCohomology}, and the general version has been developed in \cite[Section~6.3]{BunkeEngel_homotopy}. The equivariant version can be found in \cite[Section~7]{BunkeEngelKasprowskiWinges}. There is actually no need at all to pass to a smaller admissible category than the one of coarse $\Gamma$-spaces.

In the following, given a coarse space $X$ we always consider $X^{p+1}$ as equipped with the $(p+1)$-fold product coarse structure. In particular, the penumbras of the multidiagonal \(\Diag[X,p+1]\coloneqq\big\{(x,\dots,x)\in X^{p+1} \mid x\in X\big\}\) are those subsets $P\subset X^{p+1}$ for which there is an entourage $E$ of~$X$ such that for all $(x_0,\dots,x_p)\in P$ and for all $0\leq i,j\leq p$ we have $(x_i,x_j)\in E$.

\begin{defn}
Let $X$ be a coarse space and $M$ an abelian group. For $p\in\N$ we define $\CX_p(X;M)$ as the group of all infinite formal sums $c=\sum_{\mathbf{x}\in X^{p+1}}m_{\mathbf{x}}\mathbf{x}$ such that
\begin{enumerate}\itemsep=0pt
\item[$\bullet$] the set $\supp(c)\coloneqq\big\{\mathbf{x}\in X^{p+1}\mid m_{\mathbf{x}}\not=0\big\}$ is a penumbra of the multidiagonal,
\item[$\bullet$] the set $\supp(c)\cap K$ is finite for all bounded $K\subset X^{p+1}$, or equivalently, for all relatively compact $K$.\footnote{Recall that relatively compact subsets are finite unions of bouded subsets by Definition~\ref{def:relativelycompact}.}
\end{enumerate}
For $p\in\Z\setminus\N$ we define $\CX_p(X;M)\coloneqq 0$. These groups together with the boundary maps
$\partial\colon\CX_{p}(X;M)\to\CX_{p-1}(X;M)$ which are defined on the single summands by
\[
\partial(x_0,\dots,x_p)\coloneqq \sum_{i=0}^p(-1)^i(x_0,\dots,x_{i-1},x_{i+1},\dots,x_p)
\]
constitute a chain complex.
If $A\subset X$ is a coarse subspace, then $\CX_*(A;M)$ is a subcomplex of $\CX_*(X;M)$ and we define the \emph{coarse chain complex} $\CX_*(X,A;M)\coloneqq \CX_*(X;M)/\CX_*(A;M)$. Its homology is the \emph{ordinary coarse homology} $\HX_*(X,A;M)$ of the pair of coarse spaces $(X,A)$ with coefficients in $M$.

If $\Gamma$ is a discrete group acting coarsely on $(X,A)$ from the right and acting on $M$ from the left, then it also acts on the chain complex $\CX_*(X,A;M)$ from the left by
\[
\gamma\bigg(\sum_{\mathbf{x}\in X^{p+1}}m_{\mathbf{x}}\mathbf{x}\bigg)\coloneqq\sum_{\mathbf{x}\in X^{p+1}}( \gamma m_{\mathbf{x}})\big(\mathbf{x}\gamma^{-1}\big)=\sum_{\mathbf{x}\in X^{p+1}}(\gamma m_{\mathbf{x}\gamma})\mathbf{x}.
\]
The chain $\sum_{\mathbf{x}\in X^{p+1}}m_{\mathbf{x}}\mathbf{x}$ is $\Gamma$-invariant, if $\gamma m_{\mathbf{x}\gamma}=m_{\mathbf{x}}$ for all $\gamma\in\Gamma$ and $x_0,\dots,x_p\in X$.
This property is preserved under the boundary map and hence we obtain the subcomplex $\CX^\Gamma_*(X,A;M)\subset \CX_*(X,A;M)$ of $\Gamma$-invariant chains. Its homology is the \emph{$\Gamma$-equivariant ordinary coarse homology} $\HX^\Gamma_*(X,A;M)$.
\end{defn}

The chain complex $\CX_*(X;\Z)$ is very similar to the one for uniformly finite homology of \cite[Section 2]{BlockWeinberger}, just that we have ommitted the uniform bounds on the coefficients and our bounds on the number of non-zero coefficients in bounded sets are also not uniform in any way.
Due to these uniformity requirements, uniformly finite homology is not functorial under arbitrary coarse maps but only under the rough maps.

In contrast, it is straightforward to see that ordinary $\Gamma$-equivariant coarse homology with arbitrary coefficients is functorial under $\Gamma$-equivariant coarse maps.
We will now show that it satisfies the exactness, strong homotopy, excision and flasqueness axioms and in particular is a~coarse homology theory in the sense of Definition~\ref{def:CoHomologytheories}.

\begin{lem}
The ordinary $\Gamma$-equivariant coarse homology satisfies the exactness axiom.
\end{lem}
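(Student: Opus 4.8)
The plan is to exhibit a short exact sequence of chain complexes of $\Gamma$-invariant chains and then invoke the standard zig-zag lemma of homological algebra. First I would note that by the very definition of $\CX_*(X,A;M)$ as a quotient there is a short exact sequence of chain complexes
\[
0 \to \CX_*(A;M) \to \CX_*(X;M) \to \CX_*(X,A;M) \to 0,
\]
and since the $\Gamma$-action is defined compatibly on all three complexes, the maps are $\Gamma$-equivariant chain maps. Taking $\Gamma$-fixed points is only left exact in general, so the crux of the argument is to verify that the induced sequence
\[
0 \to \CX^\Gamma_*(A;M) \to \CX^\Gamma_*(X;M) \to \CX^\Gamma_*(X,A;M) \to 0
\]
remains exact, the only non-trivial point being surjectivity of the quotient map on invariants; injectivity and exactness in the middle are automatic from left exactness of the fixed-point functor.

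The key step exploits the combinatorial structure of these chain groups. Since $A$ is $\Gamma$-invariant, the underlying $\Gamma$-set $X^{p+1}$ decomposes $\Gamma$-equivariantly as the disjoint union of $A^{p+1}$ and its complement $X^{p+1}\setminus A^{p+1}$, and $\CX_p(X;M)$ splits accordingly. Consequently every class in $\CX^\Gamma_p(X,A;M)$ has a canonical representative $c\in\CX_p(X;M)$ supported on $X^{p+1}\setminus A^{p+1}$. Invariance of the class means $\gamma c-c\in\CX_p(A;M)$ is supported on $A^{p+1}$ for every $\gamma\in\Gamma$; but $\gamma c$ is again supported on the $\Gamma$-invariant set $X^{p+1}\setminus A^{p+1}$, so $\gamma c-c$ is supported there as well, forcing $\gamma c-c=0$. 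Hence the canonical representative is itself $\Gamma$-invariant and furnishes the required lift, establishing surjectivity.

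With the short exact sequence of $\Gamma$-invariant complexes in hand, the zig-zag lemma produces the long exact homology sequence with connecting homomorphism $\partial_p\colon\HX^\Gamma_p(X,A;M)\to\HX^\Gamma_{p-1}(A;M)$. Finally I would observe that naturality of this sequence under $\Gamma$-equivariant coarse maps of pairs — and hence the fact that the $\partial_p$ assemble into natural transformations as demanded in \Cref{def:CoHomologytheories} — follows from the naturality of the zig-zag lemma together with the functoriality of the short exact sequence of complexes. I expect no serious obstacle beyond the surjectivity check, which is the only place where exactness of the fixed-point functor could fail and where the specific product structure of the coarse chain groups, combined with the $\Gamma$-invariance of $A$, is essential.
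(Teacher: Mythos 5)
Your proof is correct and follows essentially the same route as the paper: the paper introduces the $\Gamma$-equivariant (non-chain) splitting $\CX_*(X;M)\to\CX_*(A;M)$ that kills all summands not supported within $A^{p+1}$, which is precisely your $\Gamma$-equivariant decomposition of $\CX_p(X;M)$ along $X^{p+1}=A^{p+1}\sqcup(X^{p+1}\setminus A^{p+1})$. Your canonical-representative argument just spells out the surjectivity-on-invariants step that the paper compresses into ``this readily implies.''
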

\begin{proof}
The short exact sequence
\[
0\to \CX_*(A;M)\to\CX_*(X;M)\to \CX_*(X,A;M)\to 0
\]
has a $\Gamma$-equivariant split $\CX_*(X;M)\to \CX_*(A;M)$ (not compatible with the boundary map, of course) which maps all summands not supported within $A$ to $0$. This readily implies that the short sequence of the subcomplexes
\[
0\to \CX_*^\Gamma(A;M)\to\CX_*^\Gamma(X;M)\to \CX_*^\Gamma(X,A;M)\to 0
\]
is also exact.
\end{proof}

\begin{lem}\label{lem:ordinarycoarsehomologystronghomotopy}
The ordinary $\Gamma$-equivariant coarse homology satisfies the strong homotopy axiom. In particular, it is also functorial under \emph{closeness classes} of $\Gamma$-equivariant coarse maps.
\end{lem}

\begin{proof}
Let $H\colon (X\times [a,b],\coarseStr_\cU)\to (Y,\coarseStr_Y)$ be a $\Gamma$-equivariant generalized coarse homotopy between~$f$ and $g$ which maps $A\times[a,b]$ into $B$.
The outline of the proof is that we are going to construct a ``prism operator'' (inspired by the classical prism operator in algbraic topology, cf.~\cite[proof of Theorem 2.10]{Hatcher_AT}) as a chain homotopy between $f_*$ and $g_*$ which should be interpreted as follows: A~``simplex'' $\mathbf{x}=(x_0,\dots,x_p)$ times the intervall $I$ must be ``sudivided'' into simplices of diameter only depending on the diameter of $\mathbf{x}$, but as the diameter of $I$ is allowed to depend unboundedly on $\mathbf{x}$, the fineness of the subdivision has to be chosen depending on $\mathbf{x}$.
Concretely, this is done as follows.

For each point $x\in X$ we choose $a=s_{x,0}<s_{x,1}<\dots<s_{x,k_x}=b$ such that $(s,s_{x,j})\in U_x$ and $(s,s_{x,j+1})\in U_x$ for all $s\in[s_{x,j},s_{x,j+1}]$, $j=0,\dots,k_x-1$.
Due to $U_{x\gamma}=U_x$, this can be done in such a way that $s_{x,j}=s_{x\gamma,j}$ for all $\gamma\in \Gamma$, $x\in X$ and $j=0,\dots,k_x-1=k_{x\gamma}-1$. Note that although we do not require the action of $\Gamma$ on $X$ to be isocoarse, we cannot weaken the equivariance condition on the $U_x$ as described in the paragraph following Definition~\ref{def:equivcoarsehomotopy}.

The purpose of these choices is that $((x,s),(x,s_{x,j}))$ and $((x,s),(x,s_{x,j+1}))$ are contained in the entourage $E_\cU$ whenever $s\in [s_{x,j},s_{x,j+1}]$. Consequently, if $x_1,\dots,x_n$ lie in the same $E$-ball $\Pen_E(\{x\})$ around some $x\in X$ for some entourage $E\in\coarseStr_X$ and if $s\in\bigcap_{i=0}^p[s_{x_i,j_i},s_{x_i,j_i+1}]$, then the points
\begin{equation}\label{eq:2ppoints}
(x_0,s_{x_0,j_0}),\dots,(x_p,s_{x_p,j_p}),(x_0,s_{x_0,j_0+1}),\dots,(x_p,s_{x_p,j_p+1})
\end{equation}
all lie in the $(E\swapcross\Diag[{[a,b]}])\circ E_\cU$-ball $\Pen_{(E\swapcross\Diag[{[a,b]}])\circ E_\cU}(\{(x,s)\})$ around $(x,s)$.
In other words, if we start with $\mathbf{x}=(x_0,\dots,x_p)$ from a fixed penumbra of the multidiagonal in $X^{p+1}$, then for each $q\in\N$ any $q$-tuple of points with entries from \eqref{eq:2ppoints} lies within a fixed penumbra of the multidiagonal in $(X\times I)^q$.

For each $\mathbf{x}=(x_0,\dots,x_p)\in X^{p+1}$ let $k_\mathbf{x}\coloneqq k_{x_0}+\dots+k_{x_p}$ and we define
\[
\mathbf{j}_\mathbf{x}(l)=(j_{\mathbf{x},0}(l),\dots,j_{\mathbf{x},p}(l))\in\prod_{i=0}^p\{0,\dots,k_{x_i}\}
\]
 for $l=0,\dots,k_\mathbf{x}$ and $i_\mathbf{x}(l)\in\{0,\dots,p\}$ for $l=1,\dots,k_\mathbf{x}$ recursively as follows: For $l=0$ we choose $\mathbf{j}_\mathbf{x}(0)\coloneqq(0,\dots,0)$.
If $l\geq 1$, then we let $i_\mathbf{x}(l)$ be the smallest number in $\{i\in\{0,\dots,p\}\mid j_{\mathbf{x},i}(l-1)+1\leq k_{x_i}\}$ such that
\[
s_{x_{i_\mathbf{x}(l)},j_{\mathbf{x},i_\mathbf{x}(l)}(l-1)+1}=\min\{s_{x_i,j_{\mathbf{x},i}(l-1)+1}\mid i\in\{0,\dots,p\}\wedge j_{\mathbf{x},i}(l-1)+1\leq k_{x_i}\}
\]
 and we choose $j_{\mathbf{x},i_\mathbf{x}(l)}(l)\coloneqq j_{\mathbf{x},i_\mathbf{x}(l)}(l-1)+1$ and $j_{\mathbf{x},i}(l)\coloneqq j_{\mathbf{x},i}(l-1)+1$ for $i\not=i_\mathbf{x}(l)$.
In other words: $\mathbf{j}_\mathbf{x}(l)$ is obtained from $\mathbf{j}_\mathbf{x}(l-1)$ by incrementing the $i_\mathbf{x}(l)$-th entry. It is clear that this procedure stops at $l=k_\mathbf{x}$ with $j_{k_\mathbf{x}}=(k_{x_0},\dots,k_{x_p})$.

Using these choices, we now define for each $l=1,\dots,k_\mathbf{x}$ the tuple (written as a column vector for space reasons)
\[
\mathbf{x}_l\coloneqq
\begin{pmatrix}
(x_0,j_{\mathbf{x},0}(l))
\\\vdots
\\(x_{i_\mathbf{x}(l)-1},j_{\mathbf{x},i_\mathbf{x}(l)-1}(l))
\\(x_{i_\mathbf{x}(l)},j_{\mathbf{x},i_\mathbf{x}(l)}(l-1))
\\(x_{i_\mathbf{x}(l)},j_{\mathbf{x},i_\mathbf{x}(l)}(l))
\\(x_{i_\mathbf{x}(l)+1},j_{\mathbf{x},i_\mathbf{x}(l)+1}(l))
\\\vdots
\\(x_p,j_{\mathbf{x},p}(l))
\end{pmatrix}\in (X\times I)^{p+2},
\]
that is, the entries are exactly the pairs $(x_i,j_{\mathbf{x},i}(l))=(x_i,j_{\mathbf{x},i}(l-1))$ for $i\not=i_\mathbf{x}(l)$ and together with the two pairs
$(x_{i_\mathbf{x}(l)},j_{\mathbf{x},i_\mathbf{x}(l)}(l-1))\not=(x_{i_\mathbf{x}(l)},j_{\mathbf{x},i_\mathbf{x}(l)}(l))$.

The construction was made in such a way that if $\mathbf{x}$ lies in a $E$-penumbra of the multidiagonal in $X^{p+1}$, then $\mathbf{x}_l$ lies in a $E_\cU^{-1}\circ (E\swapcross\Diag[{[a,b]}])\circ E_\cU$-penumbra of the multidiagonal in $(X\times I)^{p+2}$.
Note also, that this construction is clearly $\Gamma$-equivariant thanks to the invariant choice of the $s_{x,j}$: We have $k_{\mathbf{x}\gamma}=k_\mathbf{x}$ and $(\mathbf{x}\gamma)_l=(\mathbf{x}_l)\gamma$ for each $l=1,\dots,k_\mathbf{x}$.

The prism operator for the given $\Gamma$-equivariant generalized coarse homotopy is now defined as
\begin{align*}
P_H\colon\ \CX^\Gamma_p(X,A;M)&\to \CX^\Gamma_{p+1}(Y,B;M),
\\
\sum_{\mathbf{x}\in X^{p+1}}m_{\mathbf{x}}\mathbf{x}&\mapsto \sum_{\mathbf{x}\in X^{p+1}}m_{\mathbf{x}}\sum_{l=1}^{k_\mathbf{x}}(-1)^{i_\mathbf{x}(l)}H_*(\mathbf{x}_l).
\end{align*}
Because of the above-mentioned properties, the image of this map indeed consists of $\Gamma$-invariant coarse chains. A calculation similar to the one for the classical prism operator known from basic algebraic topology (cf.\ \cite[proof of Theorem 2.10]{Hatcher_AT}) now shows that $P_H$ is a chain homotopy between $f_*$ and $g_*$.
\end{proof}

\begin{lem}
The ordinary $\Gamma$-equivariant coarse homology satisfies the excision axiom.
\end{lem}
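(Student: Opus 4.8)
The plan is to reduce the excision axiom to a coarse Mayer--Vietoris statement at chain level and then to the acyclicity produced by a single ``pushing'' homotopy built from the excisiveness condition \eqref{eq:excisiveness}.

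Write $B\coloneqq X\setminus C$; since $C\subset A$ is $\Gamma$-invariant, so is $B$, and $A\cup B=X$ with the covering satisfying \eqref{eq:excisiveness}. First I would record the elementary chain-level identifications: a chain lies in both $\CX_*(A;M)$ and $\CX_*(B;M)$ exactly when its support sits in $(A\cap B)^{p+1}$, so $\CX_*(A;M)\cap\CX_*(B;M)=\CX_*(A\cap B;M)$; and the subgroup $\CX_*(A;M)+\CX_*(B;M)$ consists precisely of those chains whose support lies in $A^{p+1}\cup B^{p+1}$ (the ``$\{A,B\}$-small'' chains), since the support of any such chain, being a penumbra, restricts to penumbras on $A^{p+1}$ and on $B^{p+1}$. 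As $A,B,C$ are $\Gamma$-invariant, the support of a $\Gamma$-invariant chain splits $\Gamma$-equivariantly along this decomposition, so all of this persists after passing to $\Gamma$-invariants. The second isomorphism theorem then identifies the excision source $\CX^\Gamma_*(B,A\cap B;M)$ with $(\CX^\Gamma_*(A;M)+\CX^\Gamma_*(B;M))/\CX^\Gamma_*(A;M)$, and the excision map is the map induced on this quotient by the inclusion of small chains. Mapping the short exact sequence of the small-chain pair into that of $(X,A)$ via the identity on the $\CX^\Gamma_*(A;M)$-term and invoking the five lemma, I reduce the whole axiom to one claim: the inclusion $\iota\colon\CX^\Gamma_*(A;M)+\CX^\Gamma_*(B;M)\hookrightarrow\CX^\Gamma_*(X;M)$ is a homology isomorphism.

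For the core claim I would use that every chain has support in some penumbra of the multidiagonal, so $\CX^\Gamma_*(X;M)=\bigcup_E\CX^{\Gamma,E}_*(X;M)$, the directed union over $\Gamma$-invariant symmetric entourages $E$ of the subcomplexes of chains supported in the $E$-penumbra of $\Diag[X,p+1]$. Fix such an $E$. By \eqref{eq:excisiveness} choose $F\supseteq E$ with $\Pen_E(A)\cap\Pen_E(B)\subset\Pen_F(A\cap B)$; then every genuinely straddling simplex of an $E$-scale chain (one with a vertex in $C$ and a vertex in $X\setminus A$) has all its vertices in $\Pen_F(A\cap B)$. I would then produce a $\Gamma$-equivariant coarse map $v_E\colon X\to X$ that is close to the identity, fixes every point outside $\Pen_F(A\cap B)\cap C$, and carries $\Pen_F(A\cap B)\cap C$ into $A\cap B$. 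The effect is that $(v_E)_\#$ sends $\CX^{\Gamma,E}_*(X;M)$ into the small chains: a straddling $E$-simplex has its $C$-vertices pushed into $A\cap B\subset B$ while its other vertices already lie in $B$, so its image is $B$-small, and small simplices stay small. Since $v_E$ is close to $\id$, the prism operator of \Cref{lem:ordinarycoarsehomologystronghomotopy} supplies a $\Gamma$-equivariant chain homotopy $T_E$ with $\partial T_E+T_E\partial=(v_E)_\#-\id$, and $T_E$ visibly preserves smallness, since it only inserts vertices of the form $x_i$ or $v_E(x_i)$, which remain on the same side.

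With these operators the isomorphism follows one scale at a time. For surjectivity, a cycle $z$ lies in some $\CX^{\Gamma,E}_*$, and $(v_E)_\# z$ is a small cycle with $z-(v_E)_\# z=\partial T_E z$, so $[z]=\iota_*[(v_E)_\# z]$. For injectivity, a small cycle $w$ with $\iota(w)=\partial u$, $u\in\CX^{\Gamma,E}_*$, satisfies $[(v_E)_\# w]=[w]$ in the homology of the small chains (again via $T_E$, which keeps everything small), while $(v_E)_\# w=\partial((v_E)_\# u)$ with $(v_E)_\# u$ small, forcing $[w]=0$. I expect the only genuinely nontrivial step to be the construction of $v_E$: selecting, for each $x\in\Pen_F(A\cap B)\cap C$, a nearby point of $A\cap B$ compatibly with the stabilizer of $x$, so that the assignment descends to a single-valued $\Gamma$-equivariant map. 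Non-equivariantly this selection is immediate; in the equivariant case I would obtain it by the same Zorn's-lemma-with-stabilizer-enlargement argument used for \Cref{lem:goodequivariantdiscretization}, which is precisely the device that makes equivariant pushing maps into a coarsely dense invariant subset available.
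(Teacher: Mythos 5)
Your reduction is sound up to its final step: the identification of $\CX^\Gamma_*(A;M)+\CX^\Gamma_*(B;M)$ with the invariant chains supported in $A^{p+1}\cup B^{p+1}$, the second-isomorphism-theorem identification of the excision source, the five-lemma step, and the scale-by-scale argument using $(v_E)_\#$ together with the smallness-preserving prism operator $T_E$ are all correct, \emph{granted that $v_E$ exists}. This Mayer--Vietoris-style route through small chains is genuinely different from the paper's proof, which never introduces small chains: there one splits a relative cycle $c$ at its own scale into $c_1\in\CX^\Gamma_p(X\setminus A;M)$ plus $c_2\in\CX^\Gamma_p(\Pen_E(A);M)$ and then uses that the inclusion $(X\setminus C,A\setminus C)\subset(X\setminus C,\Pen_E(A)\setminus C)$ is an equivariant coarse equivalence. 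In the non-equivariant case your proof is complete as written, since there the selection defining $v_E$ is unconstrained.

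The genuine gap is the equivariant existence of $v_E$, and the device you invoke cannot supply it. The stabilizer-enlargement argument of \Cref{lem:goodequivariantdiscretization} works only because there the \emph{target} may be enlarged: one adjoins new orbits $y_\Sigma\Gamma$ with prescribed stabilizers to the discretization. Here the target $A\cap B$ is a prescribed subspace, and it may contain no point with the stabilizer you need. Concretely, take $X=\Z^2$ with the sup-metric and $\Gamma=\Z/2$ acting by $(a,b)\mapsto(b,a)$ (a proper, isometric action); let $A=\{(a,b):|a-b|\le 2\}$ and $C=\{(a,b):a=b\}\cup\{(a,b)\in A:|a+b|\le 100\}$. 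This is an excision (for each $E_R$ one checks $\Pen_{E_R}(A)\setminus C\subset\Pen_F(A\setminus C)$ directly: the piece with $|a+b|\le 100$ is bounded, and points with $|a+b|>100$ reach $A\setminus C$ within distance $R+1$). Now $A\cap B=\{(a,b):1\le|a-b|\le 2,\ |a+b|>100\}$ contains no $\Gamma$-fixed point at all, whereas $C\cap\Pen_F(A\cap B)$ contains the $\Gamma$-fixed diagonal points $(a,a)$ with $|a|>50$, each at distance $1$ from $A\cap B$. Since any equivariant map sends fixed points to fixed points, no equivariant $v_E$ exists, no matter how weak a closeness bound you allow. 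Note that the paper's construction is not interchangeable with yours at this point: its retraction moves only points of $\Pen_E(A)\setminus C$, i.e.\ points \emph{outside} $C$ (which in this example all have trivial stabilizer), while your map must move points \emph{of} $C$, which is exactly where the obstruction sits. So the final step needs a genuinely different idea --- some way of making the pushing work at chain level without a globally defined equivariant map of spaces --- and as it stands the proposal is incomplete.
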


\begin{proof}
We first show surjectivity of the map \(\HX^\Gamma_p(X\setminus C,A\setminus C;M)\to\HX^\Gamma_p(X,A;M)\). An element of the target is represented by a $\Gamma$-equivariant chain $c\in\CX^\Gamma_p(X;M)$ with $\partial c\in \CX^\Gamma_{p-1}(A;M)$. As $\supp(c)$ is a $\Gamma$-invariant penumbra of the multidiagonal, the set
\begin{align*}
E\coloneqq\{(y,z)\in X\times X\mid \exists (x_0,\dots,x_p)\in \supp(c),\,
\exists i,j\in\{0,\dots,p\}\colon x_i=y\wedge x_j=z\}\cup\Diag[X]
\end{align*}
is a $\Gamma$-invariant entourage which contains the diagonal. In particular, all $E$-penumbras $\Pen_E(B)$ of $\Gamma$-invariant subspaces $B$ are also $\Gamma$-invariant, and we have included the diagonal into $E$ to ensure that $\Pen_E(B)\supset B$.

Then the chain $c$ can be decomposed as $c=c_1+c_2$ with $c_2$ being the $\Gamma$-equivariant chain in $\CX^\Gamma_p(\Pen_E(A);M)$ consisting of all those summands of $c$ which are supported in $\Pen_E(A)^{p+1}$, and therefore $c_1\in\CX^\Gamma_p(X\setminus A;M)\subset \CX^\Gamma_p(X\setminus C;M)$.
We have
\begin{align*}
\partial c_1=\partial c-\partial c_2 &\in \CX^\Gamma_{p-1}(X\setminus C;M)\cap\CX^\Gamma_{p-1}(\Pen_E(A);M)
\\&= \CX^\Gamma_{p-1}(\Pen_E(A)\setminus C;M)
\end{align*}
and hence $c_1$ represents a class in
\(\HX^\Gamma_p(X\setminus C,\Pen_E(A)\setminus C;M)\)
which is clearly mapped to the class of $c$ in $\HX^\Gamma_p(X,A;M)$ under the inclusion of pairs of coarse $\Gamma$-spaces.

Now the surjectivity of the excision map follows by noting that the inclusion $(X\setminus C,A\setminus C)\subset (X\setminus C,\Pen_E(A)\setminus C)$ is a $\Gamma$-equivariant coarse equivalence and evoking the preceeding lemma. A coarse inverse up to closeness is obtained as follows: The excisiveness condition gives us an entourage $F$ such that
\(\Pen_E(A)\setminus C\subset \Pen_F(A\setminus C)\).
Note that by replacing $F$ with $\bigcap_{\gamma\in\Gamma} F\gamma$, if necessary, we can assume that $F$ is $\Gamma$-invariant. This works, because $E$ is $\Gamma$-invariant. The coarse inverse can now be defined as being the identity on $X\setminus \Pen_E(A)$ and mapping $\Pen_E(A)\setminus C$ to $A\setminus C$ in a way which is $F$-close to the identity and $\Gamma$-equivariant.

In order to show injectivity, we start with $c\in\CX^\Gamma_p(X\setminus C;M)$ with $\partial c\in \CX^\Gamma_{p-1}(A\setminus C;M)$ for which there is $d\in \CX^\Gamma_{p+1}(X;M)$ such that $\partial d-c\in\CX^\Gamma_p(A;M)$, because this is equivalent to $c$ representing an element of the kernel. Decomposing $d$ into $d_1\in\CX^\Gamma_{p+1}(X\setminus C;M)$ and $d_2\in \CX^\Gamma_{p+1}(\Pen_E(A);M)$ as above, where this time $E$ is obtained from the support of $d$, we see that
\begin{align*}
\partial d_1-c=\partial d-c-\partial d_2&\in \CX^\Gamma_p(X\setminus C;M)\cap\CX^\Gamma_p(\Pen_E(A);M)
\\&=\CX^\Gamma_p(\Pen_E(A)\setminus C;M).
\end{align*}
This means that the class of $c$ is mapped to zero under
\[
\HX^\Gamma_p(X\setminus C,A\setminus C;M)\to\HX^\Gamma_p(X\setminus C,\Pen_E(A)\setminus C;M),
\]
but we have already seen above that the latter map is an isomorphism. The claim follows.
\end{proof}

\begin{lem}\label{lem:ordinarystronghomologyflasqueness}
The ordinary $\Gamma$-equivariant coarse homology satisfies the flasqueness axiom.
\end{lem}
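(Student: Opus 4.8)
The plan is to perform an \emph{Eilenberg swindle} at the level of chain complexes. Let $\phi\colon X\to X$ be the $\Gamma$-equivariant flasqueness map of \Cref{def:equiflasque}, so that $\phi$ preserves $A$, is close to the identity, eventually escapes every bounded set, and has equicontrolled powers $\{\phi^n\}_{n\in\N}$. Since each $\phi^n$ is a $\Gamma$-equivariant coarse map preserving $A$, it induces a chain endomorphism $(\phi^n)_*$ of $\CX^\Gamma_*(X,A;M)$. I would then define $\Phi\coloneqq\sum_{n=0}^\infty(\phi^n)_*$ and show that it is a well-defined $\Gamma$-equivariant chain endomorphism of $\CX^\Gamma_*(X,A;M)$, with the convention $\phi^0=\id$.

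The main obstacle is precisely the well-definedness of this infinite sum, and it is here that all three defining properties of flasqueness enter. Suppose $c$ is a chain whose support lies in the $E$-penumbra of the multidiagonal $\Diag[X,p+1]$. Equicontrolledness supplies a single entourage $F$ with $(\phi^n\times\phi^n)(E)\subset F$ for all $n$, so each $\phi^n(\supp(c))$, and hence $\supp(\Phi c)$, lies in the fixed $F$-penumbra of the multidiagonal. For local finiteness, fix a precompact $K\subset X^{p+1}$, which is contained in $L^{p+1}$ for some bounded $L\subset X$. The escape property gives $N_L$ with $\im(\phi^n)\subset X\setminus L$ for $n\geq N_L$, so only the finitely many terms with $n<N_L$ can have support meeting $K$; and for each such $n$, properness of $\phi^n$ makes the componentwise preimage of $K$ precompact, so $\supp(c)$ meets it in a finite set. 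Thus $\supp(\Phi c)\cap K$ is finite and every coefficient of $\Phi c$ is a genuine finite sum. Equivariance is immediate from the $\Gamma$-equivariance of $\phi$, and $\Phi$ preserves the subcomplex $\CX^\Gamma_*(A;M)$ because $\phi$ preserves $A$, so $\Phi$ descends to the quotient. Since on any individual chain the sum is locally finite and $\partial$ acts summandwise, $\Phi$ commutes with $\partial$ and is a chain map.

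Finally I would exploit the telescoping identity. One checks termwise that $(\id-\phi_*)\circ\Phi=\sum_{n\geq0}(\phi^n)_*-\sum_{n\geq0}(\phi^{n+1})_*=\id$ as chain endomorphisms. Passing to homology and using that $\phi$ is close to the identity (as a map of pairs, since $\phi$ preserves $A$), \Cref{lem:ordinarycoarsehomologystronghomotopy} shows that $\phi$ induces the identity on $\HX^\Gamma_*(X,A;M)$, whence $\id-\phi_*$ induces the zero endomorphism. Therefore the identity on $\HX^\Gamma_*(X,A;M)$ equals $(\id-\phi_*)\circ\Phi=0\circ\Phi_*=0$, forcing $\HX^\Gamma_*(X,A;M)=0$. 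The identical computation applies to the absolute groups $\HX^\Gamma_*(X;M)$ and $\HX^\Gamma_*(A;M)$, completing the verification of the flasqueness axiom.
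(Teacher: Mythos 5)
Your proof is correct, but it is not the argument the paper gives; the two are different implementations of the Eilenberg-swindle idea. The paper works entirely at chain level and in one step: it writes down an explicit degree~$+1$ operator, namely the $M$-linear extension of
\[(x_0,\dots,x_p)\mapsto\sum_{n\in\N}\sum_{i=0}^p(-1)^i(\phi^{n+1}(x_0),\dots,\phi^{n+1}(x_i),\phi^{n}(x_i),\dots,\phi^{n}(x_p))\,,\]
i.e.\ the telescope of the prism operators joining $(\phi^n)_*$ to $(\phi^{n+1})_*$, and checks that it is a well-defined chain contraction of $\CX^\Gamma_*(X,A;M)$; well-definedness uses the same three flasqueness properties you use, but closeness of $\phi$ to $\id$ is needed already for the penumbra condition, since the prism simplices mix $\phi^n$- and $\phi^{n+1}$-coordinates. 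You instead build the degree-$0$ operator $\Phi=\sum_{n\geq 0}(\phi^n)_*$ (whose support condition needs only equicontrolledness), prove the chain-level telescoping identity $(\id-\phi_*)\circ\Phi=\id$, and then import \Cref{lem:ordinarycoarsehomologystronghomotopy} to conclude that $\id-\phi_*$ induces zero on homology, so that homology must vanish. What the paper's route buys is a stronger and self-contained conclusion — the chain complex itself is contractible, with no appeal to the homotopy axiom — at the price of the sign bookkeeping in the prism formula. Your route buys a cleaner combinatorial verification (pure pushforwards, no mixed simplices) and a more transparent swindle structure, at the price of depending on the previously established homotopy invariance; since that lemma is proved earlier in the paper, there is no circularity, and your argument is a valid alternative proof. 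One point worth making explicit in a final write-up is the interchange $\phi_*\circ\Phi=\sum_{n\geq 0}(\phi^{n+1})_*$, which holds because all the sums involved are locally finite, and the fact that taking homology is additive on chain maps, so that $H(\id-\phi_*)=\id-H(\phi_*)$.
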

\begin{proof}
If $(X,A)$ is a flasque coarse $\Gamma$-space with $\Gamma$-equivariant $\phi$ as in Definitions~\ref{def:flasqueness} and~\ref{def:equiflasque}, then a chain contraction is given by
the $M$-linear extension $\CX^\Gamma_*(X,A;M)\to\CX^\Gamma_{*+1}(X,A;M)$ of the mapping
\[
(x_0,\dots,x_p)\mapsto\sum_{n\in\N}\sum_{i=0}^p(-1)^i\big(\phi^{n+1}(x_0),\dots,\phi^{n+1}(x_i), \phi^{n}(x_i),\dots,\phi^{n}(x_i)\big).
\]
The sums are finite on all relatively compact $K^{p+1}$, because the image of $\phi^n$ does not intersect $K$ for $n$ large enough and $\phi$ is close to the identity. Furthermore, equicontrolledness of the $\phi^n$ together with closeness of $\phi$ to $\id$ ensure that the support of $c$ is a penumbra of the multidiagonal.
\end{proof}

\subsection{Ordinary coarse cohomology}
\label{sec:ordinarycoarsecohomology}

Roe defined coarse cohomology in \cite{RoeCoarseCohomIndexTheory} only for metric spaces without group action and with coefficients in $\R$.
We generalize the definition below in the obvious way.

It is a bit surprising and disappointing that, although ordinary coarse cohomology is arguably the most elementary coarse cohomology theory whose groups can be defined on all coarse $\Gamma$-spaces, we nevertheless have to restrict ourselves to countably generated coarsely connected isocoarse $\Gamma$-spaces in order to be able to prove the excision axiom.

\begin{defn}
Let $X$ be a coarse space and $M$ an abelian group. For $p\in\N$ we define $\CX^p(X;M)$ as the group of all functions $\varphi\colon X^{p+1}\to M$ whose support \(\supp(\varphi)\) intersects each penumbra of the multidiagonal in a relatively compact subset.
For $p\in\Z\setminus\N$ we define $\CX^p(X;M)\coloneqq 0$.
These groups together with the Alexander--Spanier coboundary maps $\delta\colon\CX^p(X;M)\to\CX^{p+1}(X;M)$, that is
\begin{equation*}
\delta\varphi(x_0,\dots,x_{p+1})\coloneqq\sum_{i=0}^{p+1}(-1)^i\varphi(x_0,\dots,x_{i-1},x_{i+1},\dots,x_{p+1}),
\end{equation*}
constitute a cochain complex.
If $A\subset X$ is a coarse subspace, then we define the cochain complex $\CX^*(X,A;M)$ as the kernel of the surjective cochain map $\CX^*(X;M)\to \CX^*(A;M)$ and its cohomology is the \emph{ordinary coarse cohomology} $\HX^*(X,A;M)$ of the pair $(X,A)$ with coefficients in $M$.

If $\Gamma$ is a discrete group acting isocoarsely on $(X,A)$ from the right and acting on $M$ from the~left, then it also acts on the cochain complex $\CX^*(X,A;M)$ from the left via $(\gamma\varphi)(x_0,\dots,x_p)\allowbreak\coloneqq \gamma(\varphi( x_0\gamma,\dots,x_p\gamma))$. Hence we have a quotient cochain complex
$\CX_\Gamma^*(X,A;M)\coloneqq\linebreak\Gamma\backslash\CX^*(X,A;M)$,
i.e., we divide out the subcomplexes generated by $\gamma\varphi-\varphi$ for all $\gamma$, $\varphi$, and we call its cohomology the $\Gamma$-equivariant coarse cohomology $\HX_\Gamma^*(X,A;M)$.
\end{defn}

Ordinary $\Gamma$-equivariant coarse cohomology is clearly contravariantly functorial under $\Gamma$-equivariant maps between pairs of coarse $\Gamma$-spaces.

\begin{lem}
The ordinary $\Gamma$-equivariant coarse cohomology satisfies the exactness axiom.
\end{lem}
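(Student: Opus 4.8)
The plan is to produce, after the passage to $\Gamma$-coinvariants, a short exact sequence of cochain complexes
\[0\to\CX_\Gamma^*(X,A;M)\to\CX_\Gamma^*(X;M)\to\CX_\Gamma^*(A;M)\to 0\]
and then to read off the long exact cohomology sequence, together with the connecting homomorphisms, from the standard zig-zag lemma of homological algebra. The construction of this sequence at cochain level is immediate from the definitions, so the entire difficulty is concentrated in the interaction between the coinvariants functor $\Gamma\backslash(\blank)$ and exactness; this is the cohomological mirror of the phenomenon handled in the homology case (where the issue was instead the failure of invariants to be right exact).

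First I would record that, before dividing out the $\Gamma$-action, the degreewise sequence
\[0\to\CX^p(X,A;M)\xrightarrow{j}\CX^p(X;M)\xrightarrow{r}\CX^p(A;M)\to 0\]
is short exact as a sequence of $\Gamma$-modules, where $j$ is the defining inclusion of the kernel and $r$ is the restriction to $A^{p+1}$. Surjectivity of $r$ is witnessed by the extension-by-zero section $s$, which sends $\psi\colon A^{p+1}\to M$ to the function on $X^{p+1}$ equal to $\psi$ on $A^{p+1}$ and to $0$ elsewhere. I would verify that $s\psi$ genuinely lies in $\CX^p(X;M)$: its support is contained in $A^{p+1}$, and since the intersection of any penumbra of the multidiagonal in $X^{p+1}$ with $A^{p+1}$ is a penumbra of the multidiagonal in $A^{p+1}$ for the restricted coarse structure, precompactness of $\supp(\psi)$ against penumbras in $A$ upgrades to precompactness of $\supp(s\psi)$ against penumbras in $X$.

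The key point is that $s$ is $\Gamma$-equivariant, and this rests squarely on the $\Gamma$-invariance of $A$: because $x_i\gamma\in A$ if and only if $x_i\in A$, the prescriptions for $s(\gamma\psi)$ and $\gamma(s\psi)$ coincide. Hence in each degree $s$ furnishes a $\Gamma$-equivariant splitting of $r$, so that $\CX^p(X;M)\cong\CX^p(X,A;M)\oplus\CX^p(A;M)$ as $\Gamma$-modules. Applying the additive functor $\Gamma\backslash(\blank)$, which preserves direct sums, keeps the resulting sequence of coinvariants split short exact in every degree. Since $j$ and $r$ commute with the Alexander--Spanier coboundary $\delta$, so do the induced maps on coinvariants, and one obtains the desired short exact sequence of cochain complexes; the zig-zag lemma then delivers the long exact sequence in $\HX_\Gamma^*$ with natural connecting homomorphisms in the form required by the exactness axiom.

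I expect the main obstacle to be precisely the injectivity of the left-hand map $\CX_\Gamma^*(X,A;M)\to\CX_\Gamma^*(X;M)$. Because $\Gamma\backslash(\blank)$ is only right exact, this map could a priori acquire a kernel; without additional input one would only be guaranteed exactness at the middle and right terms. The equivariant section $s$ is exactly what rules this out, since a split exact sequence of $\Gamma$-modules stays exact under coinvariants. This is the dual counterpart of the $\Gamma$-equivariant splitting used in the homology proof, and once it is in place the remainder of the argument is the routine application of the standard homological machinery.
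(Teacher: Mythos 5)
Your proof is correct and takes essentially the same route as the paper: the paper's argument likewise rests on the $\Gamma$-equivariant extension-by-zero splitting $\CX^*(A;M)\to\CX^*(X;M)$ of the restriction map, from which it concludes that the quotient (coinvariant) complexes still form a short exact sequence and hence yield the long exact sequence. Your extra verifications --- that the extension by zero satisfies the support condition, that equivariance of the section uses $\Gamma$-invariance of $A$, and that the splitting is exactly what compensates for the mere right-exactness of the coinvariants functor --- simply spell out what the paper leaves implicit.
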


\begin{proof}
Again, this is implied by the fact that the short exact sequence
\[
0\to\CX^*(X,A;M)\to\CX^*(X;M)\to\CX^*(A;M)\to 0
\]
admits a $\Gamma$-equivariant split $\CX^*(A;M)\to \CX^*(X;M)$, defined by extending cocycles by $0$ on all $(p+1)$-tuples not supported within $A^{p+1}$, and hence the quotient complexes also form a short exact sequence.
\end{proof}

\begin{lem}\label{lem:ordinarycoarsecohomologystronghomotopy}
The ordinary $\Gamma$-equivariant coarse cohomology satisfies the strong homotopy axiom. In particular, it is also contravariantly functorial under \emph{closeness classes} of coarse maps.
\end{lem}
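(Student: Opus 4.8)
The plan is to dualize the generalized prism operator from the proof of \Cref{lem:ordinarycoarsehomologystronghomotopy}. Let $H\colon(X\times[a,b],\coarseStr_\cU)\to(Y,\coarseStr_Y)$ be a $\Gamma$-equivariant generalized coarse homotopy between $f$ and $g$ sending $A\times[a,b]$ into $B$, and retain all the notation from that proof: the equivariant subdivision points $s_{x,j}$, the numbers $k_{\mathbf{x}}$, the indices $i_{\mathbf{x}}(l)\in\{0,\dots,p\}$, and the tuples $\mathbf{x}_l\in(X\times I)^{p+2}$ for $l=1,\dots,k_{\mathbf{x}}$. I would define the transpose co-prism operator by
\[
P^H\colon\CX^{p+1}(Y;M)\to\CX^p(X;M),\qquad
(P^H\psi)(\mathbf{x})\coloneqq\sum_{l=1}^{k_{\mathbf{x}}}(-1)^{i_{\mathbf{x}}(l)}\,\psi\bigl(H(\mathbf{x}_l)\bigr),
\]
for $\mathbf{x}\in X^{p+1}$, where $H$ is applied coordinatewise to the $(p+2)$-tuple $\mathbf{x}_l$. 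Since each $k_{\mathbf{x}}$ is finite this is a finite sum, so there is no convergence issue and the entire content lies in checking that $P^H\psi$ is again an admissible cochain, that it is relative, and that it is equivariant.

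For well-definedness I would reuse the penumbra estimates already established in the homology proof. Fix a penumbra $\Pen_E(\Diag[X,p+1])$ and suppose $\mathbf{x}$ lies in it and in $\supp(P^H\psi)$, so that $H(\mathbf{x}_l)\in\supp(\psi)$ for some $l$. By the cited estimate $\mathbf{x}_l$ lies in the fixed penumbra $\Pen_{E'}(\Diag[X\times I,p+2])$ with $E'=E_\cU^{-1}\circ(E\swapcross\Diag[{[a,b]}])\circ E_\cU$, and since $H$ is controlled, $H(\mathbf{x}_l)$ lies in a fixed penumbra of the multidiagonal in $Y^{p+2}$ depending only on $E$. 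Thus $H(\mathbf{x}_l)$ lies in $\supp(\psi)$ intersected with that penumbra, which is precompact by the cochain condition on $\psi$; properness of $H$ then places each coordinate of $\mathbf{x}_l$ in a precompact subset of $X\times I$, which is contained in $K\times I$ for precompact $K\subset X$, forcing every $x_i\in K$. Hence $\supp(P^H\psi)$ meets the penumbra within $K^{p+1}$, so $P^H\psi\in\CX^p(X;M)$. The relative statement is immediate: if $\mathbf{x}\in A^{p+1}$ then every $\mathbf{x}_l\in(A\times I)^{p+2}$, so $H(\mathbf{x}_l)\in B^{p+2}$ and $\psi|_{B^{p+2}}=0$ gives $(P^H\psi)(\mathbf{x})=0$; thus $P^H$ carries $\CX^{p+1}(Y,B;M)$ into $\CX^p(X,A;M)$. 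Finally, the equivariant choices $s_{x\gamma,j}=s_{x,j}$, which yield $k_{\mathbf{x}\gamma}=k_{\mathbf{x}}$, $i_{\mathbf{x}\gamma}(l)=i_{\mathbf{x}}(l)$ and $(\mathbf{x}\gamma)_l=(\mathbf{x}_l)\gamma$, together with equivariance of $H$, give $P^H(\gamma\psi)=\gamma\,(P^H\psi)$, so $P^H$ descends to the quotient complexes $\CX_\Gamma^{p+1}(Y,B;M)\to\CX_\Gamma^p(X,A;M)$.

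It then remains to establish the cochain-homotopy identity $\delta P^H+P^H\delta=g^*-f^*$. Since $P^H$ is by construction the algebraic transpose of the homology prism operator $P_H$ with respect to the evaluation pairing between cochains and chains, this is exactly the transpose of the prism relation $\partial P_H+P_H\partial=g_*-f_*$ verified in \Cref{lem:ordinarycoarsehomologystronghomotopy}; alternatively, one simply repeats that elementary simplicial computation at the level of the Alexander--Spanier coboundary \eqref{eq:AScoboundary}. Either way this yields $f^*=g^*$ on $\HX_\Gamma^*$, which is the strong homotopy axiom. Functoriality under closeness classes then follows as a special case, since close maps are coarsely homotopic and hence generalized coarsely homotopic.

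I expect the only genuine obstacle to be the well-definedness step, namely verifying that $P^H\psi$ meets every penumbra of the multidiagonal precompactly. This is precisely where the \emph{properness} of $H$ is used in an essential way, to convert precompactness of $\supp(\psi)$ against a penumbra in $Y$ back into boundedness of the coordinates $x_i$ in $X$, and where one must invoke the earlier description of precompact subsets of $(X\times I,\coarseStr_\cU)$ as lying in $K\times I$. All remaining verifications (the relative property, equivariance, and the homotopy identity) are formal transposes of arguments already carried out in the homological case.
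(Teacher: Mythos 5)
Your proposal is correct and takes essentially the same route as the paper, which also proves the lemma by dualizing the prism operator $P_H$ from \Cref{lem:ordinarycoarsehomologystronghomotopy} using the identical data $k_{\mathbf{x}}$, $i_{\mathbf{x}}(l)$, $\mathbf{x}_l$ (the paper's one-line proof simply states the formula and asserts ``it does the job''). Your write-up in fact supplies the verifications the paper omits — notably the support/precompactness check via properness of $H$ — and corrects what is evidently a typo in the paper's formula, where the domain should be $\CX_\Gamma^{p+1}(Y,B;M)$ rather than $\CX_\Gamma^{p+1}(X,A;M)$ and the sum should start at $l=1$.
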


\begin{proof}
The proof is essentially dual to the proof of Lemma~\ref{lem:ordinarycoarsehomologystronghomotopy}. Using exactly the same $k_{\mathbf{x}}$, $i_\mathbf{x}(l)$ and $\mathbf{x}_l$ constructed from an arbitrary $\mathbf{x}\in X^{p+1}$ we can define the cochain homotopy
\begin{align*}
P_H\colon\ \CX_\Gamma^{p+1}(X,A;M)&\to \CX_\Gamma^p(X,A;M),
\\
P_H(\varphi)(\mathbf{x})&\coloneqq\sum_{l=0}^{k_\mathbf{x}}(-1)^{i_\mathbf{x}(l)} \varphi(H_*(\mathbf{x}_l)).
\end{align*}
It does the job.
\end{proof}

\begin{lem}
On the admissible category of pairs of countably generated coarsely connected isocoarse $\Gamma$-spaces the ordinary $\Gamma$-equivariant coarse homology satisfies the excision axiom.
\end{lem}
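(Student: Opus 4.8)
The plan is to reproduce, on this smaller admissible category, the $\Gamma$-equivariant chain-level argument that already yields excision for ordinary $\Gamma$-equivariant coarse homology on all isocoarse $\Gamma$-spaces, checking at each step that the auxiliary pairs stay inside $\admissible$. Write the excision as $(X\setminus C,A\setminus C)\subset(X,A)$ with $C\subset A\subset X$. Restricted coarse structures on subspaces and penumbrae $\Pen_E(A)$ of $\Gamma$-invariant sets inherit both countable generation and isocoarseness, so every intermediate object below is again a countably generated isocoarse $\Gamma$-space, and it suffices to prove surjectivity and injectivity of the induced map $\HX^\Gamma_*(X\setminus C,A\setminus C;M)\to\HX^\Gamma_*(X,A;M)$ separately.

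For surjectivity, represent a class in the target by a $\Gamma$-invariant chain $c\in\CX^\Gamma_p(X;M)$ with $\partial c\in\CX^\Gamma_{p-1}(A;M)$. Since $c$ is $\Gamma$-invariant, the entourage $E$ formed from all pairs of entries of simplices in $\supp(c)$ together with the diagonal is $\Gamma$-invariant, hence so is $\Pen_E(A)$. Split $c=c_1+c_2$, where $c_2\in\CX^\Gamma_p(\Pen_E(A);M)$ gathers the summands supported in $\Pen_E(A)^{p+1}$ and $c_1\in\CX^\Gamma_p(X\setminus A;M)\subset\CX^\Gamma_p(X\setminus C;M)$; then $\partial c_1=\partial c-\partial c_2\in\CX^\Gamma_{p-1}(\Pen_E(A)\setminus C;M)$, so $c_1$ is a relative cycle for $(X\setminus C,\Pen_E(A)\setminus C)$. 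Excisiveness \eqref{eq:excisiveness} supplies $F$ with $\Pen_E(A)\setminus C\subset\Pen_F(A\setminus C)$, which I replace by $\bigcap_{\gamma\in\Gamma}F\gamma$ to keep it $\Gamma$-invariant; this makes $A\setminus C$ an $F$-dense, hence coarsely equivalent, subspace of $\Pen_E(A)\setminus C$. Thus both vertical inclusions in the square relating the pairs $(X\setminus C,A\setminus C)$, $(X\setminus C,\Pen_E(A)\setminus C)$, $(X,A)$, $(X,\Pen_E(A))$ are $\Gamma$-equivariant coarse equivalences and induce isomorphisms by strong homotopy invariance (\Cref{lem:ordinarycoarsehomologystronghomotopy}). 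As the bottom map sends $[c_1]$ to $[c_1]=[c]$ in $\HX^\Gamma_p(X,\Pen_E(A);M)$ (because $c_2$ is supported in $\Pen_E(A)$), a diagram chase shows $[c]$ lies in the image of the excision map.

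Injectivity is symmetric: for $c\in\CX^\Gamma_p(X\setminus C;M)$ with $\partial c\in\CX^\Gamma_{p-1}(A\setminus C;M)$ and a witness $d\in\CX^\Gamma_{p+1}(X;M)$ with $\partial d-c\in\CX^\Gamma_p(A;M)$, I form $E$ from $\supp(d)$, split $d=d_1+d_2$ as above, and obtain $\partial d_1-c\in\CX^\Gamma_p(\Pen_E(A)\setminus C;M)$; hence $[c]$ already vanishes in $\HX^\Gamma_p(X\setminus C,\Pen_E(A)\setminus C;M)$, and the isomorphism of the previous step forces $[c]=0$ in $\HX^\Gamma_p(X\setminus C,A\setminus C;M)$. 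The main point requiring care is purely the equivariant bookkeeping — keeping $E$, $F$ and all intermediate chains $\Gamma$-invariant, where isocoarseness is exactly what allows $F$ to be symmetrized over the orbit. Unlike the cohomological excision statement in this subsection, whose finiteness-on-penumbrae condition genuinely forces countable generation, the homological argument needs no such restriction; countable generation enters only to pin down the ambient admissible category and is otherwise inessential.
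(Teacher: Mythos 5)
Your argument is internally correct, but it proves the wrong statement. The word ``homology'' in this lemma is a typo: the lemma sits in \Cref{sec:ordinarycoarsecohomology}, whose preamble says explicitly that one must restrict to countably generated isocoarse $\Gamma$-spaces ``in order to be able to prove the excision axiom'' for ordinary coarse \emph{cohomology}, and the paper's proof of this lemma is entirely cohomological. Your own closing observation — that the homological argument needs no countable generation — should have been the tell. What you wrote is essentially a verbatim restriction of the homology excision proof from \Cref{sec:ordinarycoarsehomology}, which the paper has already established on the larger category of all coarse $\Gamma$-spaces; since excision morphisms in a full subcategory are the same inclusions, that restriction is automatic and carries no new content.

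The cohomological statement cannot be obtained by ``symmetry'' from your chain-splitting argument, and this is the genuine gap. For a coarse chain $c$, the support is contained in a \emph{single} penumbra of the multidiagonal, so one entourage $E$ extracted from $\supp(c)$ suffices to split $c=c_1+c_2$. A coarse cochain $\varphi$ satisfies instead the condition that $\supp(\varphi)$ meets \emph{every} penumbra of the multidiagonal in a precompact set; there is no single controlling entourage attached to $\varphi$, so no analogous splitting exists. The paper's proof works globally: it constructs a $\Gamma$-equivariant map $r\colon X\to X\setminus C$ which is the identity on $X\setminus C$, maps $C$ into $A\setminus C$, and restricts to a controlled map on $\Pen_E(X\setminus C)$ for \emph{every} entourage $E$. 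This is exactly where the hypotheses enter: one picks a chain $\Diag[X]=E_0\subset E_1\subset E_2\subset\dots$ of $\Gamma$-invariant entourages cofinal in the coarse structure (possible precisely because the space is countably generated and isocoarse), uses \eqref{eq:excisiveness} to find $F_n$ with $A\cap\Pen_{E_n}(X\setminus C)\subset\Pen_{F_n}(A\setminus C)$, and patches $r$ together on the annuli $\Pen_{E_n}(X\setminus C)\setminus\Pen_{E_{n-1}}(X\setminus C)$ by maps $F_n$-close to the identity. One must then verify the nontrivial point that $r^*$ preserves the cochain support condition (that $\supp(r^*\varphi)\cap P$ is precompact for every penumbra $P$ of the multidiagonal, using that $r$ is close to the identity, hence proper and coarse, on each $\Pen_{E\circ E^{-1}}(X\setminus C)$), and finally exhibit the cochain homotopy $P(\varphi)(x_0,\dots,x_{p-1})=\sum_{l=0}^{p-1}(-1)^l\varphi(x_0,\dots,x_l,r(x_l),\dots,r(x_{p-1}))$ showing $r^*$ inverts the restriction map on cohomology. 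None of these steps — the construction of $r$, the support verification, or the cochain homotopy — appears in your proposal.
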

\begin{proof}
Assume for a moment that there is a $\Gamma$-equivariant map $r\colon X\to X\setminus C$ which restricts to the identity on $X\setminus C$, maps $C$ (and hence also $A$) into $A\setminus C$ and restricts to a controlled map on $\Pen_E(X\setminus C)$ for every penumbra $E$ of $X$.
We claim that $r$ then induces an inverse to the excision homomorphism on the cohomology groups.

Let $\varphi\in\CX^p(X\setminus C,A\setminus C;M)$ and $P$ be a penumbra of the multidiagonal in $X^{p+1}$. Note that the latter is equivalent to $P$ being contained in $\bigcup_{x\in X}(\Pen_E(\{x\}))^{p+1}$ for some entourage $E$ of~$X$.
Hence, if one component $x_i$ of $\mathbf{x}=(x_0,\dots,x_p)\in P$ is not contained in $\Pen_{E\circ E^{-1}}(X\setminus C)$, then none of the components is contained in $X\setminus C$. In particular, all components of $\mathbf{x}$ are mapped to $A\setminus C$ by $r$, implying $r^{p+1}(\mathbf{x})\in(A\setminus C)^{p+1}$ and therefore $r^*\varphi(\mathbf{x})=0$.
This shows $\supp(r^*\varphi)\cap P\subset (\Pen_{E\circ E^{-1}}(X\setminus C))^{p+1}$. By assumption, the restriction of $r$ to $\Pen_{E\circ E^{-1}}(X\setminus C)$ is a controlled map. Together with the other assumption on $r$ we see that it is close to the identity on $\Pen_{E\circ E^{-1}}(X\setminus C)$ and therefore must also be proper, that is, it is even a coarse map. Now, controlledness of $r$ implies that $r^{p+1}(\supp(r^*\varphi)\cap P)$ is a penumbra of the multidiagonal in $X\setminus C$. In addition, it is also contained in $\supp(\varphi)$ and hence it is relatively compact due to the support condition on $\varphi$. Exploiting that $r$ is also proper we conclude that $\supp(r^*\varphi)\cap P$ is relatively compact.

We have just shown that $r^*$ maps $\CX^*(X\setminus C,A\setminus C;M)$ to $\CX^*(X,A;M)$. The map is clearly compatible with the group action and the coboundary map. Therefore it induces a map $r^*\colon \HX_\Gamma^*(X\setminus C,A\setminus C;M)\to\HX_\Gamma^*(X,A;M)$ on cohomology and we claim that it is an inverse to the excision map. One of the two compositions is trivially the identity, because $r$ restricts to the identity on $X\setminus C$. The other composition is homotopic to the identity on cochain level via the ``prism operator''
\[
P(\varphi)(x_0,\dots,x_{p-1})\coloneqq\sum_{l=0}^{p-1}(-1)^l\varphi(x_0,\dots,x_l,r(x_l),\dots,r(x_{p-1})),
\]
where the proof that this formula really defines a map $\CX_\Gamma^p(X,A;M)\to\CX_\Gamma^{p-1}(X,A;M)$ is completely analogous to the above proof that $r^*$ maps $\CX_\Gamma^p(X\setminus C,A\setminus C;M)$ to $\CX_\Gamma^p(X,A;M)$.

It remains to prove that the map $r$ exists. If $X$ is a countably generated isocoarse $\Gamma$-space, then we can pick a sequence $\Diag[X]=E_0\subset E_1\subset E_2\subset\cdots$ of $\Gamma$-invariant entourages such that every entourage of $X$ is contained in one of the $E_n$. Using the alternative excisiveness condition~\eqref{eq:excisiveness} we find for each $E_n$ an entourage $F_n$ such that $A\cap\Pen_{E_n}(X\setminus C)\subset \Pen_{F_n}(A\setminus C)$. We can then define $r$ by patching together the identity on $X\setminus C=\Pen_{E_0}(X\setminus C)$ with maps
\[
\Pen_{E_{n}}(X\setminus C)\setminus \Pen_{E_{n-1}}(X\setminus C)\to A\setminus C
\]
that are $F_n$-close to the identity. The assumption of coarse connectedness ensures that this defines~$r$ on the whole space $X$. All the above-mentioned assumptions on $r$ are readily verified.
\end{proof}

Finally, the flasqueness axiom is proven by dualizing the proof of Lemma~\ref{lem:ordinarystronghomologyflasqueness}.

\begin{lem}
The ordinary $\Gamma$-equivariant coarse cohomology satisfies the flasqueness axiom.
\end{lem}

\subsection{The stable Higson corona}\label{sec:sHigCor}

The stable Higson corona was introduced in \cite{EmeMeyDualizing}.
A function $f\colon X\to D$ from a coarse space $X$ to a \textCstar-algebra $D$ is said to
\begin{enumerate}\itemsep=0pt
\item[$\bullet$] vanish at infinity if for each $\varepsilon>0$ there is a relatively compact subset $K$ such that $f$ is bounded by $\varepsilon$ outside of $K$,
\item[$\bullet$] have \emph{vanishing variation} if for every entourage $E$ of $X$ the function
\[
\Var_Ef\colon\quad X\to [0,\infty),\qquad x\mapsto \sup\{\|f(x)-f(y)\|\mid(x,y)\in E\}
\]
vanishes at infinity.
\end{enumerate}
Denote by $\sHigFuAlg(X;D)$ the \textCstar-algebra of all bounded functions of vanishing variation from $X$ to $D\otimes\Kom$, where $\Kom$ denotes the \textCstar-algebra of compact operators on a separable infinite dimensional Hilbert space. Furthermore, we define $\sHigFuAlgRed(X;D)$ to be the \textCstar-algebra of all bounded functions of vanishing variation from $X$ into the stable multiplier algebra $\multiplier^s(D)\coloneqq\multiplier(D\otimes\Kom)$ of $D$ such that $f(x)-f(y)\in D\otimes\Kom$ for all $x,y\in X$ which lie in the same coarse component.
Both of them contain the ideal $\VanInfFuAlg(X;D\otimes\Kom)$ of all bounded functions $X\to D\otimes\Kom$ vanishing at infinity. The \textCstar-algebras $\sHigFuAlg(X;D)$, $\sHigFuAlgRed(X;D)$, $\VanInfFuAlg(X;D\otimes\Kom)$ have been introduced in \cite[proof of Proposition~3.7]{EmeMeyDualizing} with the additional condition that the functions should also be Borel, but all statements that are relevant to us still hold without this condition.

The \emph{stable Higson corona of $X$ with coefficients in $D$} is the quotient \textCstar-algebra
\[
\sHigCor(X;D)\coloneqq \sHigFuAlg(X;D)/\VanInfFuAlg(X;D\otimes\Kom)
\]
and the \emph{reduced stable Higson corona of $X$ with coefficients in $D$} is the quotient \textCstar-algebra
\[
\sHigCorRed(X;D)\coloneqq \sHigFuAlgRed(X;D)/\VanInfFuAlg(X;D\otimes\Kom).
\]
All of the above function algebras are clearly contravariantly functorial under closeness classes of coarse maps.

If $X$ is moreover a topological coarse space, then there are the sub-\textCstar-algebras of continuous functions
\[
\Cz(X;D\otimes\Kom)\subset\VanInfFuAlg(X;D\otimes\Kom),\qquad
\sHigCom(X;D)\subset\sHigFuAlg(X;D),\qquad
\sHigComRed(X;D)\subset\sHigFuAlgRed(X;D),
\]
which are contravariantly functorial under continuous coarse maps and the inclusions induce isomorphisms
\begin{gather}
\sHigCor(X;D)\cong \sHigCom(X;D)/\Cz(X;D\otimes\Kom),\nonumber
\\
\sHigCorRed(X;D)\cong \sHigComRed(X;D)/\Cz(X;D\otimes\Kom),\label{eq:sHigCorcontinuousrepresentatives}
\end{gather}
see \cite[proof of Proposition 3.7]{EmeMeyDualizing}.
The right hand sides are the familiar original definitions of the stable Higson coronas \cite[Definitions 3.2 and 5.4]{EmeMeyDualizing}.

For our purposes, we need the relative versions:
\begin{defn}\label{defn:stableHigsoncoronas}
If $(X,A)$ is a pair of coarse spaces, then we define its (\emph{unreduced and reduced$)$ relative stable Higson coronas with coefficients in $D$} as the kernels of the restrictions to $A$,
\begin{gather*}
\sHigCor(X,A;D)\coloneqq\ker(\sHigCor(X;D)\to\sHigCor(A;D)),
\\
\sHigCorRed(X,A;D)\coloneqq\ker\big(\sHigCorRed(X;D)\to\sHigCorRed(A;D)\big).
\end{gather*}
If $(X,A)$ is even a pair of topological coarse spaces, then we also define its (\emph{unreduced and reduced$)$ relative stable Higson compactifications with coefficients in $D$} as the kernels of the restrictions to~$A$,
\begin{gather*}
\sHigCom(X,A;D)\coloneqq\ker(\sHigCom(X;D)\to\sHigCom(A;D)),
\\
\sHigComRed(X,A;D)\coloneqq\ker\big(\sHigComRed(X;D)\to\sHigComRed(A;D)\big).
\end{gather*}
\end{defn}

Note that $\sHigCor(X,A;D)=\sHigCorRed(X,A;D)$ and $\sHigCom(X,A;D)=\sHigComRed(X,A;D)$ whenever $A$ meets all coarse components of $X$, whereas for $A=\varnothing$ we recover $\sHigCor(X;D)$, $\sHigCorRed(X;D)$, $\sHigCom(X;D)$ and~$\sHigComRed(X;D)$. Also, $\sHigCor(X;D)=\sHigCorRed(X,K;D)$ for any non-empty relatively compact $K\subset X$.
The relative stable Higson coronas are clearly contravariantly functorial under coarse maps of pairs and the relative stable Higson compactifications are so under continuous coarse maps of pairs.

\begin{rem}\label{rem:howtoreducestableHigsoncorona}
For coarsely disconnected spaces $X$, it is debatable what the correct definition of the reduced stable Higson corona is. One could also ask for $f(x)-f(y)\in D\otimes\Kom$ for all $x,y\in X$, not just for those in the same connected component, just like the reduced singular (co-)chain complexes are obtained from the unreduced ones by augmentation with one copy of $\Z$ and not with one copy for each connected component of the space. As a result, the $\K$-theory of this reduced version of the stable Higson corona would not satisfy flasqueness (cf.\ Lemma~\ref{lem:sHigCorRedFlasque}), which is in analogy to reduced singular (co-)homology not vanishing on disjoint unions of more than one contractible spaces.

Our choice of definition is due to the fact that flasqueness is an indispensable technical tool and we need a version of this coarse cohomology theory satisfying this axiom.
Later on, in Section~\ref{sec:reducedtransgression}, we will also discuss a more flexible notion of topological reduced coarse (co-)homology, which also allows for componentwise augmentation.
\end{rem}

{\sloppy Note that we would immediately obtain long exact sequences for the $\K$-theory groups $\K_{1-*}(\sHigCor(-,-;D))$ and $\K_{1-*}\big(\sHigCorRed(-,-;D)\big)$ if only the restriction maps to $A$ were surjective. Unfortunately, this is a rather delicate problem which has only been solved in special cases. Willett had shown it for pairs of coarsely connected proper metric spaces for the reduced stable Higson coronas with trivial coefficients $D=\C$ \cite[Lemma 3.4]{WillettHomological}, but the proof also works for arbitrary coefficients and the claim for the unreduced stable Higson corona is an easy consequence. We will show in Lemma~\ref{lem:Willettssurjectivity} below how to deduce the generalization to coarsely disconnected proper metric spaces.
Bunke and Engel have discussed a generalization to a bigger class of coarse spaces, but also could not overcome all dificulties \cite[Section 5]{BunkeEngel_CoarseCohomologyTheories}.

}

\begin{lem}\label{lem:Willettssurjectivity}
If $X$ is a proper metric space and $A\subset X$, then the restriction maps \(\sHigCor(X;D)\to\sHigCor(A;D)\) and \(\sHigCorRed(X;D)\to\sHigCorRed(A;D)\) are surjective.
\end{lem}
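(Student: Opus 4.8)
The plan is to reduce both statements to a single extension problem: given a bounded function $g$ of vanishing variation on $A$ --- valued in $D\otimes\Kom$ in the unreduced case, or in $\multiplier^s(D)$ with $g(x)-g(y)\in D\otimes\Kom$ in the reduced case --- produce a bounded function $f$ of vanishing variation on all of $X$ with $f|_A-g\in\VanInfFuAlg(A;D\otimes\Kom)$. Since $X$ is coarsely connected we may assume $A\neq\emptyset$ (otherwise the target corona is $0$), so $d(x,A)<\infty$ for every $x$. The naive candidate $f=g\circ c$, with $c(x)$ a nearest point of $A$, fails: on a fixed penumbra $\Pen_E(A)$ the projection $c$ is controlled and $g\circ c$ does have vanishing variation there, but for $x$ far from $A$ two nearby points $x,y$ can have projections $c(x),c(y)$ arbitrarily far apart in $A$, so no fixed entourage controls the pair $(c(x),c(y))$ and vanishing variation of $g$ gives nothing. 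The idea I would use is therefore to mollify the projection at a length scale comparable to $d(x,A)$, so that far from $A$ the extension varies only on the scale it is allowed to, while near $A$ it is genuinely close to $g$.

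Concretely, I would fix a maximal $1$-separated net $\{a_i\}_{i\in I}\subset A$; properness makes it locally finite, so every bounded subset of $X$ meets it in a finite set. Writing $r(x)\coloneqq\max\{1,d(x,A)\}$, I assign to each $i$ a Lipschitz weight $w_i(x)\in[0,1]$ supported on those $a_i$ with $d(x,a_i)\le d(x,A)+r(x)$ and varying on the scale $r(x)$, and set
\[f(x)\coloneqq\frac{\sum_{i\in I}w_i(x)\,g(a_i)}{\sum_{i\in I}w_i(x)}\,.\]
For each fixed $x$ only finitely many summands are nonzero and the denominator is positive (the nearest net point contributes), so $f$ is a well-defined convex combination of values of $g$; in particular $\|f\|_\infty\le\|g\|_\infty$, and each $f(x)$ lies in $D\otimes\Kom$ (respectively in $\multiplier^s(D)$). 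In the reduced case, fixing any contributing index $i_0$, each difference $g(a_i)-g(a_{i_0})$ lies in $D\otimes\Kom$, so $f(x)-g(a_{i_0})\in D\otimes\Kom$ and hence $f(x)-f(y)\in D\otimes\Kom$ as required.

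To finish I would verify the two remaining properties. For the matching on $A$: if $a\in A$ then $r(a)=1$, the contributing net points lie within a fixed bounded distance of $a$, and vanishing variation of $g$ forces $f(a)-g(a)\to 0$ as $a\to\infty$, i.e.\ $f|_A-g\in\VanInfFuAlg(A;D\otimes\Kom)$. For vanishing variation of $f$ I would split, for a fixed entourage $d(x,y)\le R$, into two regimes as $x\to\infty$: if $d(x,A)$ stays bounded, then all net points contributing to $f(x)$ and $f(y)$ lie in one fixed entourage of $A$ and tend to infinity there, so all the $g(a_i)$ occurring agree up to $o(1)$ by vanishing variation of $g$, whence $\|f(x)-f(y)\|\to 0$; and if $d(x,A)\to\infty$, then since the normalized weights sum to $1$ and vary on scale $r(x)$, writing $f(x)-f(y)$ against a fixed $g(a_{i_0})$ gives $\|f(x)-f(y)\|\lesssim (R/r(x))\,\|g\|_\infty\to 0$. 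Combining the regimes yields $\Var_E f\to 0$ for every entourage $E$, so $f\in\sHigFuAlg(X;D)$ (respectively $\sHigFuAlgRed(X;D)$), and its class restricts to that of $g$. The same construction serves both coronas, the only difference being the ambient algebra and the ideal-valued-difference check. The main obstacle is exactly the control of the variation away from $A$: this is what kills the nearest-point extension and forces the scale-$d(x,A)$ mollification, and it is also the point where the calculus of the weights $w_i$ must be arranged so that the total-variation estimate $\sum_i|w_i(x)-w_i(y)|\lesssim R/r(x)$ genuinely holds.
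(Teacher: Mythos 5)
Your construction does not go through in the generality of the lemma, and the failure is exactly at the step you flag as the main obstacle: the total-variation estimate for the (normalized) weights in the regime $d(x,A)\to\infty$. This is not a matter of arranging the calculus of the $w_i$ more carefully; the estimate is false, because it implicitly requires that only boundedly many net points of $A$ sit near the boundary of the support $\{a\mid d(x,a)\le d(x,A)+r(x)\}$, and proper metric spaces --- even ones of coarsely bounded geometry --- give no such control. Concretely, let $X$ be the proper graph (path metric) containing, for each $n$, a configuration placed far out along a base ray: a vertex $p_n$, a path of length $s_n$ from $p_n$ to a vertex $x_n$, a path of length $2s_n$ from $x_n$ to a hub $h_n$, and $M_n=s_n^3$ vertices $q_{n,1},\dots,q_{n,M_n}$ adjacent to $h_n$. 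Put $A=\{p_n\mid n\}\cup\{q_{n,j}\mid n,j\}$ (this set is $1$-separated, so it is its own net), and let $g(p_n)=0$ and $g(q_{n,j})=e$ for a fixed rank-one projection $e\in\Kom$; since different values of $g$ occur only at pairs of distance $\ge s_n+1$, the function $g$ is bounded of vanishing variation. At $x_n$ one has $d(x_n,A)=s_n=r(x_n)$, and the $q_{n,j}$ lie at distance $2s_n+1$, just outside the support radius $2s_n$, so $f(x_n)=g(p_n)=0$. At the neighbour $y_n$ of $x_n$ towards $h_n$ one has $d(y_n,A)=s_n+1$, support radius $2s_n+2$, and all $M_n$ points $q_{n,j}$ (now at distance $2s_n$) enter with weight $\approx 1/s_n$ each; their total weight $\approx s_n^2$ swamps the weight $1$ of $p_n$, so $f(y_n)\approx e$. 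Hence $\|f(x_n)-f(y_n)\|\to 1$ although $d(x_n,y_n)=1$ and $x_n\to\infty$: $f$ does not have vanishing variation. Each individual weight changes only by $O(1/s_n)$, as you say, but $M_n\gg s_n$ of them change simultaneously, so after normalization the total variation of the weight vector is $\approx 2$, not $O(1/s_n)$. Changing the bump profile only moves the place along the path from $x_n$ to $h_n$ where the $q$'s enter the support; because of their number, their collective entry is never gradual. The same phenomenon can be realized inside a $3$-regular tree (take $Q_n$ to be the sphere of radius $2s_n+1$ around $x_n$), so bounded geometry does not rescue the argument: what your estimate really needs is a doubling-type condition on $A$. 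The remaining steps of your proposal (matching on $A$ modulo $\VanInfFuAlg$, the bounded-$d(\cdot,A)$ regime, the ideal-valued-difference check for the reduced case) are correct, but the gap in the far regime is fatal.

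For comparison: the paper itself gives no argument here --- it cites Willett's Lemma 3.4 and remarks that the proof persists with coefficients and yields the unreduced case --- and the extension problem is solved there by a mechanism of a different nature, which avoids averaging altogether and hence never sees multiplicities. One damps the nearest-point extension towards a single constant value: fix $a_0\in A$ and a nearest-point map $c\colon X\to A$, and set $\tilde f(x)=\lambda(x)\,g(c(x))+(1-\lambda(x))\,g(a_0)$, where $\lambda(x)=\max\{0,\,1-d(x,A)/\rho(d(x,x_0))\}$ and the scale function $\rho\nearrow\infty$ is chosen to grow slowly by a diagonal argument over the vanishing-variation moduli of $g$, so that for each fixed $R$ the quantity $\sup\{\|g(a)-g(b)\| \mid d(a,b)\le 2\rho(t)+2R+2,\ d(a,x_0)\ge t-\rho(t)-1\}$ tends to $0$ as $t\to\infty$. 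Then $\lambda(x)\,(g(c(x))-g(c(y)))$ is small by this choice (if $\lambda(x)>0$, the points $c(x),c(y)$ are at controlled distance and deep inside $A$), the term $(\lambda(x)-\lambda(y))(g(c(y))-g(a_0))$ is $O(R/\rho)$ in norm, and the reduced case comes for free since $\tilde f(x)-g(a_0)=\lambda(x)(g(c(x))-g(a_0))\in D\otimes\Kom$. The two essential differences to your scheme are that this extension depends on $g$ (through $\rho$), not only on the geometry of $(X,A)$, and that it uses only one value of $g$ at each point, so no normalization over an uncontrolled number of net points ever enters. To save your averaging idea one would have to weight net points by local density at scale $r(x)$ rather than uniformly, which is a substantially more delicate construction and not what is proposed.
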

\begin{proof}
The reduced and unreduced cases are clearly equivalent to each other, and they follow from surjectivity of the restriction map \(\sHigCom(X;D)\to\sHigCom(A;D)\), so let us show the latter.

First of all we may assume that $A$ meets all coarse components of $X$, because the extensions can simply be chosen to be zero on all coarse components which are disjoint from $A$.

Second, it suffices to consider the case, where $X$ and $A$ have at most countably many coarse components. In order to see this, let $f\in \sHigCom(A;D)$. For each $R\in\N$ and $n\in\N\setminus\{0\}$ we find a~relatively compact subset $K_{R,n}\subset A$ such that $\Var_{E_R}f(x)<\frac1n$ for all $x\in A\setminus K_{R,n}$. As~each~$K_{R,n}$ meets only finitely many coarse components of $A$, their union $\bigcup_{R,n}K_{R,n}$ are contained in a countable union of coarse components of $A$, which we denote by $A'\subset A$, and let $X'\subset X$ be the union of the corresponding coarse components of $X$. For all $x\in A\setminus A'$ we therefore have $\Var_{E_R}f(x)<\frac1n$ for all $R\in\N$ and $n\in\N\setminus\{0\}$, so that $f$ is a constant function on the coarse component containing $x$.
Thus, if we know that $\sHigCom(X';D)\to\sHigCom(A';D)$ is surjective, then we can pick any preimage $F'\in\sHigCom(X';D)$ of $f|_{A'}$ and extend it to a function $F\in \sHigCom(X;D)$ which is constant on each coarse component of $X\setminus X'$ and such that $F|_A=f$.

Thirdly, we can deduce the statement from the one for coarsely connected spaces as follows. Let $S\subset \big\{n^2\mid n\in\N\big\}$ be a set of square numbers which has the same cardinality as the set $\coarsecomp(A)$ of coarse components of $A$ (and hence also $X$) and let $S\to A, s\mapsto x_s$ be a map which induces a bijection $S\to \coarsecomp(A)$. If $d\colon X\times X\to [0,\infty]$ is the original coarsely disconnected proper metric, then we can define the coarsely connected proper metric $d'\colon X\times X\to [0,\infty)$ on $X$ as the largest metric such that $d'\leq d$ and $d(x_s,x_{t})\leq |s-t|$ for all $s,t\in S$. It is then clear that changing the metric from $d$ to $d'$ does not change the \textCstar-algebras \(\sHigCom(X;D)\) and \(\sHigCom(A;D)\).

Finally, for coarsely connected proper metric spaces the claim is proven exactly as \cite[Lemma~3.4]{WillettHomological}.
\end{proof}

\begin{lem}\label{lem:sHigCorExcision}
If $X$ is a proper metric space and $C\subset A\subset X$ satisfy the coarse excisiveness condition, then restriction to $X\setminus C$ yields isomorphism
\(\sHigCor(X,A;D)\cong\sHigCor(X\setminus C,A\setminus C;D)\) and \(\sHigCorRed(X,A;D)\cong\sHigCorRed(X\setminus C,A\setminus C;D)\).
\end{lem}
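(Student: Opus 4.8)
The plan is to describe both relative coronas as quotients of explicit function algebras and then to analyze the restriction map directly. Write $\sHigFuAlg(X,A;D)$ for the sub-\textCstar-algebra of $\sHigFuAlg(X;D)$ consisting of those $f$ whose restriction $f|_A$ vanishes at infinity. Since $\VanInfFuAlg(X;D\otimes\Kom)\subset\sHigFuAlg(X,A;D)$, and a class $[f]$ lies in $\ker(\sHigCor(X;D)\to\sHigCor(A;D))$ precisely when $f|_A$ vanishes at infinity, one obtains a natural identification $\sHigCor(X,A;D)\cong\sHigFuAlg(X,A;D)/\VanInfFuAlg(X;D\otimes\Kom)$, and likewise for $(X\setminus C,A\setminus C)$. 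Restriction to $X\setminus C$ carries $\sHigFuAlg(X,A;D)$ into $\sHigFuAlg(X\setminus C,A\setminus C;D)$ and $\VanInfFuAlg(X;D\otimes\Kom)$ into $\VanInfFuAlg(X\setminus C;D\otimes\Kom)$, so it induces the map in the statement. It thus suffices to prove this induced map is bijective.

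Injectivity is elementary and does not use the excisiveness hypothesis. Suppose $f\in\sHigFuAlg(X,A;D)$ restricts to a function vanishing at infinity on $X\setminus C$. By definition $f|_A$ vanishes at infinity as well, and since $C\subset A$ we have $X=(X\setminus C)\cup A$. Given $\varepsilon>0$, choose precompact $K_1\subset X\setminus C$ and $K_2\subset A$ outside of which $\|f\|<\varepsilon$ on the respective subspace; then $\|f\|<\varepsilon$ outside the precompact set $K_1\cup K_2$, so $f\in\VanInfFuAlg(X;D\otimes\Kom)$ and its class is zero.

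For surjectivity I would extend by zero: given $g\in\sHigFuAlg(X\setminus C,A\setminus C;D)$, define $\tilde g\colon X\to D\otimes\Kom$ by $\tilde g=g$ on $X\setminus C$ and $\tilde g=0$ on $C$. This $\tilde g$ is bounded, satisfies $\tilde g|_{X\setminus C}=g$, and $\tilde g|_A$ vanishes at infinity, since it is zero on $C$ and agrees with the vanishing-at-infinity function $g|_{A\setminus C}$ on $A\setminus C$. The one non-trivial point, and the main obstacle of the proof, is that $\tilde g$ has vanishing variation. Fixing a symmetric entourage $E$ containing the diagonal, the quantity $\|\tilde g(x)-\tilde g(y)\|$ for $(x,y)\in E$ can be non-zero only when exactly one of $x,y$ lies in $C$; by symmetry it therefore suffices to make $\|g(x)\|$ small for $x\in\Pen_E(C)\setminus C$ (which also bounds the contribution to $\Var_E\tilde g$ at points of $C$, via the same estimate read through $E$).

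This is exactly where the excisiveness condition enters. Since $C\subset A$ we have $\Pen_E(C)\setminus C\subset\Pen_E(A)\setminus C$, and the excisiveness condition (in the form $\Pen_E(A)\setminus C\subset\Pen_F(A\setminus C)$ equivalent to \eqref{eq:excisiveness}) supplies an entourage $F$ with $\Pen_E(C)\setminus C\subset\Pen_F(A\setminus C)$. Hence every such $x$ is $F$-close to some $a\in A\setminus C$, giving $\|g(x)\|\leq\|g(a)\|+\Var_F g(x)$. The first summand is small once $a$ leaves a precompact subset of $A\setminus C$, because $g|_{A\setminus C}$ vanishes at infinity; the second is small once $x$ leaves a precompact subset of $X\setminus C$, by vanishing variation of $g$. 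Using that penumbras of precompact sets are again precompact, these two exceptional regions combine into a single precompact set outside of which $\|g(x)\|<\varepsilon$, and the contribution at points of $C$ is controlled in the same manner. Thus $\Var_E\tilde g$ vanishes at infinity for every $E$, so $\tilde g\in\sHigFuAlg(X,A;D)$ and its class maps to $[g]$. This establishes surjectivity and completes the isomorphism.
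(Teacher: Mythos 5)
Your proof is correct, and its surjectivity half takes a genuinely different route from the paper's. The setup (identifying $\sHigCor(X,A;D)$ with the quotient of the algebra of functions whose restriction to $A$ vanishes at infinity by $\VanInfFuAlg(X;D\otimes\Kom)$) and the injectivity argument (covering $X=A\cup(X\setminus C)$ and combining the two precompact exceptional sets into one) coincide with the paper's. For surjectivity, however, the paper does not build a lift by hand: it quotes \cite[Lemma 3.3]{WillettHomological} (noting that it holds with arbitrary coefficients), which says that the right-hand square of the diagram formed by the two restriction sequences for $(X,A)$ and $(X\setminus C,A\setminus C)$ is a pullback of \textCstar-algebras, and then finishes with a short diagram chase; note that the pullback property would give injectivity as well. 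You instead lift a class directly by extending a representative $g$ by zero across $C$ and verifying vanishing variation of the extension, which is exactly where excisiveness enters: a point of $\Pen_E(C)\setminus C$ is $F$-close to a point of $A\setminus C$, so $\|g\|$ there is controlled by vanishing at infinity of $g|_{A\setminus C}$ together with vanishing variation of $g$, after discarding the precompact set where $\Var_F g$ is large and the penumbra $\Pen_F(K_2)$ of the exceptional set $K_2\subset A\setminus C$; your parenthetical handling of points of $C$ themselves, by pushing the same estimate through $E$ once more, is the right way to close the argument. What each approach buys: yours is self-contained, makes the geometric role of excisiveness transparent, and works directly with the unreduced coronas appearing in the statement; the paper's is shorter, delegates the analytic patching to Willett's pullback lemma (a stronger structural fact that is reusable elsewhere), but is phrased for the reduced coronas and therefore needs the preliminary remark that reduced and unreduced relative coronas agree once $A\setminus C\neq\emptyset$.
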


\begin{proof}
Injectivity is straightforward: If a function on $X$ represents an element of the kernel, then its restrictions to both $A$ and $X\setminus C$ vanish at infinity. These two sets cover $X$ and the relatively compact subsets of $X$ are exactly the union of relatively compact subsets of $A$ with relatively compact subsets of $X\setminus C$. This implies that the function itself vanishes at infinity, and therefore represents the zero element.

For surjectivity we use that the right square of the diagram
\[
\xymatrix{
0\ar[r]&\sHigCorRed(X,A;D)\ar[r]\ar[d]&\sHigCorRed(X;D)\ar[r]\ar[d]&\sHigCorRed(A;D)\ar[r]\ar[d]&0
\\0\ar[r]&\sHigCorRed(X\setminus C,A\setminus C;D)\ar[r]&\sHigCorRed(X\setminus C;D)\ar[r]&\sHigCorRed(A\setminus C;D)\ar[r]&0
}
\]
is a pullback diagram. For coarsely connected spaces and trivial coefficients, this is \cite[Lem\-ma~3.3]{WillettHomological}, but the proof goes through for coarsely disconnected spaces and arbitrary coefficients as well. Furthermore, the same is true for the analogous unreduced version of the diagram.
The claim now follows from an easy diagram chase.
\end{proof}

\begin{lem}\label{lem:sHigCorRedFlasque}
For flasque proper metric spaces $X$, the group $\K_{1-*}\big(\sHigCorRed(X;D)\big)$ vanishes.
\end{lem}

\begin{proof}
This is the straightforward adaption of \cite[Proposition 3.7]{WillettHomological} (see also \cite[Theorem 5.2]{EmeMeyDualizing}) to the case of not necessarily coarsely connected spaces and with coefficients.
Let us briefly recall Willett's sketch of proof, to see why our definition of the reduced stable Higson corona for coarsely disconnected spaces is the one fulfilling this property.

Denote by $H$ the infinite dimensional separable Hilbert space on which $\Kom\coloneqq\Kom(H)$ is modelled, such that we also have $\multiplier^s(D)=\Lin(D\otimes H)$. Choose an isometric isomorphism $H\cong\oplus_\N H$ and let $V_n\colon H\to H$ be the isometric inclusion as the $n$-th summand with respect to this decomposition. Furthermore, let $\phi\colon X\to X$ be the coarse map from the definition of flasqueness. For every $f\in\sHigComRed(X;D)$ one now defines
\[
\nu f\colon\quad X\to \multiplier^s(D),\qquad x\mapsto \sum_{n\in\N}(\id_D\otimes V_n)\circ f(\phi^n(x))\circ (\id_D\otimes V_n)^*.
\]

For each two $x$, $y$ in the same coarse component of $X$, the sequences $(\phi^n(x))_{n\in\N}$ and $(\phi^n(y))_{n\in\N}$ stay close to each other and go to infinity. Hence the vanishing variation of $f$ implies that the differences $f(\phi^n(x))-f(\phi^n(y))$ converge in norm to zero for $n\to\infty$. As they are also compact, the difference
\[
\nu f(x)-\nu f(y)= \sum_{n\in\N}(\id_D\otimes V_n)\circ \big(f(\phi^n(x))-f(\phi^n(y))\big)\circ (\id_D\otimes V_n)^*
\]
is compact, too. Note that the part of this argument using vanishing variation does not work for~$x$,~$y$ in different coarse components, and that is the reason why this lemma only holds for our definition of the reduced stable Higson corona and not the alternative mentioned in Remark~\ref{rem:howtoreducestableHigsoncorona}.

Furthermore, it is straightforward to check that $\nu f$ has vanishing variation and that the resulting $*$-homomorphism $\nu\colon \sHigComRed(X;D)\to \sHigComRed(X;D)$ descends to a $*$-homomorphism $\nu\colon \sHigCorRed(X;D)\allowbreak\to \sHigCorRed(X;D)$ on the corona. The proof is then completed by the following Eilenberg swindle: one shows that on $\K$-theory we have $\nu_*(x)=x+\nu_*(x)$ for all $x\in \K_{1-*}\big(\sHigCorRed(X;D)\big)$, so the latter group vanishes.
\end{proof}

\begin{thm}
The contravariant functors $\K_{1-*}\big(\sHigCorRed(-,-;D)\big)$ and $\K_{1-*}(\sHigCor(-,-;D))$ are co\-arse cohomology theories on the category of proper metric spaces and coarse maps. The first one satisfies the flasqueness axiom and the second one the coronality axiom.
\end{thm}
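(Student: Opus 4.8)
The plan is to check the three axioms of \Cref{def:CoHomologytheories} together with the relevant extra axiom, doing first the parts that are common to both functors and isolating the genuinely different arguments afterwards. Closeness invariance of both $\K_{1-*}(\sHigCor(-,-;D))$ and $\K_{1-*}(\sHigCorRed(-,-;D))$ is immediate, since the relative corona \textCstar-algebras are already contravariant functors of closeness classes and $\K$-theory is a functor. For \textbf{exactness}, \Cref{lem:Willettssurjectivity} makes the restriction map surjective, so that
\[0\to\sHigCorRed(X,A;D)\to\sHigCorRed(X;D)\to\sHigCorRed(A;D)\to 0\]
is a short exact sequence of \textCstar-algebras (its kernel being the relative corona by definition), and the same holds verbatim for the unreduced coronas; feeding these into the six-term exact sequence of $\K$-theory and reindexing by $\K_{1-p}$ gives the long exact sequence of the pair, with the natural $\K$-theoretic index/exponential map as connecting homomorphism. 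For \textbf{excision}, \Cref{lem:sHigCorExcision} provides an isomorphism $\sHigCor(X,A;D)\cong\sHigCor(X\setminus C,A\setminus C;D)$ induced by restriction whenever $C\subset A$ is excisive; applying $\K$-theory yields the excision isomorphism, and since (as noted after that lemma) the reduced and unreduced relative coronas coincide whenever $C\neq\emptyset$, while the case $C=\emptyset$ is trivial, excision holds for both functors.

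Next I would establish the distinguishing axioms. \textbf{Coronality} of the unreduced theory is essentially formal: on a precompact space every bounded $D\otimes\Kom$-valued function vanishes at infinity, so $\sHigFuAlg(X;D)=\VanInfFuAlg(X;D\otimes\Kom)$ and hence $\sHigCor(X;D)=0$, whence its $\K$-theory vanishes. \textbf{Flasqueness} of the reduced theory I would prove by an Eilenberg swindle. If $\phi$ witnesses flasqueness, then, because $\phi$ is close to the identity and the corona is closeness-invariant, the induced endomorphism $\phi^*$ of $\sHigCorRed(X;D)$ equals the identity; using that the functions take values in the stable multiplier algebra $\multiplier^s(D)$, the equicontrolledness of $\{\phi^n\}$ together with the fact that $\phi^n$ eventually leaves every bounded set lets one assemble the infinite ampliation $\Sigma\coloneqq\bigoplus_{n\geq 0}(\phi^n)^*$ as a genuine $*$-endomorphism of $\sHigCorRed(X;D)$. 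The shift identifying $\Sigma$ with $\mathrm{id}\oplus\Sigma$ is implemented by a multiplier unitary, so on $\K$-theory $[\Sigma]=[\mathrm{id}]+[\Sigma]$ and therefore $[\mathrm{id}]=0$, i.e.\ $\K_*(\sHigCorRed(X;D))=0$. It is precisely the passage to $\multiplier^s(D)$-valued functions that makes the swindle work and that is \emph{unavailable} for $\sHigCor$, consistent with the fact that the unreduced theory does not vanish on flasque spaces.

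It remains to verify the \textbf{homotopy} axiom, which I expect to be the main obstacle. For the reduced theory it comes for free: having verified exactness, excision and flasqueness, and since the category of coarsely connected proper metric spaces satisfies the hypotheses of \Cref{lem:flasquenessimplieshomotopyinvariance}, that lemma delivers homotopy invariance (for pairs). The unreduced theory is the delicate case, since neither the swindle nor \Cref{lem:flasquenessimplieshomotopyinvariance} applies. Here I would first reduce to the absolute case by pure functoriality rather than the five lemma: when the target subspace is nonempty the relative coronas coincide with the reduced ones (so homotopy invariance is inherited), and a map of pairs $(X,\emptyset)\to(Y,B)$ factors through $\mathrm{id}_Y\colon(Y,\emptyset)\to(Y,B)$, so its induced map is $f_{\mathrm{abs}}^*$ composed with the natural map of the long exact sequence. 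For the absolute statement I would exploit that $\sHigCor(Y;D)=\sHigCor(Y,K;D)=\sHigCorRed(Y,K;D)$ for any non-empty precompact $K\subset Y$: given a coarse homotopy $H$ between $f,g\colon X\to Y$, fix $x_0\in X$ and set $K_0\coloneqq H(\{x_0\}\times\Z)$, a finite, hence precompact, subset of $Y$ containing $f(x_0)$ and $g(x_0)$. Then $H$ is a coarse homotopy of \emph{pairs} between $f,g\colon(X,\{x_0\})\to(Y,K_0)$, and under the identifications $\sHigCor(Y;D)=\sHigCorRed(Y,K_0;D)$ and $\sHigCor(X;D)=\sHigCorRed(X,\{x_0\};D)$ the already proven relative homotopy invariance of the reduced theory yields $f^*=g^*$ on $\K_*(\sHigCor(Y;D))$.

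The crux is this last step: the unreduced theory has no homotopy or swindle argument of its own, and the only leverage is the coincidence of the relative coronas together with coronality, which converts the absolute unreduced groups into relative reduced groups with precompact, and hence homotopy-transportable, basepoint data. The points needing care are that the image $K_0$ of the basepoint path is genuinely precompact, so that $\sHigCor(K_0;D)=0$ and the identification $\sHigCor(Y;D)=\sHigCor(Y,K_0;D)$ is valid, and that all auxiliary pairs remain coarsely connected proper metric spaces, so that \Cref{lem:Willettssurjectivity}, \Cref{lem:sHigCorExcision} and the reduced homotopy invariance genuinely apply.
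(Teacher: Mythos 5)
Your proposal is correct, and everything except the last step coincides with the paper's proof: exactness and excision come from \Cref{lem:Willettssurjectivity,lem:sHigCorExcision}, coronality of the unreduced functor is immediate from $\sHigCor(K;D)=0$ for precompact $K$, flasqueness of the reduced functor is the Eilenberg swindle with $\multiplier^s(D)$-valued functions (the paper cites the adaptation of Willett's Proposition 3.7 to coefficients), and homotopy invariance of the reduced functor then follows from \Cref{lem:flasquenessimplieshomotopyinvariance}. Where you genuinely diverge is the homotopy invariance of the unreduced functor. The paper verifies the hypotheses of \Cref{lem:homotopydomainisomorphismlemma} for $\K_{1-*}(\sHigCor(-,-;D))$ by a five-lemma argument along the natural short exact sequence
\[0\to \sHigCor(-;D)\to \sHigCorRed(-;D)\to \multiplier^s(D)/D\otimes\Kom\to 0\,,\]
using that the reduced functor already turns the maps $i_0$, $i_\rho$ and $p$ into isomorphisms. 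You instead use a basepoint trick: the track $K_0=H(\{x_0\}\times\Z)$ of a coarse homotopy at a basepoint is finite (the homotopy is constant outside $[\rho^-(x_0),\rho^+(x_0)]$), hence non-empty and precompact, so that $\sHigCor(Y;D)=\sHigCorRed(Y,K_0;D)$ and $\sHigCor(X;D)=\sHigCorRed(X,\{x_0\};D)$ as \textCstar-algebras, compatibly with the induced maps, and the already established \emph{relative} homotopy invariance of the reduced theory gives $f^*=g^*$; your reduction of the general relative case to the absolute one by functoriality is also sound. Your route is more elementary---no diagram chase, no five lemma---and it makes rigorous the slogan, which the paper only states informally at the end of \Cref{sec:sHigCor} via the space $X^\to$, that the unreduced theory is the reduced theory of the space augmented by a basepoint. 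What the paper's route buys is reusability: the identical five-lemma scheme is applied verbatim to the equivariant functors $\K_{1-*}(\sHigCor(-,-;D)\rtimes\Gamma)$ and $\Ktop_{1-*}(\Gamma,\sHigCor(-,-;D))$ and again for the coronal Roe algebra, whereas your trick does not survive equivariance for infinite $\Gamma$: the pair $(Y,K_0)$ would have to be $\Gamma$-invariant, and a proper action of an infinite group admits no non-empty precompact invariant subspace---exactly the obstruction the paper points out for the construction of $X^\to$.
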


\begin{proof}
The long exact sequences and excision are immediate from Lemmas~\ref{lem:Willettssurjectivity} and~\ref{lem:sHigCorExcision}. The coronality axiom for the second one is clear, because $\sHigCor(K;D)=0$ for all relatively compact spaces $K$.

The flasqueness axiom for the first one follows from \ref{lem:sHigCorRedFlasque} for the absolute groups together with the long exact sequence for the relative groups.
Flasqueness also implies the homotopy invariance of the first functor by Lemma~\ref{lem:flasquenessimplieshomotopyinvariance}.

The homotopy invariance of the second functor is a bit tricky, but it can also be deduced from the flasqueness of the first one as follows.
The proof of Lemma~\ref{lem:flasquenessimplieshomotopyinvariance} worked by showing that the hypothesis of Lemma~\ref{lem:homotopydomainisomorphismlemma} is satisfied.
Thus, we already know that the first functor $\K_{1-*}\big(\sHigCorRed(-,-;D)\big)$ satisfies this hypothesis and it remains to show that the second functor $\K_{1-*}(\sHigCor(-,-;D))$ does so as well.
Given any pair $(X,A)$ of proper metric spaces, we denote by $\coarsecomp\coloneqq\coarsecomp(X,A)$ the set of those coarse components of $X$ which do not contain any points of $A$.
Then there are canonical short exact sequences
\[
0\to \sHigCor(Y,B;D)\to \sHigCorRed(Y,B;D)\to \prod_{\coarsecomp}\frac{\multiplier^s(D)}{D\otimes\Kom}\to 0
\]
for all those pairs $(Y,B)$ of proper metric spaces derived from $(X,A)$ which appear in Lemma~\ref{lem:homotopydomainisomorphismlemma}.
Consider the maps between the associated long exact $\K$-theory sequences induced by the maps~$i_0$,~$i_\rho$ and~$p$.
For $i_0$ this yields a diagram
\[
\xymatrix{
\K_{2-p}\big(\sHigCorRed\big(\overline{X}_0^\rho;D\big)\big)\ar[r]\ar[d]_{i_0^*}^{\cong}
&\K_{2-p}\Bigl(\prod_{\coarsecomp}\frac{\multiplier^s(D)}{D\otimes\Kom}\Bigr)\ar[r]\ar@{=}[d]
&\K_{1-p}\big(\sHigCor\big(\overline{X}_0^\rho;D\big)\big)\ar[r]\ar[d]_{i_0^*}
&
\\\K_{2-p}\big(\sHigCorRed(X;D)\big)\ar[r]
&\K_{2-p}\Bigl(\prod_{\coarsecomp}\frac{\multiplier^s(D)}{D\otimes\Kom}\Bigr)\ar[r]
&\K_{1-p}(\sHigCor(X;D))\ar[r]
&
\\\qquad\qquad\qquad\ar[r]&\K_{1-p}\big(\sHigCorRed\big(\overline{X}_0^\rho;D\big)\big)\ar[r]\ar[d]_{i_0^*}^{\cong}
&\K_{1-p}\Bigl(\prod_{\coarsecomp}\frac{\multiplier^s(D)}{D\otimes\Kom}\Bigr)\ar@{=}[d]
&
\\\qquad\qquad\qquad\ar[r]&\K_{1-p}\big(\sHigCorRed(X;D)\big)\ar[r]
&\K_{1-p}\Bigl(\prod_{\coarsecomp}\frac{\multiplier^s(D)}{D\otimes\Kom}\Bigr)
&
}
\]
and the five-lemma implies that the middle verticle map is an isomorphism. Similarly, the maps~$i_\rho$ and $p$ induce isomorphisms under the second functor.
\end{proof}

We now briefly discuss two equivariant generalizations for pairs of coarsely connected proper metric $\Gamma$-spaces, where we assume that the coefficient \textCstar-algebra $D$ is a $\Gamma$-\textCstar-algebra. In this case, there are canonical actions of $\Gamma$ on $\sHigCorRed(X,A;D)$ and $\sHigCor(X,A;D)$ in which $\gamma [f]$ is represented by the function $\gamma f\colon x\mapsto \gamma(f(x\gamma))$. Recall that we only consider discrete groups (which even have to be countable, because they act properly on coarsely connected proper metric spaces) and thus do not have to worry about continuity of the group actions.

\begin{thm}If $\rtimes\Gamma$ denotes an exact crossed product functor in the sense of {\rm \cite[Definition~3.1]{BaumGuentnerWillettExpandersExactcrossedproducts}}, then the functors $\K_{1-*}\big(\sHigCorRed(-,-;D)\rtimes \Gamma\big)$ and $\K_{1-*}(\sHigCor(-,-;D)\rtimes\Gamma)$ are $\Gamma$-equivariant coarse cohomology theories satisfying the flasqueness or the coronality axiom, respectively.
\end{thm}
\begin{proof}
Exactness and excision again follow directly from Lemmas~\ref{lem:Willettssurjectivity} and~\ref{lem:sHigCorExcision}, and coronality of the unreduced versions is again clear.

The flasqueness axiom for the reduced version is proven exactly as in the non-equivariant case, because the $*$-homomorphism $\nu$ in the proof of Lemma~\ref{lem:sHigCorRedFlasque} will now be equivariant.

Homotopy invariance of the reduced version follows. For the unreduced version we prove it just as in the proof of the previous theorem, where the induced $\Gamma$-action on $\coarsecomp\coloneqq \coarsecomp(X,A)$ and the action on $D$ turn $\prod_{\coarsecomp}\frac{\multiplier^s(D)}{D\otimes\Kom}$ into a $\Gamma$-\textCstar-algebra.
\end{proof}

The above equivariant version is not the most sophisticated. According to the work of Emerson and Meyer \cite{EmeMeyDescent,EmeMeyEquivariantCoassembly}, better choices are $\Ktop_{1-*}(\Gamma,\sHigCor(-,-;D))$ and $\Ktop_{1-*}\big(\Gamma,\sHigCorRed(-,-;D)\big)$.
We do not need to recall their definiton, because the only thing relevant to us is that there are natural isomorphisms
\begin{equation}\label{eq:KtopDef}
\Ktop_*(\Gamma,C)\cong \K_*((C\maxtensor\sfP)\maxcrossed\Gamma)\cong \K_*((C\maxtensor\sfP)\redcrossed\Gamma),
\end{equation}
which hold for both the reduced and full crossed product functor and for any $\Gamma$-\textCstar-algebra~$C$ (see \cite[Theorem~22]{EmeMeyDescent}, which is based on~\cite[Theorems~5.2 and~10.2]{MeyerNest}\footnote{Separability of the \textCstar-algebras is a standing assumption in the latter reference, but the isomorphisms here also hold for non-separable $C$, because we can take the direct limit over all separable sub-\textCstar-algebras and exploit that all constructions in~\eqref{eq:KtopDef} are co-continuous under such.}). Here, $\sfP$ denotes a~certain $\Gamma$-\textCstar-algebra which supports the so-called Dirac morphism $\sfD\in\KK^\Gamma(\sfP,\C)$.
Exactly the same proof as for the previous theorem, just with $(\blank\maxtensor\sfP)\maxcrossed$ instead of $\rtimes$, shows the following.\looseness=1

\begin{thm}
The functors $\Ktop_{1-*}\big(\Gamma,\sHigCorRed(-,-;D)\big)$ and $\Ktop_{1-*}(\Gamma,\sHigCor(-,-;D))$ are $\Gamma$-equivariant coarse cohomology theories satisfying the flasqueness or the coronality axiom, respectively.
\end{thm}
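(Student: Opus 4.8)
The plan is to deduce this theorem from the previous one by recognizing $\Ktop_{1-*}(\Gamma,-)$ as $\K_{1-*}$ post-composed with a functor that behaves exactly like the exact cross product $\rtimes\Gamma$ used there. By the identification \eqref{eq:KtopDef} we have $\Ktop_{1-*}(\Gamma,C)\cong\K_{1-*}((C\maxtensor\sfP)\maxcrossed\Gamma)$, so the assignment $C\mapsto(C\maxtensor\sfP)\maxcrossed\Gamma$ plays the role previously played by $-\rtimes\Gamma$. The only genuinely new point I would verify is that this composite functor is \emph{exact}, i.e.\ that it sends short exact sequences of $\Gamma$-$\textCstar$-algebras to short exact sequences. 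This is precisely where taking the full (maximal) crossed product rather than the reduced one matters: the full crossed product $-\maxcrossed\Gamma$ is always exact, and the maximal tensor product $-\maxtensor\sfP$ with a fixed algebra is likewise exact, whence their composition is exact. The isomorphism with the reduced crossed product in \eqref{eq:KtopDef} would then only be used to identify the groups, not in the exactness argument.

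Granting exactness, the exactness and excision axioms follow as before: I would apply the exact functor $(-\maxtensor\sfP)\maxcrossed\Gamma$ to the short exact sequence of relative stable Higson coronas provided by \Cref{lem:Willettssurjectivity} and to the excision isomorphism of \Cref{lem:sHigCorExcision}, and then pass to the associated six-term $\K$-theory sequences; naturality of $\Ktop_*$ under the induced $*$-homomorphisms yields the long exact sequence and the excision isomorphism. Coronality of the unreduced version is immediate, since $\sHigCor(K;D)=0$ for bounded $K$ forces $(\sHigCor(K;D)\maxtensor\sfP)\maxcrossed\Gamma=0$ and hence vanishing $\K$-theory.

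For the flasqueness axiom of the reduced version I would reuse the equivariant Eilenberg swindle from the preceding proof: a $\Gamma$-equivariant flasqueness map $\phi$ produces a $\Gamma$-equivariant $*$-endomorphism $\nu$ of $\sHigCorRed(X;D)$ satisfying $\nu_*=\mathrm{id}+\nu_*$ on $\K$-theory. Applying the functorial, $\Gamma$-equivariant assignment $(-\maxtensor\sfP)\maxcrossed\Gamma$ and then $\K_*$ transports this to $x=x+\nu_*(x)$ on $\Ktop_{1-*}(\Gamma,\sHigCorRed(X;D))$, which forces the groups to vanish; the relative case follows from the absolute one via the long exact sequence, and homotopy invariance of the reduced functor then follows from \Cref{lem:flasquenessimplieshomotopyinvariance}. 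Homotopy invariance of the unreduced functor I would obtain exactly as before, applying the exact functor to the natural short exact sequence $0\to\sHigCor(-;D)\to\sHigCorRed(-;D)\to\multiplier^s(D)/D\otimes\Kom\to0$ (whose quotient term does not depend on $X$), passing to the resulting long exact $\Ktop$-sequences for the maps $i_0$, $i_\rho$ and $p$, and invoking the five-lemma together with the already-established isomorphisms for the reduced functor to verify the hypotheses of \Cref{lem:homotopydomainisomorphismlemma}.

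The main obstacle I anticipate is exactly the exactness of the composite functor $(-\maxtensor\sfP)\maxcrossed\Gamma$; everything else is a mechanical transcription of the previous proof. Once exactness is secured --- which is the reason the maximal tensor product and full crossed product appear in \eqref{eq:KtopDef} in the first place --- no further difficulties remain.
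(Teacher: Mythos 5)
Your proposal is correct and follows essentially the same route as the paper: the paper's proof is literally \enquote{exactly the same proof as for the previous theorem, just with $(\blank\maxtensor\sfP)\maxcrossed\Gamma$ instead of $\rtimes$}, which is precisely your transcription --- exactness/excision from \Cref{lem:Willettssurjectivity,lem:sHigCorExcision}, coronality from $\sHigCor(K;D)=0$, the equivariant Eilenberg swindle for flasqueness, and the five-lemma argument with $0\to\sHigCor(-;D)\to\sHigCorRed(-;D)\to\multiplier^s(D)/D\otimes\Kom\to 0$ for homotopy invariance of the unreduced functor. Your explicit verification that $(\blank\maxtensor\sfP)\maxcrossed\Gamma$ is exact (maximal tensor product and full crossed product are both exact functors) is exactly the point the paper leaves implicit, and it is the right justification.
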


Let us briefly discuss the relation between all of these theories. First of all, the Baum--Connes assembly map yields natural transformations
\begin{gather*}
\Ktop_{1-*}\big(\Gamma,\sHigCorRed(-,-;D)\big)\to \K_{1-*}\big(\sHigCorRed(-,-;D)\maxcrossed\Gamma\big)\to \K_{1-*}\big(\sHigCorRed(-,-;D)\rtimes\Gamma\big),
\\[.5ex]
\Ktop_{1-*}(\Gamma,\sHigCor(-,-;D))\to \K_{1-*}(\sHigCor(-,-;D)\maxcrossed\Gamma)\to \K_{1-*}(\sHigCor(-,-;D)\rtimes\Gamma),
\end{gather*}
which are compatible with the connecting homomorphisms.

Second, there is a commutative diagram
\[
\xymatrix{
0\ar[r]& \prod_{\coarsecomp}(D\otimes\Kom)\ar[r]\ar[d]&\prod_{\coarsecomp} \multiplier^s(D)\ar[r]\ar[d]&\prod_{\coarsecomp}\frac{\multiplier^s(D)}{D\otimes\Kom}\ar[r]\ar@{=}[d]&0
\\0\ar[r]& \sHigCor(X,A;D)\ar[r]& \sHigCorRed(X,A;D)\ar[r]& \prod_{\coarsecomp}\frac{\multiplier^s(D)}{D\otimes\Kom}\ar[r]& 0
}
\]
with exact rows, where $\coarsecomp\coloneqq\coarsecomp(X,A)$ again denotes the set of those coarse components of $X$ which do not contain any points of $A$. The $\K$-theory of $\prod_{\coarsecomp}\multiplier^s(D)$, $\bigl(\prod_{\coarsecomp}\multiplier^s(D)\bigr)\rtimes\Gamma$ and $\bigl(\prod_{\coarsecomp}(\multiplier^s(D))\maxtensor\sfP\bigr)\maxcrossed\Gamma$, vanishes by an Eilenberg swindle and hence we can easily read off the following long exact sequences in $\K$-theory:
\begin{equation}\label{eq:reducedunreducedLES}
\begin{split}
&\xymatrix@R=2ex{
\myvdots\ar[d]\restore&\myvdots\ar[d]&\myvdots\ar[d]
\\
\K_{1-*}\big(\prod_{\coarsecomp}(D\otimes\Kom)\big)\ar[d]&\K_{1-*}\big(\big(\prod_{\coarsecomp}(D\otimes\Kom)\big) \rtimes\Gamma\big)\ar[d]&\Ktop_{1-*}\big(\Gamma,\prod_{\coarsecomp}(D\otimes\Kom)\big)\ar[d]
\\
\K_{1-*}(\sHigCor(X,A;D))\ar[d]&\K_{1-*}(\sHigCor(X,A;D)\rtimes\Gamma)\ar[d]&\Ktop_{1-*} (\Gamma,\sHigCor(X,A;D))\ar[d]
\\
\K_{1-*}\big(\sHigCorRed(X,A;D)\big)\ar[d]&\K_{1-*}\big(\sHigCorRed(X,A;D)\rtimes\Gamma\big)\ar[d] &\Ktop_{1-*}\big(\Gamma,\sHigCorRed(X,A;D)\big)\ar[d]
\\
\K_{-*}\big(\prod_{\coarsecomp}(D\otimes\Kom)\big)\ar[d]&\K_{-*} \big(\big(\prod_{\coarsecomp}(D\otimes\Kom)\big)\rtimes\Gamma\big)\ar[d]&\Ktop_{-*}\big(\Gamma;\prod_{\coarsecomp} (D\otimes\Kom)\big)\ar[d]
\\
\myvdots&\myvdots&\myvdots
}\end{split}\end{equation}
The first of these three sequences decomposes for an unbounded and coarsely connected $X$ and empty $A$ into the short exact sequence
\[
0\to\K_{1-*}(D)\to\K_{1-*}(\sHigCor(X;D))\to\K_{1-*}\big(\sHigCorRed(X;D)\big)\to 0
\]
(cf.\ \cite[Lemma~3.10]{EmeMeyDualizing}), but the proof does not generalize to the equivariant case, because it involves evaluation $*$-homomorphisms at points of $X$, which are in general not equivariant.
The~closest we come to an equivariant generalization is the following: If $X$ contains a flasque subspace~$B$ which is contained in the union of the components in $\coarsecomp(X,A)$ and contains points from all components in $\coarsecomp(X,A)$ (in other words, the inclusion $B\subset X$ induces a bijection between the set $\coarsecomp(B)$ of coarse components of $B$ and $\coarsecomp(X,A)$), then the above long exact sequences are mapped by restriction to the corresponding ones for $B$. But then the arrows leaving the reduced terms in the sequences for $(X,A)$ factor through the corresponding terms for~$B$, which vanish because of flasqueness. Thus, we get a decomposition of the three long exact sequences into short exact sequences in this case, too.

Third, as the long exact sequences \eqref{eq:reducedunreducedLES} indicate, there is also a way of obtaining the reduced groups from the unreduced stable Higson corona with the help of a mapping cone construction. In fact, it is this construction which completely establishes the analogy to reduced cohomology theories as we will see later in Sections~\ref{sec:reducedtransgression} and~\ref{sec:coassembly} (keeping Remark~\ref{rem:howtoreducestableHigsoncorona} in mind).
Recall that the mapping cone of a $*$-homomorphism $f\colon A \to B$ is defined as the \textCstar-algebra
\[
\Cone(f)\coloneqq \{(a,\beta)\in A\times \Cz((0,1],B)\mid \beta(1)=f(a)\}
\]
and if $f$ is surjective, then the inclusion $\ker(f)\to\Cone(f),a\mapsto (a,0)$ induces an isomorphism on $\K$-theory.
\begin{lem}\label{lem:reducedunreducedstablehigsoncoronacone}
Let $\coarsecomp\coloneqq\coarsecomp(X,A)$, again, denote the set of coarse components of $X$ which are disjoint from $A$.
Then there are canonical $($up to a sign$)$ natural isomorphisms
\begin{gather*}
\K_{1-*}\big(\sHigCorRed(X;D)\big)\cong\K_{-*}\big(\Cone\big(\textstyle\prod_{\coarsecomp}(D\otimes\Kom)\to\sHigCor(X;D)\big)\big),
\\
\K_{1-*}\big(\sHigCorRed(X;D)\rtimes\Gamma\big)\cong\K_{-*} \big(\Cone\big(\textstyle\prod_{\coarsecomp}(D\otimes\Kom)\to\sHigCor(X;D)\big)\rtimes\Gamma\big),
\\
\Ktop_{1-*}\big(\Gamma,\sHigCorRed(X;D)\big)\cong\Ktop_{-*} \big(\Gamma,\Cone\big(\textstyle\prod_{\coarsecomp}(D\otimes\Kom)\to\sHigCor(X;D)\big)\big)
\end{gather*}
under which the long exact sequences \eqref{eq:reducedunreducedLES} are isomorphic to the long exact sequences induced by
\[
0\to\Cz(0,1)\otimes\sHigCor(X;D)\to \Cone(\textstyle\prod_{\coarsecomp}(D\otimes\Kom)\to\sHigCor(X;D))\to \textstyle\prod_{\coarsecomp}(D\otimes\Kom)\to 0.
\]
\end{lem}

\begin{proof}
In order to reuse the exact same proof for different algebras later on (Lemma~\ref{lem:reducedKtheory}), we write $I\coloneqq\sHigCor(X;D)$, $A\coloneqq\sHigCorRed(X;D)$, $B\coloneqq\prod_{\coarsecomp}\multiplier^s(D)$ and $J\coloneqq \prod_{\coarsecomp} (D\otimes \Kom)$. The decisive properties are that $I$ is an ideal in $A$, $J$ is an ideal in $B$, $\K_*(B)=\K_*(B\rtimes \Gamma)=\Ktop_*(\Gamma,B)=0$ (because of an Eilenberg swindle argument) and there is a $*$-homomorphism $\iota\colon B\to A$ which induces an isomorphism $B/J\cong A/I$. In particular, $\iota$ restricts to $\iota'\colon J\to I$.

We furthermore let $\pi\colon B\to B/J$ be the quotient homomorphism and consider the diagram with exact rows
\[
\xymatrix@R=2ex{
0\ar[r]& \Cz(0,1)\otimes I\ar[r]\ar@{=}[d]& \Cz(0,1)\otimes A\ar[r]\ar[d]& \Cz(0,1)\otimes B/J\ar[r]\ar[d]&0
\\0\ar[r]& \Cz(0,1)\otimes I\ar[r]\ar@{=}[d]& \Cone(\iota)\ar[r]&\Cone(\pi)\ar[r]&0
\\0\ar[r]& \Cz(0,1)\otimes I\ar[r]& \Cone(\iota')\ar[u] \ar[r]& J\ar[u]\ar[r]&0.
}
\]
Both vertical arrows on the right are known to induce isomorphisms on $\K$-theory. They are the ones implementing the connecting homomorphism
$\K_{1-*}(B/J)\xrightarrow{\cong}\K_{-*}(J)$.
Therefore, the five-lemma shows that the diagram induces isomorphisms between the long exact sequences in $\K$-theory associated to the rows.
This is exactly the first isomorphism of the claim.

For the other two, we apply the functors $\blank\rtimes\Gamma$ and $(\blank\maxtensor\sfP)\maxcrossed\Gamma$ to the whole diagram and note that all the properties mentioned above still hold, because the crossed products are exact and compatible with tensor products, hence also continuous and compatible with the cone construction.
\end{proof}

And last, there is another relation between the reduced and unreduced groups which is based upon the idea that an unreduced cohomology can be obtained by taking reduced cohomology of the space augmented by a distinct basepoint (cf.\ \cite[Section 6]{WulffCoassemblyRinghomo}). But with our Definition~\ref{defn:stableHigsoncoronas} and in the light of Remark~\ref{rem:howtoreducestableHigsoncorona} we have to add not one distinct point but instead the whole set of coarse components $\coarsecomp(X)$ as isolated points, in some sense.
To this end, assume that there exists a relatively compact subspace $K\subset X$ which contains points from all connected components of $X$. Note that this might not be the case if $\Gamma$ is infinite. We define the space $X^\to\coloneqq X\cup_KK\times\N$ equipped with the obvious metric. The subspace $K\times\N$ is coarsely equivalent to $\coarsecomp(X)\times\N$, considered as the coarsely disjoint union of copies of $\N$, and each of these copies should be interpreted as a newly added point at infinity. We note that $(X,K)\subset (X^\to,K\times\N)$ is an excision, $K\times\N$ is flasque and $\sHigCor(X,K;D)=\sHigCor(X;D)$. Therefore, the long exact sequence for the pair $(X^\to,K\times\N)$ gives rise to isomorphisms
\[
\K_{1-*}(\sHigCor(X;D))=\K_{1-*}(\sHigCor(X,K;D))\cong \K_{1-*}(\sHigCor(X^\to,K\times\N;D))\cong \K_{1-*}\big(\sHigCorRed(X^\to;D)\big)
\]
and analogously for the two equivariant versions.

\subsection[K-theory of the Roe algebra]{$\boldsymbol{\K}$-theory of the Roe algebra}\label{sec:RoeAlg}

The Roe algebra is a \textCstar-algebra $\Roe(X)$ that can be constructed for every coarse space $X$ (cf.\ \cite[Section~4.4]{RoeCoarseGeometry}), but it is usually only dealt with in the case of proper metric spaces. As most of the references only consider the latter special case, we shall stick to it.

Let us briefly recall the broad outlines of its construction as presented, e.g., in \cite[Section~6.3]{HigRoe}. Starting with a representation $\rho$ of $\Cz(X)$ on a separable Hilbert space $H$ which is non-degenerate (meaning that $\rho(\Cz(X))H$ is dense in $H$), the Roe algebra $\Roe(X,\rho)$ is defined as the norm closure of the $*$-subalgebra of $\Lin(H)$ consisting of all operators $T$ satisfying the following two properties:\looseness=1
\begin{enumerate}\itemsep=0pt
\item[$\bullet$] $T$ is called \emph{locally compact}, if $\rho(f)T,T\rho(f)$ are compact operators on $H$ for all $f\in\Cz(X)$.
\item[$\bullet$] $T$ has \emph{finite propagation}, if there is $R\geq 0$ such that $\rho(f)T\rho(g)=0$ whenever the distance between the supports of $f,g\in\Cz(X)$ is at least $R$. The infimum of all $R\geq 0$ with this property is called the \emph{propagation} $\Prop(T)$ of $T$.
\end{enumerate}
This \textCstar-algebra depends on the choice of $H$ and $\rho$, but if the representation is big enough in a~cer\-tain sense (it is \enquote{ample}, i.e., it is non-degenerate and $\rho^{-1}(\Kom(H))=\{0\}$), then its $\K$-theory $\K_*(\Roe(X,\rho))$ is independent from the choices up to canonical isomorphism.
Therefore, the representation $\rho$ is usually omitted from the notation with the understanding that one suitable representation has been chosen for each $X$.

Now, there are two well-known generalizations of the Roe algebra. The first is the equivariant Roe algebra in the case that a countable discrete group $\Gamma$ acts properly and isometrically on the coarsely proper metric space $X$. Here one chooses in addition to the non-degenerate representation~$\rho$ also a representation $u$ of the group $\Gamma$ on $H$ by unitaries such that $u(\gamma)\rho(f)u(\gamma)^*=\rho(\gamma\cdot f)$ for all $\gamma\in\Gamma$ and $f\in\Cz(X)$. The triple $(H,\rho,u)$ is called a $\Gamma$-$X$-module. Then the equivariant Roe algebra $\Roe_\Gamma(X,\rho,u)$ is the norm closure of the \textCstar-algebra of all locally compact operators of finite propagation in $\Lin(H)$ that are equivariant with respect to $u$. Again, the $\K$-theory groups $\K_*(\Roe_\Gamma(X,\rho,u))$ are independent up to canonical isomorphism from the $\Gamma$-$X$-module $(H,\rho,u)$ if the latter has been chosen sufficiently large and then abbreviated by $\K_*(\Roe_\Gamma(X))$.
A detailed exposition about equivariant Roe algebras can be found in \cite{WillettYuHigherIndexTheory}.

The second generalization is to introduce a (possibly graded) coefficient \textCstar-algebra $D$ into the Roe algebra, cf.\ \cite{HankePapeSchick,HigsonPedersenRoe,WulffTwisted}. This is done by replacing the Hilbert space $H$ by a right Hilbert module $\HilbertMod$ over $D$ and the non-degenerate representation $\rho$ is by adjointable operators on $\HilbertMod$.
The $\K$-theory of the resulting \textCstar-algebra $\Roe(X,\rho,\HilbertMod;D)$ is again unique up to canonical isomorphism if $\rho$ and $\HilbertMod$ have been chosen sufficiently large and in this case $\rho$ will be ommited from the notation. Here, sufficient largeness means that $\HilbertMod\cong H\otimes D$ and $\rho=\rho'\otimes \id_D$ for an ample representation~$\rho'$ on a separable Hilbert space $H$ \cite[Definition 4.5]{HigsonPedersenRoe}.

It is possible to unify the two generalizations to equivariant Roe algebras $\Roe_\Gamma(X,\rho,u,\HilbertMod;D)$ with coefficients in a (graded) $\Gamma$-\textCstar-algebra $D$.
In this case, $u$ also has to be a representation by adjointable operators on $\HilbertMod$ which is compatible with the $\Gamma$-action on $D$ in the sense that $u(\gamma)(\xi d)=(u(\gamma)\xi) (\gamma d)$ for all $\xi\in\HilbertMod$, $d\in D$ and $\gamma\in\Gamma$.
Again, we obtain unambiguous $\K$-theory groups $\K_*(\Roe_\Gamma(X;D))$ if we start with sufficiently large $H,\rho,u$ as in the equivariant case and then simply use their tensor product with $D$, i.e., $\HilbertMod\coloneqq H\otimes D$ with obvious induced representations of $\Cz(X)$ and $\Gamma$.

The proofs of the following statements can be found in the above-mentioned sources in the special cases just described, but the proofs generalize to the general case of equivariant Roe-algebras with coefficients.

If $A\subset X$ is a $\Gamma$-invariant closed subspace of the proper metric $\Gamma$-space $X$, then $\Roe_\Gamma(X;D)$ contains an ideal $\Roe_\Gamma(A\subset X;D)$ whose $\K$-theory is canonically isomorphic to the $\K$-theory of $\Roe_\Gamma(A;D)$.
It is the norm closure of all $T\in\Roe_\Gamma(X;D)$ which are supported in an $R$-neighborhood of $A$ for some $R\geq 0$, that is, all those $T$ for which $\rho(f)T=T\rho(f)=0$ for all $f\in\Cz(X)$ with $\dist(\supp(f),A)\geq R$.
The quotient \textCstar-algebra $\Roe_\Gamma(X,A;D)\coloneqq \Roe_\Gamma(X;D)/\Roe_\Gamma(A\subset X;D)$ is called the \emph{$\Gamma$-equivariant relative Roe algebra}. We have $\Roe_\Gamma(\varnothing\subset X;D)=0$ and hence $\Roe_\Gamma(X,\varnothing;D)=\Roe_\Gamma(X;D)$. The long exact sequences
\[
\dots\to\K_p(\Roe_\Gamma(A;D))\to \K_p(\Roe_\Gamma(X;D))\to \K_p(\Roe_\Gamma(X,A;D))\to \K_{p-1}(\Roe_\Gamma(A;D))\to \cdots
\]
are immediate.

The $\K$-theory of the $\Gamma$-equivariant relative Roe algebra with coefficients in $D$ is functorial under $\Gamma$-equivariant coarse maps between pairs of proper metric $\Gamma$-spaces and the non-boundary maps in the long exact sequence above are special cases of this functoriality under the inclusion maps $(A,\varnothing)\to(X,\varnothing)\to (X,A)$.

For excision, we note that for an excision $(A\setminus C,X\setminus C)\to (X,A)$ the left square in
\[
\xymatrix{
0\ar[r]&\Roe_\Gamma(A\setminus C;D)\ar[r]\ar[d]&\Roe_\Gamma(X\setminus C;D)\ar[r]\ar[d]&\Roe_\Gamma(X\setminus C,A\setminus C;D)\ar[r]\ar[d]&0
\\0\ar[r]&\Roe_\Gamma(A;D)\ar[r]&\Roe_\Gamma(X;D)\ar[r]&\Roe_\Gamma(X,A;D)\ar[r]&0
}
\]
is simultaneously a pull-back and a push-out diagram (cf.\ \cite[p.~156]{HigRoe} for the non-equivariant case without coefficients, but the general case is proven in exactly the same way) and therefore the excision isomorphism already holds at the level of $\Gamma$-\textCstar-algebras.

Last but not least, the $\K$-theory of the Roe algebra vanishes on flasque spaces by an Eilenberg swindle (cf.\ \cite[Proposition 9.4]{RoeITCGTM}, which is also readily generalized to the equivariant case with coefficients) and therefore is also satisfies the homotopy axiom.
Even more, the methods developed by Higson and Roe in \cite{HigsonRoeHomotopy} can be adapted to show the following.
\begin{lem}\label{lem:Roestronghomotopy}
The $\K$-theory of the Roe algebra satisfies the strong homotopy axiom.
\end{lem}
\begin{proof}
Let $H\colon (X\times [0,1],\coarseStr_\cU)\to (Y,d_Y)$ be a $\Gamma$-equivariant generalized coarse homotopy which maps $A\times[0,1]$ into $B$.
In order to stay within the world of proper metric $\Gamma$-spaces, we may use the metric defined as follows instead of the coarse structure $\coarseStr_\cU$.
Without loss of generality we can assume that the metric $d_X$ on $X$ is discrete and make a $\Gamma$-invariant choice of $c_x\geq 1$ for all $x\in X$ such that $|s-t|\leq \frac{1}{c_x}\implies (s,t)\in U_x$.
The largest metric $d$ on $X\times [0,1]$ with the property
\[
\forall x,y\in X,s,t\in[0,1]\colon\ d((x,s),(y,t))\leq d_X(x,y)+\min\{c_x,c_y\}\cdot |s-t|
\]
is proper and $\Gamma$-invariant and $H$ is still coarse as a map $(X\times [0,1],d)\to (Y,d_Y)$.

Let $e_t\colon (X,A)\to (X,A)\times[0,1]$, $x\mapsto (x,t)$ denote the canonical inclusions. It is then sufficient to prove that $e_0$ and $e_1$ induce the same map
\[
(e_0)_*=(e_1)_*\colon\ \K_*(\Roe_\Gamma(X,A;D))\to\K_*(\Roe_\Gamma((X,A)\times[0,1];D)),
\]
where the right hand side is defined using the above metric $d$ on $X\times[0,1]$.
In the non-equivariant, non-relative case without coefficients ($\Gamma=1$, $A=\varnothing$, $D=\C$) this has been proved in \cite[Sections~5 and~6]{HigsonRoeHomotopy}, albeit with a slightly different metric on $X\times [0,1]$.
The proof relies heavily on calculations with idempotents, which is not adequate for the relative case $A\not=\varnothing$ and for non-unital coefficient \textCstar-algebras $D$. Therefore, we use the same tools but apply them a bit differently to obtain the general result.

Recall the following technique to define elements of the $\K$-theory of a \textCstar-algebra $C$ from \cite[Section 4]{HigsonRoeHomotopy}:
Let $\multiplier(C)$ denote its multiplier algebra and define $\doubleplier(C)\coloneqq \{(a,b)\in\multiplier(C)\mid a-b\allowbreak\in C\}$.
Then there is a split short exact sequence
\[
\xymatrix{0\ar[r]&\K_*(C)\ar[r]^{j_*}&\K_*(\doubleplier(C))\ar[r]_{\pi_*} &\K_*(\multiplier(C))\ar[r]\ar@/_/@{-->}[l]_{s_*}&0,}
\]
where $j\colon c\mapsto (c,0)$, $\pi\colon (a,b)\mapsto b$ and $s\colon b\mapsto (b,b)$.
An element of $\K_*(C)$ can then be defined as the image of an element of $\K_*(\doubleplier(C))$ under $\Theta\coloneqq(j_*)^{-1}\circ(\id-s_*\circ\pi_*)$.

Furthermore, we recall that the asymptotic algebra of a \textCstar-algebra $C$ is defined as $\Asymp(C)\coloneqq \Cb([1,\infty);C)/\Cz([1,\infty);C)$ and an asymptotic morphism from another \textCstar-algebra $B$ to $C$ is a $*$-homomorphism $\psi\colon B\to\Asymp(C)$. As the \textCstar-algebra $\Cz([1,\infty);C)$ is contractible, such an asymptotic morphism induces a homomorphism
\[
\psi_*\colon\ \K_*(B)\xrightarrow{\Psi_*}\K_*(\Asymp(C))\cong\K_*(\Cb([1,\infty);C))\xrightarrow{(\ev{1})_*}\K_*(C).
\]

The plan is to construct an algebra homomorphism
\begin{equation}\label{eq:AsympDoublealgebrahom}
\Psi\colon\ \Ct\big(S^1\big)\otimes\Roe_\Gamma(X,A;D)\to \Asymp\big(\Ct[0,1]\otimes\Ct\big(S^1\big)\otimes\doubleplier\big(\Roe_\Gamma((X,A)\times[0,1];D))\big)\big).
\end{equation}
It will not be a $*$-homomorphism (and therefore strictly speaking not an asymptotic morphism), but as algebraic and topological $\K$-theory agree in degree $0$ we will still obtain an induced map on $\K_0$.\footnote{Alternatively one can use that topological $\K$-theory of \textCstar-algebras is a special case of topological $\K$-theory of Banach algebras \cite[Chapters 5 and~8]{BlaKK}. Since $\Psi$ will clearly be a homomorphism of Banach algebras, it will induce a homomorphism on $\K$-theory even without the $*$-property.}
Since $\K_0(\Ct(S^1)\otimes C)\cong \K_0(C)\oplus \K_1(C)$ in topological $\K$-theory for any $\textCstar$-algebra $C$, this induced map decomposes into two maps
\[
\Psi_*\colon\ \K_i(\Roe_\Gamma(X,A;D))\to\K_i(\Ct[0,1]\otimes\doubleplier(\Roe_\Gamma((X,A)\times[0,1];D))),\qquad i=0,1.
\]
Now, the evaluation $*$-homomorphisms $\ev{t}\colon\Ct[0,1]\to \C$ for $t\in[0,1]$ are all homotopic to each other and therefore all of the maps
\[
\Theta\circ(\ev{t})_*\circ\Psi_*\colon\ \K_i(\Roe_\Gamma(X,A;D))\to \K_i(\Roe_\Gamma((X,A)\times[0,1];D))
\]
are equal.
For the specific $\Psi$ which we are about to define and $t=0,1$, these maps are exactly $(e_0)_*$ and $(e_1)_*$, respectively. The claim follows.

In order to carry this out, assume that $\Roe_\Gamma(X;D)$ has been constructed on the Hilbert module $\HilbertMod\coloneqq H\otimes D$ for a sufficiently large $\Gamma$-$X$-module $(H,\rho,u)$. Furthermore, let $\rho'\colon\Ct[0,1]\to\Lin\big(L^2(\R)\big)$ be the representation, where $\phi\in\Ct[0,1]$ acts on $L^2(\R)$ by multiplication with
\[
\tilde\phi\colon\ s\mapsto
\begin{cases}\phi(0),&s\leq0,\\\phi(t),&0\leq t\leq 1,\\\phi(1),&t\geq 1.
\end{cases}
\]
Then we choose two copies of the $\Gamma$-$(X\times[0,1])$-module $\big(H\otimes L^2(\R),\rho\otimes\rho',u\otimes 1\big)$ to construct $\Roe_\Gamma(X\times [0,1];D)$, that is, this \textCstar-algebra consists of operators on $\big(H\otimes L^2(\R)\otimes D\big)^2$.

The relevant outcome of \cite[Sections 6]{HigsonRoeHomotopy} can be summarized as follows. There exists a bounded and equicontinuous family of maps $\big\{F_p\colon[0,1]\to\Lin\big(L^2(\R)\big)\big\}_{p\in(0,1]}$ such that
\begin{enumerate}\itemsep=0pt
\item[$\bullet$] the operators $F_p(s)$ are Fredholm of index one and $F_p(s)^*$ is an inverse to $F_p(s)$ modulo compact operators,
\item[$\bullet$] for fixed $s$ and varying $p$, the operators $F_p(s)$ are compact perturbations of one another,
\item[$\bullet$] the propagation of $F_p(s)$ and $F_p(s)^*$ with respect to the representation $\rho'$ is bounded by $p$,
\item[$\bullet$] $F_p(0)$ is independent of $p$, has zero propagation and decomposes as $F_p(0)=W_0\oplus\id_{L^2(0,\infty)}$, where $W_0$ is a coisometry on $L^2(-\infty,0)$ (i.e., $W_0W_0^*=\id_{L^2(-\infty,0)}$) of index one,
\item[$\bullet$] and similarily $F_p(1)$ is independent of $p$, has zero propagation and decomposes as $F_p(1)=W_1\oplus\id_{L^2(-\infty,1)}$ for an index one coisometry $W_1$ on $L^2(1,\infty)$.
\end{enumerate}

We can now adapt the constructions of \cite[Sections 5]{HigsonRoeHomotopy} to our needs. Proofs of our claims can be found in that reference.
Given $\varepsilon>0$ we can find a partition of unity $\{\sigma_{\varepsilon,i}\}_{i\in\N}$ on $X$ such that
\begin{enumerate}\itemsep=0pt
\item[$\bullet$] each $\sigma_{\varepsilon,i}$ is continuous, $\Gamma$-invariant and $\Gamma$-cocompactly supported,
\item[$\bullet$] $\sigma_{\varepsilon,i}\sigma_{\varepsilon,j}=0$ if $|i-j|\geq 2$,
\item[$\bullet$] each $\sigma_{\varepsilon,i}$ is $\varepsilon 2^{-i}$-Lipschitz. \end{enumerate}
It can be constructed, for example, by starting with an analogous but non-equivariant partition of unity $\{s_{\varepsilon,i}\}_{i\in\N}$ on $[0,\infty)$ and defining $\sigma_{\varepsilon,i}(x)\coloneqq s_{\varepsilon,i}(\dist(x,\Gamma\cdot x_0))$ for some fixed $x_0\in X$.

For each $n\in\N\setminus\{0\}$ one can now find $\varepsilon_n>0$ and $p_{n,i}\in(0,1]$ such that the $\Gamma$-equivariant operators
\[
G_n(s)\coloneqq \sum_{i=1}^\infty\rho(\sigma_{\varepsilon_n,i})\otimes F_{p_{n,i}}(s)\in\Lin\big(H\otimes L^2(\R)\big)
\]
have propagation at most $\frac1n$ for all $s\in[0,1]$.
Note that these operators are all bounded by $2\sup_{p\in(0,1]}\|F_p\|$, because the second assumption on the partition of
unity implies
\begin{align*}
\|G(v)\|&\leq\biggl\|\sum_{i\text{ even}}(\rho(\sigma_{\varepsilon_n,i})\otimes F_{p_{n,i}}(s))v\biggr\|+\biggl\|\sum_{i\text{ odd}}(\rho(\sigma_{\varepsilon_n,i})\otimes F_{p_{n,i}}(s))v\biggr\|
\\
&\leq\biggl(\sum_{i\text{ even}}\|\id_H\otimes F_{p_{n,i}}\|^2\cdot\|(\rho(\sigma_{\varepsilon_n,i})\otimes\id_{L^2(\R)}) v\|^2\biggr)^{1/2}
\\
&\quad +\biggl(\sum_{i\text{ odd}}\|\id_H\otimes F_{p_{n,i}}\|^2\cdot\|(\rho(\sigma_{\varepsilon_n,i})\otimes\id_{L^2(\R)}) v\|^2\biggr)^{1/2}
\\
&\leq 2\sup_{p\in(0,1]}\|F_p\|\cdot\|v\|
\end{align*}
for all $v\in H\otimes L^2(\R)$.
Furthermore, these operators are equicontinuous in $s$ because of the equicontinuity of the $F_p$.
By the affine linear interpolation $G_t\coloneqq (t-n)G_{n+1}+(n+1-t)G_n$ for $t\in[n,n+1]$ we extend these maps to a bounded continuous map
\[
G\colon\ [1,\infty)\to C[0,1]\otimes\Lin\big(H\otimes L^2(\R)\big),\qquad t\mapsto G_t
\]
and define the bounded continuous map
\begin{align*}
R\colon\ [1,\infty)&\to C[0,1]\otimes\Lin\big(\big(H\otimes L^2(\R)\big)^2\big),
\\[.5ex]
t&\mapsto R_t\coloneqq\begin{pmatrix}2G_t-G_tG_t^*G_t&G_tG_t^*-\id_{H\otimes L^2(\R)}\\\id_{H\otimes L^2(\R)}-G_t^*G_t&G_t^*\end{pmatrix}\!.
\end{align*}
Each $R_t$ is invertible with inverse
\[
R_t^{-1}=\begin{pmatrix}G_t^*&\id_{H\otimes L^2(\R)}-G_t^*G_t
\\[.5ex]
G_tG_t^*-\id_{H\otimes L^2(\R)}&2G_t-G_tG_t^*G_t\end{pmatrix}
\]
and we define the idempotents $Q\coloneqq\id_{H\otimes L^2(\R)}\oplus 0$ and $P_t\coloneqq R_tQR_t^{-1}$ in $C[0,1]\otimes\Lin\big(\big(H\otimes L^2(\R)\big)^2\big)$.

The proof of \cite[Lemma 5.11]{HigsonRoeHomotopy} now shows the following properties of these idempotents:
\begin{enumerate}\itemsep=0pt
\item[$\bullet$] The operators $Q\otimes\id_D$, $P_t(s)\otimes\id_D$ for $s\in[0,1],t\in[1,\infty)$ and $T\otimes \id_{(L^2(\R))^2}$ for $T\in \Roe_\Gamma(\Roe(X;D))$ are all equivariant and have finite propagation. Thus, they are multipliers of $\Roe_\Gamma(\Roe(X\times[0,1];D))$.

\item[$\bullet$] For all $s\in[0,1],t\in[1,\infty)$ and $T\in \Roe_\Gamma(\Roe(X;D))$ we have
\[
((Q-P_t(s))\otimes\id_D)\circ(T\otimes \id_{(L^2(\R))^2})\in \Roe_\Gamma(\Roe(X\times[0,1];D)).
\]
The above-mentioned proof goes directly through for $t\in\N\setminus\{0\}$. For all other $t$ we additonally need to exploit the second of the above-mentioned properties of the $F_p$, i.e., that for fixed $s$ and varying $p$ the $F_p(s)$ are compact perturbations of one another.

\item[$\bullet$] For all $e\in \Ct\big(S^1\big)\otimes \Roe_\Gamma(\Roe(X;D))$ the commutators
\[
\big[P_t\otimes\id_{\Ct(S^1)\otimes D},e\otimes\id_{(L^2(\R))^2}\big]
\]
converge in norm to zero as $t\to\infty$.
\end{enumerate}

These properties imply that algebra homomorphism
\[
\Psi\colon\ \Ct\big(S^1\big)\otimes\Roe_\Gamma(X;D)\to \Asymp\big(\Ct[0,1]\otimes\Ct\big(S^1\big)\otimes\doubleplier\big(\Roe_\Gamma(X\times[0,1];D)\big)\big),
\]
{\sloppy which is the non-relative version of \eqref{eq:AsympDoublealgebrahom},
can now be defined by mapping $e\in \Ct\big(S^1\big)\otimes\allowbreak \Roe_\Gamma(\Roe(X;D))$ to the class represented by the function
\[
t\mapsto \Biggl(s\mapsto \underbrace{((P_t(s)\otimes\id_D)\circ (e(s)\otimes\id_{(L^2(\R))^2}),(e(s)\otimes\id_{L^2(\R)})\oplus 0)}_{\in\doubleplier\big(\Roe_\Gamma(X\times[0,1];D))\big)}\Biggr).
\]}\noindent
It is not a $*$-homomorphism, because the $R_t$ are not unitaries and hence the $P_t$ are not self-adjoint.

To finish the proof of the absolute case, it remains to analyze what happens at $s=0$ and $s=1$.
At $s=0$ we have $G_t(0)=\id_H\otimes F_p(0)=(\id_H\otimes W_0)\oplus\id_{H\otimes L^2(0,\infty)}$ for all $t\in[1,\infty)$, $p\in(0,1]$ and a calculation yields
\begin{align*}
P_t(0)&=\begin{pmatrix}\id_{H\otimes L^2(-\infty,0)}&0\\0&\id_H\otimes(\id_{L^2(-\infty,0)}-W_0^*W_0)\end{pmatrix}\oplus\id_{(H\otimes L^2(0,\infty))^2}
\\[.5ex]
&=Q\oplus (\id_{H\otimes D}\otimes p_0)
\end{align*}
for a rank one orthogonal projection $p_0\in \Lin(L^2(\R))$ which is supported over $0\in[0,1]$ with respect to $\rho'$. It is now straightforward to verify that $\Theta\circ(\ev{0})_*\circ\Psi_*$ is the same map as the one induced by the $*$-homomorphism $\Roe_\Gamma(X,A;D)\to \Roe_\Gamma((X,A)\times[0,1];D)$, $T\mapsto T\otimes p_0$, and this is exactly $(e_0)_*$. Analogously we obtain $\Theta\circ(\ev{1})_*\circ\Psi_*=(e_1)_*$.

Since all $P_t(s)$ have propagation bounded by one, the constructions in the last two paragraphs are compatible with taking the quotients by the ideals $\Roe_\Gamma(A\subset X;D)$, $\Roe_\Gamma(A\times [0,1]\subset X\times [0,1];D)$ and then the relative case follows immediately.
\end{proof}

We summarize:
\begin{thm}
Let $\Gamma$ be a discrete group.
The $\K$-theory groups of the $\Gamma$-equivariant relative Roe algebras with coefficients in a possibly graded $\Gamma$-\textCstar-algebra constitute a $\Gamma$-equivariant coarse homology theory on the admissible category of pairs of proper metric isometric $\Gamma$-spaces which satisfies the flasqueness and the strong homotopy axiom.
\end{thm}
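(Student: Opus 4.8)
The proof is a matter of assembling the facts established in the preceding discussion into the axioms of \Cref{def:CoHomologytheories}, so rather than new arguments it requires collecting the cited inputs and checking that they interlock. Functoriality of $(X,A)\mapsto\K_*(\Roe_\Gamma(X,A;D))$ under $\Gamma$-equivariant coarse maps between pairs has already been recorded, so what remains is to exhibit the connecting homomorphisms and verify the axioms. The plan is to define
\[\partial_p^\Gamma\colon\K_p(\Roe_\Gamma(X,A;D))\to\K_{p-1}(\Roe_\Gamma(A;D))\]
as the $\K$-theory boundary map of the short exact sequence
\[0\to\Roe_\Gamma(A\subset X;D)\to\Roe_\Gamma(X;D)\to\Roe_\Gamma(X,A;D)\to 0\]
post-composed with the canonical isomorphism $\K_*(\Roe_\Gamma(A\subset X;D))\cong\K_*(\Roe_\Gamma(A;D))$. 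Naturality of $\partial_p^\Gamma$ is then inherited from naturality of the $\K$-theory boundary homomorphism together with the functoriality of the ideal inclusion under equivariant coarse maps of pairs.

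Granting this, the \textbf{exactness axiom} is precisely the long exact $\K$-theory sequence of the short exact sequence above, which was already written down. For the \textbf{excision axiom} I would invoke the displayed pull-back/push-out square: it shows that the map $\Roe_\Gamma(X\setminus C,A\setminus C;D)\to\Roe_\Gamma(X,A;D)$ induced by the excision inclusion is an isomorphism of $\Gamma$-\textCstar-algebras, so excision holds already before passing to $\K$-theory and applying $\K_*$ yields the required isomorphism. The \textbf{flasqueness axiom} is the cited vanishing of $\K_*(\Roe_\Gamma(X;D))$ on flasque $X$, which rests on an Eilenberg swindle built from the $\Gamma$-equivariant map $\phi$ of \Cref{def:equiflasque}; the relative case follows from the absolute one via the exactness axiom and the five-lemma.

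Finally, the \textbf{homotopy axiom} is deduced from flasqueness rather than proved directly: I would apply \Cref{lem:flasquenessimplieshomotopyinvariance}, whose hypotheses hold for the admissible category of coarsely connected proper metric isometric $\Gamma$-spaces once the auxiliary spaces $\overline X_\rho$, $\overline X^\rho$ and their associated pairs are taken to be the closures of $X_\rho$, $X^\rho$ inside $X\times\Z$, exactly as explained in the remark following that lemma. The genuinely verifiable content is thus bookkeeping: one must confirm that every space occurring in the swindle and in the homotopy-domain constructions remains a coarsely connected proper metric isometric $\Gamma$-space (so as to lie in the admissible category) and that the connecting homomorphisms are natural. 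The main obstacle is therefore not analytic—the hard inputs, namely independence of the $\K$-theory from the choice of $\Gamma$-$X$-module, the swindle producing flasqueness, and the pull-back/push-out structure yielding excision, are all imported from the literature—but rather the routine-yet-essential check that the abstract machinery of \Cref{lem:flasquenessimplieshomotopyinvariance} genuinely applies in this metric, equivariant, coefficient setting.
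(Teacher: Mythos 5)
Your proposal is correct and follows essentially the same route as the paper: connecting homomorphisms from the ideal short exact sequence together with $\K_*(\Roe_\Gamma(A\subset X;D))\cong\K_*(\Roe_\Gamma(A;D))$, exactness as the resulting long exact $\K$-theory sequence, excision from the pull-back/push-out square already at the level of $\Gamma$-\textCstar-algebras, flasqueness via the equivariant Eilenberg swindle (with the relative case from the absolute one by exactness), and the homotopy axiom deduced from flasqueness through \Cref{lem:flasquenessimplieshomotopyinvariance}, whose hypotheses hold for this admissible category by taking closures of the auxiliary subspaces. This is exactly how the paper assembles the theorem, which it states as a summary of the preceding discussion.
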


The Roe algebra can also be modified as follows such that the resulting coarse homology theory satisfies the coronality instead of the flasqueness axiom.
The compact operators in $\Roe_\Gamma(A\subset X;D)$ form an ideal $\Kom_\Gamma(A\subset X;D)\coloneqq\Roe_\Gamma(A\subset X;D)\cap\Kom(\HilbertMod)$ which is $0$ if $A=\varnothing$ and equal to the \textCstar-algebra $\Kom(\HilbertMod)^\Gamma$ of $\Gamma$-equivariant compact operators on $\HilbertMod$ if $A\not=\varnothing$ and $X$ is coarsely connected, but $\Kom_\Gamma(A\subset X;D)\subsetneqq \Kom(\HilbertMod)^\Gamma$ if $X$ is coarsely disconnected. We also define $\Kom_\Gamma(X;D)\coloneqq\Kom_\Gamma(X\subset X;D)=\Roe_\Gamma(X;D)\cap\Kom(\HilbertMod)\subset \Roe_\Gamma(X;D)$.
We can now define the \emph{coronal relative $\Gamma$-equivariant Roe algebras with coefficients in $D$} as
\[
\Roe_{/\Kom;\Gamma}(X,A;D)\coloneqq\frac{\Roe_\Gamma(X;D)/\Kom_\Gamma(X;D)}{\Roe_\Gamma(A\subset X;D)/\Kom_\Gamma(A\subset X;D)}\cong \frac{\Roe_\Gamma(X;D)}{\Roe_\Gamma(A\subset X;D)+\Kom_\Gamma(X;D)}
\]
and we note that $\Roe_{/\Kom;\Gamma}(X,\varnothing;D)=\Roe_\Gamma(X;D)/\Kom_\Gamma(X;D)$.
These \textCstar-algebras vanish if $X$ is relatively compact, because in this case $\Roe_\Gamma(X;D)=\Kom_\Gamma(X;D)$.
Note also that we have $\Roe_{/\Kom;\Gamma}(X,A;D)\cong\Roe_\Gamma(X,A;D)$ if $A$ contains points from all coarse components of $X$.

\begin{thm}
The functor $\K_*(\Roe_{/\Kom;\Gamma}(\blank,\blank;D))$ is a $\Gamma$-equivariant coarse homology theory on the admissible category of pairs of proper metric isometric $\Gamma$-spaces satisfying the coronality and the strong homotopy axiom.
\end{thm}

\begin{proof}
Functoriality and independence of $F(\blank,\blank)\coloneqq \K_*(\Roe_{/\Kom;\Gamma}(\blank,\blank;D))$ from the chosen representation follows just as for $G(\blank,\blank)\coloneqq \K_*(\Roe_\Gamma(\blank,\blank;D))$, because the constructions on the level of \textCstar-algebras pass to the quotients.
We know that the excision isomorphism for $G$ holds already at the level of \textCstar-algebras and from here one readily deduces that the same is true for $F$.
The long exact sequence and coronality are clear from the construction.
Finally, strong homotopy invariance can be proven just like Lemma~\ref{lem:Roestronghomotopy} simply by taking another quotient by the ideal compact operators in the last part of the proof.
\end{proof}

\section{From topological to coarse (co-)homology theories}\label{sec:Sigmastuff}

Let $\Gamma$ be a fixed discrete group and let $\CGCBGz$ be the category of pairs of countably gene\-ra\-ted proper isocoarse $\Gamma$-spaces (see Definition~\ref{def:discretegroupactingoncoarsespace}) of bornologically bounded geometry (see Definition~\ref{def:discreteboundedgeometry}) and $\Gamma$-equivariant coarse maps. It is an admissible category in the sense of Definition~\ref{def:admissibleCAT}.

The purpose of this section is to construct coarse \mbox{(co-)}homology theories on $\CGCBGz$ from generalized \mbox{(co-)}homology theories on so-called $\sigma$-locally compact $\Gamma$-spaces by means of a Rips-complex construction. Coarse \mbox{(co-)}homology theories constructed in this way always satisfy the strong homotopy axiom.
For a very useful special type of \mbox{(co-)}homology theories, the so-called single-space \mbox{(co-)}homology theories which also satisfy the so-called strong excision axiom, the resulting coarse \mbox{(co-)}homology theories also satisfy the flasqueness axiom and can be related to the \mbox{(co-)}homology theories of certain coronas via the so-called transgression maps.

The idea of constructing coarse \mbox{(co-)}homology theories in this way goes back to the definition of coarse $\K$-theory in \cite{EmeMeyDualizing} and was generalized to non-equivariant single-space cohomology theories in \cite[Section 4]{EngelWulff}. New in our present discussion are the group actions and that we take a much deeper look at the details.

A different approach to constructing coarse homology theories is via anti-\v{C}ech systems, see \cite[Section~2]{HigsonRoeOberwolfach}, \cite[Definition~3.5]{MitchenerCoarse} and
\cite[Section 5.5]{RoeCoarseGeometry}. Unfortunately, it seems to be unsuitable for cohomology theories, because there instead of simply taking the direct limits one has to find groups satisfying $\varinjlim^1$-sequences, cf.\ \cite[Chapter~3]{RoeCoarseCohomIndexTheory}.

\subsection[Categories of sigma-spaces]{Categories of $\boldsymbol{\sigma}$-spaces}

In this subsection we also allow arbitrary locally compact groups $\Gamma$, but we will return to discrete ones in the following two subsections.

\begin{defn}[{cf.~\cite[Section~2]{EmeMeyDualizing}, \cite[Definition~3.1]{WulffCoassemblyRinghomo}}] Let $\Gamma$ be a locally compact group.
A~\emph{$\sigma$-locally compact $\Gamma$-space} $\cX$ is an increasing sequence $X_0\subset X_1\subset X_2\subset X_3\subset\cdots$ of locally compact Hausdorff spaces equipped with continuous $\Gamma$-actions such that for all $m \leq n$ the space~$X_m$ is closed in $X_n$ and carries the subspace topology and the restricted $\Gamma$-action.
We~call~$\cX$ a~\emph{$\sigma$-compact $\Gamma$-space}, if in addition all $X_n$ are compact.\footnote{Note that this is not the same as what is usually known as $\sigma$-compact spaces in the literature.}
\end{defn}

By an abuse of notation we will use the symbol $\cX$ for the set $\cX = \bigcup_{n \in \N} X_n$ endowed with the induced action and the final topology, i.e., a subset $\cA \subset \cX$ is open/closed if and only if every intersection $A_n := \cA \cap X_n$ is open/closed.
The final topology has the property that a map from~$\cX$ into another topological space is continuous if and only if it is continuous on every $X_n$.

Note also that the final topology on $\cX$ is Hausdorff itself: If $x,y\in \cX$ are two distinct points, say $x,y\in X_n$, then there is a function $f\in \Cz(X)$ with $f(x)=0$ and $f(y)=1$. This function can be extended inductively to all $X_m$ with $m\geq n$, because the one-point compactification $X_m^+$ is a~closed subspace of the normal Hausdorff space $X_{m+1}^+$. Thus, one obtains a continuous extension $F\colon\cX\to \C$ with $F(x)=0$ and $F(y)=1$.

Any locally compact Hausdorff $\Gamma$-space $X$ can be considered also as a $\sigma$-locally compact $\Gamma$-space by assigning to it the constant sequence $X_n\coloneqq X$.

\begin{defn}\label{def:subspace}
We say that a $\sigma$-locally compact $\Gamma$-space $\cX=\bigcup_{n\in\N}X_n$ is a \emph{subspace} of $\cY=\bigcup_{n\in\N}Y_n$ if $\cX\subset \cY$ is $\Gamma$-invariant, $\cX\cap Y_n = X_n$ and each $X_n$ carries the subspace topology of $Y_n$ and the restricted $\Gamma$-action.
\end{defn}

\begin{defn}
Let $\cX$, $\cY$ be $\sigma$-locally compact $\Gamma$-spaces given by the filtrations $(X_m)_{m\in\N}$ and~$(Y_n)_{n\in\N}$, respectively, and let $f\colon\cX\to\cY$ be a map.
\begin{enumerate}\itemsep=0pt
\item[$\bullet$] We call $f$ a \emph{$\sigma$-map} if for every $m\in\N$ there exists $n\in\N$ such that $f(X_m)\subset Y_n$.
Note that such an $f$ is continuous in the final topologies if and only if all the restricted maps $f|_{X_m}\colon X_m\to Y_n$ are continuous.
\item[$\bullet$] We call a continuous $\sigma$-map $f$ \emph{proper} if the preimages of all $\sigma$-compact subspaces of~$\cY$ under~$f$ are $\sigma$-compact subspaces of $\cX$, or equivalently, if the restricted maps $f|_{X_m}$: $X_m\to Y_n$ are proper continuous maps.
\end{enumerate}
\end{defn}

Using these notions we can introduce the categories of spaces on which we will consider \mbox{(co-)}homology theories.

\begin{defn}\quad
\begin{enumerate}\itemsep=0pt
\item[$\bullet$] Let $\sCHz$ be the category whose objects are pairs $(\cX,\cA)$ of $\sigma$-compact $\Gamma$-spaces with $\cA\subset\cX$ a closed subspace\footnote{Recall that subspaces are always $\Gamma$-invariant by Definition~\ref{def:subspace}.} and whose morphisms between $(\cX,\cA)$ and $(\cY,\cB)$ are continuous $\Gamma$-equivariant $\sigma$-maps $\cX\to\cY$ which restrict to maps $\cA\to\cB$.
\item[$\bullet$] Let $\sLCHz$ be the category whose objects are pairs $(\cX,\cA)$ of $\sigma$-locally compact $\Gamma$-spaces with $\cA\subset\cX$ a closed subspace and whose morphisms between $(\cX,\cA)$ and $(\cY,\cB)$ are proper continuous $\Gamma$-equivariant $\sigma$-maps $\cX\to\cY$ which restrict to maps $\cA\to\cB$.
\item[$\bullet$] The category $\sLCHp$ has objects the $\sigma$-locally compact $\Gamma$-spaces and its morphisms between~$\cX$ and $\cY$ are proper continuous $\Gamma$-equivariant $\sigma$-maps $\cU\to\cY$, where $\cU\subset\cX$ is an open subspace. The composition of two such morphisms $\cX\supset\cU\xrightarrow{f}\cY$ and $\cY\supset\cV\xrightarrow{g}\cZ$ is defined to be the morphism $\cX\supset f^{-1}(\cV)\xrightarrow{g\circ f|_{f^{-1}(\cV)}}\cZ$.
\item[$\bullet$] The category $\sLCHzp$ is a mixture of the last two: The objects $(\cX,\cA)$ are the same as in $\sLCHz$ and the morphisms are of the form $\cX \supset \cU\to\cY$ as in $\sLCHp$ but with the additional property that they map $\cA\cap \cU$ into $\cB$. If this is the case, then the restriction $\cA\cap\cU\to\cB$ is also morphisms in $\sLCHp$.
\item[$\bullet$] The categories $\CHz$, $\LCHz$, $\LCHzp$ and $\LCHp$ are the restrictions of the above categories to pairs of compact Hausdorff spaces and locally compact Hausdorff spaces, respectively. Equivalently, one can define them by removing all $\sigma$'s in the above four parts of the definition. Everything that will be said in the remainder of this section about the above four categories is also true in complete analogy for these non-$\sigma$-counterparts and therefore we shall not mention the corresponding statements explicitly.
\item[$\bullet$] If the index $\Gamma$ is omitted from the notation, this means we do not consider group actions, i.e., $\Gamma=1$.
\end{enumerate}
\end{defn}

The definition of the morphisms in the category $\sLCHp$ may appear a bit strange at first sight, but note that they are exactly in one-to-one correspondence with the basepoint-preserving continuous $\Gamma$-equivariant $\sigma$-maps between the \emph{one-point $\sigma$-compactifications}
\[
\cX^+\coloneqq\bigcup_{n\in\N}X_n^+=\bigcup_{n\in\N}X_n\cup\{\infty\},
\]
i.e., the $\sigma$-compact spaces defined by the corresponding sequence of one-point compactifications.
In other words, the functor $\sLCHp\to\sCHz$ which maps a space $\cX$ to the pair of spaces $(\cX^+,\{\infty\})$ and a morphism $\cX\supset\cU\xrightarrow{f}\cY$ to the continuous $\sigma$-map
\[
f^+\colon\quad(\cX^+,\infty)\to(\cY^+,\infty),\qquad
x\mapsto\begin{cases}f(x),&x\in \cU,\\\infty,&x\in\cX^+\setminus\cU\end{cases}
\]
is a full and faithful functor.

Conversely, a right inverse to the above functor is the composition of the full and faithful inclusion of categories
\(\sCHz\to\sLCHz\)
with the faithful inclusion of categories
\(\sLCHz\to\sLCHzp\)
and the functor
\(\sLCHzp\to\sLCHp\)
which maps pairs of spaces $(\cX,\cA)$ to their difference $\cX\setminus\cA$ and morphisms $(\cX,\cA)\to(\cY,\cB)$ which are given by the data $\cX\supset\cU\xrightarrow{f}\cY$ to the morphism $\cX\setminus\cA\supset f^{-1}(\cY\setminus\cB)\xrightarrow{f|_{f^{-1}(\cY\setminus\cB)}}\cY\setminus\cB$.

\begin{rem}\label{rem_morphconstr}
The most convenient way to construct a morphism $\cU\to\cV$ in the category $\sLCHp$ is by providing a morphism $(\cX,\cA)\to(\cY,\cB)$ with $\cX\setminus\cA=\cU$ and $\cY\setminus\cB=\cV$ in the category $\sLCHz$ and applying the above construction, i.e., applying the last two of those three functors.
\end{rem}

Now that we have explained our categories of spaces, we can go on to product spaces and homotopies.

\begin{defn}
Let $\cX$ be a $\sigma$-locally compact $\Gamma$-space and $\cY$ a $\sigma$-locally compact $\Gamma'$-spaces given by the filtrations $(X_m)_{m\in\N}$ and $(Y_n)_{n\in\N}$, respectively. Their \emph{product} $\cX\times\cY$ is the $\sigma$-locally compact $\Gamma\times\Gamma'$-space defined by the sequence of locally compact Hausdorff $\Gamma\times\Gamma'$-spaces $(X_n\times Y_n)_{n\in\N}$.
\end{defn}
Again, this is not a cartesian product in the category of $\sigma$-locally compact Hausdorff spaces.

Note that the union $\bigcup_{n\in\N}X_n\times Y_n$ equipped with the final topology is homeomorphic to the product of the spaces $\bigcup_{n\in\N}X_n$ and $\bigcup_{n\in\N}Y_n$ and hence the interpretation of the expression $\cX\times\cY$ as a topological space is unambiguous.

The following two lemmas and the corollary are obvious. The first lemma says that the cross product is functorial in both variables, the second treats inclusions of one of the factors as slices and projections onto one of the factors, and the corollary explains adequate notions of homotopy.

\begin{lem}
The assignment
\[
\big((\cX,\cA),\,(\cY,\cB)\big)\mapsto (\cX\times\cY,\cA\times\cY\cup\cX\times\cB)
\]
gives rise to functors
\begin{gather*}
\sCHz\times\sCHz[\Gamma']\to \sCHz[\Gamma\times\Gamma'],
\\
\sLCHz\times\sLCHz[\Gamma']\to \sLCHz[\Gamma\times\Gamma'],
\\
\sLCHzp\times\sLCHzp[\Gamma']\to \sLCHzp[\Gamma\times\Gamma']
\end{gather*}
and the assignment $(\cX,\cY)\mapsto \cX\times \cY$ gives rise to a functor
\begin{gather*}
\sLCHp\times\sLCHp[\Gamma']\to \sLCHp[\Gamma\times\Gamma'].
\end{gather*}
In each case, the morphisms are mapped in the obvious way.
They are compatible with each other in the sense that they commute with the three functors mentioned in the paragraph before Remark~$\ref{rem_morphconstr}$.
\end{lem}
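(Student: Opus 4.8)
The plan is to treat the lemma as a routine but careful verification in four steps: that the product of two objects is again an object of the target category, that the product of two morphisms is again a morphism, that the assignment respects identities and composition, and that it commutes with the three functors relating the categories. Almost everything reduces to levelwise statements about (locally) compact Hausdorff spaces together with elementary identities for products and preimages, using the already noted fact that $\bigcup_{n\in\N}X_n\times Y_n$ with its final topology is the cartesian product of $\bigcup_{n\in\N}X_n$ and $\bigcup_{n\in\N}Y_n$.

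First I would check the object level. At each filtration stage $X_n\times Y_n$ is compact (in the $\sCHz$ case) or locally compact Hausdorff (otherwise), carries the product $\Gamma\times\Gamma'$-action, and sits closedly inside $X_{n+1}\times Y_{n+1}$ with the subspace topology; hence $\cX\times\cY$ is again a $\sigma$-(locally) compact $\Gamma\times\Gamma'$-space. Since $\cA$ is closed and $\Gamma$-invariant in $\cX$ and $\cB$ is closed and $\Gamma'$-invariant in $\cY$, both $\cA\times\cY$ and $\cX\times\cB$ are closed and $\Gamma\times\Gamma'$-invariant, so their union $\cA\times\cY\cup\cX\times\cB$ is a legitimate closed invariant subspace. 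For morphisms $f\colon(\cX,\cA)\to(\cX',\cA')$ and $g\colon(\cY,\cB)\to(\cY',\cB')$ in the two variables, the product $f\times g$ is continuous and $\Gamma\times\Gamma'$-equivariant; it is a $\sigma$-map because, given $m$, one may choose a single $n$ with $f(X_m)\subset X'_n$ and $g(Y_m)\subset Y'_n$ by taking the maximum of the two indices; and it preserves the subspaces since $f(\cA)\subset\cA'$ and $g(\cB)\subset\cB'$. The one point needing a word is properness in the categories $\sLCHz$, $\sLCHzp$ and $\sLCHp$: here it suffices that a product of proper continuous maps between locally compact Hausdorff spaces is proper, applied to the restrictions $f|_{X_m}\times g|_{Y_m}$.

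Functoriality is then immediate from the set-theoretic identities $\id\times\id=\id$ and $(f'\times g')\circ(f\times g)=(f'\circ f)\times(g'\circ g)$; in the categories $\sLCHp$ and $\sLCHzp$, where composition proceeds along preimages of open subspaces, one additionally invokes $(f\times g)^{-1}(\cV\times\cV')=f^{-1}(\cV)\times g^{-1}(\cV')$ to see that the domains of the two composites agree. Compatibility with the two inclusion functors $\sCHz\to\sLCHz\to\sLCHzp$ is trivial, since on both sides the product is given by the very same formula $\big((\cX,\cA),(\cY,\cB)\big)\mapsto(\cX\times\cY,\cA\times\cY\cup\cX\times\cB)$. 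Compatibility with the difference functor $(\cX,\cA)\mapsto\cX\setminus\cA$ rests on the set identity
\[
(\cX\times\cY)\setminus(\cA\times\cY\cup\cX\times\cB)=(\cX\setminus\cA)\times(\cY\setminus\cB),
\]
a point lying in the complement precisely when neither coordinate lies in its respective subspace; on morphisms the same preimage identity $(f\times g)^{-1}(P\times Q)=f^{-1}(P)\times g^{-1}(Q)$ matches up the restricted maps.

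There is no genuinely hard step. The only places that are not pure formalism are the properness of the product $\sigma$-map, which I would dispatch by the levelwise product-of-proper-maps argument, and the bookkeeping of domains under the nonstandard composition in $\sLCHp$, which is handled by the preimage identity above. Since all remaining verifications are the evident ones for cartesian products and pass stagewise through the filtrations, the lemma follows.
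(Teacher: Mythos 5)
Your proof is correct, and the paper itself offers no argument at all: the lemma is stated as ``obvious'' with a \qed\ immediately following it, so your stagewise verification is exactly the routine check the paper leaves to the reader. You also correctly isolate the only two points with any content — levelwise properness of a product of proper maps, and the preimage identity $(f\times g)^{-1}(\cV\times\cV')=f^{-1}(\cV)\times g^{-1}(\cV')$ that makes the domains match under the nonstandard composition in $\sLCHp$ and under the difference functor $\sLCHzp\to\sLCHp$.
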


\begin{lem}
Fixing $(\cY,\cB)$ $($or only $\cY)$ in the above lemma gives rise to functors
\begin{gather*}
-\times(\cY,\cB)\colon\ \sCHz\to \sCHz[\Gamma\times\Gamma'],
\\
-\times(\cY,\cB)\colon\ \sLCHz\to \sLCHz[\Gamma\times\Gamma'],
\\
-\times(\cY,\cB)\colon\ \sLCHzp\to \sLCHzp[\Gamma\times\Gamma'],
\\
-\times\cY\colon\qquad\, \sLCHp\to \sLCHp[\Gamma\times\Gamma']
\end{gather*}
and for each $y\in \cY$ the maps $\cX\to \cX\times\{y\}\subset\cX\times\cY$ are proper continuous $\sigma$-maps and constitute a natural transformation from the respective identity functors to each of the above functors.

Conversely, if $\cY$ is $\sigma$-compact and $\cB=\varnothing$, then the projection maps $\cX\times\cY\to \cX$ are proper continuous $\sigma$-maps and constitute a natural transformation from $-\times\cY$ to the identity functors in each of these categories and this natural transformation is left inverse to the abovementioned ones.
\end{lem}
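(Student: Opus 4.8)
The functoriality assertions require no work beyond the preceding lemma: fixing the second argument $(\cY,\cB)$ (respectively $\cY$) of each of the four product bifunctors produces, by the usual restriction of a bifunctor to one variable, the four functors $-\times(\cY,\cB)$ and $-\times\cY$ landing in the categories for $\Gamma\times\Gamma'$. The substance of the lemma is therefore the verification that the slice inclusions and the projections are morphisms of the appropriate kind and that they are natural. Both verifications are essentially set-theoretic, since the naturality squares are literal equalities of underlying maps; equivariance causes no difficulty, as $\Gamma$ acts through the first factor only.

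The first step I would carry out is to check the slice inclusion $\iota_y\colon\cX\to\cX\times\cY$, $x\mapsto(x,y)$, for a fixed $y\in\cY$, say $y\in Y_k$. It is a $\sigma$-map because it sends $X_m$ into $X_n\times Y_n$ for $n=\max(m,k)$; it is continuous because $\iota_y|_{X_m}$ is the inclusion of $X_m$ as the closed slice $X_m\times\{y\}$; and it is proper because $\iota_y$ is a homeomorphism onto the closed subspace $\cX\times\{y\}$, so the preimage of any ($\sigma$-)compact set is the coordinate projection of its intersection with that slice and hence again ($\sigma$-)compact. Naturality is the identity $(f\times\id_\cY)\circ\iota_y=\iota_y\circ f$, which in the partial-map categories $\sLCHp$ and $\sLCHzp$ one reads off after noting $\iota_y^{-1}(\cU\times\cY)=\cU$. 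In the relative categories $\iota_y$ respects the pair structure because $\iota_y(\cA)=\cA\times\{y\}\subset\cA\times\cY$.

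The second step, and the only one where a hypothesis is genuinely used, concerns the projection $\pi\colon\cX\times\cY\to\cX$. That it is a $\sigma$-map and continuous is immediate, and the requirement $\cB=\emptyset$ is exactly what is needed for $\pi$ to respect the pair structure, since otherwise $\cX\times\cB$ would not project into $\cA$. The crux is properness: here $\sigma$-compactness of $\cY$ enters, because the restriction $\pi|_{X_m\times Y_m}\colon X_m\times Y_m\to X_m$ is projection along the \emph{compact} factor $Y_m$, which is always proper; equivalently, the preimage of a compact $K\subset\cX$ is $K\times\cY=\bigcup_n K\times Y_n$, a $\sigma$-compact set precisely because each $Y_n$ is compact. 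Without $\sigma$-compactness this step fails, and I expect it to be the main (indeed essentially the only) real point of the proof. Naturality of $\pi$ is the evident identity $f\circ\pi=\pi\circ(f\times\id_\cY)$, and finally the left-inverse claim is the single computation $\pi\circ\iota_y=\id_\cX$, exhibiting the projection natural transformation as a left inverse to each slice natural transformation.
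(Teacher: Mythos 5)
Your proposal is correct and matches the intended argument: the paper states this lemma (together with the preceding one and the corollary) as obvious and gives no proof, and the routine checks you carry out — slice inclusions are proper closed embeddings hence proper continuous $\sigma$-maps, naturality squares are literal equalities (using $\iota_y^{-1}(\cU\times\cY)=\cU$ in the partial-map categories), and properness of the projection rests exactly on compactness of each $Y_n$ — are precisely the verifications being left to the reader. Your identification of $\sigma$-compactness of $\cY$ (and $\cB=\emptyset$) as the only substantive hypotheses is also exactly where the paper's formulation places them.
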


\begin{cor}
For $\cY=I\coloneqq[0,1]$ the unit interval with trivial $\Gamma'=1$-action and $\cB=\varnothing$, the above natural tranformations give rise to an adequate notion of homotopy in each of the four categories $\sCHz$, $\sLCHz$, $\sLCHzp$ and $\sLCHp$.
\end{cor}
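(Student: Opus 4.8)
The plan is to verify that the cylinder functor $-\times I$ together with the three natural transformations supplied by the two preceding lemmas meets the formal requirements of an adequate homotopy in the sense of Eilenberg--Steenrod, namely that the induced relation is an equivalence relation and is compatible with composition on both sides. Throughout, $I=[0,1]$ carries the trivial $\Gamma'=1$-action, so every map built out of $I$ alone is automatically equivariant, and since $I$ is compact the constant filtration makes it $\sigma$-compact; hence $-\times I$ is an endofunctor of each of the four categories, the endpoint inclusions $j_0,j_1$ (the slices at $0$ and $1$) are natural transformations $\id\Rightarrow(-\times I)$, and the projection $p$ is a natural transformation $(-\times I)\Rightarrow\id$ with $p\circ j_0=p\circ j_1=\id$, this last identity being precisely the left-inverse assertion of the preceding lemma. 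One declares $f\simeq g$ for $f,g\colon\cX\to\cY$ whenever there is a morphism $H\colon\cX\times I\to\cY$ with $H\circ j_0=f$ and $H\circ j_1=g$ (in the relative and mixed categories $H$ is a morphism of pairs, respectively a partially defined morphism with the open-domain condition).

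First I would dispatch the easy axioms. Reflexivity is witnessed by $H=f\circ p$, since $p\circ j_0=p\circ j_1=\id$ forces $H\circ j_0=H\circ j_1=f$. Compatibility with composition is equally formal: for $k\colon\cY\to\cZ$ the composite $k\circ H$ is a homotopy from $k\circ f$ to $k\circ g$, and for $e\colon\cW\to\cX$ the composite $H\circ(e\times\id_I)$ is a homotopy from $f\circ e$ to $g\circ e$, the latter using functoriality of $-\times I$ and naturality of $j_0,j_1$, which give $(e\times\id_I)\circ j_0^{\cW}=j_0^{\cX}\circ e$ and likewise for $j_1$.

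Symmetry I would obtain from the interval reversal $r\colon I\to I$, $t\mapsto 1-t$. This is a homeomorphism, trivially equivariant, proper and a $\sigma$-map for the constant filtration, so by functoriality of the cartesian product $\id_\cX\times r$ is an isomorphism in each category; since $r(0)=1$ and $r(1)=0$ it satisfies $(\id_\cX\times r)\circ j_0=j_1$ and $(\id_\cX\times r)\circ j_1=j_0$. Consequently $H\circ(\id_\cX\times r)$ is a homotopy from $g$ to $f$ whenever $H$ is one from $f$ to $g$.

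The only step requiring genuine care, and the one I expect to be the main obstacle, is transitivity by concatenation. Given $H\colon f\simeq g$ and $H'\colon g\simeq h$, I would form the map $G\colon\cX\times[0,2]\to\cY$ equal to $H$ on $\cX\times[0,1]$ and to $(x,s)\mapsto H'(x,s-1)$ on $\cX\times[1,2]$; it is well defined on the overlapping slice $s=1$ because both restrictions there equal $g$, and reparametrising $[0,2]\cong I$ yields the desired homotopy $f\simeq h$. What must be checked is that $G$ is a legitimate morphism in each category: continuity follows from the pasting lemma applied at every filtration level $X_n\times[0,2]$ together with the characterisation of continuity in the final topology; equivariance and the $\sigma$-map property are immediate; properness in $\sLCHz$ and $\sLCHzp$ is preserved because the two halves are proper over compact interval pieces and the reparametrisation is a homeomorphism. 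In the partially defined categories $\sLCHp$ and $\sLCHzp$ one must additionally track that the domain of $G$ is the open subspace obtained by gluing the domains of $H$ and $H'$, which agree and restrict to $g$ on the common slice, while in the relative categories $\sCHz$, $\sLCHz$ and $\sLCHzp$ one checks that the subspace $\cA\times I$ is carried into the subspace of the target by both halves and hence by the concatenation. Once $G$ is seen to be a morphism, transitivity, and with it the equivalence-relation property, follows, completing the verification that the notion is adequate in all four categories.
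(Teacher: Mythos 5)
Your verification is correct and matches what the paper intends: the paper declares this corollary obvious, the implicit content being exactly the routine check that the cylinder functor, slice inclusions and projection from the two preceding lemmas yield a homotopy relation that is reflexive, symmetric, transitive and compatible with composition. Your treatment of the only delicate points (gluing the open domains along the common slice in $\sLCHp$/$\sLCHzp$, and properness of the concatenation) is sound, so nothing is missing.
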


\subsection{Generalized (co-)homology theories and the Rips complex}\label{sec:Ripscomplex}

From all of the categories of spaces that we have introduced in the last section, $\CHz$, $\LCHz$, $\sCHz$ and $\sLCHz$ are almost admissible categories in the sense of Eilenberg and Steenrod \cite[Section~I.1]{EilenbergSteenrod}, except for the slight difference that the objects are not just topological spaces but contain a little bit of additional information in the form of the group action and possibly the filtration by closed subspaces.

\begin{defn}
By a \mbox{(co-)}homology theory (or \emph{$\Gamma$-equivariant \mbox{$($co-$)$}homology theory}) on $\CHz$, $\LCHz$, $\sCHz$ or $\sLCHz$ we understand a collection of
 covariant (contravariant) functors from this category to the category of $\Z$-graded abelian groups together with natural transformations, the (co-)boundary maps, just as in the axiomatic set-up by Eilenberg and Steenrod,
which satisfies the homotopy, exactness and excision axioms, but not necessarily the dimension or the additivity axioms, cf.\ \cite[Sections I.3 and~I.3c]{EilenbergSteenrod}.
\end{defn}

In the following, we will always be interested in such a homology theory $\Homol^\Gamma_*$ or such a~coho\-mology theory $\Cohom_\Gamma^*$ on the category $\sLCHz$. The purpose of this section is to show that applying them to the Rips complex yields coarse \mbox{(co-)}homology theories on $\CGCBGz$ which satisfy the strong homotopy axiom. From now on, $\Gamma$ is a discrete group.

We first construct the Rips complex for locally finite countably generated proper isocoarse $\Gamma$-spaces. If $E$ is a $\Gamma$-invariant entourage, then we define $P_E(X)$ to be the geometric realization of the simplicial complex with vertex set $X$ and with $x_0,\dots,x_n\in X$ spanning an $n$-simplex iff $(x_i,x_j)\in E$ for all $0\leq i,j\leq n$.
Note that we could equally well consider only symmetric entourages for this construction, because $P_E(X)=P_{E\cap E^{-1}}(X)$, but there is no need at all for doing so.
The space $P_E(X)$ is locally compact, because $X$ is locally finite, and it is a proper $\Gamma$-space, because $E$ is $\Gamma$-invariant and the action of $\Gamma$ on $X$ is proper.

Let $S\coloneqq\{E_n\mid n\in\N\}$ be a countable generating set of the coarse structure. We may assume $\Diag[X]=E_0\subset E_1\subset\cdots$ and that each entourage $E$ is contained in one of the $E_n$, because otherwise we simply replace $E_n$ by the finite union of all the entourages that can be obtained from $E_1,\dots,E_n$ by up to $n$ operations of the form (3)--(5)\ of Definition~\ref{def:coarsestructure}. Furthermore, we may assume that all $E_n$ are $\Gamma$-equivariant, because the $\Gamma$-action is isocoarse.
\begin{defn}
The \emph{Rips complex} of $X$ with respect to the generating set $S$ is the $\sigma$-locally compact $\Gamma$-space $\cP(X,S)\coloneqq\bigcup_{m\in\N} P_m(X)$, where we have abbreviated $P_m(X)\coloneqq P_{E_m}(X)$.
\end{defn}

Note that as a topological space, the Rips complex is simply the full simplicial complex of $X$, but as a $\sigma$-space it depends on the choice of the generating set $S$, because the sequence of the $P_m(X)$ is an essential part of the data.
Nevertheless, the following lemma justifies to drop the generating set from the notation and simply write $\cP(X)$.

\begin{lem}
For two different choices of generating sets $S$, $S'$ as above, the resulting Rips complexes are canonically isomorphic in the category $\sLCH$ of $\sigma$-locally compact $\Gamma$-spaces and proper continuous $\Gamma$-equivariant maps via the identity map $\id\colon\cP(X,S)\to\cP(X,S')$.
\end{lem}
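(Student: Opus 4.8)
The plan is to observe that the two Rips complexes have literally the same underlying topological space and the same $\Gamma$-action, so that the identity map is a well-defined $\Gamma$-equivariant bijection; the entire content of the lemma is that this identity and its inverse are morphisms in $\sLCH$, i.e.\ proper continuous $\sigma$-maps. Write $S=\{E_m\}$ and $S'=\{E_m'\}$, so that $\cP(X,S)$ is filtered by the subcomplexes $P_{E_m}(X)$ and $\cP(X,S')$ by the $P_{E_m'}(X)$. A tuple $(x_0,\dots,x_k)$ spans a simplex in $\bigcup_m P_{E_m}(X)$ if and only if all pairs $(x_i,x_j)$ lie in a common entourage, a condition that does not refer to the generating set; hence the two unions coincide as simplicial complexes and the identity is a genuine bijection of sets carrying the same $\Gamma$-action induced from the common action on the vertex set $X$.

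First I would check the $\sigma$-map condition for $\id\colon\cP(X,S)\to\cP(X,S')$. Fix $m$. Since $E_m$ is an entourage and $S'$ was chosen so that every entourage is contained in some member of $S'$, there is an $n$ with $E_m\subset E_n'$. Consequently every simplex of $P_{E_m}(X)$ is a simplex of $P_{E_n'}(X)$, so $P_{E_m}(X)\subset P_{E_n'}(X)$ and the identity sends the $m$-th filtration stage of $\cP(X,S)$ into the $n$-th stage of $\cP(X,S')$. This is precisely the $\sigma$-map condition.

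Next I would verify continuity and properness, which by definition of a proper $\sigma$-map reduces to checking the restricted maps $P_{E_m}(X)\to P_{E_n'}(X)$. Each such map is simply the inclusion of the closed subcomplex $P_{E_m}(X)$ into $P_{E_n'}(X)$. Both are locally compact Hausdorff spaces because $X$ is locally finite, and a closed embedding into a locally compact Hausdorff space is automatically continuous and proper. Hence each restricted map is proper continuous, so the identity is a proper continuous $\Gamma$-equivariant $\sigma$-map, i.e.\ a morphism in $\sLCH$.

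Finally, the same argument with the roles of $S$ and $S'$ interchanged shows that the identity $\cP(X,S')\to\cP(X,S)$ is also a morphism; the two are visibly mutually inverse, so the identity is an isomorphism in $\sLCH$, and being the identity map it is canonical. The only point requiring care --- and the crux of the argument --- is the $\sigma$-map condition, which hinges on the mutual cofinality of the two generating sequences: each entourage, in particular each $E_m$ and each $E_m'$, lies in a member of the opposite sequence.
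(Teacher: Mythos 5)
Your proposal is correct and follows essentially the same route as the paper: the paper's proof consists exactly of your second paragraph's observation that cofinality of the generating sequences gives $E_m\subset E'_n$, hence $P_{E_m}(X)\subset P_{E'_n}(X)$, so the identity is a $\sigma$-map. The additional verifications you spell out (continuity and properness of the closed inclusions of subcomplexes, equivariance, and the symmetric argument for the inverse) are left implicit in the paper but are correct and harmless to include.
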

\begin{proof}
If $S'$ is given by the sequence $\Diag[X]=E'_0\subset E'_1\subset\cdots$, then for every $m\in\N$ there is $n\in\N$ such that $E_m\subset E'_n$ and hence $P_{E_m}(X)\subset P'_{E_n}(X)$. Therefore the identity is a $\sigma$-map.
\end{proof}

Given in addition a $\Gamma$-invariant subspace $A\subset X$, its Rips complex $\cP(A)$ with respect to the restricted generating set $\{E\cap(A\times A)\mid E\in S'\}$ is a closed subspace of $\cP(X)$.
If~$f\colon (X,A)\to (Y,B)$ is a $\Gamma$-equivariant coarse map between two pairs of locally finite countably generated coarse $\Gamma$-spaces, then affine linear extension defines a proper continuous $\Gamma$-invariant $\sigma$-map $\cP(f)\colon (\cP(X),\cP(A))\to(\cP(Y),\cP(B))$, i.e., a morphism in $\sLCHz$, the subset $P_E(X)$ being mapped to $P_F(Y)$ if $(f\times f)(E)\subset F$.
It is readily verified that close maps $X\to Y$ induce homotopic maps $\cP(X)\to\cP(Y)$, where the homotopy is affine linear, but we even have the following much stronger result.

\begin{lem}\label{lem:Ripshomotopy}
Let $f,g\colon (X,A)\to (Y,B)$ be $\Gamma$-equivariant coarse maps between two pairs of locally finite countably generated proper isocoarse $\Gamma$-spaces. If $f$, $g$ are $\Gamma$-equi\-variantly gene\-ralized coarsely homotopic, then their induced maps between the Rips complexes $\cP(f)$, $\cP(g)$: $(\cP(X),\cP(A))\to(\cP(Y),\cP(B))$ are $\Gamma$-equivariantly homotopic.
\end{lem}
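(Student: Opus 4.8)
The plan is to lift the coarse homotopy $H$ directly to an affine homotopy between $\cP(f)$ and $\cP(g)$ inside the geometric realization of the target Rips complex, by letting each vertex $x\in X$ travel along the path that $H$ traces on the vertical segment $\{x\}\times[\rho^-(x),\rho^+(x)]$. After reparametrizing the homotopy — replacing $\rho^-$ by $\min\{\rho^-,-1\}$ and $\rho^+$ by $\max\{\rho^+,1\}$ and, if necessary, cutting along $X\times\{0\}$, exactly as in the proof of \Cref{lem:homotopydomainisomorphismlemma} — I may assume $\rho^-\equiv 0$ and write $\rho\coloneqq\rho^+\colon X\to\N$, so that $H$ is supported on $X_0^\rho$, restricts to $f$ on $X\times\{0\}$ and to $g$ on $X_\rho^\rho$, and its restriction to $X_0^\rho$ is a $\Gamma$-equivariant coarse map. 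The essential obstruction to a naive argument is that the bottom and top sections $x\mapsto(x,0)$ and $x\mapsto(x,\rho(x))$ of $X_0^\rho$ are only bornological, not controlled, so one cannot simply write $\cP(f)$ and $\cP(g)$ as composites of $\cP(H|_{X_0^\rho})$ with induced maps $\cP(X)\to\cP(X_0^\rho)$; the comparison has to be carried out at the level of an honest homotopy.

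The construction of the homotopy $G\colon\cP(X)\times I\to\cP(Y)$ is the geometric analogue of the prism operator built in \Cref{lem:ordinarycoarsehomologystronghomotopy}. For each vertex $x$ I would choose integer heights $0=n_{x,0}<n_{x,1}<\dots<n_{x,k_x}=\rho(x)$ and let $x$ travel, as the interval parameter increases, along the edge-path $f(x)=H(x,n_{x,0})\to H(x,n_{x,1})\to\dots\to H(x,n_{x,k_x})=g(x)$; consecutive points here are joined by an edge of $\cP(Y)$ because vertically adjacent points of $X_0^\rho$ are sent by the coarse map $H|_{X_0^\rho}$ into one fixed entourage of $Y$. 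On a simplex $[x_0,\dots,x_p]$ of $\cP(X)$ I define $G$ as the convex combination of the positions of its vertices. Since the vertices of such a simplex lie in one $E_m$-ball and $\rho$ is bornological, the support of this combination is a bounded subset of $Y$ and therefore spans a simplex of $\cP(Y)$ at \emph{some} scale, so $G$ is a well-defined point of the geometric realization. Equivariance is ensured by choosing the heights $\Gamma$-invariantly, $n_{x\gamma,j}=n_{x,j}$, precisely as in \Cref{lem:ordinarycoarsehomologystronghomotopy}, and continuity and properness follow from local finiteness of the complexes $\cP(X),\cP(Y)$ together with properness of $H$; that $G$ restricts to $\cP(f)$ at $t=0$, to $\cP(g)$ at $t=1$, and preserves the subspace $\cP(A)$ is immediate from the construction.

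The genuinely delicate step — the one I expect to be the main obstacle — is to verify that $G$ is a $\sigma$-map, i.e.\ that $G\bigl(P_m(X)\times I\bigr)$ lands in a \emph{single} $P_n(Y)$ with $n$ depending only on $m$. By the previous paragraph each individual simplex of $P_m(X)$ is mapped into some $P_{n(\sigma)}(Y)$, but the scale $n(\sigma)$ must be made uniform over all simplices, and this is exactly where bornologicity is too weak: if the heights $\rho(x_i)$ of $E_m$-close vertices drift apart by an uncontrolled amount, then the intermediate points of their $H$-paths can only be compared through arbitrarily large vertical entourages of $X_0^\rho$, whose $H$-images have unbounded scale. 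The remedy is to let all vertices share a single interval coordinate and to compare them only at equal parameter values, so that two $E_m$-close vertices are always evaluated at heights differing by at most one step; their images are then $F$-close, where $F$ is the image under $H\times H$ of the (fixed) entourage of $X_0^\rho$ consisting of pairs whose $X$-coordinates are $E_m$-close and whose heights differ by at most one, and this $F$ is independent of the simplex. Making this equal-parameter comparison rigorous is the crux of the proof; it is where the controlled behaviour of $\rho^\pm$ — equivalently, in the generalized-homotopy reformulation, the uniform generating entourage $E_\cU$ of $\coarseStr_\cU$ — is indispensable, and it is the reason the shared-parameter viewpoint underlying our generalized coarse homotopies is the right one to carry the scale estimate through.
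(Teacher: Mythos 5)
In outline your construction is the same as the paper's: each vertex travels along its $H$-track through a $\Gamma$-invariantly chosen subdivision, the map is extended affinely over simplices, and the whole problem is reduced to an equal-parameter entourage estimate. The genuine gap is that this estimate --- which you yourself single out as the crux --- is never carried out, and it cannot be carried out in the framework you set up. With $\rho=\rho^+$ merely bornological, two $E_m$-close vertices $x_i,x_{i'}$ may have wildly different total heights $\rho(x_i),\rho(x_{i'})$, so no coupling of their tracks to a shared interval coordinate keeps their current heights within one step of each other (at the final parameter the heights are $\rho(x_i)$ and $\rho(x_{i'})$ themselves). The unavoidable ``mixed'' pairs, where one track has terminated while the nearby one is still running, lie outside your entourage $F$, and coarseness of $H|_{X_0^\rho}$ with respect to the product coarse structure of $X\times\Z$ says nothing about them. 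This is not a repairable bookkeeping issue: take $X=\{(m,0)\mid m\in\N\}\cup\{(2^k,1)\mid k\in\N\}\subset\R^2$, $Y=\N$, $f=g$ the first-coordinate projection, and let $H$ be constant on the points $(m,0)$ and equal to the tent path $n\mapsto 2^k+\min(n,2\cdot 4^k-n)$ for $0\le n\le\rho(2^k,1)\coloneqq 2\cdot 4^k$ on $(2^k,1)$. This $H$ satisfies every axiom of a coarse homotopy with bornological $\rho^\pm$, yet the track of $(2^k,1)$ passes at distance $4^k$ from the (constant) track of the unit-distance neighbour $(2^k,0)$; hence for \emph{any} shared-parameter coupling your $G$ sends the edge $\{(2^k,0),(2^k,1)\}\in P_{E_1}(X)$ outside every fixed $P_n(Y)$, so $G$ is not a $\sigma$-map.

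Your suggested remedy, controlledness of $\rho^\pm$, would indeed close the hole, but then you have proven strictly less than the lemma, whose notion of coarse homotopy allows bornological $\rho^\pm$; and your claim that controlled $\rho^\pm$ is ``equivalently'' what the entourage $E_{\cU}$ provides is not an equivalence. The paper's proof works from the start with a \emph{generalized} coarse homotopy $H\colon(X\times[a,b],\coarseStr_{\cU})\to Y$ (this is also the form in which \Cref{thm:coarsifiedtheories} uses the lemma; the classical bornological case is delegated to the earlier conversion lemma rather than handled inside this proof). There the subdivision points $a=s_{x,0}<\dots<s_{x,k_x}=b$ are chosen, $\Gamma$-invariantly, adapted to the neighbourhoods $U_x$, and the uniform scale comes for free from the \emph{hypothesis} that $H$ is controlled with respect to $\coarseStr_{\cU}$: for a simplex of $P_E(X)$ and a common parameter $s$, all relevant pairs of points lie in the fixed entourage $E_{\cU}^{-1}\circ(E\swapcross\Diag[{[a,b]}])\circ E_{\cU}$, so their $H$-images lie in a fixed entourage of $Y$. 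No controlledness of any $\rho$ enters; the point of the generalized framework is precisely that the equal-parameter entourages $E\swapcross\Diag[{[a,b]}]$ are among the generators of $\coarseStr_{\cU}$, so equicontrolledness of the slices is part of the assumption on $H$ instead of something to be derived (your tent example above is simply not a generalized coarse homotopy). To repair your argument, discard the reduction to $X_0^\rho$ and bornological $\rho$ altogether, start from a generalized coarse homotopy, and run your paragraph-two construction with subdivisions adapted to $\cU$, exactly as in \Cref{lem:ordinarycoarsehomologystronghomotopy}.
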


\begin{proof}
We recall the choices that we have already made at the beginning of the proof of Lemma~\ref{lem:ordinarycoarsehomologystronghomotopy}.
Let $H\colon (X\times [a,b],\coarseStr_\cU)\to (Y,\coarseStr_Y)$ be a $\Gamma$-equivariant generalized coarse homotopy between $f$ and $g$ which maps $A\times[a,b]$ into $B$.
For each point $x\in X$ we choose $a=s_{x,0}<s_{x,1}<\dots<s_{x,k_x}=b$ such that $(s,s_{x,j})\in U_x$ and $(s,s_{x,j+1})\in U_x$ for all $s\in[s_{x,j},s_{x,j+1}]$, $j=0,\dots,k_x-1$ and we can assume that $s_{x,j}=s_{x\gamma,j}$ for all $\gamma\in \Gamma$, $x\in X$ and $j=0,\dots,k_x-1=k_{x\gamma}-1$.
Now, if $x_0,\dots,x_n$ span an $n$-simplex in $P_E(X)$ for some entourage $E\in\coarseStr_X$ and if $s\in\bigcap_{i=0}^n[s_{x_i,j_i},s_{x_i,j_i+1}]$, then the points
\[
(x_0,s_{x_0,j_0}),\quad\dots,\quad(x_n,s_{x_n,j_n}),\quad(x_0,s_{x_0,j_0+1}),\quad\dots,\quad(x_n,s_{x_n,j_n+1})
\]
span an $2n$-simplex in $P_{(E\swapcross\Diag[{[a,b]}])\circ E_\cU}(X)$.
Due to this property the map $\tilde H\colon\cP(X)\times[a,b]\to\cP(Y)$ we are now about to define will obviously be a $\sigma$-map.

We start the construction of $\tilde H$ by demanding that $\tilde H$ shall agree with $H$ on the subset $\{(x,s_{x,j})\mid x\in X\wedge j=0,\dots,k_x\}$. Then we extend it affine linearly to a map on $X\times[a,b]$, that~is
\[
\tilde H(x,(1-r)s_{x,j}+rs_{x,j+1})\coloneqq (1-r)\cdot H(x,s_{x,j})+r\cdot H(x,s_{x,j+1})
\]
for all $j=0,\dots,k_x-1$ and $r\in[0,1]$.
And finally, we extend it again affine linearly to all of $\cP(X)\times[a,b]$ by
\[
\tilde H\bigg(\sum_{x\in X} \lambda_xx,s\biggr)\coloneqq \sum_{x\in X} \lambda_x\tilde H(x,s).
\]
Continuity of $\tilde H$ is obvious and topological properness of $\tilde H$ follows directly from the coarse \mbox{geometric} properness of $H$.
It is also a $\sigma$-map, because it clearly maps $P_E(X)$ into\linebreak $P_{(H\times H)((E\swapcross\Diag[{[a,b]}])\circ E_\cU)}(Y)$, and it obviously maps $\cP(A)$ into $\cP(B)$.
Finally, $\Gamma$-equivariance follows from the $\Gamma$-invariant choice of the $s_{x,j}$.
\end{proof}

We are now going to apply these constructions to discretizations of pairs $(X,A)$ of countably generated proper isocoarse $\Gamma$-spaces of bornologically bounded geometry, i.e., objects in $\CGCBGz$. According to Lemma~\ref{lem:goodequivariantdiscretization} there are $\Gamma$-invariant discretizations $\iota_{X}\colon X'\subset X$ and $A'\subset A$ such that there are $\Gamma$-equivariant coarse equivalences $\pi_{X}\colon X\to X'$ and $\pi_{A}\colon A\to A'$ which are the identities on $X'$ and $A'$, respectively.
By performing the obvious modifications of $X'$ and $\pi_{X}$, if necessary, we may assume that $A'\subset X'$ and $\pi_{A}=\pi_{X}|_A$. Then the inclusion $\iota_{X}\colon (X',A')\to (X,A)$ is a~$\Gamma$-equivariant coarse equivalence with $\pi_{X}$ being a $\Gamma$-equivariant coarse inverse up to closeness.

\begin{defn}
Let $\Homol_*^\Gamma$ be a generalized $\Gamma$-equivariant homology theory or $\Cohom^*_\Gamma$ a generalized $\Gamma$-equivariant cohomology theory. Then its \emph{coarsification} $\HomolX_*^\Gamma$ or $\CohomX^*_\Gamma$ is defined by the groups
\[
\HomolX_*^\Gamma(X,A)\coloneqq \Homol_*^\Gamma(\cP(X'),\cP(A'))\qquad\text{or}\qquad\CohomX^*_\Gamma(X,A)\coloneqq \Cohom^*_\Gamma(\cP(X'),\cP(A')),
\]
respectively, where $(X',A')$ denotes some fixed discretization of $(X,A)$ of the type described above.
\end{defn}

\begin{thm}\label{thm:coarsifiedtheories}
Up to canonical isomorphism, the coarsifications $\HomolX_*^\Gamma(X,A)$ and $\CohomX^*_\Gamma(X,A)$ are independent of the choice of discretization. Furthermore, they are the groups of a $\Gamma$-equivariant \mbox{$($co-$)$}homology theory that satisfies the strong homotopy axiom.
\end{thm}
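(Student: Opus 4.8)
The plan is to deduce every property of the coarsifications from the corresponding property of the fixed \mbox{(co-)}homology theory $\Homol_*^\Gamma$ (resp.\ $\Cohom_\Gamma^*$) on $\sLCHz$, using that the Rips-complex assignment $\cP$ converts coarse-geometric data into exactly the homotopy-theoretic data to which $\Homol_*^\Gamma$ is sensitive. The two pillars are \Cref{lem:Ripshomotopy}, which says that generalized $\Gamma$-equivariantly coarsely homotopic maps induce $\Gamma$-equivariantly homotopic maps of Rips complexes, and \Cref{lem:goodequivariantdiscretization}, which supplies for each object a discretization together with a $\Gamma$-equivariant coarse equivalence $\pi_X\colon X\to X'$ that is inverse up to closeness to the inclusion. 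Since closeness implies (generalized) coarse homotopy, \Cref{lem:Ripshomotopy} shows that $\cP$ sends $\Gamma$-equivariant coarse equivalences to $\Gamma$-equivariant homotopy equivalences, so that applying the homotopy-invariant functor $\Homol_*^\Gamma$ collapses all the choices involved.

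First I would settle well-definedness and functoriality. Given two discretizations $(X',A')$ and $(X'',A'')$ of the same pair, the composite $\pi_{X''}\circ\iota_{X'}\colon(X',A')\to(X'',A'')$ is a $\Gamma$-equivariant coarse equivalence, hence $\cP$ of it is a homotopy equivalence and induces the comparison isomorphism on $\Homol_*^\Gamma$. Canonicity holds because any two such coarse equivalences are close, so by \Cref{lem:Ripshomotopy} they induce homotopic Rips maps and thus equal maps after applying $\Homol_*^\Gamma$; the cocycle condition over three discretizations is verified the same way. A $\Gamma$-equivariant coarse map $f\colon(X,A)\to(Y,B)$ is transported to $\pi_{Y'}\circ f\circ\iota_{X'}\colon(X',A')\to(Y',B')$, whose Rips map followed by $\Homol_*^\Gamma$ defines the induced homomorphism; functoriality and independence of the discretization follow once more from the fact that the relevant composites agree up to closeness.

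Exactness and strong homotopy I expect to be routine. Since $\cP(A')$ is a closed subspace of $\cP(X')$, the pair $(\cP(X'),\cP(A'))$ is an object of $\sLCHz$, and the long exact sequence of $\Homol_*^\Gamma$ for this pair, together with the naturality of its connecting homomorphisms, yields the exact sequence and the connecting natural transformations of the coarsification. The strong homotopy axiom is essentially the content of \Cref{lem:Ripshomotopy}: a generalized $\Gamma$-equivariant coarse homotopy between $f$ and $g$ transports, by precomposition with $\iota_{X'}\times\id$ and postcomposition with $\pi_{Y'}$ (operations preserving generalized coarse homotopy), to one between the discretized maps, so $\cP(f)$ and $\cP(g)$ become $\Gamma$-equivariantly homotopic and induce equal maps under the homotopy-invariant theory $\Homol_*^\Gamma$. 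As closeness and classical coarse homotopy are special cases, this also gives the ordinary homotopy axiom demanded of a \mbox{(co-)}homology theory.

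The \emph{main obstacle} will be excision. Here I would first arrange, using \Cref{lem:goodequivariantdiscretization} and the excisiveness condition, compatible discretizations in which $B':=X'\setminus C'$ discretizes $B:=X\setminus C$, so that $A'\cup B'=X'$ and $A'\cap B'$ discretizes $A\cap B=A\setminus C$. As the Rips complexes are full simplicial complexes on their vertex sets, one has $\cP(A')\cap\cP(B')=\cP(A'\cap B')$ on the nose, and topological excision for $\Homol_*^\Gamma$ applied to the triad $(\cP(A')\cup\cP(B');\cP(A'),\cP(B'))$ gives $\Homol_*^\Gamma(\cP(B'),\cP(A'\cap B'))\cong\Homol_*^\Gamma(\cP(A')\cup\cP(B'),\cP(A'))$. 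It then remains to show that the inclusion $\cP(A')\cup\cP(B')\hookrightarrow\cP(X')$ induces an isomorphism, and this is precisely where the coarse excisiveness condition \eqref{eq:excisiveness} is used: any simplex of $\cP(X')$ lying in neither subcomplex has a vertex in $C\subset A$ and a vertex in $B$, so each of its vertices lies in $\Pen_E(A)\cap\Pen_E(B)\subset\Pen_F(A\cap B)$, whence the whole simplex sits inside the Rips complex of a penumbra of $A\cap B$. I would exploit this confinement to build, at each filtration stage, a $\Gamma$-equivariant deformation of $\cP(X')$ into $\cP(A')\cup\cP(B')$ that pushes the offending mixed simplices into $\cP(A'\cap B')$, exhibiting the inclusion as a $\sigma$-homotopy equivalence. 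Making this deformation simultaneously $\Gamma$-equivariant, proper, a $\sigma$-map, and coherent across filtration levels is the delicate part of the whole proof; everything else follows formally from the axioms of $\Homol_*^\Gamma$.
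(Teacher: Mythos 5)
Your treatment of well-definedness, functoriality, exactness and the strong homotopy axiom coincides with the paper's: both rest on \Cref{lem:goodequivariantdiscretization}, \Cref{lem:Ripshomotopy}, and the fact that all relevant composites agree up to closeness. The problem is excision, which is also where the paper spends essentially all of its effort, and there your proposal has two genuine gaps. First, the step ``topological excision for $\Homol_*^\Gamma$ applied to the triad $(\cP(A')\cup\cP(B');\cP(A'),\cP(B'))$ gives $\Homol_*^\Gamma(\cP(B'),\cP(A'\cap B'))\cong\Homol_*^\Gamma(\cP(A')\cup\cP(B'),\cP(A'))$'' is not an instance of the Eilenberg--Steenrod excision axiom: that axiom requires the closure of the excised set to lie in the interior of the subspace, and for subcomplex pairs this fails. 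Already for three points $a\in A'\cap B'$, $c\in C'$, $b\in B'\setminus A'$ spanning the edges $[a,c]$ and $[a,b]$, the excised set is $(a,c]$, whose closure contains $a$, while $a$ is not interior to $\cP(A')=[a,c]$ inside $[a,c]\cup[a,b]$. Since $\Homol_*^\Gamma$ is an arbitrary theory on $\sLCHz$ (not singular homology), subcomplex excision must itself be established by thickening and deformation retraction, i.e.\ by exactly the kind of construction you postpone.

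Second, and more seriously, the isomorphism induced by $\cP(A')\cup\cP(B')\hookrightarrow\cP(X')$ is the heart of the whole axiom, and you explicitly defer it (``the delicate part''). The difficulty is not mere bookkeeping: a $\Gamma$-equivariant retraction pushing the vertices of mixed simplices (which, as you correctly observe, lie in $\Pen_F(A'\setminus C')$ by \eqref{eq:excisiveness}) onto nearby points of $A'\setminus C'$ need not exist at the level of points, because a vertex $x$ may have a stabilizer $\Gamma_x$ fixing no nearby point of $A'\setminus C'$. The paper resolves this by mapping an orbit representative $x_O$ to the convex combination $\sum_{\gamma'\in\Gamma_{x_O}}y_O\gamma'$ inside the Rips complex, extending affine linearly and equivariantly, and only then using coarse excisiveness to verify the $\sigma$-map and properness conditions. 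Without this (or an equivalent) construction your argument is incomplete. For comparison, the paper sidesteps your triad decomposition entirely: it thickens $\cP(A')$ and $\cP(C')$ inside $\cP(X')$ to $\cA=\alpha^{-1}[\tfrac13,1]$ (closed) and $\cC=\beta^{-1}(\tfrac23,1]$ (open) via affine-linear functions, so that topological excision applies on the nose to $(\cP(X')\setminus\cC,\cA\setminus\cC)\to(\cP(X'),\cA)$, and then reduces everything to three $\Gamma$-equivariant deformation retractions $\cA\to\cP(A')$, $\cP(X')\setminus\cC\to\cP(X'\setminus C')$ and $\cA\setminus\cC\to\cP(A'\setminus C')$, only the last of which requires the stabilizer construction above. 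Your route could likely be completed along similar lines, but as it stands the decisive constructions are missing.
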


\begin{proof}
If there is another pair of spaces with discretization $(Y',B')\xleftrightharpoons[\iota_Y]{\pi_Y}(Y,B)$ of the same type and if $f\colon (X,A)\to(Y,B)$ is a $\Gamma$-equivariant coarse map, then the $\Gamma$-equivariant coarse map $f'\coloneqq \pi_Y\circ f\circ\iota_X\colon(X',A')\to (Y',B')$ is up to closeness independent of the choice of $\pi_Y$ and consequently the induced map on the Rips complexes $\cP(f)\colon (\cP(X'),\cP(A'))\to (\cP(Y'),\cP(B'))$ is up to homotopy in the category $\sLCHz$ independent of the choice of $\pi_Y$ by Lemma~\ref{lem:Ripshomotopy}. Thus, we obtain an induced group homomorphism $f_*\colon\HomolX_*^\Gamma(X,A)\to\HomolX_*^\Gamma(Y,B)$ and $f^*\colon\CohomX^*_\Gamma(Y,B)\to\CohomX^*_\Gamma(X,A)$ which is independent of the choice of $\pi_Y$.

Also, if $f_1$, $f_2$ are generalized coarsely homotopic, then so are $f_1'$, $f_2'$ and therefore $\cP(f_1)$, $\cP(f_2)$ are homotopic in the category $\sLCHz$, so $(f_1)_*=(f_2)_*$ and $(f_1)^*=(f_2)^*$.

If there is a third pair of spaces with discretization $(Z',C')\xleftrightharpoons[\iota_Z]{\pi_Z}(Z,C)$ and $g\colon (Y,B)\to (Z,C)$ is a $\Gamma$-equivariant coarse map, then $g'\circ f'$ is close to $(g\circ f)'$ and hence $\cP(g')\circ \cP(f')$ is homotopic to $\cP(g'\circ f')$, so $g_*\circ f_*=(g\circ f)_*$ and $f^*\circ g^*=(g\circ f)^*$.

Applying all of this to the identity map between the same pair of spaces $(X,A)$ but equipped with different discretizations $(X',A')$, we see that the groups $\HomolX_*^\Gamma(X,A)$ and $\CohomX^*_\Gamma(X,A)$ are independent of the choice of discretization up to canonical isomorphism.

Thus, so far we have shown that the groups form a covariant or a contravariant functor from $\CGCBG^2$ to the category of abelian groups which satisfies the strong homotopy axiom. The exactness axiom for $\HomolX_*^\Gamma$, $\CohomX^*_\Gamma$ follows immediately from the exactness axiom for $\Homol_*^\Gamma$, $\Cohom^*_\Gamma$.

Finally, but most complicatedly, it remains to show the excision axiom.
Let $(X\setminus C,A\setminus C)\subset (X,A)$ be an excision (cf.\ Definition~\ref{defn:excisionmorphisms}). We can choose discretizations $C'\subset C$, $A'\subset A$ and $X'\subset X$ as above such that $C'\subset A'\subset X'$ and then $(X'\setminus C',A'\setminus C')\subset (X',A')$ is an excision, too.
Let $\alpha\colon\cP(X)\to [0,1]$ be the continuous $\Gamma$-invariant function which is $1$ on $A'$, $0$ on $X'\setminus A'$ and then extended over all simplices affine linearily. In the same way we construct $\beta\colon \cP(X)\to [0,1]$ being $1$ on $C'$ and $0$ on $X'\setminus C'$. We define the closed subspace $\cA\coloneqq\alpha^{-1}\bigl[\frac13,1\bigr]$ and the open subspace $\cC\coloneqq\beta^{-1}\bigl(\frac23,1\bigr]$ of $\cP(X')$.
Note that the closure of $\cC$ is contained in the interior of $\cA$, because $\beta\leq \alpha$, and therefore the inclusions $(\cP(X')\setminus\cC,\cA\setminus\cC)\to (\cP(X'),\cA)$ are topological excisions and thus induce isomorphisms on \mbox{(co-)}homology.

The proof can now be finished by showing that the subspaces $\cP(A')\subset\cA$, $\cP(X'\setminus C')\subset\cP(X')\setminus\cC$ and $\cP(A'\setminus C')\subset \cA\setminus \cC$ are deformation retracts, because then homotopy invariance and the long exact sequences will show that the inclusions $(\cP(X'),\cP(A'))\to(\cP(X'),\cA)$ and $(\cP(X'\setminus C'),\cP(A'\setminus C'))\to(\cP(X')\setminus\cC,\cA\setminus\cC)$ induce isomorphisms on \mbox{(co-)}homology.

Constructing a retraction $\cA\to\cP(A')$ is straightforward: A point $x\in\cA$ has a decomposition $x=\lambda a+(1-\lambda) b$ with $a\in\cP(A')$ and $b\in\cP(X'\setminus A')$ in which $\lambda\geq \frac13$. In particular, $\lambda\not=0$ and therefore $a$ is uniquely determined. Due to this uniqueness, mapping $x\mapsto a$ gives us a continuous $\Gamma$-equivariant retraction which is homotopic to the identity via the obvious affine linear homotopy. Furthermore, both the retraction and the homotopy respect the filtration, because if $x$ lies in a simplex, then $a$ and the whole path between $x$ and $a$ lie in the same simplex. Exactly the same procedure provides us with a deformation retraction $\cP(X')\setminus\cC\to\cP(X'\setminus C')$.

The tricky part is to construct the deformation retraction $\cA\setminus \cC\to\cP(A'\setminus C')$ and here we will need the excisiveness condition. Let $\Diag[X']=E_0\subset E_1\subset \cdots$ be a sequence of symmetric $\Gamma$-invariant entourages of $X'$ such that every entourage is contained in some $E_n$, and set $E_{-1}=\varnothing$.
We choose a point $x_O\in O$ in each $\Gamma$-orbit $O$ of $X'$, $O=x_O\Gamma$, and let $\Gamma_{x_O}$ denote the stabilizer of $x_O$. Now, there is a unique $n_O\in\N$ such that $x_O\in \Pen_{E_{n}}(A'\setminus C')\setminus\Pen_{E_{n-1}}(A'\setminus C')$ and therefore in particular $O=x_O\Gamma\subset \Pen_{E_{n}}(A'\setminus C')$.
Choose $y_O\in A'\setminus C'$ such that $(x_O,y_O)\in E_n$. Note that for all $\gamma\in\Gamma,\gamma'\in\Gamma_{x_O}$ we have $(x_O\gamma,y_O\gamma'\gamma)\in E_n$ by $\Gamma$-invariance of $E_n$. Also, for $O\subset X'\setminus C', x_O\in X'\setminus C'$, we have $y_O= x_O$.

We can now define a map $r\colon\cP(X')\to\cP(A'\setminus C')$ as the affine linear extension of the $\Gamma$-equivariant extension of the mapping $x_O\mapsto \sum_{\gamma'\in\Gamma_{x_O}}y_O\gamma'$, that is
\[
r\colon\ \sum_{O\subset X',\,\gamma\in\Gamma}\lambda_{O,\gamma}x_O\gamma\mapsto \sum_{O\subset X',\,\gamma\in\Gamma,\,\gamma'\in\Gamma_{x_O}}\lambda_{O,\gamma}y_O\gamma'\gamma.
\]
This map is clearly well defined, $\Gamma$-equivariant, continuous and restricts to the identity on $\cP(A'\setminus C')$.

It is not a $\sigma$-map itself, but its restriction to $\cA\setminus\cC$ is: If $x\in\cA\setminus\cC$, then each simplex of $\cP(X')$ which contains $x$ must have at least one vertex in $A'$ \big(because $\alpha(x)\geq\frac13$\big) and one vertex in $X'\setminus C$ \big(because $\beta(x)\leq\frac23$\big). Thus, if this particular simplex lies in $P_E(X')$, then all of its vertices lie in $\Pen_E(A')\cup\Pen_E(X'\setminus C')\subset \Pen_{E_n}(A'\setminus C')$, where $n\in\N$ only depends on $E$. This shows $(\cA\setminus\cC)\cap P_E(X')\subset P_E(\Pen_{E_n}(A'\setminus C'))$ and $r$ clearly maps this subset into $P_{E_n\circ E\circ E_n}(A'\setminus C')$.

The same applies to the homotopy between $r$ and the identity on $\cA\setminus\cC$ given by affine linear interpolation. Therefore, $\cP(A'\setminus C')\subset \cA\setminus \cC$ is a deformation retract.
\end{proof}

\subsection{Single space (co-)homology theories and transgression maps}
\label{sec:SteenrodTransgression}

An important class of coarse \mbox{(co-)}homology theories are the coarsifications of so-called single-space \mbox{(co-)}homology theories. We will show in this section that they always satisfy the flasqueness axiom and admit the so-called transgression maps.

\begin{defn}
A \mbox{(co-)}homology theory on $\sCHz$ or $\sLCHz$ is called a \emph{single-space \mbox{$($co-$)$}ho\-mo\-lo\-gy theory} (or \emph{$\Gamma$-equivariant single space $($co-$)$ho\-mo\-lo\-gy theory}, if we want to emphasize the group action), if it factors through $\sLCHp$ up to natural isomorphism. This property is called the \emph{strong excision axiom}.
Equivalently, these \mbox{(co-)}homology theories can be axiomatized using solely the absolute \mbox{(co-)}homology groups\footnote{Therefore the name.} as follows:
\begin{enumerate}\itemsep=0pt
\item[$\bullet$] A \emph{$\Gamma$-equivariant single-space homology theory $\Homol^\Gamma_*$ for $\sigma$-locally compact spaces} is a collection of covariant homotopy functors $\Homol^\Gamma_n$ indexed by $n\in\Z$ from the category $\sLCHp$ to the category of abelian groups together with \emph{boundary maps} $\partial_n\colon \Homol^\Gamma_n(\cX\setminus\cA)\to\Homol^\Gamma_{n-1}(\cA)$ such that the long sequences
\begin{equation}\label{eq:ssHomolLES}
\dots\to\Homol^\Gamma_n(\cA)\to\Homol^\Gamma_n(\cX)\to\Homol^\Gamma_n (\cX\setminus\cA)\xrightarrow{\partial_n}\Homol^\Gamma_{n-1}(\cA)\to\cdots
\end{equation}
are exact and natural in $(\cX,\cA)\in\sLCHzp$.
\item[$\bullet$] Dually, a \emph{$\Gamma$-equivariant single-space cohomology theory $\Homol^\Gamma_*$ for $\sigma$-locally compact spaces} is a collection of contravariant homotopy functors $\Cohom_\Gamma^n$ indexed by $n\in\Z$ from the category $\sLCHp$ to the category of abelian groups together with \emph{coboundary maps} $\delta^n\colon \Cohom_\Gamma^n(\cA)\to\Cohom_\Gamma^{n+1}(\cX\setminus\cA)$ such that the long sequences
\begin{equation}\label{eq:ssCohomLES}
\dots\to\Cohom_\Gamma^n(\cX\setminus\cA)\to\Cohom_\Gamma^n(\cX)\to\Cohom_\Gamma^n(\cA)\xrightarrow{\delta_n}\Cohom_\Gamma^{n+1}(\cX\setminus\cA)\to\cdots
\end{equation}
are exact and natural in $(\cX,\cA)\in\sLCHzp$.
\end{enumerate}
In the same way, $\Gamma$-equivariant single-space \mbox{(co-)}homology theories can be introduced on $\CHz$, $\LCHz$ and $\LCHp$.
\end{defn}

This definition has already been well-established for a long time on the categories $\CHz[]$, $\LCHz[]$, $\LCHp[]$. In \cite[Definition 4.13]{EngelWulff} we gave $\sigma$-versions for $\sCHz[],\sLCHp[]$ under the name ``generalized Steenrod \mbox{(co-)}homology theories'', where ``Steenrod'' was meant to express that the strong excision axiom holds. However, we drop this naming in the present paper to avoid confusion, because other authors use it as a qualifier for the cluster axiom, which says that homology turns disjoint unions of spaces into products of homology groups and co-homology turns disjoint unions into direct sums (cf.\ \cite[Definition B.2.2]{WillettYuHigherIndexTheory}).
We will not use the cluster axiom here, but it should be noted that it might be very useful for the calculation of \mbox{(co-)}homology of the Rips complexes by cellular methods.

For the remainder of this section, $\Homol^\Gamma_*$ will always denote a $\Gamma$-equivariant single-space homology theory and $\Cohom_\Gamma^*$ will always denote a generalized $\Gamma$-equivariant single-space cohomology theory for $\sigma$-locally compact spaces and their coarsifications will be denoted by $\HomolX^\Gamma_*$ and $\CohomX_\Gamma^*$, respectively, as before.

\begin{lem}
\label{lem:SteenrodFlasqueness}
Coarsifications of single-space \mbox{$($co-$)$}homology theories satisfy the flasqueness axiom.
\end{lem}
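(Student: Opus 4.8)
The plan is to turn the flasqueness map into a concrete factorisation of the identity through a half-open cylinder, on which a single-space theory vanishes by homotopy invariance. I treat the homological case; the cohomological one is formally dual, with the arrows reversed. First I would reduce to the absolute case: if $(X,A)$ is flasque with map $\phi$ preserving $A$, then $\phi|_A$ exhibits $A$ as flasque and $\phi$ exhibits $X$ as flasque, so by the exactness axiom the long exact sequence of the pair sandwiches $\HomolX^\Gamma_*(X,A)$ between the two groups $\HomolX^\Gamma_*(X)$ and $\HomolX^\Gamma_*(A)$, and it suffices to prove $\HomolX^\Gamma_*(X)=\Homol^\Gamma_*(\cP(X'))=0$ for a single flasque space $X$. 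Here $X'$ is a $\Gamma$-invariant discretization as produced by \Cref{lem:goodequivariantdiscretization}, and I would replace $\phi$ by the coarse self-map $\phi'\coloneqq\pi_X\circ\phi\circ\iota_X\colon X'\to X'$, which inherits all three flasqueness properties.

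Next I would record the relevant features of the induced morphism $\Phi\coloneqq\cP(\phi')\colon\cP(X')\to\cP(X')$ in $\sLCHp$. Since $\phi'$ is close to the identity with a single fixed closeness entourage $F_0$, all consecutive pairs $\phi'^{\,n},\phi'^{\,n+1}$ are $F_0$-close, so the affine-linear homotopies $H_n$ between $\Phi^n$ and $\Phi^{n+1}$ exist and have image in one fixed filtration level; equicontrolledness of $\{\phi'^{\,n}\}$ ensures that every level $P_m(X')$ is carried by all $\Phi^n$ and all $H_n$ into a single level $P_{m'}(X')$, and the escaping property ensures that $\Phi^n$ and the $H_n$ leave every compact subset of $\cP(X')$ once $n$ is large. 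Using these I would glue the $H_n$ over the intervals $[n,n+1]$ into a map $b\colon\cP(X')\times[0,\infty)\to\cP(X')$, $b(z,t)=H_{\lfloor t\rfloor}(z,t-\lfloor t\rfloor)$, together with the slice inclusion $a\colon\cP(X')\to\cP(X')\times[0,\infty)$, $a(z)=(z,0)$. Both are $\Gamma$-equivariant proper continuous $\sigma$-maps, and by construction $b\circ a=\id$ because $H_0(z,0)=\Phi^0(z)=z$.

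Finally I would observe that the single-space theory vanishes on the cylinder $\cM\coloneqq\cP(X')\times[0,\infty)$: its one-point $\sigma$-compactification is the smash product $\cM^+\cong\cP(X')^{+}\wedge[0,\infty]$ with the based-contractible cone $[0,\infty]$, and since $\Gamma$ acts trivially on the interval factor the contraction is $\Gamma$-equivariant, so $\cM^+$ is equivariantly based-contractible and homotopy invariance gives $\Homol^\Gamma_*(\cM)=0$. Consequently the identity $\id=b_*\circ a_*$ on $\Homol^\Gamma_*(\cP(X'))$ factors through $\Homol^\Gamma_*(\cM)=0$, which forces $\Homol^\Gamma_*(\cP(X'))=0$, as desired.

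The main obstacle is the verification in the second paragraph that $b$ is a genuine morphism of $\sLCHp$, that is, a proper continuous $\Gamma$-equivariant $\sigma$-map; this is exactly where all three clauses of flasqueness are needed and why they were imposed. Closeness to the identity with a \emph{uniform} entourage makes the interpolating homotopies exist and be continuous across the integers, equicontrolledness yields the $\sigma$-map property, and the escaping property yields properness, since the preimage of a compact set meets only finitely many strips $\cP(X')\times[n,n+1]$ and is compact on each of them because every $\Phi^n$ is itself proper. Once $b$ is known to be a morphism, the remaining steps are formal, and I do not expect the cohomological dual to present any new difficulty.
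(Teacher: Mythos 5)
Your overall strategy is essentially the one the paper uses: glue the affine-linear interpolations of the induced maps on Rips complexes into a single morphism over $[0,\infty)$, verify that the three clauses of flasqueness give continuity, the $\sigma$-map property and properness, and then conclude formally --- you by factoring the identity through the cylinder $\cP(X')\times[0,\infty)$, the paper by reading the very same map as an $\sLCHp$-homotopy over $[0,\infty]$ from the identity to the empty morphism. However, there is a genuine gap in your first paragraph: the claim that $\phi'\coloneqq\pi_X\circ\phi\circ\iota_X$ \emph{inherits all three flasqueness properties} is false, and your second and third paragraphs depend on it. The point is that $\phi'^{\,n}=(\pi_X\circ\phi\circ\iota_X)^n$ is not $\pi_X\circ\phi^n\circ\iota_X$: the discretization error $\iota_X\circ\pi_X$ is re-inserted at every step and then transported by the remaining iterates, so it compounds, and neither the escaping property nor equicontrolledness of $\{\phi'^{\,n}\}$ follows from the corresponding properties of $\phi$. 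Concretely, let $X=[0,\infty)$ with $\phi(x)=x+\tfrac{2}{5}$ (a legitimate flasqueness witness), $X'=\N$, and $\pi_X$ the nearest-integer retraction; then $\phi'=\id_{\N}$, so $\im(\phi'^{\,n})=\N$ meets every bounded set, your map $b$ becomes the projection $(z,t)\mapsto z$, and this is not proper --- the construction does not merely lack a justification, it actually fails. (The same issue invalidates the claim that closeness of $\phi'$ to the identity alone makes consecutive iterates $\phi'^{\,n},\phi'^{\,n+1}$ uniformly close; even for an honest flasqueness witness this requires equicontrolledness, since $(\phi'^{\,n}(x),\phi'^{\,n+1}(x))\in(\phi'^{\,n}\times\phi'^{\,n})(F)$ for the closeness entourage $F$.)

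The repair is small, and it is exactly what the paper does: discretize the iterates instead of iterating the discretization. Set $\psi_n\coloneqq\pi_X\circ\phi^n\circ\iota_X\colon X'\to X'$, so that $\psi_0=\id_{X'}$. Then $(\psi_n(x),\psi_{n+1}(x))=\bigl(\pi_X\phi^n(x),\pi_X\phi^n(\phi(x))\bigr)\in(\pi_X\times\pi_X)\bigl((\phi^n\times\phi^n)(F)\bigr)$ is uniformly small by equicontrolledness of $\{\phi^n\}$ and controlledness of $\pi_X$; the family $\{\psi_n\}$ is equicontrolled for the same reason; and $\im(\psi_n)\subset\pi_X(\im\phi^n)$ still eventually avoids every bounded set, because $\pi_X$ displaces points by only a fixed entourage. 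Replacing $\Phi^n$ by $\cP(\psi_n)$ and taking $H_n$ to be the affine interpolation between $\cP(\psi_n)$ and $\cP(\psi_{n+1})$, your verification of continuity, the $\sigma$-condition, properness and equivariance, as well as the cylinder argument and the factorization $\id=b\circ a$, go through unchanged; with this modification the proposal is correct and coincides in substance with the paper's proof.
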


\begin{proof}
Let $X$ be a flasque space, witnessed by $\phi\colon X\to X$, and let $\iota\colon X'\subset X$ be a discretization as in Lemma~\ref{lem:goodequivariantdiscretization} with $\Gamma$-equivariant coarse equivalence $\pi\colon X\to X'$. Then we define the map $(\cP(X'))^+\times[0,\infty]\to (\cP(X'))^+$ as being equal to $\cP(\pi\circ\phi^n\circ\iota_X)$ on $\cP(X')\times\{n\}$, interpolating affine linearily inbetween and mapping everything else (i.e., $\{\infty\}\times [0,\infty]\cup\cP(X')\cup\{\infty\}$) to~$\infty$. From the assumptions on $\phi$, it is straightforward to see that this is a $\Gamma$-equivariant proper continous $\sigma$-map. Furthermore, it is readily checked that it descends to a homotopy in $\sLCHp$ which shows that $\cP(X')$ is homotopy equivalent to the empty set. The long exact sequence shows that \mbox{(co-)}homology vanishes on the empty set, and therefore its coarsification vanishes on $X$.
\end{proof}

Given a coarse space $X$, we let $\HigFuAlg(X)$ denote the commutative \textCstar-algebra of all bounded functions $X\to\C$ of vanishing variation and $\VanInfFuAlg(X)\coloneqq \VanInfFuAlg(X;\C)$ the ideal of all functions vanishing at infinity. The \emph{Higson corona} $\partial_hX$ is by definition the maximal ideal space of the quotient \textCstar-algebra $\HigFuAlg(X)/\VanInfFuAlg(X)$, i.e., the compact Hausdorff space with $\Ct(\partial_hX)\cong \HigFuAlg(X)/\VanInfFuAlg(X)$.
Note that $\HigFuAlg(X)$ and $\VanInfFuAlg(X)$ are clearly contravariantly functorial and hence $\partial_hX$ is covariantly functorial under closeness classes of coarse maps.
In particular, coarsely equivalent spaces have homeomorphic Higson coronas.
We call a compact Hausdorff space $\partial X$ a \emph{Higson dominated corona} for the coarse space $X$ if there is a continuous surjective map $\partial_hX\twoheadrightarrow \partial X$.

If $X$ is moreover a topological coarse space, then $\HigFuAlg(X)$ contains the sub-\textCstar-algebra of continuous functions $\Ch(X)=\HigFuAlg(X)\cap\Ct(X)$ which contains $\Cz(X)$. Its maximal ideal space is a compactification $\overline{X}^h$ of~$X$, called the \emph{Higson compactification}. We have $\Ct(\partial_hX)\cong \HigFuAlg(X)/\VanInfFuAlg(X)\cong \Ch(X)/\Cz(X)\cong \Ct\big(\overline{X}^h\setminus X\big)$, where the proof of the second isomorphism works exactly as the proof of~\eqref{eq:sHigCorcontinuousrepresentatives}, see \cite[proof of Proposition~3.7]{EmeMeyDualizing}.
Thus, the Higson corona is the boundary of this compactification. A compactification $\overline{X}$ of $X$ with boundary~$\partial X$ is called \emph{Higson dominated}, if there is a continuous surjection $\overline{X}^h\twoheadrightarrow\overline{X}$ which restricts to the identity on~$X$, or equivalently, if the closure of each entourage $E\subset X\times X$ intersects $\overline{X}\times\partial X\cup\partial X\times\overline{X}$ only within $\partial X\times\partial X$.
In this case $\partial X\coloneqq \overline{X}\setminus X$ is a Higson dominated corona.

If $X$ comes equipped with a coarse $\Gamma$-action, then $\partial_h X$ and $\overline{X}^h$ are $\Gamma$-spaces. In this case we will always assume that Higson dominated coronas and compactifications $\partial X,\overline{X}$ are also $\Gamma$-spaces and that the surjections are $\Gamma$-equivariant.

Now assume that $X$ is a countably generated proper isocoarse $\Gamma$-space of bornologically bounded geometry and $\partial X$ a Higson dominated corona. Let $X'\subset X$ be a $\Gamma$-equivariant discretization as in Lemma~\ref{lem:goodequivariantdiscretization}.
For each $\Gamma$-invariant entourage $E$ of $X'$, the Rips complex $P_E(X')$ becomes a topological isocoarse $\Gamma$-space if we additionally equip it with the canonical coarse structure characterized by the properties that the inclusion $X'\subset P_E(X')$ is a coarse equivalence and the simplices are uniformly bounded.
Then there is a Higson dominated compactification of $P_E(X')$ with the same corona $\partial X$, which can be constructed as follows: Note that the corona $\partial X$ corresponds to a unital sub-$\Gamma$-\textCstar-algebra of $\Ct(\partial_hX)\cong\HigFuAlg(X)/\VanInfFuAlg(X)\cong \HigFuAlg(X')/\VanInfFuAlg(X')\cong \HigFuAlg(P_E(X'))/\VanInfFuAlg(P_E(X'))\cong\Ch(P_E(X'))/\Cz(P_E(X'))$, that is, it is of the form $C/\Cz(P_E(X'))$ for a unital sub-$\Gamma$-\textCstar-algebra $C\subset \Ch(P_E(X'))$ and the maximal ideal space of~$C$ is a Higson-dominated compactification $\overline{P_E(X')}$ of $P_E(X')$ with boundary $\partial X$. Applying it to the sequence of Rips complexes $P_{E_0}(X')\subset P_{E_1}(X')\subset \cdots$ we obtain a sequence of compact Hausdorff $\Gamma$-spaces $\overline{P_{E_0}(X')}\subset\overline{P_{E_1}(X')}\subset \cdots$, where the inclusions are the identity on $\partial X$. Thus, this sequence is a $\sigma$-compactification $\overline{\cP(X')}$ of $\cP(X')$ with boundary $\partial X$.

Assume furthermore that $A\subset X$ is a coarse $\Gamma$-invariant subspace. Then the image of $\Ct(\partial X)$ under the restriction map $\Ct(\partial_h X)\cong \HigFuAlg(X)/\VanInfFuAlg(X)\to \HigFuAlg(A)/\VanInfFuAlg(A)\cong \Ct(\partial_h A)$ corresponds to a Higson dominated corona $\partial A$ of $A$. By construction we have a surjection $\Ct(\partial X)\to\Ct(\partial A)$, so $\partial A$ can be considered a subspace of $\partial X$.
If $X'\subset X$ and $A'\subset A$ are two discretizations as in Lemma~\ref{lem:goodequivariantdiscretization} with $A'\subset X'$, then we obtain corresponding compactifications $\overline{P_E(A')}$ with boundary $\partial A$ for each entourage $E$ of $X$. The function algebras $\Ct(\overline{P_E(A')})$ are exactly the images of $\Ct(\overline{P_E(X')})$ under the restriction maps $\Ch(P_E(X'))\to\Ch(P_E(A'))$, so we can consider $\overline{P_E(A')}$ as a subspace of $\overline{P_E(X')}$. Thus, the $\sigma$-compactification $\overline{\cP(A')}$ is also a subspace of $\overline{\cP(X')}$ and we note that $\overline{\cP(A')}\cap\partial X=\partial A$.

In the proof of Theorem~\ref{thm:coarsifiedtheories} we have seen that different choices of discretizations $(X'',A'')$ lead to homotopy equivalences $(\cP(X'),\cP(A'))\simeq (\cP(X''),\cP(A''))$ in the category $\sLCHz$ which are canonical up to homotopy. They clearly extend to homotopy equivalences $(\overline{\cP(X')},\overline{\cP(A')})\simeq (\overline{\cP(X'')},\overline{\cP(A'')})$ in the category $\sCHz$ which are the identity on the boundaries $(\partial X,\partial A)$ and are also canonical up to homotopy. Therefore, the next definition is independent of the choice of discretizations.

\begin{defn}
Let $(X,A)$ be a pair of countably generated proper isocoarse $\Gamma$-spaces of bornologically bounded geometry, $\partial X$ a Higson dominated corona and $\partial A$ the corresponding corona of $A$.
The \emph{transgression maps} are the connecting homomorphisms
\[
\HomolX_{*+1}^\Gamma(X,A)\to\Homol_*^\Gamma(\partial X,\partial A)\qquad\text{and}\qquad \Cohom^*_\Gamma(\partial X,\partial A)\to \CohomX^{*+1}_\Gamma(X,A)
\]
associated to the pair of $\sigma$-locally compact spaces $\big(\overline{\cP(X')}\setminus\overline{\cP(A')},\partial X\setminus\partial A\big)$.
\end{defn}

Important for applications (see, e.g., Theorems~\ref{thm:transgressionfactorsthroughcoassembly} and~\ref{thm:transgressionfactorsthroughassembly} below) is the question when transgression maps are isomorphisms. The long exact sequences and homotopy invariance immediately imply the following result.

\begin{lem}\label{lem:relativetransgressionisos}
If $\overline{\cP(A')}$ is a deformation retract of $\overline{\cP(X')}$ in the category $\sCH$, then the transgression maps are isomorphisms.
\end{lem}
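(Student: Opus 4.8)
The plan is to recognise the transgression map as the connecting homomorphism of the single-space long exact sequence of the pair $(\overline{\cP(X')}\setminus\overline{\cP(A')},\,\partial X\setminus\partial A)$, and then to show that the deformation-retract hypothesis forces the term $\Homol^\Gamma_*(\overline{\cP(X')}\setminus\overline{\cP(A')})$ (dually $\Cohom_\Gamma^*(\dots)$) to vanish; a term sandwiched between the corona groups and the coarsification groups, its vanishing turns the connecting homomorphisms into isomorphisms.

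Write $Y\coloneqq\overline{\cP(X')}\setminus\overline{\cP(A')}$, which by definition carries the transgression maps as connecting homomorphisms of the pair $(Y,\partial X\setminus\partial A)\in\sLCHzp$. First I would unwind the set-theoretic structure of this pair. Since $\overline{\cP(A')}\cap\partial X=\partial A$ and $\cP(X')\subset\overline{\cP(X')}$ is open with closed complement $\partial X$, one checks that $\partial X\setminus\partial A$ is closed in $Y$ and that its open complement is exactly $Y\setminus(\partial X\setminus\partial A)=\cP(X')\setminus\cP(A')$. The single-space long exact sequence of this pair then reads
\[\dots\to\Homol^\Gamma_n(\partial X\setminus\partial A)\to\Homol^\Gamma_n(Y)\to\Homol^\Gamma_n(\cP(X')\setminus\cP(A'))\xrightarrow{\partial_n}\Homol^\Gamma_{n-1}(\partial X\setminus\partial A)\to\dots\]
By the strong excision axiom, $\Homol^\Gamma_n(\partial X\setminus\partial A)=\Homol^\Gamma_n(\partial X,\partial A)$ and $\Homol^\Gamma_n(\cP(X')\setminus\cP(A'))=\Homol^\Gamma_n(\cP(X'),\cP(A'))=\HomolX^\Gamma_n(X,A)$, so $\partial_n$ is precisely the transgression map. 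Hence the transgression maps are isomorphisms as soon as $\Homol^\Gamma_*(Y)=0$, and dually $\Cohom_\Gamma^*(Y)=0$ handles cohomology via the sequence with $\delta$ in place of $\partial$.

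It remains to establish this vanishing. Applying strong excision a second time, now to the compact pair $(\overline{\cP(X')},\overline{\cP(A')})\in\sCHz$, I would identify $\Homol^\Gamma_n(Y)=\Homol^\Gamma_n(\overline{\cP(X')}\setminus\overline{\cP(A')})\cong\Homol^\Gamma_n(\overline{\cP(X')},\overline{\cP(A')})$, the relative homology of that pair, and likewise in cohomology. The hypothesis that $\overline{\cP(A')}$ is a deformation retract of $\overline{\cP(X')}$ in $\sCH$ means exactly that the inclusion $\overline{\cP(A')}\hookrightarrow\overline{\cP(X')}$ is a homotopy equivalence there; by homotopy invariance the induced maps $\Homol^\Gamma_n(\overline{\cP(A')})\to\Homol^\Gamma_n(\overline{\cP(X')})$ and $\Cohom_\Gamma^n(\overline{\cP(X')})\to\Cohom_\Gamma^n(\overline{\cP(A')})$ are isomorphisms for all $n$. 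Feeding this into the long exact sequence of the pair $(\overline{\cP(X')},\overline{\cP(A')})$ forces every relative group to vanish, so $\Homol^\Gamma_*(Y)=\Cohom_\Gamma^*(Y)=0$, and the transgression maps are isomorphisms in both variances.

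The only genuinely delicate point is the set-theoretic bookkeeping in the second paragraph: verifying that $\partial X\setminus\partial A$ is closed in $Y$, that its complement is $\cP(X')\setminus\cP(A')$, and that the resulting pair really lies in $\sLCHzp$ so that the single-space long exact sequence and the identifications $\Homol^\Gamma(\partial X\setminus\partial A)=\Homol^\Gamma(\partial X,\partial A)$ apply. Everything after that is a formal consequence of strong excision, homotopy invariance, and the long exact sequences, carried out identically in the homological and cohomological settings.
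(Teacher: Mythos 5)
Your proof is correct and is exactly the argument the paper intends: the paper dispatches this lemma with the remark that ``the long exact sequences and homotopy invariance immediately imply'' it, and your write-up simply carries that out --- the transgression is the connecting homomorphism of the pair $(\overline{\cP(X')}\setminus\overline{\cP(A')},\partial X\setminus\partial A)$, the middle term of that sequence is identified by strong excision with the relative group of the compact pair $(\overline{\cP(X')},\overline{\cP(A')})$, and the deformation-retract hypothesis kills it via homotopy invariance and the long exact sequence. No discrepancy with the paper's approach.
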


\subsection{Reduced (co-)homology and reduced transgression}\label{sec:reducedtransgression}

In the absolute case $A=\varnothing$, there is a similar result to Lemma~\ref{lem:relativetransgressionisos} if we consider a reduced version of the transgression maps.
To define them, we first introduce reduced (co-)homology for $\sigma$-locally compact $\Gamma$-spaces $\cX$ equipped with a decomposition $\cX=\coprod_{i\in \pi}\cK_i$ as a coproduct in the category of $\sigma$-locally compact spaces (indexed by an arbitrary set $\pi$) with the following properties: First, we assume that each $\cK_i$ is $\sigma$-compact. And second, the subsets $\cK_i$ do not have to be closed under the $\Gamma$-action, but we do assume that each group element maps each $K_i$ homeomorphically onto some $K_j$.
In case of the trivial decomposition into a single $\sigma$-compact subset $\cK=\cX$ and without group action we will recover the usual definition of reduced (co-)homology.

For each nonempty $\sigma$-compact space $\cK$, let $\cC\cK\coloneqq \cK\times[0,1]/\cK\times\{1\}$ be the closed cone and $\cO\cK\coloneqq \cK\times(0,1]/\cK\times\{1\}$ the open cone over $\cK$.
Furthermore we define $\cC\varnothing$ and $\cO\varnothing$ to be the one-point space.
Thanks to $\sigma$-compactness of $\cK$, the closed cone is again $\sigma$-compact and the open cone is $\sigma$-locally compact.
Then we define the closed and open multicones of $\cX$ as $\cC^+\cX\coloneqq\coprod_{i\in \pi}\cC\cK_i$ and $\cO^+\cX\coloneqq\coprod_{i\in \pi}\cO\cK_i$, respectively. They are canonically $\sigma$-locally compact $\Gamma$-spaces.

\begin{defn}
The \emph{reduced} \mbox{(co-)}homology groups of a decomposed $\sigma$-locally compact $\Gamma$-space~$\cX$ as above are defined as
\[
\widetilde\Homol^\Gamma_*(\cX)\coloneqq \Homol^\Gamma_{*+1}(\cO^+\cX)\qquad \text{and}\qquad \widetilde\Cohom_\Gamma^*(\cX)\coloneqq \Cohom_\Gamma^{*+1}(\cO^+\cX).
\]
\end{defn}

To understand this definition, we furthermore define the suspension of a $\sigma$-locally compact $\Gamma$-space $\cX$ as $\Sigma\cX\coloneqq \cX\times(0,1)$.
The obvious deformation retraction of $\cX\times[0,1]$ to $\cX\times\{1\}$ induces a homotopy equivalence in $\sLCHp$ between $\cX\times[0,1)$ and the empty space and hence all single space \mbox{(co-)}homology theories vanish on them.
\begin{defn}\label{def:suspensionandreduction}
The \emph{suspension isomorphisms} are the connecting homomorphisms
\[
\Homol^\Gamma_{*+1}(\Sigma \cX)\cong\Homol^\Gamma_*(\cX)\qquad \text{and}\qquad\Homol_\Gamma^*(\cX)\cong \Cohom_\Gamma^{*+1}(\Sigma \cX)
\]
in the long exact sequences associated to the pairs of spaces $(\cX\times[0,1),\cX\times\{0\})$.
\end{defn}

Now, we can canonically consider $\pi$ itself as a discrete $\Gamma$-space. As such, it is embedded in the multicones $\cC^+\cX$, $\cO^+\cX$ as the subspace of apices. As $\cC^+\cX$ is contractible onto $\pi$ and by using the suspension isomorphisms, the long exact sequences associated to the pair $(\cC^+\cX,\cX\times\{0\})$ become
\begin{gather}
\cdots\to \Homol^\Gamma_{*+1}(\pi) \to \widetilde\Homol^\Gamma_*(\cX) \to \Homol^\Gamma_*(\cX) \to \Homol^\Gamma_*(\pi) \to\cdots,\nonumber
\\\cdots\to \Cohom_\Gamma^*(\pi) \to \Cohom_\Gamma^*(\cX) \to \widetilde\Cohom_\Gamma^*(\cX) \to \Cohom_\Gamma^{*+1}(\pi) \to\cdots.\label{eq:generalizedreducedunreducedsequence}
\end{gather}
These sequences imply in particular that reduced \mbox{(co-)}homology vanishes if the canonical $\Gamma$-equivariant continuous $\sigma$-map $\cX\to \pi$ is a homotopy equivalence.

The connection to the well-known definition of reduced (co-)homology is now as follows:
If $\cX$ contains a $\Gamma$-invariant subset which meets each $\cK_i$ in exactly one point, then the map $\cX\to \pi$ has a right inverse in $\sLCH$ and the exact sequences yield canonical isomorphisms
\[
\widetilde\Homol^\Gamma_*(\cX)\cong\ker(\Homol^\Gamma_*(\cX)\to\Homol^\Gamma_*(\pi))\qquad \text{and}\qquad \widetilde\Cohom_\Gamma^*(\cX)\cong\operatorname{coker}(\Cohom_\Gamma^*(\pi)\to\Cohom_\Gamma^*(\cX)).
\]
In the non-equivariant case and for the trivial decomposition, such that $\pi$ itself is a one-point space, these isomorphisms are exactly the classical definitions of the reduced groups via augmentation with a point.
Analogously, for non-trivial decompositions, we recognize the idea that an augmentation with more than one point has been performed.
However, we emphasize that such a right inverse $\pi\to \cX$ need not exist in the equivariant case, so our definiton via cones is superior.

Given a closed subspace $\cA\subset \cX$
equipped with the decomposition into the subsets $\cA\cap \cK_i$, we have $\cO^+\cA\subset\cO^+\cX$ and $\cO^+\cX\setminus\cO^+\cA=\Sigma(\cX\setminus\cA)$.
As $\cA\cap \cK_i$ might be empty even if $\cK_i$ is not, we point out that it is important to allow empty sets in the decomposition and to define the cones over the empty space to be the one-point space for this to work.
Then, the long exact sequences associated to the pairs $(\cO^+\cX,\cO^+\cA)$ become
\begin{gather}
\cdots\to \widetilde\Homol^\Gamma_*(\cA) \to \widetilde\Homol^\Gamma_*(\cX) \to \Homol^\Gamma_*(\cX,\cA) \to \widetilde\Homol^\Gamma_{*-1}(\cA) \to\cdots,
\label{eq:multireducedLEShomol}
\\
\cdots\to \widetilde\Cohom_\Gamma^{*-1}(\cA) \to \Cohom_\Gamma^*(\cX,\cA) \to \widetilde\Cohom_\Gamma^*(\cX) \to \widetilde\Cohom_\Gamma^*(\cA) \to\cdots.
\label{eq:multireducedLEScohom}
\end{gather}

\begin{lem}Up to signs, there is a canonical natural transformations from the reduced homological long exact sequence \eqref{eq:multireducedLEShomol} to the corresponding unreduced one \eqref{eq:ssHomolLES} and dually from the unreduced cohomological long exact sequence \eqref{eq:ssCohomLES} to the corresponding reduced one \eqref{eq:multireducedLEScohom}.
\end{lem}

\begin{proof}
{\sloppy The transformations are defined by applying naturality under the morphism $(\cO^+\cX,\cO^+\cA)\allowbreak\supset (\Sigma\cX,\Sigma\cA)$ and the suspension isomorphisms. The only nontrivial claim is that the boundary maps $\Homol^\Gamma_{*+1}(\Sigma\cX,\Sigma\cA) \to \Homol^\Gamma_{*}(\Sigma\cA)$ and $\Homol^\Gamma_*(\cX,\cA) \to \Homol^\Gamma_{*-1}(\cA)$ correspond to each other under suspension (and dually for cohomology), but this is only true up to the sign $-1$. Its proof is a~fairly standard argument from algebraic topology, although concrete references seem to be hard to find: First, one rewrites the boundary map using suspension as
\begin{align*}
\Homol^\Gamma_*(\cX,\cA)&\cong \Homol^\Gamma_*(\cX\times\{1\}\cup A\times[0,1],\cA\times\{0\})
\\
&\to \Homol^\Gamma_*(A\times (0,1],\cA\times\{0\})\cong\Homol^\Gamma_*(\Sigma A)
\cong\Homol^\Gamma_{*-1}(\cA)
\end{align*}}\noindent
and similarily for the boundary map for $(\Sigma\cX,\Sigma\cA)$.
The claim is then equivalent to the fact that the two ways of performing the double suspension $\Homol^\Gamma_{*+1}\big(\Sigma^2 A\big) \cong\Homol^\Gamma_*(\Sigma A)\cong\Homol^\Gamma_{*-1}(\cA)$ (suspending the first copy of~$(0,1)$ first and then the second copy versus the other way around) coincide up to~the sign $-1$, where the sign arises from the reflection of the square~$(0,1)^2$ along the diagonal.
How the proof works can be seen in the follow-up article \cite[Lemma~4.6.3]{wulff2020secondary}, where a more complicated analogous statement is shown for suspension in coarse (co-)homology.
\end{proof}

Now, let $X$ be a countably generated proper isocoarse $\Gamma$-space of bornologically bounded geometry and $X'\subset X$ a discretization.
For each coarse component $K$ of $X$, we furthermore consider a Higson-dominated corona $\partial K$ in a way compatible with the $\Gamma$-action. More precisely, we assume that their coproduct $\partial X\coloneqq\coprod_{K\in\coarsecomp(X)}\partial K$ is a locally compact $\Gamma$-space and moreover a quotient $\Gamma$-space of $\coprod_{K\in\coarsecomp(X)}\partial_hK$. We can attach each $\partial K$ to $\cP(X'\cap K)$ and obtain the $\sigma$-locally compact $\Gamma$-space $\overline{\cP(X')}\coloneqq \coprod_{K\in\coarsecomp(X)}\overline{\cP(X'\cap K)}$, which contains $\cP(X')$ as an open and~$\partial X$ as a closed subspace.
We consider all of them as equipped with the decomposition indexed by $\coarsecomp(X)$.\footnote{Note however, that the notation is slightly misleading: Here, the spaces $\partial X$ and $\overline{\cP(X')}$ are not ($\sigma$-)compact, if $\coarsecomp(X)$ is not finite.}

\begin{defn}\label{defn:reducedtransgression}
We call a locally compact $\Gamma$-space $\partial X\coloneqq\coprod_{K\in\coarsecomp(X)}\partial K$ as above together with its decomposition a \emph{componentwise Higson dominated corona} of $X$.
The associated \emph{reduced transgression maps} are the connecting homomorphisms
\[
\HomolX_{*+1}^\Gamma(X)\to\widetilde\Homol_*^\Gamma(\partial X)\qquad\text{and}\qquad \widetilde\Cohom^*_\Gamma(\partial X)\to \CohomX^{*+1}_\Gamma(X)
\]
in the long exact sequences of reduced (co-)homology associated to the pair $(\overline{\cP(X')},\partial X)$.
\end{defn}

We now obtain the following reduced analog of Lemma~\ref{lem:relativetransgressionisos}.
\begin{lem}If each $\overline{\cP(X'\cap K)}$ is contractible and all the contractions together are compatible with the $\Gamma$-action, then the reduced transgression maps are isomorphisms.
\end{lem}

{\sloppy\begin{proof}The hypothesis means nothing else than that the $\sigma$-locally compact $\Gamma$-space $\coprod_{K\in\coarsecomp(X)}\overline{\cP(X'\cap K)}$ is homotopy equivalent to $\coarsecomp(X)$, canonically considered as a discrete $\Gamma$-space. But its reduced (co-)homology groups vanish, because $\cO^+\coarsecomp(X)=(0,1]\times \coarsecomp(X)$ is homotopy equivalent to the empty set. The claim now follows from the exact sequences \eqref{eq:multireducedLEShomol} and \eqref{eq:multireducedLEScohom}.
\end{proof}}

As an example, the central Theorem~5.7 of~\cite{EngelWulff} says that the hypothesis is satisfied in the non-equivariant case for the combing compactification of a proper metric space equipped with an expanding and coherent combing.

Let us close this section by mentioning the following relation between reduced and unreduced co-assembly.\footnote{Compare to the end of Section~\ref{sec:sHigCor}.} If $X$ is a countably generated proper isocoarse $\Gamma$-space of bornologically bounded geometry and $K\subset X$ is a subspace whose intersection with each coarse component of $X$ is bounded, then we define the countably generated isocoarse $\Gamma$-space $X^\to\coloneqq X\cup_KK\times\N$ of bornologically bounded geometry by equipping it with the obvious coarse structure. Then reduced transgression for $X^\to$ canonically identifies with relative transgression of the pair $(X^\to,K\times\N)$ and with unreduced transgression for~$X$.

\section{Further examples}\label{sec:FurtherExamples}

In the following subsections we take a closer look at coarsifications of some specific directly constructed \mbox{(co-)}homology theories on $\sigma$-spaces and partly also their transgression maps.

\subsection{Singular (co-)homology}
Locally finite singular homology $\HS^{\lf}_*(\blank,\blank;M)$ and compactly supported singular cohomology $\HS_{\cs}^*(\blank,\blank;M)$ with coefficients in an abelian group $M$ are known to be \mbox{(co-)}homology theories on $\LCHz[]$. Let $\CS^{\lf}_*(\blank,\blank,M)$ and $\CS_{\cs}^*(\blank,\blank,M)$ denote their respective \mbox{(co-)}chain complexes. If $\Gamma$ is in addition a discrete group which acts on $M$ from the left, then we can perform the following modifications to the \mbox{(co-)}chain complexes to obtain \mbox{(co-)}homology theories on $\sLCHz$.
First we pass to the subcomplexes $\CS^{\lf,\Gamma}_*(\blank,\blank;M)\subset\CS^{\lf}_*(\blank,\blank;M)$ of $\Gamma$-equivariant chains and the quotient complex $\CS_{\cs,\Gamma}^*(\blank,\blank;M)\coloneqq\Gamma\backslash\CS_{\cs}^*(\blank,\blank;M)$, just as we did in Sections~\ref{sec:ordinarycoarsehomology} and~\ref{sec:ordinarycoarsecohomology}, to obtain functors on $\LCHz$. And second, if $\cX=\bigcup_{n\in\N}X_n$ is a $\sigma$-locally compact $\Gamma$-space with closed subspace $\cA=\bigcup_{n\in\N}A_n$, then we define
\begin{gather*}
\CS^{\lf,\Gamma}_*(\cX,\cA;M)\coloneqq\varinjlim_{n\in\N}\CS^{\lf,\Gamma}_*(X_n,A_n;M),
\\
\CS_{\cs,\Gamma}^*(\cX,\cA;M)\coloneqq\varprojlim_{n\in\N}\CS_{\cs,\Gamma}^*(X_n,A_n;M),
\end{gather*}
{\sloppy where all the maps in the directed systems are injective or surjective, respectively.
The usual proofs go through to show that these \mbox{(co-)}chain complexes define \mbox{(co-)}homology theories $\HS^{\lf}_*(\blank,\blank;M)$ and $\HS_{\cs}^*(\blank,\blank;M)$ on $\sLCHz$ and for the sake of computations we note that we have $\HS^{\lf,\Gamma}_*(\cX,\cA;M)\cong\varinjlim_{n\in\N}\HS^{\lf,\Gamma}_*(X_n,A_n;M)$ and a Milnor-${\varprojlim}^1$-sequence
\[
0\to {\varprojlim_{n\in\N}}^1\HS_{\cs,\Gamma}^{*-1}(X_n,A_n;M)\to \HS_{\cs,\Gamma}^*(\cX,\cA;M)\to \varprojlim_{n\in\N}\HS_{\cs,\Gamma}^*(X_n,A_n;M)\to 0.
\]}

Now, if $(X',A')$ is a pair of locally finite countably generated proper isocoarse $\Gamma$-spaces, then its ordinary $\Gamma$-equivariant coarse \mbox{(co-)}chain complex is nothing else but the $\Gamma$-equivariant locally finite/compactly supported simplicial \mbox{(co-)}chain complex of the pair of $\sigma$-simplicial complexes $(\cP(X'),\cP(A'))$ (defined in the completely analogous way). The usual proofs using barycentric subdivision can be adapted to the present $\sigma$-situation with $\Gamma$-actions to show that the singular and simplicial \mbox{(co-)}homology of $(\cP(X'),\cP(A'))$ coincide. This shows $\HX^\Gamma_*(X',A';M)\cong \HS^{\lf,\Gamma}_*(\cP(X'),\cP(A');M)$ and $\HX_\Gamma^*(X',A';M)\cong \HS_{\cs,\Gamma}^*(\cP(X'),\cP(A');M)$, that is, the ordinary $\Gamma$-equivariant coarse \mbox{(co-)}homology is the coarsification of $\Gamma$-equivariant locally finite/compactly supported singular \mbox{(co-)}homology.

\subsection{Alexander--Spanier (co-)homology}\label{sec:AlexanderSpanier}

Unfortunately, singular \mbox{(co-)}homology does not satisfy strong excision, so they do not provide transgression maps. Instead one has to use other \mbox{(co-)}homology theories like Alexander--Spanier cohomology and its dual.

Alexander--Spanier cohomology is known to be a single-space cohomology theory on $\LCHzp[]$ (see in particular \cite[Section 6.6]{Spanier66} or \cite[Section~1.7]{MasseyHomologyCohomologyBook} for a proof of the strong excision property).
By taking inverse limits of Alexander--Spanier cochain complexes one can easily generalize this cohomology theory to a single-space cohomology theory on $\sLCHzp[]$ whose coarsification is the (non-equivariant) ordinary coarse cohomology. As a consequence we obtain transgression maps from the Alexander--Spanier cohomology of Higson-dominated coronas to the coarse cohomology of a space, and one can indeed show that they agree with Roe's original transgression map from \cite[Section~5.3]{RoeCoarseCohomIndexTheory}.
All of this was done in \cite[Section~4.4]{EngelWulff}.

Care has to be taken if one tries to dualize Alexander--Spanier cohomology (or \v{C}ech coho\-mo\-logy) in order to obtain a homology theory which satisfies the strong excision axiom, because most attempts of dualization destroy the long exact sequences. Massey pointed out how to do it correctly \cite{MasseyHomologyCohomologyBook,MasseyHowTo}, providing us with a single-space homology theory on $\LCHzp[]$, which we call Alexander--Spanier homology. It can be generalized easily to $\sLCHzp[]$ by taking direct limits.
If $(X',A')$ is a pair of uniformly locally finite countably generated coarse spaces, then all the pairs of Rips complexes $(P_E(X'),P_E(A'))$ are finite dimensional and the Alexander--Spanier homology of these pairs are isomorphic to their locally finite cellular homology by \cite[Section~4.9(4)]{MasseyHomologyCohomologyBook} and hence also to their locally finite simplicial homology. Consequently, for spaces of coarsely bounded geometry the coarsification of Alexander--Spanier homology is (non-equivariant) ordinary coarse homology and we obtain transgression maps from ordinary coarse homology to the Alexander--Spanier homology of Higson-dominated coronas.

The proofs of the basic properties of Alexander--Spanier \mbox{(co-)}homology are somewhat involved, and therefore some non-trivial effort would probably have to be invested to see if $\Gamma$-actions can be implemented into the definitions in the naive ways seen in Sections~\ref{sec:ordinarycoarsehomology} and~\ref{sec:AlexanderSpanier}. We are not going to pursue this question any further.

\subsection[Coarse K-theory and coassembly]{Coarse $\boldsymbol{\K}$-theory and coassembly}\label{sec:coassembly}

Given a $\sigma$-locally compact $\Gamma$-space $\cX=\bigcup_{n\in\N}X_n$ and a $\Gamma$-\textCstar-algebra $D$, the associated function $*$-algebra
\[
\Cz(\cX;D)\coloneqq \{f\colon\cX\to D\mid \forall n\in\N\colon f|_{X_n}\in\Cz(X_n;D)\}
\]
equipped with the countable family of \textCstar-seminorms $\|f\|_n\coloneqq\|f|_{X_n}\|$ and the pull-back action $\gamma\cdot f\colon x\mapsto \gamma\cdot (f(x\gamma))$ is a so-called $\sigma$-$\Gamma$-\textCstar-algebra, i.e., it is a complex topological $*$-algebra whose topology is Hausdorff, generated by a countable family of $\Gamma$-equivariant \textCstar-seminorms and is complete with respect to this family of seminorms. Equivalently, it is the inverse limit in the category of topological complex $*$-algebras of an inverse system of $\Gamma$-\textCstar-algebras indexed over the natural numbers (cf.\ \cite{PhiInv}).

Referring to a previous remark, the $\Gamma$-action on a $\sigma$-$\Gamma$-\textCstar-algebra is isometric with respect to all the seminorms. Instead one could also pose the weaker condition that the $\Gamma$-action is only continuous with respect to the topology on the whole algebra. The latter results in what we would call $\Gamma$-$\sigma$-\textCstar-algebras, i.e., the $\sigma$ and $\Gamma$ are interchanged to account for the different order in which they are implemented into the definition.

Via pullback of functions, we obtain a contravariant functor $\Cz(\blank;D)\colon\sLCHp\to\sCs$
into the category of $\sigma$-$\Gamma$-\textCstar-algebras and $\Gamma$-equivariant $*$-homomorphisms, the latter being automatically continuous. In the case $D=\C$ this functor is actually an equivalence between the categories $(\sLCHp)^{\mathrm{op}}$ and the category of commutative $\sigma$-$\Gamma$-\textCstar-algebras $\csCs$, i.e., a Gelfand--Naimark type duality.

If $\blank\rtimes\Gamma$ is any crossed product functor on the category of $\Gamma$-\textCstar-algebras, then we obtain a~corresponding crossed product functor from $\sCs$ to $\sCs[]$ by applying it to an associated inverse system of $\Gamma$-\textCstar-algebras and taking the inverse limit afterwards. It is a direct consequence of \cite[Proposition~5.3(2)]{PhiInv} that the resulting crossed product functor is exact, if we started with an exact crossed product functor.
We can now extend $\Ktop_*$ to $\sigma$-$\Gamma$-\textCstar-algebras by turning~\eqref{eq:KtopDef} into a definition, as was done in \cite{EmeMeyDescent,EmeMeyEquivariantCoassembly}.

\begin{defn}
Using Phillips' $\K$-theory for $\sigma$-\textCstar-algebras \cite{PhiRep,PhiFre},
for any exact crossed product functor and any $\sigma$-$\Gamma$-\textCstar-algebra $C$ we define
\[
\Ktop_*(\Gamma,C)\coloneqq \K_*((C\maxtensor\sfP)\maxcrossed\Gamma).
\]
Furthermore, the \emph{$\Gamma$-equivariant $\K$-theory with coefficients in a $\Gamma$-\textCstar-algebra $D$} is defined as
\[
\K_\Gamma^*(\blank;D)\coloneqq \Ktop_{-*}(\Gamma,\Cz(\blank;D)).
\]
\end{defn}

\begin{lem}The functors $\K_\Gamma^*(\blank,D)$ and $\K_{-*}(\Cz(\blank;D)\rtimes\Gamma)$ for any exact crossed product functor $\blank\rtimes\Gamma$ are $\Gamma$-equivariant single-space cohomology theories on $\sLCHp$ and they all agree on proper $\sigma$-locally compact $\Gamma$-spaces.
\end{lem}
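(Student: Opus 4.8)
The plan is to read off both axioms of a single-space cohomology theory (\Cref{def:Steenrod}) — homotopy invariance and the natural long exact sequence — by transporting them from the $\sigma$-$\Gamma$-\textCstar-algebra side through the relevant $\K$-theoretic functors, and then to treat the coincidence on proper spaces separately. Contravariant functoriality requires no work: the excerpt already exhibits the contravariant functor $\Cz(\blank;D)\colon\sLCHp\to\sCs$, so $\K_\Gamma^*(\blank;D)=\Ktop_{-*}(\Gamma,\Cz(\blank;D))$ and $\K_{-*}(\Cz(\blank;D)\rtimes\Gamma)$ are automatically contravariant functors to abelian groups, and strong excision is built into their being defined on all of $\sLCHp$. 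The degree shift $-*$ turns the $\K$-theory boundary maps into coboundary maps raising degree.

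First I would check homotopy invariance. A homotopy in $\sLCHp$ between $f_0,f_1\colon\cX\to\cY$ is a morphism $H\colon\cX\times[0,1]\to\cY$ restricting to $f_i$ on the slices $\cX\times\{i\}$; pulling back functions yields $H^*\colon\Cz(\cY;D)\to\Cz(\cX\times[0,1];D)$, and composing with evaluation at the two endpoints recovers $f_0^*$ and $f_1^*$. Since $\Cz(\cX\times[0,1];D)$ is the $\sigma$-algebra of $\Ct([0,1])$-paths in $\Cz(\cX;D)$, this exhibits $f_0^*$ and $f_1^*$ as homotopic $*$-homomorphisms of $\sigma$-$\Gamma$-\textCstar-algebras. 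Homotopy invariance of Phillips' $\K$-theory \cite{PhiRep,PhiFre}, together with the fact that $\blank\maxtensor\sfP$ and the crossed products send homotopies to homotopies, then gives $(f_0)^*=(f_1)^*$ for both theories.

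Next comes exactness. A pair $(\cX,\cA)\in\sLCHzp$ with open complement $\cU\coloneqq\cX\setminus\cA$ produces, levelwise and then in the inverse limit, the short exact sequence of $\sigma$-$\Gamma$-\textCstar-algebras
\[0\to\Cz(\cU;D)\to\Cz(\cX;D)\to\Cz(\cA;D)\to0,\]
the inverse limit staying exact because the structure maps of the filtration are surjective restrictions \cite{PhiInv}. For $\K_{-*}(\Cz(\blank;D)\rtimes\Gamma)$ this remains short exact after $\rtimes\Gamma$ by the standing exactness hypothesis, and the $\K$-theory long exact sequence is exactly the one required. For $\Ktop_{-*}(\Gamma,\blank)$ one applies in addition $\blank\maxtensor\sfP$, which preserves short exact sequences since the maximal tensor product is exact, and then the $\sigma$-maximal crossed product, exact by \cite[Proposition 5.3(2)]{PhiInv}; the associated $\K$-theory long exact sequence supplies the coboundary maps, natural in $(\cX,\cA)$.

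The real content is the agreement on a proper $\sigma$-locally compact $\Gamma$-space $\cX=\bigcup_n X_n$. Here each $\Cz(X_n;D)$ is a proper $\Gamma$-\textCstar-algebra, so two standard facts apply at every level: for proper coefficients all crossed-product completions coincide, so $\Cz(X_n;D)\rtimes\Gamma=\Cz(X_n;D)\maxcrossed\Gamma$ independently of the exact functor $\rtimes$; and the Dirac morphism $\sfD\in\KK^\Gamma(\sfP,\C)$ induces an isomorphism $\K_*\big((\Cz(X_n;D)\maxtensor\sfP)\maxcrossed\Gamma\big)\cong\K_*\big(\Cz(X_n;D)\maxcrossed\Gamma\big)$ \cite{MeyerNest,EmeMeyDescent}. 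Both isomorphisms are natural in the coefficient algebra, hence compatible with the restriction maps of the filtration, so they descend to $\Ktop$ and to Phillips' $\K$-theory of the inverse limit, yielding $\Ktop_{-*}(\Gamma,\Cz(\cX;D))\cong\K_{-*}(\Cz(\cX;D)\rtimes\Gamma)$ for every exact $\rtimes$. I expect the main obstacles to be purely organisational: confirming that $\Cz(\cX;D)$ is proper in precisely the $\sigma$-sense these theorems demand, and ensuring that the Dirac-morphism isomorphism — a statement at each finite stage $X_n$ — is natural enough to commute with the inverse limits and with the $\varprojlim^1$-terms concealed in $\Ktop$ of a $\sigma$-algebra.
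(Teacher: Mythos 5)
Your treatment of the two axioms is the paper's own argument with the details written out: contravariant functoriality and strong excision come for free from $\Cz(\blank;D)\colon\sLCHp\to\sCs$, homotopy invariance comes from Phillips' $\K$-theory applied to the path algebra $\Ct([0,1],\Cz(\cX;D))$, and the long exact sequences come from the levelwise short exact sequences $0\to\Cz(\cU;D)\to\Cz(\cX;D)\to\Cz(\cA;D)\to 0$ (exact in the inverse limit by surjectivity of the restriction maps), preserved by $\blank\maxtensor\sfP$ and by the exact $\sigma$-crossed products via \cite[Proposition 5.3(2)]{PhiInv}. That part is correct and matches the paper.

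The gap is exactly where you flagged it, and it is genuine as the argument stands. Splitting your final step in two: the claim that all crossed products $\Cz(\cX;D)\rtimes\Gamma$ coincide does pass harmlessly from the stages $X_n$ to the $\sigma$-algebra, because \cite[Remark 3.4.16]{echterhoff_KK_BC_overview} gives an identification of \textCstar-algebras, natural in the coefficient algebra, and the $\sigma$-crossed products are by definition inverse limits of the levelwise ones. The Dirac-morphism step does not pass so easily: the isomorphism $\K_*\bigl((\Cz(X_n;D)\maxtensor\sfP)\maxcrossed\Gamma\bigr)\cong\K_*\bigl(\Cz(X_n;D)\maxcrossed\Gamma\bigr)$ is only an isomorphism of $\K$-theory groups, induced by Kasparov product with a $\KK$-class rather than by a $*$-homomorphism of the inverse systems. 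Compatible levelwise isomorphisms identify the $\varprojlim$ and ${\varprojlim}^1$ terms of the two Milnor sequences, but they do not by themselves produce a map between the middle terms, i.e.\ between the Phillips $\K$-groups of the two inverse limit algebras; without such a map there is no five lemma to apply, and two extensions with isomorphic outer terms need not have isomorphic middle terms. What is needed is an assembly map defined at the level of $\sigma$-$\Gamma$-\textCstar-algebras, and this is precisely what the paper invokes: by \cite[Section 2.7]{EmeMeyDescent} the Baum--Connes assembly map exists for $\sigma$-$\Gamma$-\textCstar-algebras and is an isomorphism $\Ktop_{-*}(\Gamma,C)\cong\K_{-*}(C\maxcrossed\Gamma)$ whenever $C$ is proper, which applied to $C=\Cz(\cX;D)$ finishes the proof with no levelwise-to-limit argument at all. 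Replacing your last step by that citation makes your proof coincide with the paper's.
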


\begin{proof}The first statement follows directly from the long exact sequences and homotopy invariance of Phillips' $\K$-theory together with continuity and exactness of the maximal tensor product and the crossed product functors in consideration.

If $X$ is a proper locally compact $\Gamma$-space, then $\Cz(X;D)$ is a proper $\Gamma$-\textCstar-algebra and \cite[Remark~3.4.16]{echterhoff_KK_BC_overview} implies that all crossed products $\Cz(X;D)\rtimes\Gamma$, even the non-exact ones, coincide. Thus, the same is true for $\Cz(\cX;D)\rtimes \Gamma$ if $\cX$ is a proper $\sigma$-locally compact $\Gamma$-space.

As observed in \cite[Section~2.7]{EmeMeyDescent}, the Baum--Connes assembly maps yield isomorphisms \linebreak$\Ktop_{-*}(\Gamma,C)=\K_{-*}((C\maxtensor\sfP)\maxcrossed\Gamma)\cong\K_{-*}(C\maxcrossed\Gamma)$ for all proper $\sigma$-$\Gamma$-\textCstar-algebras $C$. Applying this to $C=\Cz(\cX;D)$ for proper $\sigma$-locally compact $\Gamma$-spaces $\cX$ finishes the proof.
\end{proof}

\begin{defn}By the second part of the preceding lemma, the coarsifications of $\K_\Gamma^*(\blank,D)$ and $\K_{-*}(\Cz(\blank;D)\rtimes\Gamma)$ for all exact crossed product functors $\blank\rtimes\Gamma$ agree and we call it the \emph{$\Gamma$-equivariant coarse $\K$-theory} $\KX_\Gamma^*(\blank,D)$.
\end{defn}

We are also interested in the reduced $\K$-theory groups of $\sigma$-locally compact $\Gamma$-spaces $\cX$ equipped with a decomposition $\cX=\coprod_{i\in \pi}\cK_i$ as in Section~\ref{sec:reducedtransgression}. In \cite[Section~2.3]{EmeMeyEquivariantCoassembly}, Emerson and Meyer gave a definition thereof for undecomposed $\sigma$-compact $\Gamma$-spaces $\cK$ as the $\K$-theory of a~certain $\sigma$-$\Gamma$-\textCstar-algebra, which we shall call $\CRed(\cK;D)$. We recall and generalize their construction and show that it agrees with the reduced groups obtained via Definition~\ref{def:suspensionandreduction} if all of the $\cK_i$ are nonempty.

Given a non-empty compact Hausdorff space $K$, Emerson and Meyer define $\CRed(K;D)$ as the sub-$\Gamma$-\textCstar-algebra of $\Ct(K;\multiplier^s(D))$ consisting of those functions for which $f(x)-f(y)\in D\otimes\Kom$ for all $x,y\in K$.
For the empty space, we deviate and simply define $\CRed(\varnothing;D)\coloneqq\frac{\multiplier^s(D)}{D\otimes\Kom}$.
If~$\cK=\bigcup_{n\in\N}K_n$ is a $\sigma$-compact Hausdorff space, then we obtain the $\sigma$-$\Gamma$-\textCstar-algebra
\[
\CRed(\cK;D)\coloneqq\big\{f\colon\cK\to \multiplier^s(D)\mid \forall n\in\N\colon f|_{K_n}\in\CRed(K_n;D)\big\}=\varprojlim_{n}\CRed(K_n;D).
\]

Now we generalize it to locally compact Hausdorff spaces $X$ decomposed into compact subsets $K_i$, which are allowed to be empty. To this end, for each $f\in \CRed(K;D)$ we let $\Var(f)\coloneqq\sup_{x,y\in K}\|f(x)-f(y)\|$ if $K\not=\varnothing$ and $\Var(f)\coloneqq 0$ if $K=\varnothing$.
Then we define
\[
\CzRed(X;D)\coloneqq\bigg\{(f_i)_{i\in \pi}\in \prod_{i\in \pi}\CRed(K_i;D)\bigg|\big(\pi\mapsto [0,\infty),i\mapsto\Var(f_i)\big)\in \Cz(\pi)\bigg\},
\]
recalling that we also consider $\pi$ as a discrete space.
Note that it can be identified as a~sub-\textCstar-algebra of $\Cb(X;\multiplier^s(D))$ if all $K_i$ are non-empty.

For the $\sigma$-locally compact space $\cX=\bigcup_{n\in\N}X_n$ decomposed as above into the $\cK_i$ we then define the $\sigma$-\textCstar-algebra
\[
\CzRed(\cX;D)\coloneqq\varprojlim_{n\in\N}\CzRed(X_n;D),
\]
where the locally compact subspaces $X_n$ of the filtration of $\cX$ are decomposed into the compact subsets $X_n\cap \cK_i$.
Note that it is even a $\sigma$-$\Gamma$-\textCstar-algebra if $\cX$ is a $\sigma$-locally compact $\Gamma$-space and the decomposition is compatible with the $\Gamma$-action as in Section~\ref{sec:reducedtransgression}.

The purpose of this cryptic construction is to have the short exact sequence
\begin{equation}\label{eq:reducedcontinuousfunctionalgebrasSES}
0\to\Cz(\cX;D)\otimes\Kom\to \CzRed(\cX;D)\to\frac{\prod_{\pi}\multiplier^s(D)}{\bigoplus_{\pi}(D\otimes\Kom)}\to 0
\end{equation}
of $\sigma$-$\Gamma$-\textCstar-algebras.

{\sloppy\begin{lem}\label{lem:reducedKtheory}
The reduced groups of the cohomology theories $\K^*(\blank;D)$, $\K_\Gamma^*(\blank;D)$ and $\K_{-*}(\Ct(\blank;D\otimes\Kom)\rtimes\Gamma)$ are canonically naturally isomorphic to $\K_{-*}\big(\CRed(\blank;D)\big)$, $\Ktop_{-*}\big(\Gamma,\CRed(\blank;D)\big)$ and $\K_{-*}\big(\CRed(\blank;D\otimes\Kom)\rtimes\Gamma\big)$, respectively, in such a way that the corresponding sequences \eqref{eq:generalizedreducedunreducedsequence} are isomorphic to the long exact sequences induced by~\eqref{eq:reducedcontinuousfunctionalgebrasSES}.
\end{lem}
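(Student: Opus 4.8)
The plan is to reuse verbatim the abstract mechanism set up in the proof of \Cref{lem:reducedunreducedstablehigsoncoronacone}, with the substitution $A=\CRed(\cK;D)$ and $I=\Ct(\cK;D)\otimes\Kom=\Ct(\cK;D\otimes\Kom)$, the ideal of those functions taking values in $D\otimes\Kom$. First I would verify that this pair has the decisive properties required there. The inclusion of constant functions $\iota\colon\multiplier^s(D)\to\Ct(\cK;\multiplier^s(D))$ takes values in $\CRed(\cK;D)$, because constant functions have vanishing difference, and it is a $\Gamma$-equivariant $*$-homomorphism of $\sigma$-$\Gamma$-\textCstar-algebras. Since every $f\in\CRed(\cK;D)$ satisfies $f(x)-f(y)\in D\otimes\Kom$, the composite $\cK\to\multiplier^s(D)\to\multiplier^s(D)/D\otimes\Kom$ is constant; reading off this common class gives a surjection $A\to\multiplier^s(D)/D\otimes\Kom$ with kernel exactly $I$, so $\iota$ induces an isomorphism $\multiplier^s(D)/D\otimes\Kom\cong A/I$ and restricts to the constant inclusion $\iota'\colon D\otimes\Kom\to I$. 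This is precisely the input of \Cref{lem:reducedunreducedstablehigsoncoronacone}, and it produces the short exact sequence displayed in the statement.

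The key geometric observation is the identification of the mapping cone occurring there with an open cone algebra. An element of $\Cone(\iota')$ is a pair $(a,\beta)$ with $\beta\in\Cz((0,1],\Ct(\cK;D\otimes\Kom))$ and $\beta(1)$ equal to the constant function $a$; writing $\beta(t)(x)=g(x,t)$, this is exactly a function on $\cK\times(0,1]$ valued in $D\otimes\Kom$ that vanishes as $t\to 0$ and is constant along $\cK\times\{1\}$, i.e. an element of $\Cz(\cO\cK;D\otimes\Kom)$. Thus $\Cone(\iota')\cong\Cz(\cO\cK;D\otimes\Kom)$ as $\sigma$-$\Gamma$-\textCstar-algebras, and by stability $\K_*(\Cone(\iota'))\cong\K_*(\Cz(\cO\cK;D))$.

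With both identifications in place, the abstract conclusion of \Cref{lem:reducedunreducedstablehigsoncoronacone}, namely a natural isomorphism $\K_{1-*}(A)\cong\K_{-*}(\Cone(\iota'))$ together with the asserted correspondence of long exact sequences, specializes to $\K_{1-*}(\CRed(\cK;D))\cong\K_{-*}(\Cz(\cO\cK;D))$. Re-indexing $*\mapsto *+1$ and invoking \Cref{def:suspensionandreduction}, under which the reduced group of $\K^*(\blank;D)=\K_{-*}(\Cz(\blank;D))$ is $\K_{-*-1}(\Cz(\cO\cK;D))$, this becomes the desired natural isomorphism $\K_{-*}(\CRed(\cK;D))\cong\widetilde{\K}^*(\cK;D)$. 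For the compatibility of sequences I would observe that, after the canonical identifications $\K_{-*}(\Ct(\cK;D)\otimes\Kom)\cong\K^*(\cK;D)$ (stability) and $\K_{-*}(\multiplier^s(D)/D\otimes\Kom)\cong\K^{*+1}(\{*\};D)$ (from the Eilenberg swindle $\K_*(\multiplier^s(D))=0$ and the resulting connecting isomorphism), the $\K$-theory long exact sequence of the displayed short exact sequence of $\CRed$ and the sequence \eqref{eq:generalizedreducedunreducedsequence} are literally the same bi-infinite exact sequence, merely broken at different places in the repeating three-term pattern. The only genuine content beyond the middle-term isomorphism is that it intertwines the two adjacent maps, and this map-compatibility is exactly the naturality already established in \Cref{lem:reducedunreducedstablehigsoncoronacone}.

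Finally, the equivariant and crossed-product statements follow by applying the exact functors $\blank\rtimes\Gamma$ and $(\blank\maxtensor\sfP)\maxcrossed\Gamma$ to the entire diagram, exactly as at the end of the proof of \Cref{lem:reducedunreducedstablehigsoncoronacone}; for the third statement one simply runs the argument with $D$ replaced by $D\otimes\Kom$ throughout. The main obstacle is not conceptual but technical: one must ensure that long exact sequences, Bott periodicity, stability, the mapping-cone isomorphism and homotopy invariance all remain valid for the $\sigma$-$\Gamma$-\textCstar-algebras at hand, using Phillips' $\K$-theory, and that $\blank\rtimes\Gamma$ and $\blank\maxtensor\sfP$ commute with the relevant countable inverse limits so that all identifications survive passage to the $\sigma$-setting; these points are covered by \cite{PhiInv,PhiRep,PhiFre}, after which the degree bookkeeping is routine.
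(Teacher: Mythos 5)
Your proposal is correct and takes essentially the same route as the paper's own proof: the paper likewise reuses \Cref{lem:reducedunreducedstablehigsoncoronacone} with $I=\Ct(\cK;D)\otimes\Kom$ and $A=\CRed(\cK;D)$, identifies $\Cone(\iota')\cong\Cz(\cO\cK;D)\otimes\Kom$ with the quotient map onto $D\otimes\Kom$ being evaluation at the apex of the cone, and then lets the established ladder of long exact sequences do the rest. Your write-up merely spells out the verifications (the constant-function homomorphism $\iota$, the isomorphism $A/I\cong\multiplier^s(D)/D\otimes\Kom$, the degree bookkeeping, and the passage to $\blank\rtimes\Gamma$ and $(\blank\maxtensor\sfP)\maxcrossed\Gamma$) that the paper leaves implicit.
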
}

\begin{proof}
We reuse the proof of Lemma~\ref{lem:reducedunreducedstablehigsoncoronacone}, but this time with the $\sigma$-$\Gamma$-\textCstar-algebras $I=\Cz(\cX;D)\allowbreak\otimes\Kom$, $A=\CzRed(\cX;D)$, $B\coloneqq\prod_{\pi}\multiplier^s(D)$ and $J\coloneqq \bigoplus_{\pi}(D\otimes\Kom)$. Note that here we have
\[
\Cone\biggl(\iota'\colon \bigoplus_{\pi}(D\otimes\Kom)\to \Cz(\cX;D)\biggr)\cong \Cz(\cO^+\cX;D)\otimes\Kom
\]
and that the $*$-homomorphism $\Cone(\iota')\to \bigoplus_{\pi}(D\otimes\Kom)= \Cz(\pi;D)\otimes\Kom$ is exactly the evaluation at the apices of the cones. The claim follows.
\end{proof}

The $\Gamma$-equivariant coarse $\K$-theory is the target of the so-called coassembly map, which we recall next. Let $X$ be a proper metric isometric $\Gamma$-space. Then for any discretization $X'$ as in Lemma~\ref{lem:goodequivariantdiscretization}, all the Rips complexes $P_n(X')=P_{E_n}(X')$ carry an obvious $\Gamma$-isocoarse coarse structure compatible with the topology such that all of the inclusions $X\supset X'\subset P_m(X')\subset P_n(X')$ for $m\leq n$ are coarse equivalences. It follows that $\sHigCorRed(\cP(X');D)\coloneqq\varprojlim_{n}\sHigCorRed(P_n(X');D)$
is a $\Gamma$-\textCstar-algebra isomorphic to $\sHigCorRed(X;D)$,
because all of the maps in the inverse system are isomorphisms.
Furthermore we define the $\sigma$-$\Gamma$-\textCstar-algebra
$\sHigComRed(\cP(X');D)\coloneqq\varprojlim_{n}\sHigComRed(P_n(X');D)$.
By \cite{EmeMeyDualizing} they fit into a short exact sequence
\[
0\to\Cz(\cP(X');D\otimes\Kom)\to\sHigComRed(\cP(X');D)\to\sHigCorRed(\cP(X');D)\cong\sHigCorRed(X;D)\to 0.
\]
If $A\subset X$ is a closed subspace and a suitable pair of discretizations $(X',A')$ has been chosen, then we obtain analogously the short exact sequence
\[
0\to\Cz(\cP(X')\setminus\cP(A');D\otimes\Kom)\to\sHigCom(\cP(X'),\cP(A');D)\to\sHigCor(X,A;D)\to 0,
\]
where $\sHigCom(\cP(X'),\cP(A');D)\coloneqq\ker(\sHigCom(\cP(X');D)\to\sHigCom(\cP(A');D))$.

\begin{defn}
The connecting homomorphisms in $\K$-theory
\begin{gather*}
\mu^*\colon\quad\, \K_{1-*}\big(\sHigCorRed(X;D)\big)\to\KX^*(X;D),
\\[.5ex]
\hphantom{\mu^*\colon\quad\,}\K_{1-*}(\sHigCor(X,A;D))\to\KX_{\phantom{\Gamma}}^*(X,A;D),
\\[.5ex]
\mu_{\mathrm{top}}^*\colon\ \Ktop_{1-*}\big(\Gamma;\sHigCorRed(X;D)\big)\to\KX_\Gamma^*(X;D),
\\[.5ex]
\hphantom{\mu_{\mathrm{top}}^*\colon\ }\Ktop_{1-*}(\Gamma;\sHigCor(X,A;D))\to\KX_\Gamma^*(X,A;D),
\\[.5ex]
\mu_\rtimes^*\colon \ \ \,\K_{1-*}\big(\sHigCorRed(X;D)\rtimes\Gamma\big)\to\KX_\Gamma^*(X;D),
\\[.5ex]
\hphantom{\mu_\rtimes^*\colon\ \ \, }\K_{1-*}(\sHigCor(X,A;D)\rtimes\Gamma)\to\KX_\Gamma^*(X,A;D),
\end{gather*}
associated to the above short exact sequences are all called \emph{co-assembly maps}.
\end{defn}
The three co-assembly maps in the absolute case have originally been defined in \cite{EmeMeyDualizing,EmeMeyDescent,EngelWulffZeidler}, respectively.

It is usually $\mu_{\mathrm{top}}^*$ which is of interest, because it is known to be an isomorphism in many important cases, whereas not much can be said about $\mu_\rtimes^*$ \cite[p.~72]{EmeMeyEquivariantCoassembly}.
According to \cite[Lemma~5.2]{EngelWulffZeidler}, the co-assembly map $\mu_{\mathrm{top}}^*$ factors through $\mu_\rtimes^*$ in the absolute case, but the same also holds true for the corresponding relative versions. Therefore, if one is only interested in surjectivity (as was the case in \cite{EngelWulffZeidler}), then $\mu_\rtimes^*$ is good enough.

\begin{thm}\label{thm:transgressionfactorsthroughcoassembly}
The transgression maps associated to the cohomology theories $\K^*(\blank,D)$, $\K_\Gamma^*(\blank,D)$ and $\K_{-*}(\Cz(\blank;D\otimes\Kom)\rtimes\Gamma)$ factor through the assembly maps as follows:
\begin{gather*}
\widetilde\K^{*-1}(\partial X;D)\to\K_{1-*}\big(\sHigCorRed(X;D)\big)\to\KX^*(X;D),
\\[.5ex]
\K^{*-1}(\partial X,\partial A;D)\to\K_{1-*}(\sHigCor(X,A;D))\to\KX^*(X,A;D),
\\[.5ex]
\widetilde\K_\Gamma^{*-1}(\partial X;D)\to\Ktop_{1-*}\big(\Gamma;\sHigCorRed(X;D)\big)\to\KX_\Gamma^*(X;D),
\\[.5ex]
\K_\Gamma^{*-1}(\partial X,\partial A;D)\to\Ktop_{1-*}(\Gamma;\sHigCor(X,A;D))\to\KX_\Gamma^*(X,A;D),
\\[.5ex]
\K_{1-*}\big(\CRed(\partial X;D)\rtimes\Gamma\big)\to\K_{1-*}\big(\sHigCorRed(X;D)\rtimes\Gamma\big)\to\KX_\Gamma^*(X;D),
\\[.5ex]
\K_{1-*}(\Cz(\partial X\setminus\partial A;D\otimes\Kom)\rtimes\Gamma)\to\K_{1-*}(\sHigCor(X,A;D)\rtimes\Gamma)\to\KX_\Gamma^*(X,A;D).
\end{gather*}
Thus, if a transgression map is an isomorphism $($e.g., because $\overline{\cP(X')}$ is contractible or $\overline{\cP(A')}$ is a deformation retract of $\overline{\cP(X')})$, then the corresponding co-assembly map is surjective.
\end{thm}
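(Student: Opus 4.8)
The plan is to exhibit both the transgression map and the co-assembly map as the $\K$-theory boundary homomorphism of an explicit short exact sequence of $\Gamma$-\textCstar-algebras (or their $\sigma$-versions), and then to compare the two sequences by a morphism of extensions, so that naturality of the boundary map produces the factorization.

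First I would unwind the definitions in the non-equivariant reduced absolute case; the remaining five rows follow formally (see below). The single-space cohomology theory $\K^*(\blank;D)$ is $\K_{-*}(\Cz(\blank;D))$ on $\sLCHp$, after coarsification $\KX^*(X;D)=\K_{-*}(\Cz(\cP(X');D\otimes\Kom))$, and by \Cref{lem:reducedKtheory} the reduced group $\widetilde\K^{*-1}(\partial X;D)$ is $\K_{1-*}(\CRed(\partial X;D))$. Under these identifications the reduced transgression for the pair $(\overline{\cP(X')},\partial X)$ is precisely the boundary map of
\[0\to\Cz(\cP(X');D\otimes\Kom)\to\CRed(\overline{\cP(X')};D)\to\CRed(\partial X;D)\to 0,\]
the surjection being restriction to the corona; its kernel consists of the $\multiplier^s(D)$-valued functions vanishing on $\partial X$, which (since $\partial X\neq\emptyset$) automatically take values in $D\otimes\Kom$. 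On the other hand the co-assembly map $\mu^*$ is by definition the boundary map of
\[0\to\Cz(\cP(X');D\otimes\Kom)\to\sHigComRed(\cP(X');D)\to\sHigCorRed(X;D)\to 0.\]

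Second, I would build a morphism from the first extension to the second. The two ideals coincide, so the left vertical map is the identity; the middle map $\CRed(\overline{\cP(X')};D)\to\sHigComRed(\cP(X');D)$ is restriction of functions from the Higson-dominated compactification to $\cP(X')$, which lands in $\sHigComRed$ because continuous functions on a Higson-dominated compactification are exactly the continuous functions of vanishing variation; and the right vertical map $\CRed(\partial X;D)\to\sHigCorRed(X;D)$ is the factoring map, obtained by pulling functions back along the $\Gamma$-equivariant surjection $\partial_h X\twoheadrightarrow\partial X$ and lifting them modulo $\VanInfFuAlg$. These three maps form a commuting ladder of extensions, so naturality of the $\K$-theory boundary homomorphism yields a commuting square whose top edge is the reduced transgression, whose bottom edge is $\mu^*$, whose left edge is the factoring map, and whose right edge is the identity on $\KX^*(X;D)$. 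This is exactly the asserted factorization, reduced transgression $=\mu^*\circ(\text{factoring map})$.

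Third, the unreduced relative rows are handled by the same device, comparing $0\to\Cz(\cP(X')\setminus\cP(A');D)\to\Cz(\overline{\cP(X')}\setminus\overline{\cP(A')};D)\to\Cz(\partial X\setminus\partial A;D)\to 0$ with the co-assembly sequence $0\to\Cz(\cP(X')\setminus\cP(A');D\otimes\Kom)\to\sHigCom(\cP(X'),\cP(A');D)\to\sHigCor(X,A;D)\to 0$; here the comparison uses the corner embedding $D\hookrightarrow D\otimes\Kom$ on the ideals, so the right edge of the resulting square is the stability isomorphism rather than the identity, and the factorization follows after identifying the two models of $\KX^*(X,A;D)$ through it. The equivariant rows come from applying an exact crossed-product functor, $\blank\rtimes\Gamma$ for rows 5--6 and $(\blank\maxtensor\sfP)\maxcrossed\Gamma$ for $\Ktop$ in rows 3--4: these functors are exact and continuous, and the comparison morphism may be chosen $\Gamma$-equivariantly (the rank-one projection being $\Gamma$-fixed), so they carry the ladders of extensions to ladders of extensions and the boundary-map naturality argument is unchanged. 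The surjectivity corollary is then immediate: if the transgression is an isomorphism it is in particular surjective, and since it equals $\mu^*$ precomposed with the factoring map, $\mu^*$ is surjective. The main obstacle I expect is the bookkeeping of well-definedness for the middle comparison map — checking that restriction from the Higson-dominated (relative) compactification really produces vanishing-variation functions compatible with both the subspace $\cP(A')$ and the corona $\partial A$ — together with verifying that all the identifications invoked (stability, the $\CRed$-model of reduced $\K$-theory from \Cref{lem:reducedKtheory}, the coarsification identification of $\KX^*$, and the degree and sign conventions of the connecting maps) are mutually compatible, so that the square commutes on the nose.
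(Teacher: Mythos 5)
Your proposal is correct and follows essentially the same route as the paper: the paper's proof consists precisely of the two commuting ladders of extensions you describe (identity respectively corner embedding on the ideals, restriction in the middle, factoring map on the coronas), with the equivariant rows obtained by applying $\blank\rtimes\Gamma$ and $(\blank\maxtensor\sfP)\maxcrossed\Gamma$, and the factorization then follows from naturality of the $\K$-theory boundary maps. One tiny correction: continuous functions on a Higson-dominated compactification restrict to vanishing-variation functions, but are not \emph{exactly} those (that equality holds only for the Higson compactification itself); your argument only uses the containment, so nothing breaks.
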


\begin{proof}
The claims follow directly from the commutative diagrams with exact rows
\[
\xymatrix@C=2ex{
0\ar[r]&\Cz(\cP(X');D\otimes\Kom)\ar[r]\ar@{=}[d]&\CRed(\overline{\cP(X')};D)\ar[r]\ar[d]&\CRed(\partial X;D)\ar[r]\ar[d]& 0
\\0\ar[r]&\Cz(\cP(X');D\otimes\Kom)\ar[r]&\sHigComRed(\cP(X');D)\ar[r]&\sHigCorRed(X;D)\ar[r]& 0
\\0\ar[r]&\Cz(\cP(X')\setminus\cP(A');D)\ar[r]\ar[d]&\Cz(\overline{\cP(X')}\setminus\overline{\cP(A')};D)\ar[r]\ar[d]&\Cz(\partial X\setminus\partial A;D)\ar[r]\ar[d]& 0
\\0\ar[r]&\Cz(\cP(X')\setminus\cP(A');D\otimes\Kom)\ar[r]&\sHigCom(\cP(X'),\cP(A');D)\ar[r]&\sHigCor(X,A;D)\ar[r]& 0,
}
\]
which exist because functions on a Higson-dominated compactification restrict to functions of vanishing variation.
\end{proof}

\subsection[Coarse K-homology and assembly]{Coarse $\boldsymbol{\K}$-homology and assembly}\label{sec:Assembly}

Using $\Gamma$-equivariant $\EE$-theory,\footnote{Of course, $\KK$-theory could also be used. In that case, a different description of the assembly maps would need to be used and hence completely new proofs of our results would be required. However, $\KK$-theory suffers from similar technical shortcomings as $\EE$-theory and therefore no advantage over our approach is expected. In the end, $\EE$-theory was primarily selected because of the author's greater familiarity with this theory.} we define the $\Gamma$-equivariant $\K$-homology with coefficients in a~separable $\Gamma$-\textCstar-algebra $D$ as the functors
\[
\K^\Gamma_*(\blank;D)\coloneqq \EE^\Gamma_*(\Cz(\blank);D).
\]
Note that this definition requires the function algebras $\Cz(\blank)$ to be separable, so we only consider these functors on the full subcategory of $\LCHp$ of all \emph{second countable} locally compact $\Gamma$-spaces.
In practice, this is only a minor restriction to the theory of Section~\ref{sec:Sigmastuff}, because we are mainly interested in two special cases: First, Rips complexes of discretizations of countably generated proper isocoarse $\Gamma$-spaces of bornologically bounded geometry are always second countable and, second, their compactification with a Higson dominated corona is second countable, if the corona is second countable, i.e., a compact metrizable space. The latter is an ubiquitous assumption in applications.

Due to the well-known properties of bivariant $\K$-theory (cf.\ \cite[Chapter~6]{GueHigTro}), the functors $\K^\Gamma_*(\blank;D)$ are $\Gamma$-equivariant single-space homology theories on the above-mentioned category. Furthermore, as direct limits preserve exact sequences, they can be extended to $\Gamma$-equivariant single-space homology theories on the full subcategory of $\sLCHp$ consisting of all second countable $\sigma$-locally compact $\Gamma$-spaces $\cX=\bigcup_{n\in\N}X_n$ by
\(\K^\Gamma_*(\cX;D)\coloneqq\lim_{n\to\infty}\K^\Gamma_*(X_n;D)\).

We want to compare the resulting transgression maps with the coarse assembly maps $\mu$: $\KX^\Gamma_*(X;D)\to\K_*(\Roe_\Gamma(X;D))$. The definition of the latter is based upon other pictures of $\K$-homology.
We choose the one using Yu's localization algebras \cite{YuLocalization} (see also \cite{QiaoRoe} and \cite{EngelWulffZeidler}) for our discussion.
A lot about localization algebras has been proven in the comprehensive book~\cite{WillettYuHigherIndexTheory}, but note that the ``localized Roe algebras'' considered there are slightly bigger than Yu's original localization algebras and hence a bit of care has to be taken when applying their results.

Following up our Section~\ref{sec:RoeAlg} about Roe algebras, let $X$ be a proper metric space equipped with a proper isometric $\Gamma$-action. We assume that sufficiently large $\Gamma$-$X$-module $(H,\rho,u)$ has been chosen and let $\HilbertMod\coloneqq H\otimes D$ be equipped with the induced representations. Recall that we defined $\Roe_\Gamma(X;D)$ as the norm closure in $\Lin(\HilbertMod)$ of the $*$-subalgebra of all $\Gamma$-equivariant locally compact operators of finite propagation, and if $A$ is a subspace of $X$, then $\Roe_\Gamma(A\subset X;D)$ denoted the ideal generated by all operators which are supported in an $R$-neighborhood of $A$ for some $R>0$ and $\Roe_\Gamma(X,A;D)\coloneqq \Roe_\Gamma(X;D)/\Roe_\Gamma(A\subset X;D)$.

\begin{defn}
The \emph{localization algebra} $\Loc(X;D)$ is the norm closure of the sub-$*$-algebra of $\Cb([1,\infty),\Roe_\Gamma(X;D))$ of all bounded uniformly continuous functions $L\colon[1,\infty)\to\Roe_\Gamma(X;D)$ such that the propagation of $L(t)$ is finite for all $t\geq 1$ and converges to zero as $t\to\infty$. Furthermore, we denote by $\Loc(A\subset X;D)\subset \Loc(X;D)$ the ideal generated by all $L$ for which there is a~function $\varepsilon\colon[1,\infty)\to(0,\infty)$ with $\varepsilon(t)\xrightarrow{t\to\infty}0$ such that $L(t)$ is supported in an $\varepsilon(t)$-neighborhood of $A$ for all $t\geq 1$, and we define $\Loc(X,A;D)\coloneqq \Loc(X;D)/\Loc(A\subset X;D)$.
\end{defn}

An obvious variant of \cite[Lemma~6.3.6]{WillettYuHigherIndexTheory}, which can be proven with the same methods, says that the inclusion $A\subset X$ induces via the theory of covering isometries an isomorphism $\K_*(\Loc(A;D))\cong\K_*(\Loc(A\subset X;D))$. Therefore we obtain a long exact sequence
\begin{align}
\cdots&\to\K_*(\Loc(A;D))\to \K_*(\Loc(X;D))\to \K_*(\Loc(X,A;D))\nonumber
\\&\to \K_{*-1}(\Loc(A;D))\to\cdots.\label{eq:localizationLES}
\end{align}

The relation between localization algebras and our definition of $\K$-homology is the following. As $\Kom(\HilbertMod)\cong D\otimes\Kom(H)$, it is straightforward to verify (cf.\ \cite[Corollary~4.2]{QiaoRoe}) that
\begin{align*}
\delta\colon\ \Loc(X;D)\maxtensor\Cz(X) &\to\frac{\Cb([1,\infty),D\otimes \Kom(H))}{\Cz([1,\infty),D\otimes\Kom(H))},
\\
L\otimes f&\mapsto [t\mapsto L(t)\circ (\rho(f)\otimes\id_D)]
\end{align*}
defines an $\Gamma$-equivariant asymptotic morphism. It vanishes on the ideal $\Loc(A\subset X;D)\maxtensor\Cz(X\setminus A)$ and hence we also obtain an induced $\Gamma$-equivariant asymptotic morphism
\[
\delta\colon\ \Loc(X,A;D)\maxtensor\Cz(X\setminus A) \to\frac{\Cb([1,\infty),D\otimes \Kom(H))}{\Cz([1,\infty),D\otimes\Kom(H))}.
\]
Both of them define canonical $\EE^\Gamma$-theory elements in $\EE^\Gamma_0(\Loc(X;D)\maxtensor\Cz(X),D)$ and \linebreak $\EE^\Gamma_0(\Loc(X,A;D)\maxtensor\Cz(X\setminus A),D)$, respectively, which we shall also denote by the letter~$\delta$.\footnote{Here we need $\EE^\Gamma$-theory also for non-separable \textCstar-algebras. It can be defined in an ad hoc way as in \cite[Section~1]{WulffTwisted}, or even simpler by letting $\EE^\Gamma_*(A,B)$ for possibly non-separable $A$, $B$ be obtained from the groups $\EE^\Gamma_*(A',B')$ by taking the direct limit over all separable $B'\subset B$ and the inverse limit over all separable $A'\subset A$. The relevant maps considered in the remainder of this section will be the same for both versions.
Note that these generalizations retain the composition and tensor products of $\EE^\Gamma_*$-theory, but be aware that they are simply too naive to preserve the long exact sequences. }
We obtain the composed group homomorphism
\begin{align*}
\Delta\colon\ \K_*(\Loc(X;D))&\cong\EE_*(\C,\Loc(X;D))
\\&\xrightarrow{\blank\otimes\id_{\Cz(X)}}\EE^\Gamma_*(\Cz(X),\Loc(X;D)\maxtensor\Cz(X))
\\&\xrightarrow{\delta\circ\blank}\EE^\Gamma_*(\Cz(X),D)=\K^\Gamma_*(X;D)
\end{align*}
and similarily $\Delta\colon \K_*(\Loc(X,A;D))\to\K^\Gamma_*(X,A;D)$ in the relative case.
By exploiting the homological properties of domains and targets of the maps $\Delta$ (in particular that the domains are functorial under uniformly continuous coarse $\Gamma$-maps via the theory of covering isometries, that the maps $\Delta$ are natural under such maps and that the connecting homomorphisms are in both cases constructed via mapping cones) we see that the maps $\Delta$ map \eqref{eq:localizationLES} to the long exact sequence of $\K^\Gamma_*(\blank,D)$.

\begin{thm}\label{thm:Khomologydefinitions}
The maps $\Delta$ are isomorphisms in the absolute case for every proper metric space~$X$ equipped with a proper isometric $\Gamma$-action and hence by a five lemma argument also in the relative case.\footnote{Instead of Theorem~\ref{thm:Khomologydefinitions}, preprint versions of this article contained the ``Assumption 6.8'' that $\Gamma$ and $D$ have been chosen for which the statement of this theorem holds. Thanks to a referee's hint, we can now prove it in full generality.}
\end{thm}

Proofs of some special cases of the theorem have been known for a long time.
In the non-equivariant case ($\Gamma=1$) without coefficients ($D=\C$), one proof is sketched in \cite[Construction~6.7.5, Exercises~B.3.2 and~B.3.3]{WillettYuHigherIndexTheory}, albeit for a different version of the localization algebra. Note, however, that this proof relies solely on homological properties and a comparison on one-point spaces. Therefore, it goes through for original version of the localization algebras, too, and even with arbitrary coefficient \textCstar-algebras $D$.

Another proof for the case $\Gamma=1$ and $D=\C$ can be found in \cite[Proposition~4.3]{QiaoRoe} and relies on Paschke duality. The author is indebted to a referee for pointing out a version of Paschke duality for the equivariant $\KK$-theory groups $\KK^\Gamma_*(\Cz(X),D)$ which allows to prove the theorem in full generality.
Nevertheless, as localization algebras and $\EE$-theory are defined in a very similiar manner, it would be interesting to know if the theorem can be proven directly without taking the detour via Paschke duality.

\begin{proof}
It suffices to prove the theorem for separable $D$, because the general case follows by taking direct limits over all separable sub-\textCstar-algebras.
In this case, we can make use of the Paschke duality for equivariant $\KK$-theory that was discussed in \cite[Appendix~B]{GueWillYu_DyncomplKtheory}.

An adjointable operator $T$ on $\HilbertMod$ is called pseudolocal if the commutators $\rho(f)T-T\rho(f)$ are compact for all $f\in \Cz(X)$.
Let $\PseudoLoc_\Gamma(X;D)$ denote the sub-\textCstar-algebra of $\Lin(\HilbertMod)$ generated by the $\Gamma$-equivariant pseudolocal operators that have finite propagation and let $\PseudoLocLoc(X;D)$ denote the sub-\textCstar-algebra of $\Cb([1,\infty),\PseudoLoc_\Gamma(X;D))$ of all bounded uniformly continuous functions $L\colon[1,\infty)\to\PseudoLoc(X;D)$ such that the propagation $L(t)$ is finite for all $t\geq 1$ and converges to zero as $t\to\infty$.\looseness=-1

We are free to choose a very specific sufficiently large $\HilbertMod=H\otimes D$, namely $\HilbertMod=\HilbertMod_D\coloneqq H'\otimes \ell^2(\N)\otimes\ell^2(\Gamma)\otimes D$ for a non-degenerate $\Gamma$-$X$-module $(H',\rho',u')$.
It will be equipped with $\Gamma$-action $\varepsilon$ which acts diagonally on the first, third and fourth factor and the representation $\pi$ of $\Cz(X)$ which acts on the first factor.
For this choice, \cite[Appendix B]{GueWillYu_DyncomplKtheory} contains the proof of isomorphisms
\begin{gather*}
\KK_p^\Gamma(\Cz(X),D)\cong\K_{p+1}\biggl(\frac{\PseudoLoc_\Gamma(X;D)}{\Roe_\Gamma(X;D)}\biggr) \cong\K_{p+1}\biggl(\frac{\PseudoLocLoc(X;D)}{\Loc(X;D)}\biggr)\cong\K_p(\Loc(X;D))
\end{gather*}
for separable $\Gamma$-\textCstar-algebras $D$ and proper metric spaces $X$ equipped with a proper cocompact isometric $\Gamma$-action.

The cocompactness assumption can be removed by performing slight modifications to the aforementioned proof: Instead of $\Gamma$-propagation we only use metric propagation (cf.\ \cite[items~(ii) and~(iii) on p.~74]{GueWillYu_DyncomplKtheory}; in the co-compact case, finite $\Gamma$-propagation of a $\Gamma$-equivariant operator is equivalent to finite metric propagation anyway).
Furthermore, the proof utilises a cut-off function~$c$, i.e., a compactly supported continuous function $X\to [0,1]$ such that $\sum_{\gamma\in \Gamma}c(x\cdot\gamma)^2=1$ for all $x\in X$. If the action is not co-compact, then we have to use the weaker condition on the support of $c$ that it intersect only finitely many $\Gamma$-translates of each compact subset. The existence of such a cut-off function follows easily from the properness of the action.
With these modifications, the calculations in the proof of \cite[Lemma~B.7]{GueWillYu_DyncomplKtheory} go through. Just at the end of that proof one has to perform another truncation a la \cite[Sublemma~12.3.3]{HigRoe} by an equivariant partition of unity to turn the operator with finite $\Gamma$-propagation into one with finite metric propagation.
As \cite[Lemmas~B.14, B.15 and B.16]{GueWillYu_DyncomplKtheory} generalize to the non-cocompact case without any problems, the abovementioned isomorphisms follow.

Let us briefly recall the definition of the abovementioned isomorphisms in degree $p=0$.
According to the analogue of \cite[Lemma B.7]{GueWillYu_DyncomplKtheory} in even degree\footnote{The cited lemma only considers the odd case, but the even case can be proven completely analogously.}, every class in $\KK_*^\Gamma(\Cz(X),D)$ can be represented by a cycle of the form $\left(\HilbertMod_D\oplus\HilbertMod_D,\left(\begin{smallmatrix}0&U^*\\U&0\end{smallmatrix}\right)\!,\epsilon\oplus\epsilon,\pi\oplus\pi\right)$ with $U\in \PseudoLoc_\Gamma(X;D)$.
Then $U$ is a unitary modulo $\Roe_\Gamma(X;D)$ and the Paschke duality isomorphism is defined by mapping this cycle to the class represented by $U$. The second isomorphism then takes it to the class represented by a function $t\mapsto U_t$ which represents a unitary in $\PseudoLocLoc(X;D)/\Loc(X;D)$ and such that $U_0$ agrees with $U$ modulo $\Roe_\Gamma(X;D)$. Such a function can be constructed from the constant function $U$ by applying the procedure from the proof of \cite[Lemma~B.15]{GueWillYu_DyncomplKtheory}.
The third isomorphism then maps it to the class represented by the difference of the projection valued functions $t\mapsto P_t$ and $t\mapsto Q$ defined by
\[
P_t\coloneqq \begin{pmatrix}U_tU_t^*&U_t(\id-U_t^*U_t)^{\frac12} \\U_t^*(\id-U_tU_t^*)^{\frac12}&\id-U_t^*U_t\end{pmatrix}\qquad\text{and}\qquad Q=\begin{pmatrix}\id&0\\0&0\end{pmatrix}\!,
\]
cf.\ \cite[Proposition~4.8.10]{HigRoe}.

In order to prove the claim in even degrees, it now suffices to show that $\Delta$ composed with these isomorphisms is the canonical natural isomorphism $\KK_0^\Gamma(\Cz(X),D)\xrightarrow{\cong}\EE_0^\Gamma(\Cz(X),D)$.
The latter is defined via \cite[Theorem~2.2]{Thomsen_univKK} and hence we have to verify two properties: That the composition is natural in $D$ and that it maps $1_{\Cz(X)}\in\KK_0^\Gamma(\Cz(X),\Cz(X))$ to $1_{\Cz(X)}\in\EE_0^\Gamma(\Cz(X),\Cz(X))$. Naturality is clear from the description of the three isomorphisms above and the definition of~$\Delta$.

For the verification of the other property, recall that $1_{\Cz(X)}\in\KK_0^\Gamma(\Cz(X),\Cz(X))$ is represented by the cycle $(\Cz(X)\oplus 0,0,\varepsilon'\oplus 0,\pi'\oplus 0)$, where $\varepsilon'$ and $\pi'$ denote the canonical representations of $\Gamma$ and $\Cz(X)$. After adding the degenerate cycle $\left(\HilbertMod_{\Cz(X)}\oplus\HilbertMod_{\Cz(X)}, \left(\begin{smallmatrix}0&\id\\\id&0\end{smallmatrix}\right)\!,\epsilon\oplus\epsilon,\pi\oplus\pi\right)$, we see that it is also represented by the cycle $\left(\HilbertMod_{\Cz(X)}\oplus\HilbertMod_{\Cz(X)}, \left(\begin{smallmatrix}0&U^*\\U&0\end{smallmatrix}\right)\!,\epsilon\oplus\epsilon,\pi\oplus\pi\right)$ for a~projection $U\colon \HilbertMod_{\Cz(X)}\cong\Cz(X)\oplus\HilbertMod_{\Cz(X)}\to\HilbertMod_{\Cz(X)}$ onto the second summand. Since $U$ is a module homomorphism, it has propagation zero and $U\in \PseudoLocLoc(X;\Cz(X))$. Furthermore we have $UU^*=\id$ and $p\coloneqq \id-U^*U$ is a projection onto an isomorphically embedded copy of $\Cz(X)$ in $\HilbertMod_{\Cz(X)}$.
For the calculation of the image of this class in $\EE_0^\Gamma(\Cz(X),D)$ we can choose the constant function $t\mapsto U_t\coloneqq U$. We end up with the element that is represented by the $*$-homomorphism $\Cz(X)\to \Kom(\HilbertMod_{\Cz(X)})$, $f\mapsto \pi(f)p$, and this is exactly $1_{\Cz(X)}$.

The isomorphism in odd degrees now follows from the one in even degrees by replacing $D$ with the suspension $D\otimes \Cz(\R)$.
\end{proof}

The reason why we do not just choose the $\K$-theory of the localization algebras as definition of $\Gamma$-equivariant $\K$-homology with coefficients is that we prefer to have a definition that works also for non-proper group actions, but the proofs of the homological properties of $\K_*(\Loc(\blank;\C))$ in~\cite{WillettYuHigherIndexTheory}, in particular the existence of the all important covering isometries underlying functoriality, rely heavily on properness.
Note also that the equivariant Paschke duality that we used in the proof of Theorem~\ref{thm:Khomologydefinitions} works only for proper $\Gamma$-actions, so the two definitions might truely be different in the non-proper case.
Unfortunately, this makes the proof of Theorem~\ref{thm:transgressionfactorsthroughassembly} below much more complicated than the argument using Paschke duality which was presented in \cite[Remark~12.3.8]{HigRoe} for the non-equivariant case without coefficients.

\begin{defn}
Let $(X,A)$ be a pair of proper metric spaces equipped with a proper isometric $\Gamma$-action.
The(\emph{uncoarsified$)$ coarse assembly maps}
\begin{gather*}
\begin{split}
&\mu\colon\ \K^\Gamma_*(X;D)\cong \K_*(\Loc(X;D))\xrightarrow{(\ev1)_*}\K_*(\Roe_\Gamma(X;D)),
\\
&\mu\colon\ \K^\Gamma_*(X,A;D)\cong \K_*(\Loc(X,A;D))\xrightarrow{(\ev1)_*}\K_*(\Roe_\Gamma(X,A;D))
\end{split}
\end{gather*}
are induced by the evaluation at one.
\end{defn}

The coarsified coarse assembly map will be obtained by taking the direct limit over all the uncoarsified assembly maps of the Rips complexes $(P_n(X'),P_n(A'))$ of a discretization $(X',A')\subset (X,A)$.
As we have chosen a description of the assembly maps which requires proper metrics, we can't content ourselves with the canonical topological coarse structures on the Rips complexes, but we have to metrize them.
Most authors equip the Rips complexes $P_n(X')$ with the spherical metric from \cite{Roe_Hyperbolic}, but they have the drawback that the inclusions $X'\subset P_n(X')$ are in general not coarse equivalences if the original space $X$ was not a path-metric space.
Therefore, we use the $\Gamma$-invariant metrics from the following lemma, such that all of the inclusions $X\supset X'\subset P_m(X')\subset P_n(X')$ for $m\leq n$ are even isometric coarse equivalences.
For a similar approach to metrizing the Rips complexes, see \cite[Section~7.2]{WillettYuHigherIndexTheory}.

\begin{lem}
We can assign to each discrete proper metric isometric $\Gamma$-space $X'$ a $\Gamma$-invariant metric $d_{\cP(X')}$ on $\cP(X')$ such that
\begin{enumerate}\itemsep=0pt
\item[$\bullet$] its restriction to $P_0(X')=X'$ is the original metric $d_{X'}$,
\item[$\bullet$] its restriction to each $P_n(X')$ is a $\Gamma$-invariant \emph{proper} metric inducing the already existing topology on the complex,
\item[$\bullet$] for all $m\leq n$, $P_m(X')$ is coarsely dense in $P_n(X')$,
\item[$\bullet$] if $f\colon X'\to Y'$ is a coarse $\Gamma$-map between discrete proper metric isometric $\Gamma$-spaces and $X'$ is $\delta$-separated for some $\delta>0$, i.e., the distance between each two distinct points of $X'$ is at least $\delta$, then all of the restrictions
\[
\cP(f)|_{P_n(X')}\colon\ P_n(X')\to \cP(Y')
\]
are low-scale Lipschitz coarse maps. Here, by low-scale Lipschitz we mean that there are constants $L,\delta'>0$ such that the Lipschitz inequality
\[
d_{\cP(Y')}(f(x),f(y))\leq L\cdot d_{\cP(X')}(x,y)
\]
holds for all $x,y\in P_n(X')$ with $d_{\cP(X')}(x,y)<\delta'$.
\end{enumerate}
\end{lem}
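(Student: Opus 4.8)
The plan is to write down one explicit $\Gamma$-invariant metric on the whole complex $\cP(X')$ and then read off all four properties from it, the restrictions to the $P_n(X')$ being the objects referred to in the bullets. A point of the geometric realization is a finite convex combination $p=\sum_{x\in X'}\lambda_x(p)\,x$ spanning a simplex. I would combine two ingredients. The first is the Fréchet (Kuratowski) potential $F_p\in\ell^\infty(X')$, $F_p(y):=\sum_x\lambda_x(p)\,d_{X'}(x,y)$; on vertices $F_x=d_{X'}(x,\cdot)$ and $\|F_x-F_{x'}\|_\infty=d_{X'}(x,x')$, which is what forces the first bullet. The second is a scaled barycentric term: set $s_x:=\inf_{x'\neq x}d_{X'}(x,x')>0$ (positive since $X'$ is discrete and proper), $r_x:=\min(s_x/2,1)$, and define
\[d_{\cP(X')}(p,q):=\max\Big(\|F_p-F_q\|_\infty,\ \textstyle\sum_{x\in X'}r_x\,|\lambda_x(p)-\lambda_x(q)|\Big).\]
Since the $\Gamma$-action is isometric it preserves each $s_x$, so the weights $r_x$ and hence $d_{\cP(X')}$ are $\Gamma$-invariant.

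The first three bullets are then routine. The second summand alone separates points (as every $r_x>0$), so this is a genuine metric; and because $r_x+r_{x'}\le s_x/2+s_{x'}/2\le d_{X'}(x,x')$ while the first summand equals $d_{X'}(x,x')$ on vertices, the maximum restricts to $d_{X'}$ on $P_0(X')=X'$. The scaled barycentric part is the standard $\ell^1$-embedding of the simplicial complex, which induces the weak topology on the locally finite $P_n(X')$; combining it with the continuous potential through a maximum does not disturb this topology. For properness on $P_n(X')$ one notes that the barycentric term is always $\le 2$, so a bounded set is bounded in the potential; evaluating $F_p$ at a basepoint $x_0$ gives $\sum_x\lambda_x(p)d_{X'}(x,x_0)\le C$, whence some support vertex lies within $C$ of $x_0$, and since the vertices of any simplex of $P_n(X')$ are pairwise $E_n$-close (of $d_{X'}$-distance at most some $R_n$) the whole support lies in a fixed $d_{X'}$-ball, which is finite. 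Thus a bounded set meets only finitely many simplices and is precompact; the same simplex-diameter bound ($\|F_p-F_{v_0}\|_\infty\le R_n$, barycentric part $\le 2$) shows every point of $P_n(X')$ lies within a constant of a vertex, giving coarse density of $P_m(X')\supset X'$ in $P_n(X')$ for $m\le n$.

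For the fourth bullet I would invoke $\delta$-separation precisely to make the estimate dimension-free. It gives $r_x\ge\min(\delta/2,1)=:c>0$ for all $x$, hence $d_{\cP(X')}(p,q)\ge c\,\|\lambda(p)-\lambda(q)\|_1$. The map $\cP(f)$ is already known to be a coarse map, so only the low-scale Lipschitz bound is new. On a simplex spanning $v_0,\dots,v_k$ the map sends it to the simplex on $f(v_0),\dots,f(v_k)$ inside a fixed $P_{n'}(Y')$, with $d_{Y'}(f(v_i),f(v_j))\le R'$ by controlledness of $f$. The push-forward of barycentric coordinates has $\ell^1$-norm $\le 1$ in the barycentric component, and—centring the potentials using $\sum_v(\lambda_v-\mu_v)=0$ and $|d_{Y'}(f(v),y)-d_{Y'}(f(v_0),y)|\le R'$—the potential component is bounded by $R'\|\lambda-\mu\|_1$. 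Hence, for $p,q$ in a common simplex,
\[d_{\cP(Y')}\big(\cP(f)p,\cP(f)q\big)\ \le\ \max(R',1)\,\|\lambda-\mu\|_1\ \le\ \tfrac{\max(R',1)}{c}\,d_{\cP(X')}(p,q),\]
a constant independent of the (possibly unbounded) simplex dimension; it is exactly the $\ell^1$ barycentric term together with $\delta$-separation that buys this dimension-free bound.

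The hardest part will be the two points hidden in the last paragraph: verifying that the metric topology agrees with the CW-topology \emph{uniformly} on each locally finite $P_n(X')$, and, more seriously, upgrading the per-simplex estimate to a genuine low-scale Lipschitz statement. The potential component is not globally Lipschitz, since vertices of different simplices can have far-apart images, so one must choose a scale $S>0$ below which $d_{\cP(X')}(p,q)\le S$ forces $p$ and $q$ into a common simplex. Establishing this common-simplex reduction—separating the open stars uniformly using local finiteness and the separation constants $r_x$—is where the real work lies; once it is in place, the displayed inequality finishes the proof.
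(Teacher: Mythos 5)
Your construction is genuinely different from the paper's proof, which instead equips each $P_n(X')$ with (a largest metric below) the $2n$-spherical path metrics on its simplices and then takes successive largest metrics compatible with these constraints; your explicit Kuratowski-plus-weighted-barycentric metric is a legitimate alternative, and the first three bullets do go through essentially as you argue. (One cosmetic slip: $F_p$ itself is unbounded when $X'$ is unbounded, so it does not lie in $\ell^\infty(X')$; only the differences $F_p-F_q$ are bounded, using $\sum_x\bigl(\lambda_x(p)-\lambda_x(q)\bigr)=0$ to subtract $d_{X'}(x_0,\cdot)$. Also, coarseness of $\cP(f)|_{P_n(X')}$ with respect to your metrics is not literally "already known" --- the earlier lemma of the paper gives topological properness of $\cP(f)$ as a $\sigma$-map, not metric controlledness --- but it follows by a routine estimate through nearby vertices.)

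The genuine gap is the one you flag yourself, and the route you propose for closing it would fail. There is \emph{no} scale $S>0$ below which $d_{\cP(X')}(p,q)\le S$ forces $p$ and $q$ into a common simplex: two points lying in different simplices that share a face, both very close to that face, can be at arbitrarily small distance; no separation of open stars can remove this obstruction, since it occurs inside a single closed star. What is true, and is all you need, is weaker. If $d_{\cP(X')}(p,q)<2c$ with $c=\min(\delta/2,1)$, then $\|\lambda(p)-\lambda(q)\|_1<2$, so $\supp(p)$ and $\supp(q)$ share a vertex; hence $\supp(p)\cup\supp(q)$ has $d_{X'}$-diameter at most $2R_n$ (where $R_n$ bounds the diameter of vertex sets of simplices of $P_n(X')$), so it spans a simplex of $P_{n''}(X')$ for some $n''$ depending only on $n$, and controlledness of $f$ bounds the pairwise distances of its image vertices by some $R''_n$ depending only on $n$ and $f$. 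Your displayed per-simplex computation never used that $p,q$ lie in a common simplex of $P_n(X')$ --- only that all vertices of $\supp(p)\cup\supp(q)$ have images pairwise at distance at most $R'$ --- so it applies verbatim with $R'$ replaced by $R''_n$, yielding the low-scale Lipschitz bound $\max(R''_n,1)/c$ at scale $S=c$, say. With this replacement (and the routine verification that metrically bounded sets lie in finite subcomplexes, so that your metric induces the CW topology on each $P_n(X')$), your proof is complete.
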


The last property is very useful when working with the localization algebras, because the direct description of the functoriality of their $\K$-theory using families of covering isometries (in~the concrete version of \cite[Definition 4.29 and~Proposition 4.30]{EngelWulffZeidler}) is only available for uniformly continuous coarse maps.
Note that $\delta$-separatedness is never a problem, because all discretizations either already have this property by construction (cf.\ the proof of Lemma~\ref{lem:topbbg} applied to the entourage $E_\delta$) or can be thinned out such that they satisfy this property.
\begin{proof}
For $R>0$, the $R$-spherical metric on a $k$-simplex is the path metric obtained by identifying it with the subset
\[
S^k_+(R)\coloneqq\big\{(x_0,\dots,x_k)\mid x_0,\dots,x_k\geq 0 \wedge x_0^2+\dots+x_k^2=R^2\big\}\subset\R^{k+1}.
\]
On each $P_n(X')$ with $n\geq 1$ we first define the auxiliary metric $d^1_{P_n(X')}$ as the largest metric such that its restriction to each simplex is at most the $2n$-spherical metric.
This metric is clearly a~proper path metric and it induces the given topology, but it may take the value $\infty$ and is not yet the restriction of the metric $d_{\cP(X')}$ that we are looking for. Also, note that $X'$ is $\pi n$-dense in~$P_n(X')$ with respect to $d^1_{P_n(X')}$.
On $P_0(X')=X'$ we simply define $d^1_{P_0(X')}=d_{X'}$ instead.

Thanks to the choice $R=2n$, similar arguments to those in the proof of \cite[Proposition~3.4(iv)]{Roe_Hyperbolic} (note also \cite{Roe_Hyperbolic_Erratum}) show that the inclusion $X'\subset P_n(X')$ is distance non-decreasing with respect to $d_{X'}$ and $d^1_{P_n(X')}$ for each $n\in\N$. Hence there is a largest metric $d^2_{P_n(X')}$ on $P_n(X')$ which is less or equal to $d^1_{P_n(X')}$ and restricts to $d_{X'}$ on $X'$. These new metrics have the property that the inclusions $P_m(X')\subset P_n(X')$ are also distance non-decreasing with respect to $d^2_{P_m(X')}$ and $d^2_{P_n(X')}$ for all $m\leq n$.

Using all of these properties, it is now readily verified that there is a largest metric on $d_{\cP(X')}$ on $\cP(X')$ such that all of the restrictions to $P_n(X')$ are less or equal to $d^2_{P_m(X')}$ and this metric clearly satisfies the first three properties on the list and is $\Gamma$-invariant.

Now, if $f\colon X'\to Y'$ and $\delta>0$ are as in the forth item on the list, then for each $n\in\N$ there is an $S_n\in\N$ such that $d_{X'}(x,y)\leq n$ implies $d_{Y'}(f(x),f(y))\leq S_n$. Note that $\cP(f)$ restricts to a $\frac{S_n}{n}$-Lipschitz map from $\big(P_n(X'),d^1_{P_n(X')}\big)$ to $\big(P_{S_n}(Y'),d^1_{P_{S_n}(Y')}\big)$ for every $n\geq 1$ and, therefore, if $x,y\in P_n(X')$ with $d_{\cP(X')}(x,y)<\frac{\delta}{2}$, then $d_{\cP(Y')}(f(x),f(y))\leq L\cdot d_{\cP(X')}(x,y)$ with $L\coloneqq\max\bigl(S_1,\frac{S_2}{2},\dots,\frac{S_n}{n}\bigr)$.
\end{proof}

\begin{defn}
Taking direct limits over the uncoarsified assembly maps of the Rips complexes, we obtain the (\emph{coarsified$)$ coarse assembly maps},
\begin{gather*}
\mu\colon\ \KX^\Gamma_*(X;D)=\varinjlim_{n\in\N}\K^\Gamma_*(P_n(X');D)
\to\varinjlim_{n\in\N}\K_*(\Roe_\Gamma(P_n(X');D))
\cong \K_*(\Roe_\Gamma(X;D)),
\\
\mu\colon\ \KX^\Gamma_*(X,A;D)=\varinjlim_{n\in\N}\K^\Gamma_*(P_n(X'),P_n(A');D)
\\ \hphantom{\mu\colon\ \KX^\Gamma_*(X,A;D)}
{}\to\varinjlim_{n\in\N}\K_*(\Roe_\Gamma(P_n(X'),P_n(A');D))
\cong \K_*(\Roe_\Gamma(X,A;D)).
\end{gather*}
\end{defn}

\begin{thm}\label{thm:transgressionfactorsthroughassembly}
Let $X$ be a proper metric space $X$ equipped with a proper isometric $\Gamma$-action and let $\partial X$ be a componentwise Higson dominated corona as defined in Definition~$\ref{defn:reducedtransgression}$. Then the reduced transgression map factors through the assembly maps as follows:
\[
\KX^\Gamma_*(X;D)\xrightarrow{\mu}\K_*(\Roe_\Gamma(X;D))\xrightarrow{\nu}\tilde\K^\Gamma_{*-1}(\partial X;D).
\]
Similarily, if $A$ is a subspace of $X$ and $(\partial X,\partial A)$ a pair of Higson-dominated coronas,
then the relative transgression map factors through the relative assembly maps:
\[
\KX^\Gamma_*(X,A;D)\xrightarrow{\mu}\K_*(\Roe_\Gamma(X,A;D))\xrightarrow{\nu}\K^\Gamma_{*-1}(\partial X,\partial A;D).
\]
In particular, if the transgression maps are isomorphisms, then the assembly maps are injective.
\end{thm}

\begin{proof}We first prove the reduced version in detail. Write $\partial X\coloneqq\coprod_{K\in\coarsecomp(X)}\partial K$ and let $\overline{X}\coloneqq \coprod_{K\in\coarsecomp(X)}\overline{K}$ be the decomposed locally compact $\Gamma$-space obtained by taking the Higson dominated compactification $\overline{K}$ corresponding to $\partial K$ of each coarse component $K\in\coarsecomp(X)$.
The map $\nu$ is then defined as follows.
The restrictions of functions in $\Cz\big(\overline{X}\big)$ to $X$ have vanishing variation on each coarse component and hence also on the whole space, so they commute with operators in $\Roe_\Gamma(X;D)$ up to compact operators as in the proof of \cite[Proposition 5.18]{RoeCoarseCohomIndexTheory}.
Thus, there is a $\Gamma$-equivariant $*$-homomorphism
\[
\Roe_\Gamma(X;D)\otimes\Cz(\overline{X})\to\frac{\multiplier(D\otimes\Kom(H))}{D\otimes\Kom(H)},\qquad T\otimes f\mapsto [T\circ(\rho(f)\otimes\id_D)]
\]
which vanishes on $\Roe_\Gamma(X;D)\otimes\Cz(X)$ and hence factors through $\Roe_\Gamma(X;D)\otimes\Cz(\partial X)$.
Note that its restriction to $\Roe_\Gamma(X;D)\otimes\Cz(\coarsecomp(X))$ lifts to an $\Gamma$-equivariant $*$-homomorphism into $\multiplier(D\otimes\Kom(H))$ when $\Cz(\coarsecomp(X))$ is considered as the sub-$\Gamma$-\textCstar-algebra of $\Cz\big(\overline{X}\big)$ via pull-back under $\overline{X}\to\coarsecomp(X)$.
Identifying $\Cz(\cO^+(\partial X))$ with $\Cone(\Cz(\coarsecomp(X))\to\Cz(\partial X))$ and applying the $*$-homomorphism above slice-wise, we obtain a $\Gamma$-equivariant $*$-homomorphism
\[
\delta'\colon\ \Roe_\Gamma(X;D)\otimes\Cz(\cO^+(\partial X))\to\Cone\biggl(\multiplier(D\otimes\Kom(H))\to \frac{\multiplier(D\otimes\Kom(H))}{D\otimes\Kom(H)}\biggr).
\]
The inclusion of $D\otimes \Kom(H)$ as a canonical ideal in the mapping cone to the right induces an isomorphism on $\EE^\Gamma$-theory and hence we can define the map $\nu$ as the composition
\begin{align*}
\K_*(\Roe_\Gamma(X;D))&\cong\EE_*(\C,\Roe_\Gamma(X;D))
\\
&\xrightarrow{\blank\otimes\id_{\Cz(\cO^+(\partial X))}}\EE^\Gamma_*(\Cz(\cO^+(\partial X)),\Roe_\Gamma(X;D)\maxtensor\Cz(\cO^+(\partial X)))
\\
&\xrightarrow{\delta'\circ\blank}\EE^\Gamma_*\biggl(\Cz(\cO^+(\partial X)),\Cone\biggl(\multiplier(D\otimes\Kom(H))\to \frac{\multiplier(D\otimes\Kom(H))}{D\otimes\Kom(H)}\biggr)\biggr)
\\
&\cong\EE^\Gamma_{*-1}(\Cz(\cO^+(\partial X)),D\otimes\Kom(H))
=\widetilde\K^\Gamma_*(\partial X;D).
\end{align*}

The definition of $\nu$ is functorial under $\Gamma$-equivariant coarse maps and thus it identifies with the corresponding maps $\nu_n$ associated to the locally compact $\Gamma$-spaces $\overline{X}_n$ obtained from $X_n\coloneqq P_n(X')$ by gluing $\partial X$ to it. We have to show that the diagram
\[
\xymatrix@C+3ex{
\K_*(\Loc(X_n;D))\ar[r]^{(\ev{1})_*}\ar[d]^{\cong}_{\Delta}&\K_*(\Roe_\Gamma(X_n;D))\ar[d]^{\nu_n}
\\\K^\Gamma_*(X_n;D)\ar[r]^{\partial}&\widetilde\K^\Gamma_{*-1}(\partial X;D)
}
\]
commutes and then the claim follows by taking the direct limit for $n\to\infty$.
This will be done by considering for each $x\in \K_*(\Loc(X_n;D))$ the following diagram in the $\EE^\Gamma$-category, in which all solid arrows come from $*$-homomorphisms and all dashed arrows from asymptotic morphisms or $\EE^\Gamma$-theory elements:
\[
\xymatrix{
{\begin{array}{l}
\Cz(0,1)\\\otimes\Cz(X_n)
\end{array}}\ar[r]^-{\simeq}\ar@{-->}[d]^{x\otimes\id}
&\Cone {\begin{pmatrix}
\Cz\big(\cO^+\big(\overline{X}_n\big)\big)\\\downarrow\\\Cz(\cO^+(\partial X))
\end{pmatrix}}\ar@{-->}[d]^{x\otimes\id}
&{\begin{array}{l}
\Cz(0,1)\\\otimes\Cz(\cO^+(\partial X))
\end{array}}\ar[l]\ar@{-->}[d]^{x\otimes\id}
\\{\begin{array}{l}
\Loc(X_n;D)\\\otimes\Cz(0,1)\\\otimes\Cz(X_n)
\end{array}}\ar[r]^-{\simeq}\ar@{-->}[d]^{\delta\otimes\id}
&\Loc(X_n;D)\otimes\Cone {\begin{pmatrix}
\Cz\big(\cO^+\big(\overline{X}_n\big)\big)\\\downarrow\\\Cz(\cO^+(\partial X))
\end{pmatrix}}\ar@{-->}[d]^{\delta_1}
&{\begin{array}{l}
\Loc(X_n;D)\\\otimes\Cz(0,1)\\\otimes\Cz(\cO^+(\partial X))
\end{array}}\ar[l]\ar@{-->}[d]^{\delta_2\otimes\id}
\\{\begin{array}{l}
\Cz(0,1)\otimes\\ D\otimes\Kom(H)
\end{array}}\ar[r]^-{\simeq}
&\Cone {\begin{pmatrix}
\Cz(0,1]\otimes\multiplier(D\otimes\Kom(H))\\\downarrow\\{\scriptstyle\Cone\left(\multiplier(D\otimes\Kom(H))\to\frac{\multiplier(D\otimes\Kom(H))}{D\otimes\Kom(H)}\right)}
\end{pmatrix}}
&{\begin{array}{l}
\Cz(0,1)\otimes \\\Cone{\begin{pmatrix}{\scriptstyle\multiplier(D\otimes\Kom(H))}\\\downarrow\\\frac{\multiplier(D\otimes\Kom(H))}{D\otimes\Kom(H)}\end{pmatrix}}
\end{array}}\ar[l]_-{\simeq}
\\{\begin{array}{l}
\Cz(0,1)\otimes \\D\otimes\Kom(H)
\end{array}}\ar[r]^-{\simeq}\ar@{=}[u]
&\Cone {\begin{pmatrix}
\Cz(0,1]\otimes\Kom(H)\otimes D\\\downarrow\\D\otimes\Kom(H)
\end{pmatrix}}\ar[u]_-{\simeq}
&{\begin{array}{l}
\Cz(0,1)\otimes\\ D\otimes\Kom(H)
\end{array}}\ar[l]_-{\simeq}\ar[u]_-{\simeq}
}
\]

The rows of the diagram are all of the form
\[
J\to \Cone(A\twoheadrightarrow A/J)\leftarrow \Cz(0,1)\otimes A/J
\]
for different \textCstar-algebras $A$ with ideals $J$. The arrows going upwards are also induced by $*$-homomorphisms of the form $J\to \Cone(A\twoheadrightarrow A/J)$. All the arrows marked with ``$\simeq$'' are isomorphisms in $\EE^\Gamma$-theory, because they are inclusions of ideals with contractible quotient \textCstar-algebras. Furthermore, the mapping cone in the middle of the bottom row is itself isomorphic to $\Cz(0,1)\otimes D\otimes\Kom(H)$ and the maps leading into it then come from the inclusion of $(0,1)$ into itself as the left or right half, up to orientation. Hence they represent the same element in $\EE^\Gamma$-theory up to a sign.

Using \cite[Lemma 6.1.2]{WillettYuHigherIndexTheory}, which clearly also holds true for operators on Hilbert modules, and the fact that functions on $\overline X_n$ restrict to functions of vanishing variation on $X_n$, we see that
\begin{align*}
\Loc(X;D)\maxtensor\Ct\big(\overline X_n\big) &\to\frac{\Cb([1,\infty),\multiplier(D\otimes\Kom(H)))}{\Cz([1,\infty),\multiplier(D\otimes\Kom(H)))},
\\
L\otimes f&\mapsto [t\mapsto L(t)\circ (\rho(f)\otimes\id_D)]
\end{align*}
is an equivariant asymptotic morphism.
It clearly restricts to $\delta$ on the ideal $\Loc(X;D)\otimes\Cz(X_n)$ and also passes to an equivariant asymptotic morphism between the quotients
\[
\Loc(X;D)\otimes\Ct(\partial X)\to\frac{\Cb([1,\infty),\multiplier(D\otimes\Kom(H))/D\otimes\Kom(H))}{\Cz([1,\infty), \multiplier(D\otimes\Kom(H))/D\otimes\Kom(H))}.
\]
Applying these constructions slicewise yields the asymptotic morphisms $\delta_1$ and $\delta_2$.
In particular, we note that $\delta_2$ is in fact given by the family of $*$-homomorphisms $(\delta'\circ\ev{t})_{t\in[1,\infty)}$ and as such it is $1$-homotopic to the constant family $(\delta'\circ\ev{1})_{t\in[1,\infty)}$. Hence $\delta_2$ and $\delta'$ represent the same $\EE^\Gamma$-theory class.

It is clear that the diagram commutes.
The left column now is $\Delta(x)\in\K^\Gamma_{*+1}((0,1)\times X_n;D)\cong \K^\Gamma_*(X_n;D)$ and the right column is $\nu_n\circ(\ev{1})_*(x)\in \K^\Gamma_{*}(\cO^+(\partial X);D)\cong \tilde\K^\Gamma_{*-1}(\partial X;D)$. The middle column represents an element in $\K^\Gamma_{*+1}\big(\cO^+\big(\overline{X}_n\big)\times\{1\}\cup \cO(\partial X)\times(0,1];D\big)$ which is mapped to both $\Delta(x)$ and $\nu_n\circ(\ev{1})_*(x)$ in the obvious way. By definition of the connecting homomorphisms~$\partial$ associated to the pair $\big(\cO^+\big(\overline X_n\big),\cO^+(\partial X)\big)$ via the mapping cone construction, this shows exactly that $\partial(\Delta(x))=\nu_n\circ(\ev{1})_*(x)$.

The proof of the relative case works completely analogously but a lot simpler by showing that for each $x\in \K_*(\Loc(X_n,A_n;D))$ a diagram
\[
\xymatrix{
\Cz(X_n\setminus A_n)
\ar[r]^-{\simeq}\ar@{-->}[d]^-{x\otimes\id}
&\Cone {\begin{pmatrix}
\Cz\big(\overline{X}_n\setminus\overline{A_n}\big)\\\downarrow\\\Cz(\partial X\setminus\partial A))
\end{pmatrix}}
\ar@{-->}[d]^-{x\otimes\id}
&{\begin{array}{l}
\otimes\Cz(0,1)\\\otimes\Cz(\partial X\setminus\partial A)
\end{array}}
\ar[l]\ar@{-->}[d]^-{x\otimes\id}
\\{\begin{array}{l}
\Loc(X_n,A_n;D)\\\otimes\Cz(X_n\setminus A_n)
\end{array}}\ar[r]^-{\simeq}\ar@{-->}[d]^{\delta}
&{\begin{array}{l}
\Loc(X_n,A_n;D)\\\otimes\Cone {\begin{pmatrix}
\Cz\big(\overline{X}_n\setminus\overline{A_n}\big)\\\downarrow\\\Cz(\partial X\setminus\partial A))
\end{pmatrix}}\end{array}}\ar@{-->}[d]
&{\begin{array}{l}
\Loc(X_n,A_n;D)\\\otimes\Cz(0,1)\\\otimes\Cz(\partial X\setminus\partial A)
\end{array}}\ar[l]\ar@{-->}[d]
\\D\otimes\Kom(H)
\ar[r]^-{\simeq}
&\Cone{\begin{pmatrix}\multiplier(D\otimes\Kom(H))\\\downarrow\\\frac{\multiplier(D\otimes\Kom(H))}{D\otimes\Kom(H)}\end{pmatrix}}
&\Cz(0,1)\otimes \frac{\multiplier(D\otimes\Kom(H))}{D\otimes\Kom(H)}
\ar[l]_-{\simeq}
}
\]
commutes in the $\EE^\Gamma$-category, where $\Delta(x)$ is the left column and $\nu((\ev{1})_*(x))$ is the composition of the right column with the bottom row.
\end{proof}

\subsection*{Acknowledgements}

The author would like to thank Alexander Engel for insightful conversations.
Furthermore, the author is also very grateful to the anonymous referees for their meticulous inspection of the manuscript, for their long list of comments and for pointing out numerous inaccuracies.
Their suggestions, critizism and partially also strong opinions had a significant positive influence on this paper.

The research was supported by the DFG through the Priority Programme ``Geometry at Infinity'' (SPP 2026; individual project ``Duality and the coarse assembly map'', WU 869/1-1, WU 869/1-2).

\addcontentsline{toc}{section}{References}
\LastPageEnding


\begin{thebibliography}{99}
\footnotesize\itemsep=0pt

\bibitem{BartelsSqueezing}
Bartels A.C., Squeezing and higher algebraic {$K$}-theory, \href{https://doi.org/10.1023/A:1024166521174}{\textit{$K$-Theory}}
 \textbf{28} (2003), 19--37, \href{https://arxiv.org/abs/math.AT/0308030}{arXiv:math.AT/0308030}.

\bibitem{BaumGuentnerWillettExpandersExactcrossedproducts}
Baum P., Guentner E., Willett R., Expanders, exact crossed products, and the
 {B}aum--{C}onnes conjecture, \href{https://doi.org/10.2140/akt.2016.1.155}{\textit{Ann. K-Theory}} \textbf{1} (2016),
 155--208, \href{https://arxiv.org/abs/1311.2343}{arXiv:1311.2343}.

\bibitem{BlaKK}
Blackadar B., {$K$}-theory for operator algebras, 2nd ed., \textit{Mathematical Sciences
 Research Institute Publications}, Vol.~5, Cambridge University
 Press, Cambridge, 1998.

\bibitem{BlockWeinberger}
Block J., Weinberger S., Aperiodic tilings, positive scalar curvature and
 amenability of spaces, \href{https://doi.org/10.2307/2152713}{\textit{J.~Amer. Math. Soc.}} \textbf{5} (1992),
 907--918.

\bibitem{BunkeEngel_CoarseCohomologyTheories}
Bunke U., Engel A., Coarse cohomology theories, \href{https://arxiv.org/abs/1711.08599}{arXiv:1711.08599}.

\bibitem{BunkeEngel_homotopy}
Bunke U., Engel A., Homotopy theory with bornological coarse spaces,
 \textit{Lecture Notes in Math.}, Vol.~2269, \href{https://doi.org/10.1007/978-3-030-51335-1}{Springer}, Cham, 2020.

\bibitem{BunkeEngelKasprowskiWinges}
Bunke U., Engel A., Kasprowski D., Winges C., Equivariant coarse homotopy
 theory and coarse algebraic {$K$}-homology, in {$K$}-Theory in Algebra,
 Analysis and Topology, \textit{Contemp. Math.}, Vol.~749, \href{https://doi.org/10.1090/conm/749/15068}{Amer. Math. Soc.},
 Providence, RI, 2020, 13--104, \href{https://arxiv.org/abs/1710.04935}{arXiv:1710.04935}.

\bibitem{echterhoff_KK_BC_overview}
Echterhoff S., Bivariant {KK}-theory and the {B}aum--{C}onnes conjecure, in
 {$K$}-Theory for Group {$C^*$}-Algebras and Semigroup {$C^*$}-Algebras,
 \textit{Oberwolfach Seminars}, Vol.~47, \href{https://doi.org/10.1007/978-3-319-59915-1_3}{Birkh\"auser/Springer}, Cham, 2017,
 81--147, \href{https://arxiv.org/abs/1703.10912}{arXiv:1703.10912}.

\bibitem{EilenbergSteenrod}
Eilenberg S., Steenrod N., Foundations of algebraic topology, Princeton
 University Press, Princeton, N.J., 1952.

\bibitem{EmeMeyDualizing}
Emerson H., Meyer R., Dualizing the coarse assembly map, \href{https://doi.org/10.1017/S147474800500023X}{\textit{J.~Inst. Math.
 Jussieu}} \textbf{5} (2006), 161--186, \href{https://arxiv.org/abs/math.OA/0401227}{arXiv:math.OA/0401227}.

\bibitem{EmeMeyDescent}
Emerson H., Meyer R., A descent principle for the {D}irac--dual-{D}irac method,
 \href{https://doi.org/10.1016/j.top.2007.02.001}{\textit{Topology}} \textbf{46} (2007), 185--209, \href{https://arxiv.org/abs/math.OA/0405388}{arXiv:math.OA/0405388}.

\bibitem{EmeMeyEquivariantCoassembly}
Emerson H., Meyer R., Coarse and equivariant co-assembly maps, in {$K$}-Theory
 and Noncommutative Geometry, \textit{EMS Ser. Congr. Rep.}, \href{https://doi.org/10.4171/060-1/3}{Eur. Math. Soc.}, Z\"urich,
 2008, 71--89, \href{https://arxiv.org/abs/math.KT/0611610}{arXiv:math.KT/0611610}.

\bibitem{EngelWulff}
Engel A., Wulff C., Coronas for properly combable spaces, \href{https://doi.org/10.1142/S1793525321500643}{\textit{J.~Topol.
 Anal.}}, {t}o appear, \href{https://arxiv.org/abs/1711.06836}{arXiv:1711.06836}.

\bibitem{EngelWulffZeidler}
Engel A., Wulff C., Zeidler R., Slant products on the {H}igson--{R}oe exact
 sequence, \href{https://doi.org/10.5802/aif.3406}{\textit{Ann. Inst. Fourier (Grenoble)}} \textbf{71} (2021),
 913--1021, \href{https://arxiv.org/abs/1909.03777}{arXiv:1909.03777}.

\bibitem{GueHigTro}
Guentner E., Higson N., Trout J., Equivariant {$E$}-theory for
 {$C^*$}-algebras, \href{https://doi.org/10.1090/memo/0703}{\textit{Mem. Amer. Math. Soc.}} \textbf{148} (2000),
 viii+86~pages.

\bibitem{GueWillYu_DyncomplKtheory}
Guentner E., Willett R., Yu G., Dynamical complexity and controlled operator
 {$K$}-theory, \href{https://arxiv.org/abs/1609.02093}{arXiv:1609.02093}.

\bibitem{HankePapeSchick}
Hanke B., Pape D., Schick T., Codimension two index obstructions to positive
 scalar curvature, \href{https://doi.org/10.5802/aif.3000}{\textit{Ann. Inst. Fourier (Grenoble)}} \textbf{65} (2015),
 2681--2710, \href{https://arxiv.org/abs/1402.4094}{arXiv:1402.4094}.

\bibitem{Hatcher_AT}
Hatcher A., Algebraic topology, Cambridge University Press, Cambridge, 2002.

\bibitem{HigsonPedersenRoe}
Higson N., Pedersen E.K., Roe J., {$C^\ast$}-algebras and controlled topology,
 \href{https://doi.org/10.1023/A:1007705726771}{\textit{$K$-Theory}} \textbf{11} (1997), 209--239.

\bibitem{HigsonRoeHomotopy}
Higson N., Roe J., A homotopy invariance theorem in coarse cohomology and
 {$K$}-theory, \href{https://doi.org/10.2307/2154607}{\textit{Trans. Amer. Math. Soc.}} \textbf{345} (1994), 347--365.

\bibitem{HigsonRoeOberwolfach}
Higson N., Roe J., On the coarse {B}aum--{C}onnes conjecture, in Novikov
 Conjectures, Index Theorems and Rigidity, {V}ol.~2 ({O}berwolfach, 1993),
 \textit{London Math. Soc. Lecture Note Ser.}, Vol.~227, \href{https://doi.org/10.1017/CBO9780511629365.008}{Cambridge University
 Press}, Cambridge, 1995, 227--254.

\bibitem{HigRoe}
Higson N., Roe J., Analytic {$K$}-homology, \textit{Oxford Mathematical Monographs},
 Oxford University Press, Oxford, 2000.

\bibitem{HigsonRoeYuMayerVietoris}
Higson N., Roe J., Yu G., A coarse {M}ayer--{V}ietoris principle, \href{https://doi.org/10.1017/S0305004100071425}{\textit{Math.
 Proc. Cambridge Philos. Soc.}} \textbf{114} (1993), 85--97.

\bibitem{MasseyHomologyCohomologyBook}
Massey W.S., Homology and cohomology theory. An approach based on
 Alexander--Spanier cochains, \textit{Monographs and Textbooks in Pure and
 Applied Mathematics}, Vol.~46, Marcel Dekker, Inc., New York~-- Basel, 1978.

\bibitem{MasseyHowTo}
Massey W.S., How to give an exposition of the \v{C}ech--{A}lexander--{S}panier
 type homology theory, \href{https://doi.org/10.2307/2321782}{\textit{Amer. Math. Monthly}} \textbf{85} (1978),
 75--83.

\bibitem{MeyerNest}
Meyer R., Nest R., The {B}aum--{C}onnes conjecture via localization of
 categories, \href{https://doi.org/10.1007/s11005-004-1831-z}{\textit{Lett. Math. Phys.}} \textbf{69} (2004), 237--263,
 \href{https://arxiv.org/abs/math.KT/0312292}{arXiv:math.KT/0312292}.

\bibitem{MitchenerCoarse}
Mitchener P.D., Coarse homology theories, \href{https://doi.org/10.2140/agt.2001.1.271}{\textit{Algebr. Geom. Topol.}}
 \textbf{1} (2001), 271--297, \href{https://arxiv.org/abs/math.AT/0106183}{arXiv:math.AT/0106183}.

\bibitem{MitchenerNorouzizadehSchick}
Mitchener P.D., Norouzizadeh B., Schick T., Coarse homotopy groups,
 \href{https://doi.org/10.1002/mana.201800523}{\textit{Math. Nachr.}} \textbf{293} (2020), 1515--1533, \href{https://arxiv.org/abs/1811.10096}{arXiv:1811.10096}.

\bibitem{PhiInv}
Phillips N.C., Inverse limits of {$C^*$}-algebras, \textit{J.~Operator Theory}
 \textbf{19} (1988), 159--195.

\bibitem{PhiRep}
Phillips N.C., Representable {$K$}-theory for {$\sigma$}-{$C^*$}-algebras,
 \href{https://doi.org/10.1007/BF00534137}{\textit{$K$-Theory}} \textbf{3} (1989), 441--478.

\bibitem{PhiFre}
Phillips N.C., {$K$}-theory for {F}r\'echet algebras, \href{https://doi.org/10.1142/S0129167X91000077}{\textit{Internat.~J.
 Math.}} \textbf{2} (1991), 77--129.

\bibitem{QiaoRoe}
Qiao Y., Roe J., On the localization algebra of {G}uoliang {Y}u, \href{https://doi.org/10.1515/FORUM.2010.036}{\textit{Forum
 Math.}} \textbf{22} (2010), 657--665.

\bibitem{Roe_Hyperbolic}
Roe J., Hyperbolic metric spaces and the exotic cohomology {N}ovikov
 conjecture, \href{https://doi.org/10.1007/BF00538881}{\textit{$K$-Theory}} \textbf{4} (1991), 501--512.

\bibitem{Roe_Hyperbolic_Erratum}
Roe J., Erratum: ``{H}yperbolic metric spaces and the exotic cohomology
 {N}ovikov conjecture'', \href{https://doi.org/10.1007/BF01254548}{\textit{$K$-Theory}} \textbf{5} (1991), 189.

\bibitem{RoeCoarseCohomIndexTheory}
Roe J., Coarse cohomology and index theory on complete {R}iemannian manifolds,
 \href{https://doi.org/10.1090/memo/0497}{\textit{Mem. Amer. Math. Soc.}} \textbf{104} (1993), x+90~pages.

\bibitem{RoeITCGTM}
Roe J., Index theory, coarse geometry, and topology of manifolds, \textit{CBMS
 Regional Conference Series in Mathematics}, Vol.~90, \href{https://doi.org/10.1090/cbms/090}{Amer. Math. Soc.},
 Providence, RI, 1996.

\bibitem{RoeCoarseGeometry}
Roe J., Lectures on coarse geometry, \textit{University Lecture Series},
 Vol.~31, \href{https://doi.org/10.1090/ulect/031}{Amer. Math. Soc.}, Providence, RI, 2003.

\bibitem{Spanier66}
Spanier E.H., Algebraic topology, McGraw-Hill Book Co., New York~-- Toronto,
 Ont.~-- London, 1966.

\bibitem{Thomsen_univKK}
Thomsen K., The universal property of equivariant {$KK$}-theory,
 \href{https://doi.org/10.1515/crll.1998.112}{\textit{J.~Reine Angew. Math.}} \textbf{504} (1998), 55--71.

\bibitem{WillettHomological}
Willett R., Some `homological' properties of the stable {H}igson corona,
 \href{https://doi.org/10.4171/JNCG/114}{\textit{J.~Noncommut. Geom.}} \textbf{7} (2013), 203--220.

\bibitem{WillettYuHigherIndexTheory}
Willett R., Yu G., Higher index theory, \textit{Cambridge Studies in Advanced
 Mathematics}, Vol.~189, \href{https://doi.org/10.1017/9781108867351}{Cambridge University Press}, Cambridge, 2020.

\bibitem{Wright_Simul}
Wright N., Simultaneous metrizability of coarse spaces, \href{https://doi.org/10.1090/S0002-9939-2011-10805-0}{\textit{Proc. Amer.
 Math. Soc.}} \textbf{139} (2011), 3271--3278.

\bibitem{WulffCoassemblyRinghomo}
Wulff C., Coarse co-assembly as a ring homomorphism, \href{https://doi.org/10.4171/JNCG/240}{\textit{J.~Noncommut.
 Geom.}} \textbf{10} (2016), 471--514, \href{https://arxiv.org/abs/1412.1691}{arXiv:1412.1691}.

\bibitem{WulffTwisted}
Wulff C., Coarse indices of twisted operators, \href{https://doi.org/10.1142/s179352531950033x}{\textit{J.~Topol. Anal.}}
 \textbf{11} (2019), 823--873, \href{https://arxiv.org/abs/1606.01297}{arXiv:1606.01297}.

\bibitem{wulff2020secondary}
Wulff C., Secondary cup and cap products in coarse geometry, \href{https://doi.org/10.1007/s40687-021-00273-4}{\textit{Res. Math.
 Sci.}} \textbf{8} (2021), 36, 64~pages, \href{https://arxiv.org/abs/2012.11296}{arXiv:2012.11296}.

\bibitem{YuCyclicCohomology}
Yu G., Cyclic cohomology and higher indices for noncompact complete manifolds,
 \href{https://doi.org/10.1006/jfan.1995.1133}{\textit{J.~Funct. Anal.}} \textbf{133} (1995), 442--473.

\bibitem{YuLocalization}
Yu G., Localization algebras and the coarse {B}aum--{C}onnes conjecture,
 \href{https://doi.org/10.1023/a:1007766031161}{\textit{$K$-Theory}} \textbf{11} (1997), 307--318.

\end{thebibliography}
\end{document}